\theoremstyle{plain}
\newtheorem{thm}{Theorem}[section]
\newtheorem*{conj}{Conjecture}
\newtheorem{propo}[thm]{Proposition}
\newtheorem{lem}[thm]{Lemma}
\newtheorem{cor}[thm]{Corollary}
\theoremstyle{definition}
\newtheorem*{def-unnumbered}{Definition}
\newtheorem{def-numbered}[thm]{Definition}
\theoremstyle{remark}
\newtheorem{rem}[thm]{Remark}
\definecolor{grey30}{rgb}{0.3,0.3,0.3}
\definecolor{grey67}{rgb}{0.67,0.67,0.67}
\definecolor{newcyan}{RGB}{105,164,216}
\definecolor{neworange}{RGB}{215,125,57}
\newcommand{\tif}{\text{if }}
\newcommand{\tand}{\text{ and }}
\newcommand{\tfor}{\text{for }}
\newcommand{\ga}{\alpha}
\newcommand{\gb}{\beta}
\newcommand{\gd}{\delta}
\newcommand{\gep}{\varepsilon}
\newcommand{\gga}{\gamma}
\newcommand{\gj}{\theta}
\newcommand{\gs}{\sigma}
\newcommand{\gt}{\tau}
\newcommand{\gw}{\omega}
\newcommand{\gx}{\xi}
\newcommand{\gz}{\zeta}
\newcommand{\gL}{\Lambda}
\newcommand{\gY}{\Psi}
\newcommand{\C}[1]{{\mathcal{#1}}} 
\newcommand{\D}[1]{{\mathbb{#1}}} 
\newcommand{\E}[1]{{\mathscr{#1}}} 
\newcommand{\refS}[1]{Section~\ref{#1}} 
\newcommand{\refT}[1]{Theorem~\ref{#1}}
\newcommand{\refL}[1]{Lemma~\ref{#1}}
\newcommand{\refD}[1]{Definition~\ref{#1}}
\newcommand{\refP}[1]{Proposition~\ref{#1}}
\newcommand{\refE}[1]{Equation~\eqref{#1}}
\newcommand{\refF}[1]{Figure~\ref{#1}}
\newcommand{\ol}{\overline}
\newcommand{\q}{\quad}
\renewcommand{\Im}{\operatorname{Im}}
\renewcommand{\Re}{\operatorname{Re}}
\newcommand{\eps}{\varepsilon}
\newcommand{\co}[1]{^{\circ {#1}}}
\DeclareMathOperator{\sign}{sign}
\begin{document}

\vspace*{-2cm}

\title[Model for renormalisation of irrationally indifferent fixed points]%
{Arithmetic geometric model for the renormalisation of irrationally indifferent attractors}
\author[Davoud Cheraghi]{Davoud Cheraghi}
\email[Davoud Cheraghi]{d.cheraghi@imperial.ac.uk}
\address[Davoud Cheraghi]{Department of Mathematics, Imperial College London, 
London SW7 2AZ, UK}
\subjclass[2010]{37F50 (Primary), 37F10, 46T25 (Secondary)}
\date{\today}

\begin{abstract}
In this paper we build a geometric model for the renormalisation of irrationally indifferent fixed points. 
The geometric model incorporates the fine arithmetic properties of the rotation number at the fixed point. 
Using this model for the renormalisation, we build a topological model for the dynamics of a holomorphic map near an irrationally indifferent fixed point.
We also explain the topology of the maximal invariant set for the model, and also explain the dynamics of the map on the maximal invariant set. 
\end{abstract}

\maketitle

\numberwithin{equation}{section}


\renewcommand{\thethm}{\Alph{thm}}
\section{Introduction}\label{S:intro}
Let $f$ be a holomorphic map defined on a neighbourhood of $0$ in the complex plane $\mathbb{C}$, and assume that $0$ is an 
\textbf{irrationally indifferent fixed point} of $f$, that is, $f$ is of the form $e^{2\pi i \ga} z+ O(z^2)$ near $0$ with $\ga \in \D{R} \setminus \D{Q}$. 
It is well-known that such systems exhibit highly non-trivial dynamical behaviour which depends on the 
arithmetic properties of $\alpha$, see for instance \cite{Sie42,Brj71,Yoc95,McM98,GrSw03,PZ04,Zha11}. 
When the system is unstable near $0$, the local dynamics remains mysterious, even for simple looking non-linear maps such as the quadratic polynomials 
$e^{2\pi i \alpha}z+ z^2$.

A powerful tool for the study of irrationally indifferent fixed points is renormalisation. 
A \textbf{renormalisation scheme} consists of a class of maps (dynamical systems), and an operator which preserves that class of maps. 
The operator assigns a new dynamical system to a given dynamical system, using suitable iterates of the 
original system. 
A fundamental renormalisation scheme for the systems with an irrationally indifferent fixed point is the 
sector renormalisation of Yoccoz, illustrated in \refF{F:renormalisation}.
In this renormalisation, the new system is obtained from considering the return map to a sector landing at $0$, and the sector is 
identified with a neighbourhood of $0$ using  a change of coordinate. 
A remarkable semi-local version of the local Yoccoz renormalisation is built by Inou and Shishikura in \cite{IS06}, 
which is defined on a sector uniformly large in size, and hence captures more dynamical information about the original system.

\begin{figure}[h]
\begin{pspicture}(13,4) 
\psccurve[linecolor=cyan,fillcolor=cyan,fillstyle=solid](.2,2)(2,.2)(3.7,2)(2,3.5)



\pscustom[linewidth=.5pt,linecolor=black,linestyle=dashed,dash=2pt 1pt,fillcolor=green,fillstyle=solid]
{\pscurve(1.5,2)(2,1.79)(2.4,1.535)(2.53,1.4)(2.61,1.25)(2.625,1.1)(2.6,1.02)(2.55,.944)
\pscurve[liftpen=1](2.74,.735)(2.825,.85)(2.85,1.0)(2.76,1.25)(2.64,1.4)(2.43,1.57)(2,1.81)(1.5,2)
}

\rput(1.4,1.9){\small $0$}

\pscurve[origin={1.5,2},linewidth=.5pt]{->}(1.13;332)(1.19;336)(1.12;340)

\pscurve[origin={1.5,2},linewidth=.5pt]{->}(1.14;341)(1.19;346)(1.1;351)

\pscurve[origin={1.5,2},linewidth=.5pt]{->}(1.1;352)(1.15;358)(1.06;365)

\pscurve[origin={1.5,2},linewidth=.5pt]{->}(1.06;7)(1.11;14)(1.02;21) 

\pscurve[origin={1.5,2},linewidth=.5pt]{->}(1.02;21)(1.05;29)(.94;38) 

\pscurve[origin={1.5,2},linewidth=.5pt]{->}(.94;38)(.96;46)(.92;54) 

\pscurve[origin={1.5,2},linewidth=.5pt]{->}(.92;54)(.94;62)(.90;70) 

\pscurve[origin={1.5,2},linewidth=.5pt]{->}(.90;70)(.92;78)(.88;86) 

\pscurve[origin={1.5,2},linewidth=.5pt]{->}(.88;86)(.90;94)(.86;102) 

\pscurve[origin={1.5,2},linewidth=.5pt]{->}(.86;102)(.88;110)(.84;118) 

\pscurve[origin={1.5,2},linewidth=.5pt]{->}(.84;118)(.86;126)(.82;134)

\pscurve[origin={1.5,2},linewidth=.5pt]{->}(.82;134)(.84;142)(.80;150)

\pscurve[origin={1.5,2},linewidth=.5pt]{->}(.80;150)(.82;158)(.78;166)

\pscurve[origin={1.5,2},linewidth=.5pt]{->}(.78;166)(.8;174)(.76;182)

\pscurve[origin={1.5,2},linewidth=.5pt]{->}(.76;182)(.78;190)(.74;198)

\pscurve[origin={1.5,2},linewidth=.5pt]{->}(.74;198)(.76;206)(.72;214)

\pscurve[origin={1.5,2},linewidth=.5pt]{->}(.75;214)(.81;224)(.79;232)

\pscurve[origin={1.5,2},linewidth=.5pt]{->}(.79;232)(.84;240)(.82;248)

\pscurve[origin={1.5,2},linewidth=.5pt]{->}(.83;248)(.88;256)(.86;264)

\pscurve[origin={1.5,2},linewidth=.5pt]{->}(.86;264)(.91;272)(.89;280)

\pscurve[origin={1.5,2},linewidth=.5pt]{->}(.89;280)(.94;288)(.92;296)

\pscurve[origin={1.5,2},linewidth=.5pt]{->}(.92;296)(.97;304)(.95;312)

\pscurve[origin={1.5,2},linewidth=.5pt]{->}(.95;312)(1.01;318)(.98;325)

\pscurve[origin={1.5,2},linewidth=.5pt]{->}(.98;325)(1.05;330)(1.03;335)

\rput(2.5,3){\small $f$}

\psset{xunit=1.4,yunit=1.4}

\pscustom[origin={2.8,-.4},linewidth=.5pt,linecolor=black,fillcolor=green,fillstyle=solid]
{\pscurve(1.5,2)(2,1.79)(2.4,1.535)(2.53,1.4)(2.61,1.25)(2.625,1.1)(2.6,1.02)(2.55,.944)
\pscurve[liftpen=1](2.74,.735)(2.825,.85)(2.85,1.0)(2.76,1.25)(2.64,1.4)(2.43,1.57)(2,1.81)(1.5,2)
}








\pscurve[origin={5.45,.45},linewidth=.5pt,linecolor=gray]{->}(0.19;135)(.35;90)(.35;350)(.2;310)

\pscurve[origin={5.5,.8},linewidth=.5pt,linecolor=gray]{->}(0.13;220)(.35;180)(.35;80)(.12;40)

\pscurve[origin={5.285,1.1},linewidth=.5pt,linecolor=gray]{->}(0.075;180)(.35;160)(.35;90)(.045;70)

\pscurve[origin={4.91,1.3475},linewidth=.5pt,linecolor=gray]{->}(0.02;230)(.35;190)(.35;90)(.02;45)

\pscurve[linewidth=.5pt]{<-}(5.5,0.5)(5.3,0.1)(5.35,0.45)(5.45,.6)
\rput(5.3,-.1){\small return map}

\psset{xunit=1.0,yunit=1.0}

\psccurve[linecolor=green,fillcolor=green,fillstyle=solid](7.2,1.5)(8.2,.5)(9.2,1.5)(8.2,2.4)

\pscircle[linewidth=.5pt](8.2,1.5){.03}

\pscurve[linewidth=.5pt]{->}(6,2.3)(6.7,2.5)(7.4,2.3)
\rput(6.7,2.7){\small uniformisation ($z \sim f(z)$)}

\pscurve[linewidth=.5pt]{->}(8,.8)(8.2,.3)(8.4,.8)
\rput(8.2,0){\small renormalisation of $f$}
\end{pspicture}
\label{F:renormalisation}
\caption{In the left hand picture, 
the (canonically defined) sector landing at $0$ is bounded by a curve landing at $0$, the image of that curve by $f$, and a line segment connecting the end points of those curves.
By glueing the sides of the sector according to $f$, one obtains a Riemann surface, isomorphic to a 
punctured disk.
}
\end{figure}
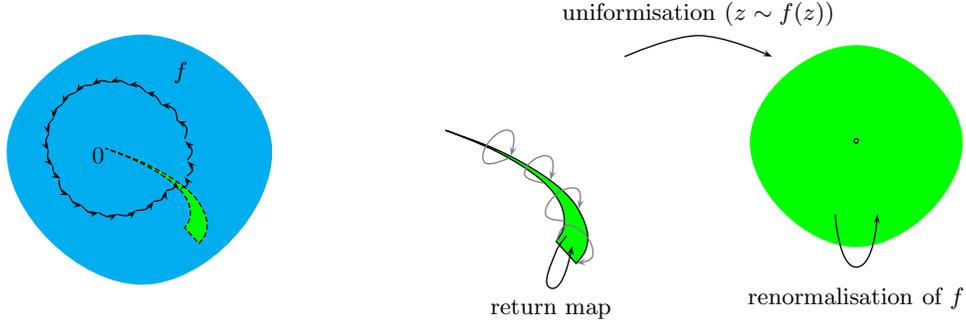

A main technical issue with employing a renormalisation method for irrationally indifferent fixed points is the highly distorting nature of the 
changes of coordinates that appear in successive applications of the renormalisation.
One is lead to analysing the local and global distorting behaviours of the successive changes of coordinates 
in conjunction with the fine arithmetic features of $\alpha$.

In this paper with build a geometric model for the (semi-local version of the) renormalisation of irrationally 
indifferent fixed points. 

\smallskip 

\begin{thm}[Renormalisation model]\label{T:toy-model-renormalisation}
There exists a class of maps  
\[\mathbb{F}=
\big\{\mathbb{T}_\alpha: \mathbb{M}_\alpha \to \mathbb{M}_\alpha \big\}
_{\alpha \in \mathbb{R}\setminus \mathbb{Q}}\] 
and a renormalisation operator $\mathcal{R}_m:\mathbb{F} \to \mathbb{F}$ satisfying the following properties: 
\begin{itemize}
\item[(i)]for every $\alpha \in \mathbb{R} \setminus \mathbb{Q}$, 
$\mathbb{M}_\alpha \subset \mathbb{C}$ is a compact star-like set with $\{0, +1\} \subset \mathbb{M}_\alpha$, 
\item[(ii)]for every $\alpha \in \mathbb{R} \setminus \mathbb{Q}$, 
$\mathbb{T}_\alpha: \mathbb{M}_\alpha \to \mathbb{M}_\alpha$ is a homeomorphism which 
acts as rotation by $2\pi \alpha$ in the tangential direction,
\item[(iii)] for $\ga \in (-1/2, 1/2) \setminus \mathbb{Q}$, 
\[\mathcal{R}_m \left(\mathbb{T}_\alpha: \mathbb{M}_\alpha \to \mathbb{M}_\alpha\right)
= \left(\mathbb{T}_{-1/\alpha}: \mathbb{M}_{-1/\alpha} \to \mathbb{M}_{-1/\alpha}\right),\] 
\item[(iv)] for every $\alpha \in \mathbb{R} \setminus \mathbb{Q}$ and every integer $k$ with 
$0 \leq k \leq 1/|\alpha|$, 
\[\frac{C^{-1}}{1+ \min \{k, |\ga|^{-1} - k\}} \leq |\mathbb{T}_\ga\co{k}(+1)| \leq \frac{C}{1+ \min \{k, |\ga|^{-1}-k\}},\]
for some constant $C$ independent of $k$ and $\alpha$. 
\item[(v)] for every $\alpha \in \mathbb{R} \setminus \mathbb{Q}$, 
$\mathbb{M}_{\alpha+1}= \mathbb{M}_{\alpha}$, $\mathbb{T}_{\alpha+1}=\mathbb{T}_{\alpha}$, 
$\mathbb{M}_{-\alpha}= s(\mathbb{M}_{\alpha})$, and $\mathbb{T}_{-\alpha}=s \circ \mathbb{T}_{\alpha} \circ s$, 
where $s$ denotes the complex conjugation map, 
\item[(vi)] $\mathbb{M}_\alpha$ depends continuously on $\alpha \in \mathbb{R} \setminus \mathbb{Q}$, 
in the Hausdorff topology. 
\end{itemize} 
\end{thm}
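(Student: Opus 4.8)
The plan is to construct the model surfaces $\mathbb{M}_\alpha$ explicitly as unions of scaled petal-shaped pieces arranged in a "bouquet" around $0$, where the number of petals and their sizes are dictated by the continued fraction expansion of $\alpha$, and then to define $\mathbb{T}_\alpha$ piecewise as a combinatorial shift on the petals that mimics the near-parabolic return dynamics. Concretely, writing $\alpha=[a_0;a_1,a_2,\dots]$ with Gauss map iterates $\alpha_n$, one builds $\mathbb{M}_\alpha$ by taking a "core" region of diameter $\asymp 1$ together with chains of $a_1$ successively smaller satellite regions whose sizes scale by the factors $|\alpha|, |\alpha\alpha_1|, \dots$ predicted by the classical Yoccoz renormalisation estimates; each satellite is a rescaled copy of $\mathbb{M}_{\alpha_1}$ glued in at an appropriate point. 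This self-referential description is exactly what makes (iii) hold essentially by definition once the gluing is set up consistently, and it is the reason we first prove existence by a fixed-point/limit argument rather than a closed formula.

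The \textbf{key steps}, in order, are as follows. (1) Fix the combinatorial skeleton: a rooted tree $\mathcal{T}_\alpha$ whose vertices are indexed by finite truncations of the continued fraction of $\alpha$, with edge-lengths $\ell(v)$ equal to the product of the Gauss cocycle up to depth of $v$; show $\sum_v \ell(v)<\infty$ so the geometric realisation embeds as a compact star-like set in $\mathbb{C}$, after choosing the angular placement of each child so the result is star-like with respect to $0$ and contains the point $+1$ on the "first" petal. (2) Thicken the tree to $\mathbb{M}_\alpha$ by attaching to each edge a petal-shaped closed Jordan domain of the corresponding scale, chosen from a fixed model family so that adjacent petals meet only at a single boundary point; verify compactness and star-likeness of the union. (3) Define $\mathbb{T}_\alpha$: on the core, it cyclically permutes the $a_1$ top-level petals and rotates by the residual angle $2\pi\alpha_1$ into the recursively-defined map on the attached sub-bouquet; check it is a well-defined homeomorphism and that, reading off the tangential coordinate (angle around $0$), it is rotation by $2\pi\alpha$. (4) Establish (iv) by tracking the orbit of $+1$: for $0\le k\le 1/|\alpha|$ the iterate $\mathbb{T}_\alpha^{\circ k}(+1)$ lands in the petal of scale $\asymp 1$ for $k$ away from both endpoints and in a petal of scale $\asymp 1/k$ (resp.\ $\asymp 1/(1/|\alpha|-k)$) near the endpoints, by comparing with the trajectory of a genuine parabolic-type map; this is precisely the two-sided bound with a universal $C$. (5) Deduce (v) from the symmetries of continued fractions ($[a_0+1;a_1,\dots]$ vs.\ the $-\alpha$ expansion) and the way $s$ acts on the model petals. (6) Prove (vi) by showing that when $\alpha_j\to\alpha$ through irrationals, the truncated trees and their thickenings converge in the Hausdorff metric: the tail $\sum_{|v|\ge N}\ell(v)$ is uniformly small, and for fixed finite depth the vertex positions and petal shapes depend continuously on the (eventually locally constant) partial quotients.

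The \emph{main obstacle} I expect is making the recursion in step (3) genuinely \emph{self-consistent} with the scaling in step (1): the operator $\mathcal{R}_m$ must send $\mathbb{T}_\alpha$ to $\mathbb{T}_{-1/\alpha}$ \emph{on the nose}, not merely up to conjugacy, which forces the gluing maps between a petal and the sub-bouquet it carries to be affine of exactly the right ratio and to respect the base-point $+1$ and the complex-conjugation symmetry simultaneously. Getting all of (iii), (iv) and (v) to hold with the \emph{same} choices — in particular arranging that the angular offsets accumulate to the correct tangential rotation while the radial scales realise the $1/(1+\min\{k,|\alpha|^{-1}-k\})$ profile — is a delicate bookkeeping problem; I would isolate it by first treating the single-step Gauss map $\alpha\mapsto\{1/\alpha\}$ together with the involutions of (v), writing $\mathbb{M}_\alpha$ as "core $\cup$ affine copy of $\mathbb{M}_{\alpha_1}$", proving the functional equation $\mathbb{M}_{-1/\alpha}=\Phi(\mathbb{M}_\alpha)$ for an explicit affine $\Phi$, and only then iterating. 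The estimate (iv) is then the one quantitative input that does not come for free from the algebra and must be checked directly on the model return dynamics, but since the petals are chosen from a fixed finite-parameter family it reduces to a compactness argument giving the universal constant $C$.
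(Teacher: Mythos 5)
Your outline takes a genuinely different route from the paper (a self-similar petal/tree gluing rather than explicit coordinate changes on a half-plane), but as it stands it has a real gap, and it sits exactly where you yourself locate the ``main obstacle''. The self-consistency of the recursion is not a bookkeeping issue that an affine gluing can absorb: since (ii) forces $\mathbb{T}_\alpha$ to be an exact rotation by $2\pi\alpha$ in the angular variable about $0$, any renormalisation defined by a first-return map lives on a sector of angular width $2\pi\alpha$ at $0$, and the change of coordinates realising (iii) must open that sector onto a full punctured neighbourhood of $0$, i.e.\ multiply angles at $0$ by $1/\alpha$; no affine $\Phi$ does this. The same nonlinearity is what produces (iv): over one cycle the orbit of $+1$ must have modulus comparable to $1$ near $k=0$ and $k=1/|\alpha|$ and comparable to $|\alpha|$ in the middle of the cycle --- your step (4) states this backwards --- and an affine self-similar scaling by the products $|\alpha|,|\alpha\alpha_1|,\dots$ does not generate the harmonic $1/(1+\min\{k,|\alpha|^{-1}-k\})$ profile; nor can a compactness argument over a ``fixed finite-parameter family of petals'' yield a constant $C$ uniform as $\alpha\to 0$ while $k$ ranges up to $1/|\alpha|\to\infty$. (Your step (2) is also internally in tension with (i): a union of petals meeting only at single boundary points along a tree is not star-like about $0$.) The paper resolves all of this by working upstairs: it writes down explicit nonlinear, vertical-line-preserving maps $Y_r$ on $\ol{\D{H}'}$ satisfying exact translation identities (\refL{L:Y_r-commutation}), builds the nested tower $I_n^j$, $I_n$, and obtains $\mathbb{M}_\alpha$ via $w\mapsto s(e^{2\pi i w})$; then (iii) is the content of \refP{P:renormalisation-relations} and (iv) is a direct estimate on $|s(e^{2\pi i\,\mathbb{Y}_0(k)})|$ in \refP{P:orbit-size}, not a soft comparison with a ``genuine parabolic-type map'', which does not exist in a model built by gluing.

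A second, quieter gap is the phrase ``check it is a well-defined homeomorphism''. Because the tower is infinite, $\mathbb{T}_\alpha$ must be defined by a limit along the tower on the set of points whose itinerary never re-enters the fundamental pieces, and continuity and injectivity at the seams between combinatorial pieces (and at $0$) is where most of the work lies; in the paper this occupies Section~\ref{S:T-on-M} (\refL{L:model-map-lift} and its supporting lemmas), and it leans on the uniform $0.9$-contraction of the coordinate changes together with the exact functional equations at the boundaries. Nothing in your plan plays this role, so even (ii) is not yet established for your model; by contrast, your sketches for (v) (continued-fraction symmetries) and (vi) (finite-depth continuity plus uniformly small tails) are in the right spirit and close to what the paper does.
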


The map $\mathbb{T}_\ga$ is a topological model for the map $f(z)=e^{2\pi i \ga}z+ O(z^2)$. 
The set $\mathbb{M}_\ga$ is a topological model for the maximal invariant set of $f$ at $0$ on which $f$ is injective. 
The model for the renormalisation, $\mathcal{R}_m$, is defined using the return map of $\mathbb{T}_\ga$ to a cone of angle $2\pi \ga$ landing at $0$;  
with a change of coordinate which preserves rays landing at 0, while exhibiting the non-linear behaviour of the actual change of coordinate for the sector 
renormalisation of $f$. 
As in the sector renormalisation, $\mathcal{R}_m$ induces the Gauss map $\ga \mapsto -1/\ga$ on the asymptotic rotation numbers at $0$.

The point $+1$ is the largest real number which belongs to $\mathbb{M}_\ga$. 
It plays the role of a certain critical point of $f$, which we will explain in a moment.
The geometry of the orbit of $+1$ under $\mathbb{T}_\ga$, for one return of the dynamics, is explained in part (iv) of \refT{T:toy-model-renormalisation}. 
This form of geometry is ubiquitous for maps with an irrationally indifferent fixed point; it requires a non-zero second derivative at $0$. 
For instance, it holds for the quadratic polynomials $e^{2\pi i \ga} z + z^2$.

By employing the renormalisation scheme $(\mathcal{R}_m, \mathbb{F})$, we build a topological model for the dynamics 
near an irrationally indifferent fixed point, as we explain below. 

By the classic work of Fatou \cite{Fat19} and Mane \cite{Man87}, when $f$ is a polynomial or a rational function of
the Riemann sphere, there is a recurrent critical point of $f$ which ``interacts'' with the fixed point at $0$. 
The topological boundary of $\mathbb{M}_\ga$, 
\[\mathbb{A}_\ga= \partial \mathbb{M}_\ga,\]
is equal to the closure of the orbit of $+1$ for the iterates of $\mathbb{T}_\ga$. 
The set $\mathbb{A}_\ga$ is a topological model for the closure of the orbit of that recurrent critical point of $f$, 
which is the measure theoretic attractor of $f$ for the orbits that remain near $0$. 

We explain the topology of the sets $\mathbb{A}_\ga$ in terms of the arithmetic nature of $\ga$. 

\smallskip

\begin{thm}[Trichotomy of the maximal invariant set]\label{T:model-trichotomy-thm}
For every $\ga \in \mathbb{R} \setminus \mathbb{Q}$ one of the following statements hold: 
\begin{itemize}
\item[(i)] $\ga$ is a Herman number and $\mathbb{A}_\alpha$ is a Jordan curve,
\item[(ii)] $\ga$ is a Brjuno but not a Herman number, and $\mathbb{A}_\alpha$ is a one-sided hairy Jordan curve,
\item[(iii)] $\ga$ is not a Brjuno number, and $\mathbb{A}_\alpha$ is a Cantor bouquet. 
\end{itemize}
\end{thm}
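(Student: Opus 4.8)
The plan is to read the topology of $\mathbb{A}_\alpha$ off the renormalisation tower furnished by \refT{T:toy-model-renormalisation}. Fix $\alpha\in\mathbb{R}\setminus\mathbb{Q}$, with continued fraction convergents $p_n/q_n$ and partial quotients $a_n$; put $\alpha_0=\alpha$ and let $\alpha_{n+1}$ be $-1/\alpha_n$ reduced into $(-1/2,1/2)$, so that by parts (iii) and (v) we have $\mathcal{R}_m\co{n}(\mathbb{T}_\alpha)=\mathbb{T}_{\alpha_n}$ for all $n\ge 0$. Since $\mathcal{R}_m$ is realised as a first return map to a cone of angle $\asymp 2\pi\alpha_n$ followed by a ray-preserving but non-linear change of coordinate, each renormalisation step comes with a cone $\mathbb{S}_n\subset\mathbb{M}_{\alpha_n}$ landing at $0$ and a homeomorphism $\chi_n\colon\mathbb{S}_n\to\mathbb{M}_{\alpha_{n+1}}$ conjugating the return map of $\mathbb{T}_{\alpha_n}$ to $\mathbb{S}_n$ with $\mathbb{T}_{\alpha_{n+1}}$. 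Composing $n$ such steps yields a homeomorphism $\mathbb{P}_n\to\mathbb{M}_{\alpha_n}$ from a piece $\mathbb{P}_n\subset\mathbb{M}_\alpha$ that shrinks to $\{0\}$ as $n\to\infty$, whose inverse contracts near $+1$ by a factor comparable to $\prod_{j<n}|\alpha_j|\asymp 1/q_n$; since $+1$ is the critical point at every level, this composite carries $+1\in\mathbb{M}_{\alpha_n}$ to a point of the $\mathbb{T}_\alpha$-orbit of $+1$ sitting at the bottom of the $n$-th return cycle, whose position is governed by part (iv). Recall finally that $\mathbb{A}_\alpha=\partial\mathbb{M}_\alpha$ equals the closure of the orbit $\{\mathbb{T}_\alpha\co{k}(+1):k\in\mathbb{Z}\}$.

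Threading these conjugacies through one return cycle, as described by part (iv), yields the \emph{self-similar decomposition} on which everything rests. Write $\Gamma_0^{(n)}$ for the arc traced by the first return cycle of $+1$ under $\mathbb{T}_{\alpha_n}$; by part (iv) its radial profile is the ``sawtooth'' $|\mathbb{T}_{\alpha_n}\co{k}(+1)|\asymp\big(1+\min\{k,\,|\alpha_n|^{-1}-k\}\big)^{-1}$, so $\Gamma_0^{(n)}$ winds once around $0$ and plunges to distance $\asymp|\alpha_n|$ from it. One then obtains
\[
\mathbb{A}_{\alpha_n} = \overline{\Gamma_0^{(n)}\cup\bigcup_{0\le j<a_{n+1}}\mathbb{T}_{\alpha_n}\co{j}\big(\chi_n^{-1}(\mathbb{A}_{\alpha_{n+1}})\big)},
\]
and iterating this identity presents $\mathbb{A}_\alpha$ as a nested countable union of rescaled copies of the arcs $\Gamma_0^{(n)}$ together with the limit set of the construction. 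A direct computation from part (iv) — in which a logarithmic factor enters precisely because the $\chi_n$ are non-linear — shows that the total size of the decorations entering at the $n$-th stage is comparable to the general term of the Brjuno series $\sum_n q_n^{-1}\log q_{n+1}$ of $\alpha$; equivalently, the decorations shrink along the tower if and only if $\alpha$ is a Brjuno number. Thus the three cases of the theorem are separated by the convergence of that series, with the Herman condition drawing a second line inside the Brjuno range.

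\emph{Herman case.} If $\alpha$ is a Herman number, the arithmetic condition confines the renormalisations $\mathbb{T}_{\alpha_n}$ to a compact sub-family of $\mathbb{F}$ on which the coordinate changes have uniformly bounded non-linear distortion — an a priori bound that even improves under $\mathcal{R}_m$. From this I would deduce that the copies in the decomposition shrink geometrically and uniformly, that the limit set is a single Jordan curve, and that the tower converges to a homeomorphism $\mathbb{M}_\alpha\to\overline{\mathbb{D}}$ conjugating $\mathbb{T}_\alpha$ to $z\mapsto e^{2\pi i\alpha}z$; hence $\mathbb{A}_\alpha$ is the image of $\partial\mathbb{D}$, a Jordan curve. \emph{Brjuno, not Herman.} The model is built so that $\alpha$ is Brjuno if and only if the model Siegel disk $D_\alpha$ of $\mathbb{T}_\alpha$ is non-empty, and the summability of the Brjuno series along the tower makes the tower converge along $\partial D_\alpha$, so $\partial D_\alpha$ is a Jordan curve; the failure of the Herman condition forces $+1\notin\partial D_\alpha$, and then part (iv) together with the decomposition exhibits $\mathbb{A}_\alpha\setminus\partial D_\alpha$ as a countable family of arcs — the rescaled copies of the $\Gamma_0^{(n)}$, of summable total length — each attached to $\partial D_\alpha$ at a single point, with the attaching points dense and all arcs lying on the side of $\partial D_\alpha$ away from $D_\alpha$ (using that $\mathbb{T}_\alpha$ is injective and the cones sit on that side); matching this against the definition of a one-sided hairy Jordan curve completes this case. \emph{Not Brjuno.} Now $D_\alpha=\emptyset$, so $\mathbb{M}_\alpha=\mathbb{A}_\alpha$, the tower never converges, and the decomposition exhibits $\mathbb{A}_\alpha$ as an infinitely iterated union of arcs whose sizes at level $n$ fail to tend to $0$ along a subsequence (divergence of the Brjuno series); one then checks that $\mathbb{A}_\alpha$ is homeomorphic to a straight brush (Aarts--Oversteegen), hence is a Cantor bouquet — the hairs being in bijection with the infinite renormalisation itineraries, with a dense set of endpoints and the expected density and semicontinuity properties of the hairs.

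I expect the Brjuno-but-not-Herman case to be the main obstacle, since it demands two opposing kinds of control simultaneously: on the one hand enough convergence of the renormalisation tower — now with only the non-uniform, Brjuno-type bounds — to produce a genuine Jordan core $\partial D_\alpha$, and on the other a precise enough comparison of $\mathbb{A}_\alpha$ with $\partial D_\alpha$ to recognise the difference as exactly a one-sided hairy Jordan curve rather than some less structured continuum. The Herman case amounts to ``uniform convergence $\Rightarrow$ Jordan curve'' and the non-Brjuno case to ``no convergence $\Rightarrow$ pure bouquet'', but the intermediate regime needs both the quantitative summability along the tower and a careful topological gluing lemma identifying the hairy-curve structure. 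A secondary difficulty, common to all three cases, is establishing the self-similar decomposition of $\mathbb{A}_\alpha$ itself, since it rests on locating the image of $+1$ through the non-linear coordinate changes at each level — which is exactly what part (iv) of \refT{T:toy-model-renormalisation} is engineered to control.
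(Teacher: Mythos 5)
Your overall picture (a self-similar decomposition of $\mathbb{A}_\ga$ down the renormalisation tower, with the Brjuno series measuring the size of the decorations and the Herman condition deciding whether the hair at the critical point has zero length) is the right heuristic, and it mirrors how the sets are actually built in \refS{SS:tilings-nest}. But as a proof it has a genuine gap, in two places. First, the Herman case: your mechanism is that the Herman condition ``confines the renormalisations to a compact sub-family with uniformly bounded non-linear distortion'', so that the copies shrink geometrically and uniformly. That is false: Herman numbers include all Diophantine numbers, the entries $a_n$ are in general unbounded, and the renormalisations $\mathbb{T}_{\ga_n}$ do not stay in any compact sub-family; uniform geometric shrinking is the signature of bounded type, not of $\E{H}$. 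The paper's actual criterion is quantitative and non-uniform: $\ga\in\E{H}$ iff the pinnacle and base height functions agree, $p_n(0)=b_n(0)$ for all $n$ (\refP{P:optimal-herman-p_n-b_n}), and this rests on comparing the restriction of the explicit coordinate change $Y_r$ to the imaginary axis with the Yoccoz maps $h_r$ (\refP{P:Y_r-vs-h_r^-1}, \refP{P:Herman-Y-iterates}) and on transporting the Herman criterion through the modified continued fraction (\refS{S:arithmetic}). None of that can be recovered from your compactness heuristic. Relatedly, your claim that the Brjuno-series size of the decorations is ``a direct computation from part (iv)'' of \refT{T:toy-model-renormalisation} does not work: part (iv) only controls the moduli of one return cycle of the critical orbit, while the heights of the hairs created at level $n$ are governed by the logarithmic vertical behaviour of the coordinate changes at all heights (\refL{L:Y_n-on-horizontals}, \refL{L:Y_r-distances}, \refP{P:b_n-sup-B}); that information is simply not contained in the statement of Theorem~\ref{T:toy-model-renormalisation}, so you would have to return to the explicit maps $Y_r$, which your argument never touches.

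Second, the topological core of the theorem is exactly the part you defer to ``a careful topological gluing lemma'': to recognise a one-sided hairy Jordan curve or a Cantor bouquet you must verify, for every angle, the one-sided $\limsup$ semicontinuity of the hair-length function, the continuity of the outer envelope $p_{-1}$, and the simultaneous density of angles where the hair has zero length and where it has positive length (in the non-Brjuno case, density of angles where the hair reaches $0$ and where it does not). In the paper this is the bulk of \refS{S:topology-A}: \refP{P:b_n-liminfs} (accumulation of hairs, proved by the eligibility/trajectory argument), \refP{P:b_n-p_n-dense-touches} and \refP{P:p_n-cont}. Your decomposition by itself gives a countable union of arcs attached to a core, but that does not yet distinguish a hairy Jordan curve from a less structured continuum, and it does not give the ambient-homeomorphism to the straight model required by the definitions in \refS{SS:CB-HJC}. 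Note also that the paper's proof of Theorem~\ref{T:model-trichotomy-thm} is entirely static, phrased in terms of the height functions $b_n,p_n$ and never using $\mathbb{T}_\ga$; your starting identification of $\mathbb{A}_\ga$ with the closure of the orbit of $+1$ is itself a later result (\refL{L:dense-orbits-model}, \refP{P:invariants-ordered-model}), so if you base the decomposition on the dynamics you must prove that density independently rather than assume it.
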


\begin{figure}[t]
\begin{pspicture}(14,5) 
\epsfxsize=4cm
\rput(2.5,2.5){\epsfbox{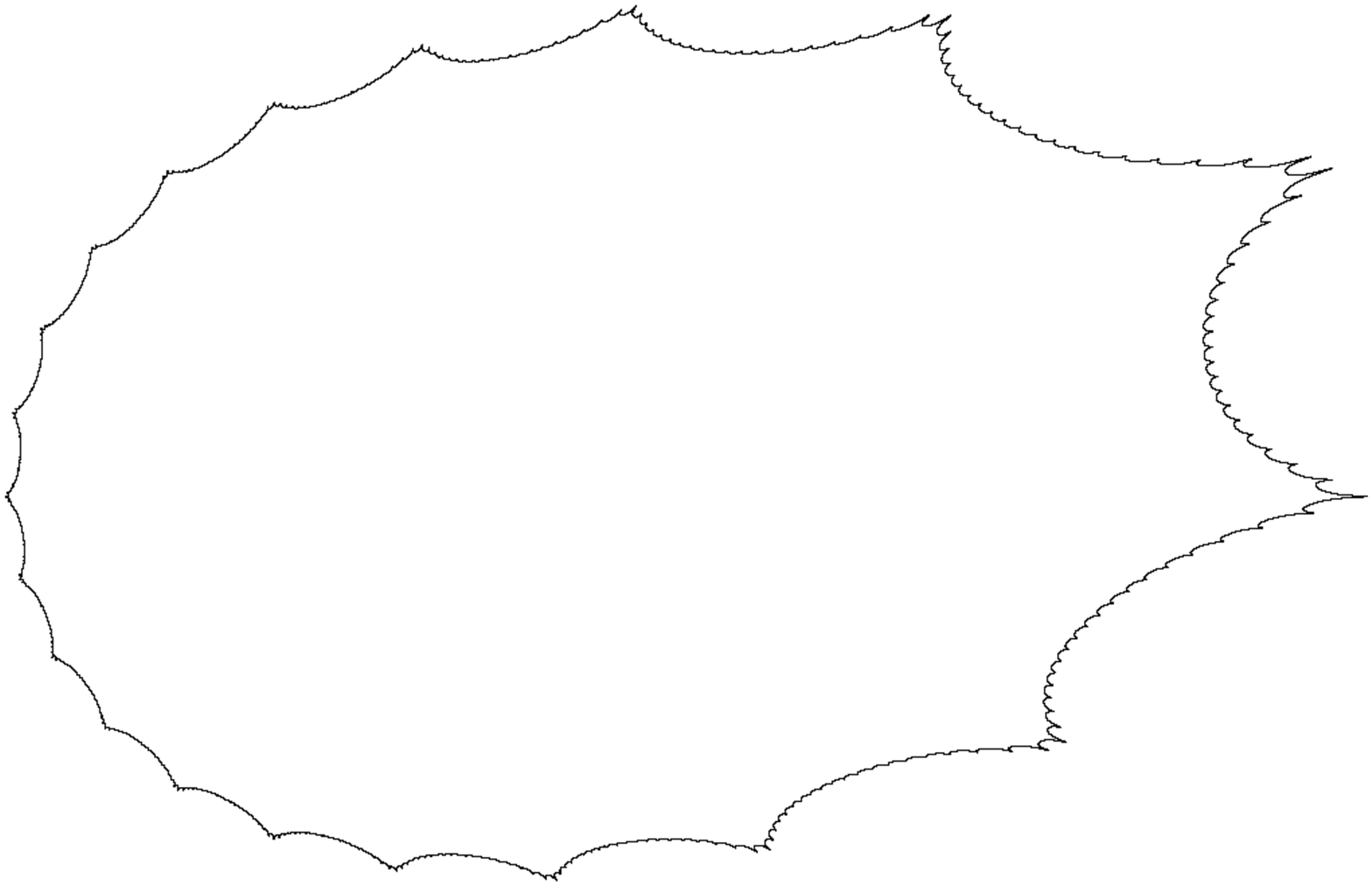}}
\rput(7,2.5){\epsfbox{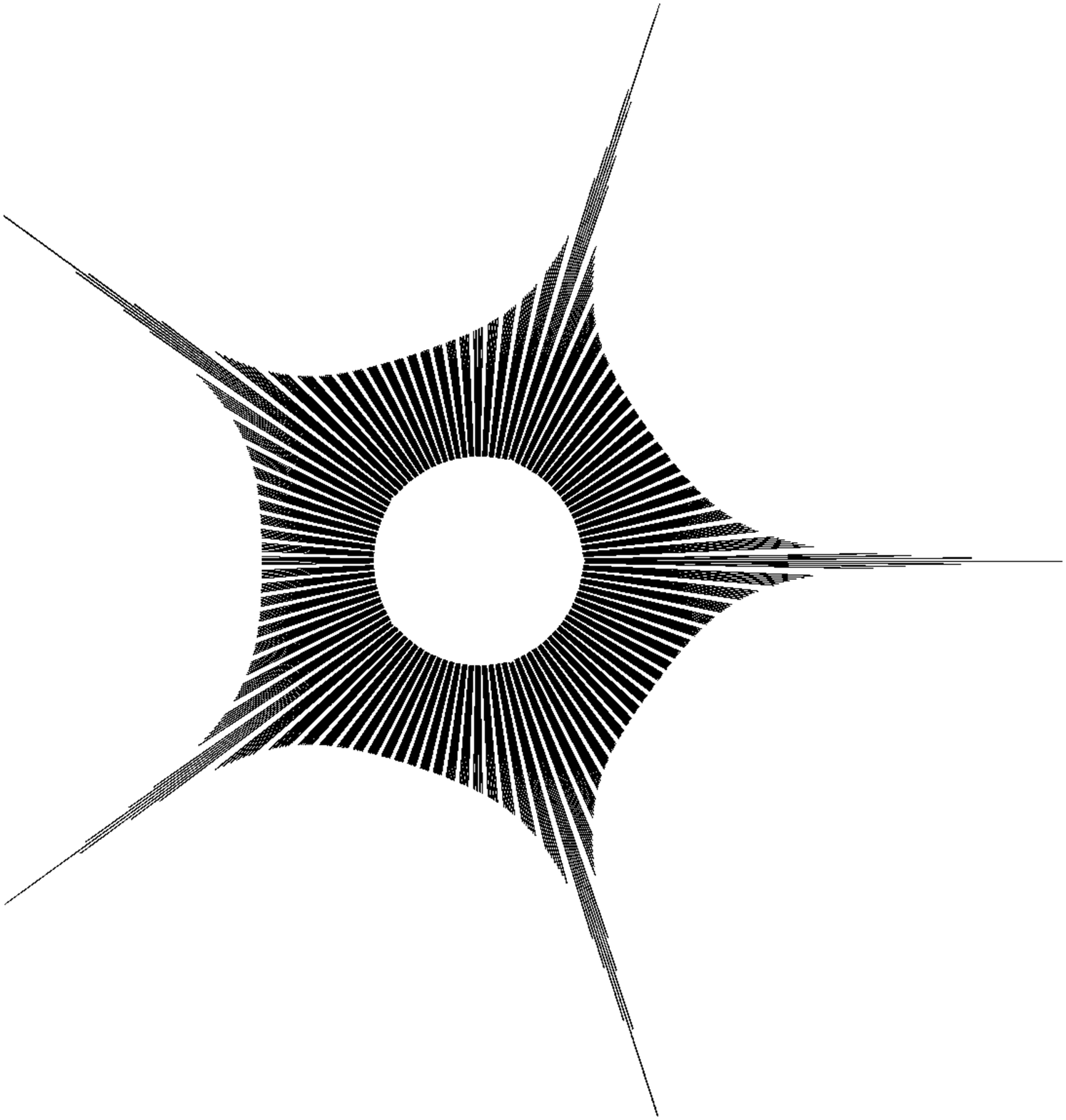}}
\rput(11.5,2.5){\epsfbox{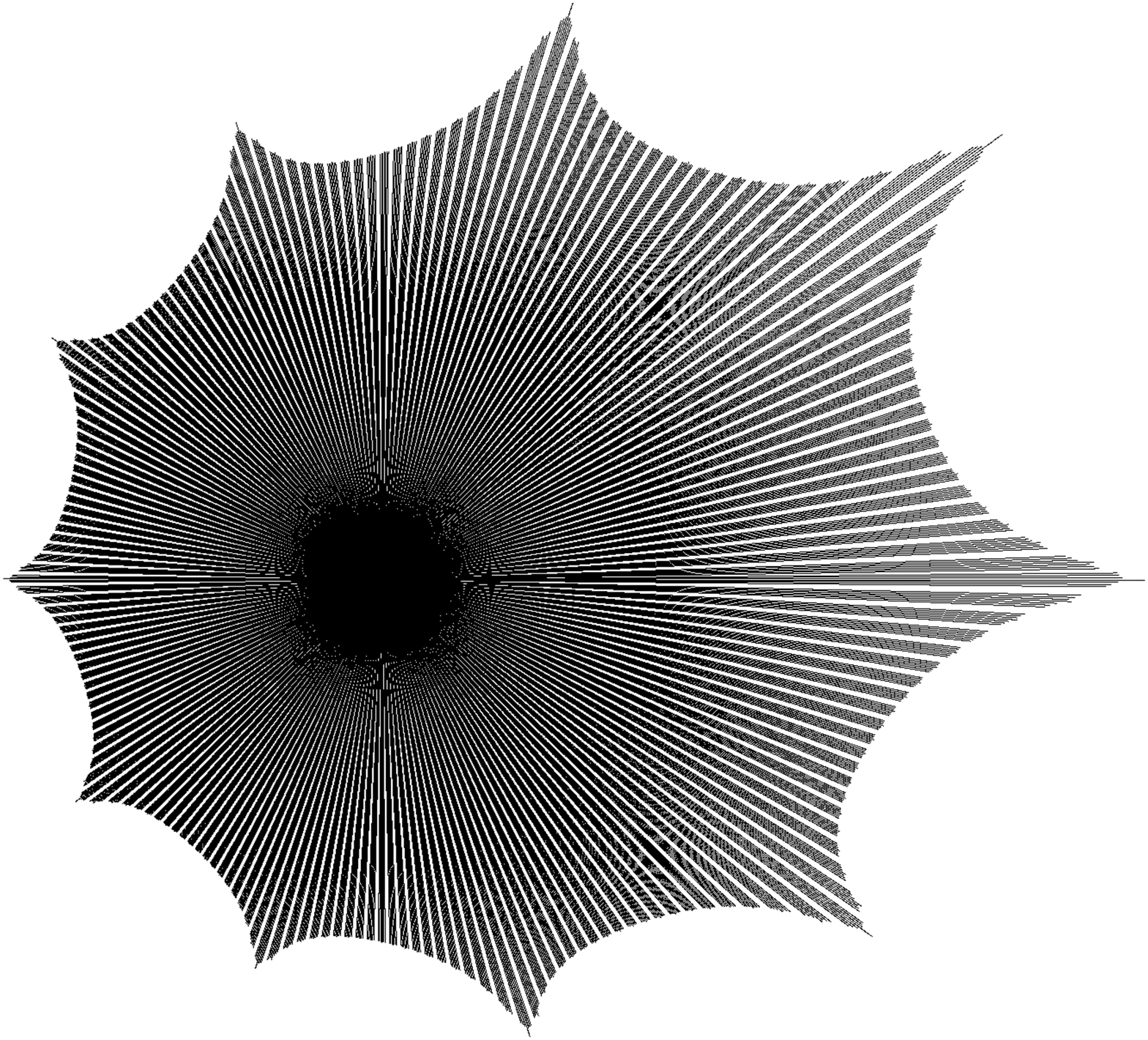}}
\end{pspicture}
\caption{Computer simulations for the three topologies in \refT{T:model-trichotomy-thm}; from left to right, a Jordan curve, a hairy Jordan curve, and a Cantor bouquet.}
\end{figure}

The arithmetic class of Brjuno was discovered by Siegel-Brjuno \cite{Sie42,Brj71} in their now classic work on the study of the linearisation problem for 
irrationally indifferent fixed points. 
It is the set of $\ga$ such that the denominators $q_n$ of the best rational approximants of $\ga$ satisfy
\[\textstyle{\sum}_{n=1}^\infty q_n^{-1} \log q_{n+1} < +\infty.\]

The arithmetic class of Herman was discovered by Herman-Yoccoz \cite{Her79,Yoc02} 
in their landmark study of the analytic linearisation problem for analytic diffeomorphisms of the circle. 
The set of Herman numbers is more complicated to characterise in terms of the arithmetic of $\ga$, 
see \refS{S:arithmetic}. It forms an $F_{\gs,\gd}$ subset of $\D{R}$. 
However, any Herman number is a Brjuno number, the set of Herman numbers has full measure in $\D{R}$, 
while the set of non-Brjuno numbers is topologically generic in $\D{R}$.
The arguments presented in this paper do not rely on the optimality properties of the Brjuno and Herman 
numbers for the linearisation problems. 
These arithmetic conditions naturally emerge in the study of the model $\mathbb{A}_\ga$.

A Cantor bouquet is a compact subset of $\mathbb{C}$ which has empty interior, consists of a collection of Jordan arcs only meeting at $0$, 
and every arc is accumulated from both sides by arcs. 
A (one-sided) hairy Jordan curve is a similar object consisting of a collection of Jordan arcs all attached to (one side of) a Jordan arc. 
See \refS{SS:CB-HJC} for the precise definitions of these objects. 
While one-sided hairy Jordan curves and Cantor bouquets are universal topological objects, 
our construction is geometric, featuring delicate metric properties. 
That is, the models for dissimilar values of non-Brjuno numbers, and also for dissimilar values of Brjuno but non-Herman numbers, 
have very distinct metric properties. 

The maps $\mathbb{T}_\ga: \mathbb{A}_\ga \to \mathbb{A}_\ga$ exhibit peculiar dynamical behaviour, especially 
when $\ga$ is not a Herman number. 
Every point in $\mathbb{A}_\ga$ is topologically recurrent. 
There are points in $\mathbb{A}_\ga$ with dense orbits. 
There are uncountably many distinct closed fully invariant sets. 
In contrast to the linearisable examples where an invariant set consists of disjoint unions of closed analytic curves, here there is only a 
one-parameter family of closed invariant sets. 
The set of accumulation points of the orbit of a point $z\in \mathbb{A}_\ga$, denoted by $\omega(z)$, is a notable example of a closed invariant set. 
It turns out that these are the only ones, as we state below. 

Define $r_\ga \in [0,1]$ according to 
\[[r_\ga, 1]=\{ z \in \mathbb{A}_\ga \mid \Im z=0 , \Re z \geq 0\}.\]

\begin{thm}[Topological dynamics]\label{T:dynamics-on-model}
For every $\ga \in \mathbb{R} \setminus \mathbb{Q}$ the map 
$\mathbb{T}_\alpha: \mathbb{A}_\alpha \to \mathbb{A}_\alpha$ satisfies the following properties:
\begin{itemize}
\item[(i)] the map $\mathbb{T}_\alpha: \mathbb{A}_\alpha \to \mathbb{A}_\alpha$ is topologically recurrent.
\item[(ii)] the map 
\[\omega: [r_\ga, 1] \to \{X \subseteq \mathbb{A}_\ga \mid X \text{ is non-empty, closed and invariant}\}\]
is a homeomorphism with respect to the Hausdorff metric on the range.
\item[(iii)] the map $\omega$ on $[r_\ga, 1]$ is strictly increasing with respect to 
the inclusion. 
\item[(iv)]if $\ga$ is a Brjuno number, for every $t \in (r_\ga, 1]$,  $\omega(t)$ is a hairy Jordan curve. 
\item[(v)] if $\ga$ is not a Brjuno number, for every $t \in (r_\ga, 1]$,  $\omega(t)$ is a Cantor bouquet. 
\end{itemize}
\end{thm}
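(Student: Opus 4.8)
The plan is to derive all five parts from the combinatorics of the \emph{renormalisation tower} of $\mathbb{T}_\ga$. Iterating $\mathcal{R}_m$ and combining parts (iii) and (v) of \refT{T:toy-model-renormalisation}, one obtains for each $\ga$ a sequence of successive renormalised rotation numbers $\ga_0=\ga,\ga_1,\ga_2,\dots$ and a nested family of cones $\mathbb{Y}_0\supset\mathbb{Y}_1\supset\cdots$ landing at $0$, together with conjugacies identifying the first-return map of $\mathbb{T}_\ga$ to $\mathbb{Y}_n$ with $\mathbb{T}_{\ga_n}$ on $\mathbb{M}_{\ga_n}$. Each $z\in\mathbb{A}_\ga$ then carries a \emph{renormalisation itinerary} that records, at every level, on which part of the critical orbit of the level-$n$ renormalisation, and how far along it, the corresponding orbit piece of $z$ sits. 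Using the two-sided estimate \refT{T:toy-model-renormalisation}(iv) at every level, this itinerary determines a threshold $\rho(z)\in[r_\ga,1]$, which I normalise so that $\rho(t)=t$ for $t\in[r_\ga,1]$ and which may be read off as the supremum of the positive real slice of the orbit closure $\omega(z)$; since it depends only on the tail of the itinerary it satisfies $\rho\circ\mathbb{T}_\ga=\rho$. In parallel, \refT{T:toy-model-renormalisation}(ii) provides a continuous surjection $\mathbb{A}_\ga\to\mathbb{R}/\mathbb{Z}$ semiconjugating $\mathbb{T}_\ga$ to the minimal rotation $\theta\mapsto\theta+\ga$, whose fibres are arcs issuing from the ``core'' towards $+1$ and on which $\mathbb{T}_\ga$ acts monotonically.

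For part (i) I argue by induction on the renormalisation level. A point $z\in\mathbb{A}_\ga$ lying outside $\mathbb{Y}_1$ is the image of a point of $\mathbb{A}_\ga\cap\mathbb{Y}_1$ under finitely many iterates of $\mathbb{T}_\ga$ and, by minimality of the base rotation, its forward orbit re-enters $\mathbb{Y}_1$; inside $\mathbb{Y}_1$ the first-return map is conjugate to $\mathbb{T}_{\ga_1}$ on $\mathbb{A}_{\ga_1}$, which is topologically recurrent by the inductive hypothesis. The base of the induction is the self-accumulation of the orbit of $+1$, which is forced by $\mathbb{A}_\ga$ being the closure of that orbit together with the returns to every cone guaranteed by irrationality of $\ga$. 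Assembling these facts gives $z\in\omega(z)$ for all $z\in\mathbb{A}_\ga$.

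For parts (ii) and (iii) the heart of the matter is the identity $\omega(z)=\{\,w\in\mathbb{A}_\ga:\rho(w)\le\rho(z)\,\}=\omega(\rho(z))$. The inclusion ``$\subseteq$'' follows from $\rho\circ\mathbb{T}_\ga=\rho$ together with $z\in\omega(z)$ from part (i). The reverse inclusion is a rigidity statement: because the base rotation is minimal and the fibre maps are monotone, the orbit of $z$ is dense in each fibre up to height $\rho(z)$, and the quantitative content of \refT{T:toy-model-renormalisation}(iv), propagated through the whole tower, is exactly what makes this ``height $\rho(z)$'' uniform over all fibres — thereby excluding ``ragged'' closed invariant sets that would contain unequal portions of different fibres, which topological recurrence alone does not exclude. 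Granting the identity: for a general non-empty closed invariant $X\subseteq\mathbb{A}_\ga$ one takes $t=\max_{w\in X}\rho(w)$, attained by compactness, and gets $X=\omega(t)$, so $\omega$ maps $[r_\ga,1]$ onto the set of non-empty closed invariant sets; distinct values of $t$ give sets that already differ in every fibre, so $\omega$ is injective and $t\mapsto\omega(t)$ is strictly increasing for inclusion; and continuity of $\omega$ and of its inverse in the Hausdorff metric follows from the transversal arc structure and the monotone dependence of the fibrewise truncation on $t$. The endpoint $r_\ga$ is the height of the minimal non-empty closed invariant set $\omega(r_\ga)$, the core.

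For parts (iv) and (v), observe that $\omega(t)$ with $t\in(r_\ga,1]$ is obtained from $\mathbb{A}_\ga$ by truncating each fibre arc at height $t$, so it inherits the transversal combinatorics of $\mathbb{A}_\ga$: a family of Jordan arcs, each accumulated from both sides by arcs of the family. The dichotomy then reduces to \refT{T:model-trichotomy-thm}. When $\ga$ is a Brjuno number the core $\omega(r_\ga)$ is a genuine Jordan curve — this is where finiteness of $\sum_n q_n^{-1}\log q_{n+1}$ enters, forcing the cones $\mathbb{Y}_n$ to shrink to a topological circle rather than to the point $0$ — so the truncation leaves all hairs attached to it and $\omega(t)$ is a hairy Jordan curve (when $\ga$ is moreover a Herman number one has $r_\ga=1$, and the assertion is vacuous). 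When $\ga$ is not a Brjuno number the core degenerates to $\{0\}$, the arcs meet only there, and $\omega(t)$ is a Cantor bouquet. Throughout, carrying the quantitative bookkeeping of \refT{T:toy-model-renormalisation}(iv) through infinitely many renormalisation levels — and matching the resulting value of $r_\ga$ against the Brjuno dichotomy — is the main obstacle.
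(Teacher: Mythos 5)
Your overall strategy — a one-parameter family of closed invariant sets indexed by a radial threshold, recurrence extracted from the renormalisation tower, and the topology of $\omega(t)$ reduced to the trichotomy machinery — is the same as the paper's, but three load-bearing steps are missing or incorrect. First, your proof of (i) is an ill-founded induction: recurrence of $\mathbb{T}_{\ga}$ is deduced from recurrence of $\mathbb{T}_{\ga_1}$, which is the identical statement one level deeper, and there is no level at which it is verified directly; the proposed ``base case'' does not work, since every point trivially lies in the closure of its own orbit, and self-accumulation of the orbit of $+1$ is not the same as $+1$ being an accumulation point of its forward orbit. What actually closes the argument (\refP{P:model-recurrent}) is quantitative: a return is constructed at a deep level $m$ of the tower and pushed down by the changes of coordinates, whose uniform $0.9$-contraction (\refL{L:uniform-contraction-Y_r}) makes the projected return land within $0.9^{m}\cdot O(1)$ of the original point; one must also treat separately the trajectories that eventually never enter $K_n$ (the set $V^\infty$), where the return map is only defined through the limits $E_n$. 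Nothing in your sketch supplies this mechanism.

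Second, the heart of (ii)--(iii) — the identity $\omega(z)=\omega(\rho(z))$ and the classification of \emph{all} closed invariant sets — is asserted rather than proved, and the supporting claim that ``the orbit of $z$ is dense in each fibre up to height $\rho(z)$'' is false as stated: since $\mathbb{T}_\ga$ acts as the irrational rotation by $2\pi\ga$ tangentially, a forward orbit meets each radial fibre at most once. The true statements are the density of the orbit of a point of $[r_\ga,1]$ in the whole set $\prescript{t}{}{\mathbb{A}}_\ga$ (\refL{L:dense-orbits-model}) and the rigidity argument of \refP{P:invariants-classified-model}, which builds height functions out of an arbitrary closed invariant set, pushes them through the tower, and identifies them with $\prescript{y}{}{b}_n$ — again resting on the contraction estimates, not on minimality of the base rotation alone. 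Third, your description of $\omega(t)$ as ``$\mathbb{A}_\ga$ with each fibre truncated at height $t$'' is wrong: a constant-radius truncation is not $\mathbb{T}_\ga$-invariant, because the map distorts radii (cf.\ \refT{T:toy-model-renormalisation}(iv)); the invariant sets are obtained by re-running the entire tower construction with the base height $y$ transported by $\mathbb{Y}_{n+1}^{-1}$ at every level, so the cutting height $\prescript{y}{}{b}_{-1}(x)$ varies with the angle. Consequently (iv)--(v) do not follow by ``inheriting transversal combinatorics'': one must re-prove the $\liminf$ and dense-touching properties for the new height functions $\prescript{y}{}{b}_n$, as in \refP{P:invariants-topology-model}. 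For the same reason your injectivity argument fails — for $r_\ga<s<t$ the sets $\omega(s)$ and $\omega(t)$ coincide on the dense set of angles where the hair of $\omega(t)$ degenerates; injectivity comes instead from the fact that the point $t$ itself does not belong to $\omega(s)$, i.e.\ from \refE{E:invariant-set-end}, and continuity of $t\mapsto\omega(t)$ requires the approximation identities proved in \refP{P:invariants-cont-depen} rather than monotone fibrewise truncation.
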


Based on the above results for the model, we propose the following conjecture. 

\begin{conj}[Trichotomy of the irrationally indifferent attractors]\label{C:trichotomy-main-conj}
Let $f$ be a rational function of the Riemann sphere $\hat{\mathbb{C}}$ of degree at least $2$, with 
$f(0)=0$, $f'(0)=e^{2\pi i \ga}$, and $\ga \in \D{R} \setminus \D{Q}$. 
There exists a critical point of $f$, such that the closure of its orbit, denoted by $\gL(f)$, satisfies the following: 
\begin{itemize}
\item[(i)] if $\ga$ is a Herman number, $\gL(f)$ is a Jordan curve,
\item[(ii)] if $\ga$ is a Brjuno but not a Herman number, $\gL(f)$ is a one-sided hairy Jordan curve, 
\item[(iii)] if $\ga$ is not a Brjuno number, $\gL(f)$ is a Cantor bouquet. 
\end{itemize}
Moreover, in cases (i) and (ii), $f$ preserves the connected component of $\hat{\D{C}} \setminus \gL(f)$ 
containing $0$, and in case (iii), $\gL(f)$ contains $0$. 
\end{conj}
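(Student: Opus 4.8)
The plan is to transport the trichotomy of \refT{T:model-trichotomy-thm} from the abstract model $\D{T}_\ga$ to $f$, by constructing a conjugacy on the attractor level by level along the renormalisation tower, with $\C{R}_m$ serving as the combinatorial and metric model for the actual near-parabolic renormalisation of $f$. \textbf{Step 1: set-up and the distinguished critical point.} By the theorems of Fatou and Ma\~n\'e \cite{Fat19,Man87}, $f$ has a recurrent critical point $c_0$ whose forward orbit accumulates on $0$; put $\gL(f)=\ol{\{f\co{n}(c_0):n\ge 0\}}$. Using the Inou--Shishikura near-parabolic (sector) renormalisation --- extended, possibly after enlarging the class, to all irrational rotation numbers --- one has an operator $\C{R}$ on a renormalisation-invariant class of maps to which $f$ belongs after finitely many renormalisations (equivalently, after replacing $\ga$ by a tail of its continued fraction expansion), and $\C{R}$ induces $\ga\mapsto -1/\ga$ on rotation numbers, matching $\C{R}_m$ on $\D{F}$ by \refT{T:toy-model-renormalisation}(iii). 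The first task is to make precise that $\C{R}\co{n}f$, of rotation number $\ga_n$, is shadowed by $\C{R}_m\co{n}(\D{T}_\ga)=\D{T}_{\ga_n}$: the finitely many points of the critical orbit of $f$ that become visible at renormalisation level $n$, together with their cyclic order and relative moduli, should agree combinatorially and coarsely geometrically with the orbit of $+1$ under $\D{T}_{\ga_n}$ described in \refT{T:toy-model-renormalisation}(iv). This is exactly the non-vanishing-second-derivative geometry, which holds for $f$ precisely because $c_0$ is a genuine critical point.

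\textbf{Step 2: the conjugacy.} Granting the Inou--Shishikura a priori bounds, together with whatever arithmetic-dependent refinements are needed, the pieces of $\gL(f)$ organise into a nested family of fundamental domains indexed by renormalisation level, in the same pattern in which $\D{A}_\ga=\partial\D{M}_\ga$ is built from the renormalisation of $\D{T}_\ga$. I would build a homeomorphism $h:\D{A}_\ga\ra\gL(f)$ piecewise: on the fundamental domain at level $n$ use the (perturbed) Fatou coordinate of $\C{R}\co{n}f$, transported back through the composition of the first $n$ renormalisation changes of coordinate; check compatibility on the overlaps; and extend by continuity to the closure. In the limit, $h$ conjugates $\D{T}_\ga|_{\D{A}_\ga}$ to $f|_{\gL(f)}$. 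The content of \refT{T:toy-model-renormalisation}(iv) is precisely the distortion budget --- how much the composed charts may stretch or shrink each fundamental domain --- that must be matched for $f$ in order for $h$ to remain a homeomorphism up to the closure.

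\textbf{Step 3: reading off the topology.} Since $h$ is a homeomorphism, $\gL(f)=h(\D{A}_\ga)=h(\partial\D{M}_\ga)$ inherits the topological type of $\D{A}_\ga$ from \refT{T:model-trichotomy-thm}: a Jordan curve when $\ga$ is a Herman number, a one-sided hairy Jordan curve when $\ga$ is Brjuno but not Herman, and a Cantor bouquet when $\ga$ is not Brjuno. For the ``moreover'' clause: when $\ga$ is Brjuno, $f$ has a Siegel disk at $0$, and $0$ lies in the complementary component of $\gL(f)$ containing that disk, which $f$ preserves because it preserves the Siegel disk --- in the model this is the component of $\rs\setminus\D{A}_\ga$ meeting the real axis in $[0,r_\ga)$, non-degenerate exactly for Brjuno $\ga$ by the construction of $r_\ga$ and \refT{T:dynamics-on-model}; when $\ga$ is not Brjuno there is no Siegel disk and $0\in\gL(f)$, mirroring $r_\ga=0$ in the model.

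\textbf{Main obstacle.} The crux is the uniform geometric control of the renormalisation tower of $f$: one must show that the composition of the successive, strongly distorting renormalisation changes of coordinate has bounded distortion on each fundamental domain, uniformly in the level and compatibly with the continued fraction of $\ga$ --- that is, that $\C{R}\co{n}f$ stays in a compact family and that the charts do not shear the fundamental domains apart. This is the analytic counterpart of the metric estimate \refT{T:toy-model-renormalisation}(iv); for $\ga$ not of bounded (or high) type it also subsumes the problem of establishing renormalisation invariance of an appropriate class for all irrational rotation numbers. A second, subtler point is passing from the visible skeleton of $\gL(f)$ to its closure: controlling the accumulation on $0$ and, when $\ga$ is non-Brjuno, showing that the hairs genuinely assemble into a Cantor bouquet rather than a more degenerate continuum --- which requires turning non-linearisability into a quantitative estimate against $\sum_{n} q_n^{-1}\log q_{n+1}=+\infty$.
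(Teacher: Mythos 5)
You have set out to prove a statement that the paper itself presents only as a conjecture: \refS{S:intro} proposes it ``based on the above results for the model'', notes that it would immediately imply the Douady--Yoccoz optimality conjecture, Herman's conjecture and the Douady--Sullivan conjecture, and defers even a partial result to the counterpart paper \cite{Che17}, which establishes it only for a restricted class of maps and rotation numbers (see also \cite{ShY18}). So there is no proof in the paper to compare yours against, and any complete argument at this level of generality would be a major breakthrough rather than a routine transport of \refT{T:model-trichotomy-thm}.

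Your outline is essentially the programme the author describes (compare the actual renormalisation changes of coordinate with the model ones $Y_r$, and conjugate $\mathbb{T}_\ga$ on $\mathbb{A}_\ga$ to $f$ on $\gL(f)$), but the load-bearing step is hidden in the phrase ``Granting the Inou--Shishikura a priori bounds \dots extended, possibly after enlarging the class, to all irrational rotation numbers''. That is precisely the open problem. The near-parabolic renormalisation of \cite{IS06} is only known to be defined, and to leave its class invariant, for rotation numbers of sufficiently high type and for maps in (or renormalisably tied to) a specific Inou--Shishikura class; a general rational map of degree at least $2$ with arbitrary $\ga \in \D{R}\setminus\D{Q}$ is not known to become renormalisable ``after finitely many renormalisations'', and no renormalisation-invariant class covering all such $f$ and all $\ga$ is known. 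Consequently your Steps~1--2 (the level-by-level shadowing of the critical orbit matching \refT{T:toy-model-renormalisation}(iv), the compatibility of perturbed Fatou coordinates across levels, and the extension of $h$ to the closure) are not established but are a restatement of what must be proved; the paper's own route is the reverse one, namely to prove in \cite{Che17} that \emph{if} a renormalisation scheme has changes of coordinate sufficiently close to the model maps $Y_r$, \emph{then} the conjugacy on the maximal invariant set follows, and this hypothesis is currently verified only in the high-type Inou--Shishikura setting. Your ``moreover'' clause also quietly assumes the Brjuno/non-Brjuno dichotomy for linearisability of rational maps (existence or absence of a Siegel disk), which is exactly the Douady--Yoccoz optimality statement the conjecture is meant to imply, so it cannot be invoked as an ingredient. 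In short: the strategy is the right one and matches the author's, but as a proof it is circular at the decisive points, and the statement remains open.
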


The above conjecture immediately implies a number of important conjectures in the study of irrationally indifferent 
fixed points for rational functions. 
Notably, it implies the optimality of the Brjuno condition for the linearisability of irrationally indifferent fixed 
points (Douady-Yoccoz conjecture 1986), the optimality of the Herman condition for the presence of a critical 
point on the boundary of the Siegel disk (Herman conjecture 1985), 
the Siegel disk are Jordan domains (Douady-Sullivan conjecture 1987). 
On the other hand, for rational functions, the invariant Siegel-compacta and hedgehogs introduced by
Perez-Marco \cite{PM97} are contained in the post-critical set, and hence their topology may be 
completely explained by the above conjecture.


In a counterpart paper \cite{Che17}, we employ the topological model in \refT{T:toy-model-renormalisation}, 
and the near-parabolic renormalisation scheme of Inou and Shishikura \cite{IS06}, 
to prove the above conjecture for a class of maps and rotation numbers.
Indeed, we present some sufficient condition for a renormalisation scheme in order to conclude the above 
conjecture. 
In order to explain this relation, we briefly outline the construction of the model.  
Here we take an upside-down approach to the renormalisation. 
In contrast to defining renormalisation for a given map defined on a domain, we start by building 
the changes of coordinates for the renormalisation, one for each rotation number. 
Repeatedly applying the Gauss map to a fixed $\ga$, one obtains a sequence of rotation numbers, 
and the corresponding changes of coordinates. 
Then, there is a maximal set on which the infinite chain of the changes of coordinates is defined. 
Finally we build the map $\mathbb{T}_\ga$ on the maximal set so that its return map via the change of 
coordinate becomes $\mathbb{T}_{G(\ga)}$. 
The advantage of this approach is that any given renormalisation scheme for irrationally indifferent fixed 
points may be compared to the model $(\mathcal{R}_m, \mathbb{F})$ by comparing the corresponding 
changes of coordinates. 
This provides a streamlined approach to the study of the dynamics of irrationally indifferent fixed points. 
It is also unified, in the sense that one does not need to reconsider the role of the arithmetic properties in 
a given renormalisation scheme. 
In the counterpart paper \cite{Che17}, we show that if a renormalisation scheme consists of a change of 
coordinate which is sufficiently close to the change of coordinate for the renormalisation model, 
then the corresponding maps are topologically conjugate on the corresponding maximal domain of 
renormalisations. 
See also \cite{ShY18} for partial progress towards the above conjecture. 
 
An alternative construction for $\mathbb{M}_\ga$ was suggested by Buff and Ch\'eritat in 2009 \cite{BC09}. 
Our thought in this direction was influenced and motivated by their construction based on employing 
toy models for the changes of coordinates.
In contrast to the conformal changes of coordinates considered by those authors, the models for the changes 
of coordinates presented here are not conformal, but preserve straight rays landing at $0$, while maintaining 
the correct nonlinear behaviour in the radial directions. 
This flexibility allows us to incorporate some remarkable functional relations, which in turn allow us to avoid 
taking Hausdorff limits in the construction. 
The explicit construction presented here allow us to promote the models for $\mathbb{M}_\ga$ to build 
the maps $\mathbb{T}_\ga$, and a renormalisation scheme on those maps. 

\tableofcontents

\renewcommand{\thethm}{\thesection.\arabic{thm}}

\section{The changes of coordinates for the renormalisation \texorpdfstring{$\mathcal{R}_m$}{Rm}}\label{S:change-coordinates}
In this section we introduce a one-parameter ($r\in (0, 1/2]$) family of real analytic diffeomorphisms. 
When we set the parameter as a rotation number, the diffeomorphism becomes the change of coordinate for the 
renormalisation of a map with that asymptotic rotation number at $0$. 
These are the toy models for the change of coordinates in the toy model for the renormalisation scheme. 

\subsection{Explicit formula for the changes of coordinates}\label{SS:coordinate-core}
Consider the set  
\[\D{H}'= \{w\in \D{C} \mid \Im w > -1 \}.\]
For $r \in (0,1/2]$, we define the map $Y_r : \ol{\D{H}'} \to \D{C}$ 
as \footnote{$\ol{X}$ denotes the topological closure of a given set $X$.} 
\[Y_r(w)= r\Re w + 
\frac{ i }{2\pi} \log \Big |\frac{e^{-3\pi r}- e^{-\pi r  i }e^{-2\pi r i  w}}{e^{-3\pi r}- e^{\pi r  i }}\Big|.\]
Evidently, $Y_r$ maps a vertical line in $\D{H}'$ to a vertical line. 
Also, one can see that $Y_r(0)=0$ and that $Y_r(1/(2r)-i)$ is uniformly close to $1/2+ i (\log 1/r)/(2\pi)$. 
\refF{F:model-Y_r} shows the behaviour of $Y_r$ on horizontal and vertical lines.

\begin{figure}[ht]
\includegraphics[width=0.8\textwidth]{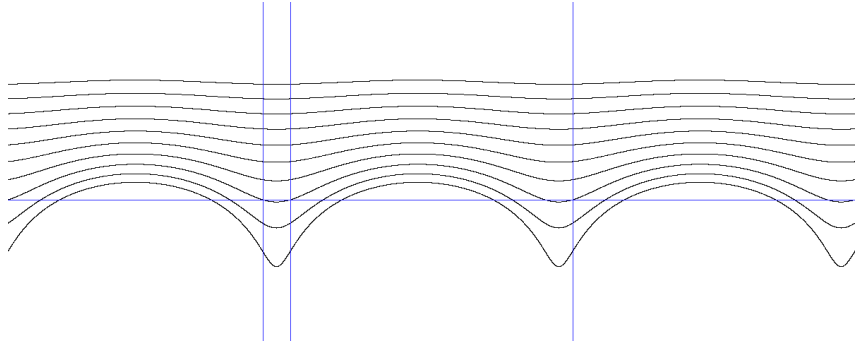}
\caption{The nearly horizontal curves are the images of the horizontal lines $y=-1, 0, 1, \dots, 8$ 
under $Y_r$. 
The vertical lines, from left to right, are the images of the vertical lines $\Re w=-1$, $\Re w=0$, 
and $\Re w=1/\ga$, under $Y_r$. Here, $r= 1/(10+1/(1+1/(1+1/(1+\dots))))$.}
\label{F:model-Y_r}
\end{figure}

\begin{lem}\label{L:Y-domain}
For every $r \in (0, 1/2]$, $Y_r: \ol{\D{H}'} \to \D{H}'$ is well-defined, real-analytic in $\Re w$ and $\Im w$, and is 
injective. 
Moreover, for every $r \in (0, 1/2)$ and every $y \geq 0$, we have 
\[H_r \left (\{w\in \D{C} \mid \Im w \geq y-1 \}\right ) \subset \{w\in \D{C} \mid \Im w \geq \Im H_r(i y)-1\}.\] 
\end{lem}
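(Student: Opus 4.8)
The plan is to strip the formula down to the two real quantities that matter and then reduce every claim to a single elementary inequality about a distance in $\D{C}$. Write $w=u+iv$ and set $Z(w):=e^{-\pi r i}e^{-2\pi r i w}$, so that $|Z(w)|=e^{2\pi r v}$ and the formula reads $\Re Y_r(w)=ru$, $\Im Y_r(w)=\tfrac1{2\pi}\log|e^{-3\pi r}-Z(w)|-\tfrac1{2\pi}\log|e^{-3\pi r}-e^{\pi r i}|$. For $w\in\ol{\D{H}'}$, i.e. $v\ge-1$, one has $|Z(w)|\ge e^{-2\pi r}>e^{-3\pi r}$, so $e^{-3\pi r}-Z(w)$ never vanishes; hence the logarithm is a genuine real number (confirming $\Re Y_r(w)=ru$) and $\Im Y_r$ is real-analytic in $(u,v)$ because $|e^{-3\pi r}-Z(w)|^2$ is a strictly positive real-analytic function of $(u,v)$. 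Thus well-definedness of $Y_r$ as a map \emph{into} $\D{H}'$ amounts to $\Im Y_r(w)>-1$ on $\ol{\D{H}'}$. For injectivity, if $Y_r(w_1)=Y_r(w_2)$ then comparing real parts gives $u_1=u_2$, so $Z(w_1),Z(w_2)$ lie on a common ray from the origin, at some argument $\theta$, with moduli $t_j=e^{2\pi r v_j}\ge e^{-2\pi r}$; comparing imaginary parts gives $|e^{-3\pi r}-Z(w_1)|=|e^{-3\pi r}-Z(w_2)|$, and since $t\mapsto|e^{-3\pi r}-te^{i\theta}|^2=t^2-2e^{-3\pi r}t\cos\theta+e^{-6\pi r}$ has derivative $2(t-e^{-3\pi r}\cos\theta)>0$ for $t\ge e^{-2\pi r}$ (using $e^{-2\pi r}>e^{-3\pi r}\ge e^{-3\pi r}\cos\theta$), it is strictly increasing on that range, forcing $t_1=t_2$, i.e. $v_1=v_2$ and $w_1=w_2$.

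The core of the matter is the inequality
\begin{equation}
e^{2\pi r(y-1)}-e^{-3\pi r}\ \ge\ e^{-2\pi}\,\bigl|\,e^{2\pi r y}e^{-\pi r i}-e^{-3\pi r}\,\bigr|,\qquad y\ge0,\ r\in(0,1/2],
\tag{$*$}
\end{equation}
which I would prove with strict inequality. It yields everything at once: if $\Im w\ge y-1$ then $|e^{-3\pi r}-Z(w)|\ge|Z(w)|-e^{-3\pi r}=e^{2\pi r\,\Im w}-e^{-3\pi r}\ge e^{2\pi r(y-1)}-e^{-3\pi r}$ (and $e^{2\pi r(y-1)}\ge e^{-2\pi r}>e^{-3\pi r}$ since $y\ge0$), while at $w=iy$ one has $|e^{-3\pi r}-Z(iy)|=|e^{2\pi r y}e^{-\pi r i}-e^{-3\pi r}|$; so $(*)$ at level $y$ is exactly $\Im Y_r(w)\ge\Im Y_r(iy)-1$, the "Moreover" part, and $(*)$ at $y=0$ (using $|e^{-\pi r i}-e^{-3\pi r}|=|e^{\pi r i}-e^{-3\pi r}|$) is exactly $\Im Y_r(w)>-1$, the remaining case of well-definedness.

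To prove $(*)$ I would put $B=e^{2\pi r y}\ge1$ and $C=e^{-3\pi r}\in(0,1)$, so that $|Be^{-\pi r i}-C|^2=(B-C)^2+2BC(1-\cos\pi r)$. Using $1-\cos\pi r\le(\pi r)^2/2$, $C<1$ and $B\ge1$ one gets $\sqrt{2BC(1-\cos\pi r)}\le\sqrt B\,\pi r\le B\pi r$, hence $|Be^{-\pi r i}-C|\le(B-C)+B\pi r$. It therefore suffices to check $Be^{-2\pi r}-C\ge e^{-2\pi}\bigl((B-C)+B\pi r\bigr)$, i.e. $B\bigl(e^{-2\pi r}-e^{-2\pi}(1+\pi r)\bigr)\ge C(1-e^{-2\pi})$. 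The coefficient of $B$ is positive, since for $r\le1/2$ we have $e^{-2\pi r}\ge e^{-\pi}>e^{-2\pi}(1+\pi/2)\ge e^{-2\pi}(1+\pi r)$ (as $e^\pi>1+\pi/2$); so, using $B\ge1$, it is enough to have $e^{-2\pi r}-e^{-3\pi r}\ge e^{-2\pi}\bigl(1+\pi r-e^{-3\pi r}\bigr)$. Here the left side equals $e^{-3\pi r}(e^{\pi r}-1)\ge e^{-3\pi r}\pi r\ge e^{-3\pi/2}\pi r$, and the right side equals $e^{-2\pi}\bigl(\pi r+(1-e^{-3\pi r})\bigr)\le4e^{-2\pi}\pi r$, so $(*)$ reduces to $e^{-3\pi/2}>4e^{-2\pi}$, i.e. to $e^{\pi/2}>4$, which is true.

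The delicate point, and the reason for the care taken above, is precisely this uniform-in-$r$ estimate. Replacing $|e^{2\pi r y}e^{-\pi r i}-e^{-3\pi r}|$ by the crude bound $e^{2\pi r y}+e^{-3\pi r}$ overshoots by a factor of order $1/r$ and makes $(*)$ fail as $r\to0$, because $e^{-2\pi r}-e^{-3\pi r}$, $1-e^{-3\pi r}$ and $1-\cos\pi r$ all degenerate in that limit; one must instead retain the cancellation encoded in $(B-C)^2$ and discard only the genuinely second-order error $2BC(1-\cos\pi r)$. Everything else is routine bookkeeping.
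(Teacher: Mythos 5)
Your proof is correct, and it is worth recording how it differs from the paper's. For well-definedness the two arguments share the same skeleton --- bound the numerator from below by the reverse triangle inequality, $|e^{-3\pi r}-Z(w)|\ge e^{2\pi r\Im w}-e^{-3\pi r}$, and compare with the reference modulus uniformly in $r$ --- but the paper obtains its uniform constant from the ratio estimate $\frac{e^{\pi r}-1}{(\pi r+1)e^{3\pi r}-1}\ge e^{-9\pi/5}$, proved by integrating an auxiliary inequality in $r$, whereas you reduce everything to the single inequality $(*)$, proved by expanding $\bigl|Be^{-\pi r i}-C\bigr|^2=(B-C)^2+2BC(1-\cos\pi r)$ and retaining the cancellation, ending at $e^{\pi/2}>4$. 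Your constant $e^{-2\pi}$ gives only $\Im Y_r\ge -1$ rather than the paper's $-9/10$, which is why you rightly insist on strictness of $(*)$; your verification of strictness does go through, so nothing is lost for the statement as such. The genuine gain of your organization is that $(*)$ is stated for all $y\ge 0$, so one inequality simultaneously delivers the ``Moreover'' clause (with $H_r$ read as $Y_r$, evidently a typo in the statement), which the paper merely says follows from ``similar elementary calculations'' and leaves to the reader; your closing remark about why the crude bound $e^{2\pi r y}+e^{-3\pi r}$ fails as $r\to0$ correctly identifies the cancellation that must be kept there. You also make the injectivity step airtight: the paper asserts that distinct imaginary parts give distinct moduli, while you prove it by observing that $t\mapsto\bigl|e^{-3\pi r}-te^{i\theta}\bigr|^2$ has derivative $2(t-e^{-3\pi r}\cos\theta)>0$ for $t\ge e^{-2\pi r}$, hence is strictly increasing on the relevant range.
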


\begin{proof}
The proof is based on some elementary calculations. Note that 
\[10 \leq 1+ (12/5)+ (12/5)^2/2!+ \dots + (12/5)^5/5! \leq e^{12/5}.\] 
Using $0 \leq r \leq 1/2$, $3\leq \pi \leq 4$, and the above inequality, we obtain  
\[\log (4+ 3\pi r) + 2\pi r \leq    \log(e^{12/5})+ \pi  \leq \log e^{4\pi/5} +\pi \leq 9\pi/5.\]
This implies that for all $r\in [0, 1/2]$, $e^{9\pi/5}  \geq  (4+3\pi r)e^{2\pi r}$, and therefore, 
\[\pi e^{\pi r} (e^{9\pi/5}  - (4+3\pi r)e^{2\pi r})) \geq 0.\]
Fix $r\in [0,1/2]$. Integrating the above inequality from $0$ to $r$, we conclude that 
\[e^{9\pi/5} (e^{\pi r} - 1) -(\pi r+1)e^{3\pi r} +1 \geq 0.\]
This implies that for all $r\in (0,1/2]$, we have 
\begin{equation}\label{E:proof-Y_r-basic-1}
\frac{e^{\pi r} -1}{(\pi r+1)e^{3\pi r} -1} \geq  e^{-9\pi/5}.
\end{equation}
On the other hand, for $w\in \ol{\D{H}'}$, by the triangle inequality, 
\[|e^{-3\pi r} - e^{-\pi r  i }e^{-2\pi r i  w} | \geq |e^{-\pi r  i }e^{-2\pi r i  w} | - |e^{-3\pi r}| 
\geq e^{-2\pi r} - e^{-3\pi r},\]
and  
\[|e^{-3\pi r}- e^{\pi r  i }| \leq |e^{-3\pi r}- 1| + |1-e^{\pi r  i }| \leq (1- e^{-3\pi r}) + \pi r.\]
Combining the above two inequalities with \refE{E:proof-Y_r-basic-1} we obtain 
\[\left| \frac{e^{-3\pi r}- e^{-\pi r  i }e^{-2\pi r i  w}}{e^{-3\pi r}- e^{\pi r  i }} \right| 
\geq \frac{e^{-2\pi r} - e^{-3\pi r}}{\pi r + 1- e^{-3\pi r}}= \frac{e^{\pi r} -1}{(\pi r+1)e^{3\pi r} -1} 
\geq e^{-9\pi/5}.\]
The above inequality implies that for all $w\in \D{H}'$ we have 
\[\Im Y_r(w) \geq  \frac{1}{2\pi} \log e^{-9\pi/5} > -1.\]
In particular, $Y_r$ is well-defined, and maps $\ol{\D{H}'}$ into $\D{H}'$. This also implies that 
$Y_r$ is real-analytic in $\Re w$ and $\Im w$, for $w$ in $\ol{\D{H}'}$. 

To prove that $Y_r$ is injective, assume that $w_1$ and $w_2$ are two distinct points in $\ol{\D{H}'}$. 
If $\Re w_1 \neq \Re w_2$, then $\Re Y_r(w_1) \neq \Re Y_r(w_2)$. 
If $\Re w_1 = \Re w_2$ but $\Im w_1 \neq \Im w_2$, then 
\[|e^{-3\pi r} - e^{-\pi r  i } e^{-2\pi r  i  w_1}| \neq |e^{-3\pi r} - e^{-\pi r  i } e^{-2\pi r  i  w_2}|.\]
This implies that $\Im Y_r(w_1) \neq \Im Y_r(w_2)$. 

One may employ similar elementary calculations to derive the latter part of the lemma. 
\end{proof}

\subsection{Uniform contraction of the changes of coordinates}
A key property of the maps $Y_r$ is stated in the following lemma. 

\begin{lem}\label{L:uniform-contraction-Y_r}
For every $ r\in (0,1/2]$, and every $w_1, w_2$ in $\ol{\D{H}'}$, we have 
\[|Y_r(w_1)- Y_r(w_2)| \leq 0.9 |w_1-w_2|.\]
\end{lem}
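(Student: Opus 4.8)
The plan is to reduce the Lipschitz bound to a pointwise estimate on the partial derivatives of $Y_r$ and then check that estimate by the same elementary calculations used in \refL{L:Y-domain}. Write $Y_r(w) = r \Re w + \frac{i}{2\pi}\log|g_r(w)|$, where $g_r(w)= e^{-3\pi r} - e^{-\pi r i}e^{-2\pi r i w}$ (the denominator $e^{-3\pi r}-e^{\pi r i}$ is a constant and contributes nothing to derivatives). Since $\ol{\D{H}'}$ is convex, it suffices to bound the operator norm of the real derivative $DY_r(w)$ by $0.9$ for every $w \in \D{H}'$, and then pass to the closure by continuity. Because $Y_r$ sends vertical lines to vertical lines, $DY_r$ is upper triangular in the $(\Re,\Im)$ basis: writing $u = \Re w$, $v = \Im w$, the first column is $(r, \partial_u \Im Y_r)$ and the second column is $(0, \partial_v \Im Y_r)$. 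So the operator norm is controlled by $r \le 1/2$ together with the size of $\nabla \Im Y_r = \frac{1}{2\pi}\nabla \log|g_r|$.

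The key computation is therefore to bound $|\nabla \log|g_r(w)||$. Writing $h = e^{-2\pi r i w} = e^{2\pi r v}e^{-2\pi r i u}$, we have $|h| = e^{2\pi r v} \ge e^{-2\pi r}$ on $\ol{\D{H}'}$, and $g_r = e^{-3\pi r} - e^{-\pi r i} h$. A direct calculation gives $\partial_u \log|g_r| = \Re\!\big(\tfrac{g_r'}{g_r}\cdot \partial_u w\big)$-type expressions; concretely, since $\log|g_r| = \tfrac12 \log(g_r \ol{g_r})$ and $\partial_u h = -2\pi r i\, h$, $\partial_v h = 2\pi r\, h$, one finds both partials are $2\pi r$ times bounded combinations of $\Re\big(\tfrac{e^{-\pi r i} h}{g_r}\big)$ and $\Im\big(\tfrac{e^{-\pi r i}h}{g_r}\big)$. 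Thus $|\nabla \Im Y_r| \le \tfrac{1}{2\pi}\cdot 2\pi r \cdot \sqrt{2}\,\big|\tfrac{e^{-\pi r i}h}{g_r}\big| = \sqrt2\, r\, \tfrac{|h|}{|g_r|}$, and by the triangle inequality $|g_r| \ge |h| - e^{-3\pi r} = |h|(1 - e^{-3\pi r}/|h|) \ge |h|(1 - e^{-\pi r})$ since $|h| \ge e^{-2\pi r}$. Hence $|\nabla \Im Y_r| \le \sqrt2\, r/(1-e^{-\pi r})$. Using $1 - e^{-\pi r} \ge \pi r e^{-\pi r} \ge \pi r/2$ for $r \le 1/2$ gives $|\nabla \Im Y_r| \le 2\sqrt2/\pi < 0.901$, but this is slightly too large, so one sharpens: for $r\in(0,1/2]$ the function $r \mapsto r/(1-e^{-\pi r})$ is increasing, so it is maximized at $r = 1/2$, where $\tfrac12/(1-e^{-\pi/2}) \approx 0.6357$, giving $|\nabla \Im Y_r| \le \sqrt2\cdot 0.6357 < 0.899$. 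Combining with the triangular structure, $\|DY_r\| \le \sqrt{r^2 + |\nabla \Im Y_r|^2} \le \sqrt{0.25 + 0.81} = \sqrt{1.06}$, which is about $1.03$ — still not $\le 0.9$.

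The resolution of this gap, and the main obstacle, is that the crude bound $\|DY_r\| \le \sqrt{r^2 + |\partial_u \Im Y_r|^2 + |\partial_v\Im Y_r|^2}$ (Frobenius norm) overestimates the operator norm badly for triangular matrices; instead one should bound the operator norm of $\begin{psmallmatrix} r & 0 \\ a & b\end{psmallmatrix}$ directly and, more importantly, use that $\partial_u\Im Y_r$ and $\partial_v\Im Y_r$ cannot simultaneously be large — they are the real and imaginary parts of one complex quantity whose modulus we bounded. A cleaner route, which I expect to be the one that actually closes the estimate with the constant $0.9$, is to exploit the vertical-line-preserving structure more fully: on each vertical line $Y_r$ acts as $v \mapsto \Im Y_r(u + iv)$, and one checks separately that $|\partial_v \Im Y_r| \le 0.9$ and that the horizontal contribution, measured correctly, combines to keep the total operator norm below $0.9$ — e.g. by verifying that $Y_r$ is a contraction in the sup-type metric $\max\{|\Re\cdot|/r\text{-normalised}, \dots\}$ or simply by a more careful $2\times 2$ operator-norm inequality using the explicit near-$e^{-3\pi r}$ smallness of the perturbation term. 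Concretely I would: (1) set up $DY_r$ explicitly as above; (2) prove $|\partial_v \Im Y_r(w)| \le 0.9$ and $|\partial_u \Im Y_r(w)| \le 0.9$ for all $w$, reusing inequality \refE{E:proof-Y_r-basic-1} and $r\le 1/2$; (3) bound the operator norm of the resulting triangular matrix, using that its entries are tied together through $\big|\tfrac{e^{-\pi r i}h}{g_r}\big|$, to get $\le 0.9$; (4) integrate along segments in the convex set $\ol{\D{H}'}$ and conclude. The delicate point throughout is squeezing the numerical constant, since the honest supremum of $\|DY_r\|$ is close to $0.9$; this forces using the geometric-series lower bound $|g_r| \ge |h| - e^{-3\pi r}$ in its sharp form together with $|h|\ge e^{-2\pi r}$, rather than any lossy simplification.
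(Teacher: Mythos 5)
Your proposal does not close the key estimate, and you say so yourself: the Frobenius-norm bound gives $\sqrt{0.25+0.81}\approx 1.03$, and the ``cleaner route'' you then outline (items (1)--(4)) is a plan, not a proof. Worse, the plan as stated cannot work: proving separately that $|\partial_u \Im Y_r|\leq 0.9$ and $|\partial_v \Im Y_r|\leq 0.9$ (together with $r\leq 1/2$) does not bound the operator norm of the Jacobian by $0.9$. For instance, the matrix with rows $(0.5,\ 0)$ and $(0.63,\ 0.63)$ has operator norm about $0.97$, so componentwise bounds of the size you actually obtain are compatible with a Lipschitz constant larger than $0.9$. The missing ingredient is a \emph{joint} control of the two partials of $\Im Y_r$, and you only gesture at it (``the entries are tied together through $|e^{-\pi r i}h/g_r|$'') without extracting a usable inequality.

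The paper closes exactly this gap by working with the Wirtinger derivatives instead of the real Jacobian: writing $Y_r$ in terms of $\log g(w)$ with $g(w)=(e^{-3\pi r}-e^{-\pi r i}e^{-2\pi r i w})(e^{-3\pi r}-e^{\pi r i}e^{2\pi r i \ol{w}})$, one computes $\frac{\partial Y_r}{\partial w}=\frac r2\bigl(1-\frac{1}{\xi-1}\bigr)$ and $\frac{\partial Y_r}{\partial \ol w}=\frac r2\bigl(1+\frac{1}{\ol\xi-1}\bigr)$ with $\xi=e^{-3\pi r}e^{\pi r i}e^{2\pi r i w}$, $|\xi|\leq e^{-\pi r}$ on $\ol{\D{H}'}$. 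The maximal directional derivative is exactly $\bigl|\frac{\partial Y_r}{\partial w}\bigr|+\bigl|\frac{\partial Y_r}{\partial \ol w}\bigr|$, which is the sharp replacement for your triangular-matrix estimate, and it is bounded by $r\,\frac{e^{\pi r}+1}{e^{\pi r}-1}$; an elementary monotonicity argument (using $e^{\pi r}-1\geq \pi r+\pi^2 r^2/2$ and evaluating at $r=1/2$) then gives $\leq 9/10$, after which one integrates along segments in the convex set $\ol{\D{H}'}$ as you intended. So your reduction to a derivative bound and your lower bound $|g_r|\geq |h|(1-e^{-\pi r})$ are fine, but the quantitative heart of the lemma --- the operator-norm bound $\leq 0.9$ --- is not established in your argument and needs the $|\partial_w|+|\partial_{\ol w}|$ computation (or an equivalent exact treatment of the $2\times 2$ matrix), not separate bounds on the entries.
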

The precise contraction factor $0.9$ in the above lemma is not crucial, any constant strictly less than $1$ suffices. 

\begin{proof}
Let $g(w)= (e^{-3\pi r}- e^{-\pi r  i } e^{-2\pi r  i  w})(e^{-3\pi r}- e^{\pi r  i } e^{2\pi r  i  \ol{w}})$. 
Then, $g(w)$ is of the form $\gz \ol{\gz}$, for some $\gz\in \D{C}$, and hence it produces positive real values for 
$w\in \ol{\D{H}'}$. 
We have 
\[\partial g(w)/\partial w
= 2\pi r  i  e^{-\pi r i } e^{-2\pi r  i  w}  (e^{-3\pi r}- e^{\pi r  i } e^{2\pi r  i  \ol{w}}),\]
and 
\[\partial g(w)/\partial \ol{w}
= -2 \pi r  i  e^{\pi r i } e^{2\pi r  i  \ol{w}}  (e^{-3\pi r}- e^{-\pi r  i } e^{-2\pi r  i  w}) .\]
Therefore, by the complex chain rule, 
\[\frac{\partial }{\partial w}\big(\log g(w)\big) = \frac{1}{g(w)} \frac{\partial g}{\partial w} 
= \frac{2\pi r  i  e^{-\pi r i } e^{-2\pi r  i  w}}{e^{-3\pi r}- e^{-\pi r  i } e^{-2\pi r  i  w}}
=\frac{2\pi r  i }{e^{-3\pi r} e^{\pi r i } e^{2\pi r  i  w} - 1},\]
and 
\[\frac{\partial}{\partial \ol{w}}\big(\log g(w)\big) = \frac{1}{g(w)} \frac{\partial g}{\partial \ol{w}} 
= \frac{-2 \pi r  i  e^{\pi r i } e^{2\pi r  i  \ol{w}}}{e^{-3\pi r}- e^{\pi r  i } e^{2\pi r  i  \ol{w}}}
=\frac{-2 \pi r  i }{e^{-3\pi r}  e^{-\pi r  i } e^{-2\pi r  i  \ol{w}}-1}.\]
We rewrite $Y_r$ in the following form 
\[Y_r(w)
=r \cdot \frac{w+\ol{w}}{2} + \frac{ i }{2\pi} \cdot \frac{1}{2} \log g(w) - 
\frac{ i }{2\pi} \log |e^{-3\pi r}-e^{\pi r i }|.\]
Then, by the above calculations,  
\[\frac{\partial Y_r}{\partial w}(w) 
= \frac{r}{2} + \frac{ i }{4\pi} \cdot \frac{2\pi r  i }{e^{-3\pi r} e^{\pi r i } e^{2\pi r  i  w} - 1}
= \frac{r}{2} \Big (1 - \frac{1}{e^{-3\pi r} e^{\pi r i } e^{2\pi r  i  w} - 1}\Big ),\]
and 
\[\frac{\partial Y_r}{\partial \ol{w}}(w) 
=\frac{r}{2} + \frac{ i }{4\pi} \cdot \frac{-2 \pi r  i }{e^{-3\pi r}  e^{-\pi r  i } e^{-2\pi r  i  \ol{w}}-1}
= \frac{r}{2} \Big (1+ \frac{1}{e^{-3\pi r}  e^{-\pi r  i } e^{-2\pi r  i  \ol{w}}-1}\Big).\]
Let $\gx=e^{-3\pi r} e^{\pi r i } e^{2\pi r  i  w}$. For $w\in \ol{\D{H}'}$, $|\gx| \leq e^{-\pi r}$. 
For the maximum size of the directional derivative of $Y_r$ we have  
\begin{align*}
\max_{\theta \in [0,2\pi]}  \big | \operatorname{D} Y_r (w) \cdot e^{ i  \theta}\big| 
&= \Big |\frac{\partial Y_r}{\partial w}(w) \Big |  + \Big |  \frac{\partial Y_r}{\partial \ol{w}}(w) \Big |  \\
&\leq \frac{r}{2} \cdot  \Big |1-\frac{1}{\gx -1}\Big | + \frac{r}{2} \cdot \Big |1+ \frac{1}{\ol{\gx}-1}\Big | \\  
&\leq \frac{r}{2} \cdot \frac{2+ e^{-\pi r}}{1-e^{-\pi r}} + \frac{r}{2} \cdot \frac{e^{-\pi r}}{1-e^{-\pi r}} 
=r \cdot \frac{e^{\pi r}+1 }{e^{\pi r}-1}.
\end{align*}
For $r \geq 0$, $e^{\pi r}-1 \geq \pi r + \pi ^2 r^2/2$, (the first two terms of the Taylor series with positive terms). 
This gives us
\[r \cdot \frac{e^{\pi r}+1 }{e^{\pi r}-1} = r\left (1+ \frac{2}{e^{\pi r}-1}\right ) 
\leq r\left (1+ \frac{2}{\pi r + \pi ^2 r^2/2}\right)
= \frac{2\pi r + \pi^2 r^2+4}{2\pi + \pi^2 r}.\]
The last function in the above equation is increasing on $(0, 1/2)$,  
because it has a non-negative derivative $(4\pi r+\pi^2 r^2)/(2+\pi r)^2$. 
Then, it is bounded by its value at $1/2$, which, using $\pi \geq 3$, gives us
\[\frac{2\pi r + \pi^2 r^2+4}{2\pi + \pi^2 r} 
\leq \frac{\pi +\pi^2/4+4}{2\pi+ \pi^2/2} 
= \frac{1}{2}+ \frac{4}{2\pi+\pi^2/2} 
\leq \frac{1}{2} + \frac{4}{6+4} = \frac{9}{10}.
\qedhere\]
\end{proof}

\subsection{Remarkable functional relations for the changes of coordinates}
The maps $Y_r$ satisfy two crucial functional relations, one at a large scale, and one at a small scale, 
both of which are illustrated in \refF{F:model-Y_r}. We present these properties in the following lemma. 

\begin{lem}\label{L:Y_r-commutation}
For every $r\in (0,1/2]$, we have  
\begin{itemize}
\item[(i)] for every $w\in \ol{\D{H}'}$, 
\[Y_r(w+1/r) = Y_r(w)+1\] 
\item[(ii)] for every $t \geq -1$, 
\[Y_r( i  t+ 1/r-1)= Y_r( i  t)+ 1- r.\]
\end{itemize}
\end{lem}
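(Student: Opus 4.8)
The plan is to verify both functional relations by direct substitution into the explicit formula
\[Y_r(w)= r\Re w + \frac{i}{2\pi}\log\Big|\frac{e^{-3\pi r}-e^{-\pi r i}e^{-2\pi r i w}}{e^{-3\pi r}-e^{\pi r i}}\Big|,\]
using nothing beyond the periodicity $e^{2\pi i}=1$ and a conjugation symmetry of the modulus; no estimates are needed, and well-definedness of $Y_r$ on $\ol{\D{H}'}$ is already granted by \refL{L:Y-domain}.

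For part (i), first note that $w+1/r\in\ol{\D{H}'}$ whenever $w\in\ol{\D{H}'}$, since the translation by $1/r$ is real. Substituting $w+1/r$ for $w$, the linear term becomes $r\Re(w+1/r)=r\Re w+1$, while inside the logarithm $e^{-2\pi r i(w+1/r)}=e^{-2\pi r i w}e^{-2\pi i}=e^{-2\pi r i w}$, so the modulus is unchanged. Adding the two contributions yields $Y_r(w+1/r)=Y_r(w)+1$.

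For part (ii), observe that for $t\ge -1$ both $it$ and $it+1/r-1$ have imaginary part $t\ge -1$ and hence lie in $\ol{\D{H}'}$. Substituting $w=it+1/r-1$, the linear term becomes $r(1/r-1)=1-r$. For the logarithm one computes
\[e^{-2\pi r i(it+1/r-1)}=e^{2\pi r t}\,e^{-2\pi i}\,e^{2\pi r i}=e^{2\pi r t}e^{2\pi r i},\]
so the numerator inside the modulus becomes $e^{-3\pi r}-e^{-\pi r i}e^{2\pi r t}e^{2\pi r i}=e^{-3\pi r}-e^{\pi r i}e^{2\pi r t}$. Since $e^{-3\pi r}$ and $e^{2\pi r t}$ are real, this is the complex conjugate of $e^{-3\pi r}-e^{-\pi r i}e^{2\pi r t}$, so it has the same modulus; and $e^{-3\pi r}-e^{-\pi r i}e^{2\pi r t}$ is precisely the numerator appearing in $Y_r(it)$, because $e^{-2\pi r i\cdot it}=e^{2\pi r t}$. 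Hence the logarithm term for $w=it+1/r-1$ agrees with the one for $it$, and combining with the linear term gives $Y_r(it+1/r-1)=(1-r)+Y_r(it)$.

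I do not anticipate any real obstacle: the computation is elementary throughout, and the only step that is not pure bookkeeping is the identity $|e^{-3\pi r}-e^{\pi r i}e^{2\pi r t}|=|e^{-3\pi r}-e^{-\pi r i}e^{2\pi r t}|$ in part (ii), which holds simply because the two numbers are complex conjugates. Both relations are also visible in \refF{F:model-Y_r}: part (i) says that $Y_r$ conjugates the horizontal translation $w\mapsto w+1/r$ to $z\mapsto z+1$, and part (ii) says that the image under $Y_r$ of the vertical line $\Re w=1/r-1$ is the translate by $1-r$ of the image of the vertical line $\Re w=0$.
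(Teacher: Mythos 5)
Your proposal is correct and follows essentially the same route as the paper: part (i) by direct substitution using $e^{-2\pi i}=1$, and part (ii) by computing the exponent and noting that $e^{-3\pi r}-e^{\pi r i}e^{2\pi r t}$ and $e^{-3\pi r}-e^{-\pi r i}e^{2\pi r t}$ are complex conjugates (the paper phrases this as $|x-z|=|x-\ol{z}|$ for real $x$), so the moduli agree.
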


\begin{proof}
Part (i) of the lemma readily follows from the formula defining $Y_r$. 

To prove part (ii) of the lemma, first note that  
\begin{align*}
\big |e^{-3\pi r}- e^{-\pi r  i } e^{-2\pi r i(it+1/r -1)}\big|   
& =\big |e^{-3\pi r}- e^{-\pi r  i } e^{2\pi r t} e^{2\pi r i} \big|  \\
&= \big| e^{-3\pi r}- e^{\pi r  i } e^{2\pi r t} \big| =\big |e^{-3\pi r}- e^{-\pi r  i } e^{2\pi r t}\big|.
\end{align*}
Above, the first and second ``$=$'' are simple multiplications of complex numbers, while for the third ``$=$'' we have 
used that $|x-z|=|x-\ol{z}|$, for real numbers $x$ and complex numbers $z$. Thus, 
\begin{equation*}
Y_r( i  t+ 1/r-1)= 
r (1/r-1)+ \frac{i}{2\pi}\log \Big |\frac{e^{-3\pi r}-e^{-\pi r  i } e^{2\pi r t}}{e^{-3\pi r}- e^{\pi r  i }}\Big| 
=(1-r) + Y_{r}( i  t). \qedhere
\end{equation*}
\end{proof} 
\section{The sets \texorpdfstring{$\mathbb{M}_\ga$}{M-ga}}\label{S:M-ga}
In this section we build the sets $\mathbb{M}_\ga$ introduced in \refS{S:intro}.
These sets are defined by successively applying the changes of coordinates $Y_r$, for a sequence of parameters $r$. 

\subsection{Successive rotation numbers \texorpdfstring{$\ga_n$}{ga-n} and signs \texorpdfstring{$\gep_n$}{epsilon-n}}\label{SS:modified-fractions-mini}
For $x\in \D{R}$, define $d(x, \D{Z})= \min_{k\in \D{Z}} |x-k|$. Let us fix an irrational number $\ga \in \D{R}$. 
Define the numbers $\ga_n\in (0,1/2)$, for $n\geq 0$, according to 
\begin{equation} \label{E:rotations-rest}
\ga_0=d(\ga, \D{Z}), \quad  \ga_{n+1}=d(1/\ga_n, \D{Z}).
\end{equation} 
Then, there are unique integers $a_n$, for $n\geq -1$, and $\gep_n \in \{+1, -1\}$, for $n\geq 0$, such that 
\begin{equation}\label{E:rotations-relations}
\ga= a_{-1}+ \gep_0 \ga_0, \quad  1/\ga_n= a_n + \gep_{n+1} \ga_{n+1}. 
\end{equation}
Evidently, for all $n\geq 0$,
\begin{equation}\label{E:rotations-signs-epsilon-n}
1/\ga_n \in (a_n-1/2, a_n+1/2), \quad a_n\geq 2,
\end{equation}
and 
\begin{equation}\label{E:signs-introduced}
\gep_{n+1}= 
\begin{cases}
+1 & \text{if } 1/\ga_n \in (a_n, a_n+1/2), \\
-1 & \text{if } 1/\ga_n \in (a_n-1/2, a_n).
\end{cases}
\end{equation}
In order to streamline our notations in this paper, we let $\ga_{-1}=+1$. 
See \refS{S:arithmetic}, in particular \refS{SS:modified-fractions}, for more details about 
this notion of continued fraction algorithm (known as nearest integer continued fraction).

\subsection{Successive changes of coordinates}
Recall that $s(w)=\ol{w}$ denotes the complex conjugation map. 
For $n\geq0$ we define 
\begin{equation}\label{E:Y_n}
\mathbb{Y}_n(w) = 
\begin{cases}
Y_{\ga_n}(w) & \tif \gep_n=-1\\
- s\circ Y_{\ga_n}(w)  & \tif \gep_n=+1. 
\end{cases}
\end{equation}
Each $\mathbb{Y}_n$ is either orientation preserving or reversing, depending on the sing of $\gep_n$. 
For $n\geq 0$, we have\footnote{For a given set $X \subseteq \D{C}$, we define $iX= \{ix \mid x\in X \}$.}
\begin{equation} \label{E:invariant-imaginary-line}
\mathbb{Y}_n( i  [-1, +\infty)) \subset   i  (-1, +\infty), \quad \text{ and } \quad \mathbb{Y}_n(0)=0.
\end{equation}
\refL{L:uniform-contraction-Y_r} implies that for all $n\geq 0$ and all $w_1, w_2$ in $\ol{\D{H}'}$, we have 
\begin{equation}\label{E:uniform-contraction-Y}
|\mathbb{Y}_n(w_1)- \mathbb{Y}_n(w_2)| \leq 0.9 |w_1-w_2|.
\end{equation}
It follows from \refL{L:Y_r-commutation} that for all $n\geq 0$ and all $w\in \ol{\D{H}'}$, 
\begin{equation} \label{E:Y_n-comm-1}
\mathbb{Y}_n(w+1/\ga_n) = 
\begin{cases}
\mathbb{Y}_n(w)+1 & \tif \gep_n=-1,\\
\mathbb{Y}_n(w)-1 & \tif \gep_n=+1. 
\end{cases}
\end{equation}
Also, by the same lemma, for all $n\geq 0$, and all $t\geq -1$, 
\begin{equation}\label{E:Y_n-comm-2}
\mathbb{Y}_n( i  t+ 1/\ga_n-1)= 
\begin{cases}
\mathbb{Y}_n( i  t)+ (1-\ga_n) & \tif \gep_n=-1, \\
\mathbb{Y}_n( i  t) +(\ga_n-1) & \tif \gep_n=+1.
\end{cases}
\end{equation}

\subsection{Equivariant tiling of the tower}\label{SS:tilings-nest}
For $n\geq 0$ let 
\begin{equation}\label{E:I_n-J_n-K_n}
\begin{gathered}
I_n^0 = \{w\in \ol{\D{H}'} \mid \Re w\in [0, 1/\ga_n]\}, \\
J_n^0 =  \{w\in I_n^0 \mid \Re w \in [1/\ga_n-1, 1/\ga_n]\}, \\
K_n^0 =  \{w\in I_n^0 \mid \Re w \in [0, 1/\ga_n-1] \}.
\end{gathered}
\end{equation}
We inductively defined the sets $I_n^j$, $J_n^j$, and $K_n^j$, for $j \geq 1$ and $n\geq 0$. 
Assume that for some $j$ and all $n \geq 0$, $I_n^j$, $J_n^j$ and $K_n^j$ are defined.
We define $I_n^{j+1}$, $J_n^{j+1}$ and $K_n^{j+1}$ for $n\geq 0$ as follows.
Fix an arbitrary $n \geq 0$. 
If $\gep_{n+1}=-1$, let 
\begin{equation}\label{E:I_n^j--1}
I_n^{j+1} 
= \bigcup_{l=0}^{a_n-2} \big( \mathbb{Y}_{n+1} ( I_{n+1}^j)+ l \big)  \bigcup \big( \mathbb{Y}_{n+1}(K_{n+1}^j)+ a_n-1\big).
\end{equation}
If $\gep_{n+1}=+1$, let 
\begin{equation}\label{E:I_n^j-+1}
I_n^{j+1} 
= \bigcup_{l=1}^{a_n} \big( \mathbb{Y}_{n+1} ( I_{n+1}^j)+ l \big)  \bigcup \big(\mathbb{Y}_{n+1}(J_{n+1}^j)+ a_n+1\big ).
\end{equation}
Regardless of the sign of $\gep_{n+1}$, define 
\[J_n^{j+1} =  \{w\in I_n^{j+1} \mid \Re w \in [1/\ga_n-1, 1/\ga_n]\},\]
\[K_n^{j+1} =  \{w\in I_n^{j+1} \mid \Re w \in [0, 1/\ga_n-1] \}.\]
\refF{F:topological-model} presents two generations of these domains.  
We summarise the basic features of these sets in the following lemma. 

\begin{figure}[ht]
\begin{pspicture}(14,11) 
\epsfxsize=6cm
\rput(3,8){\epsfbox{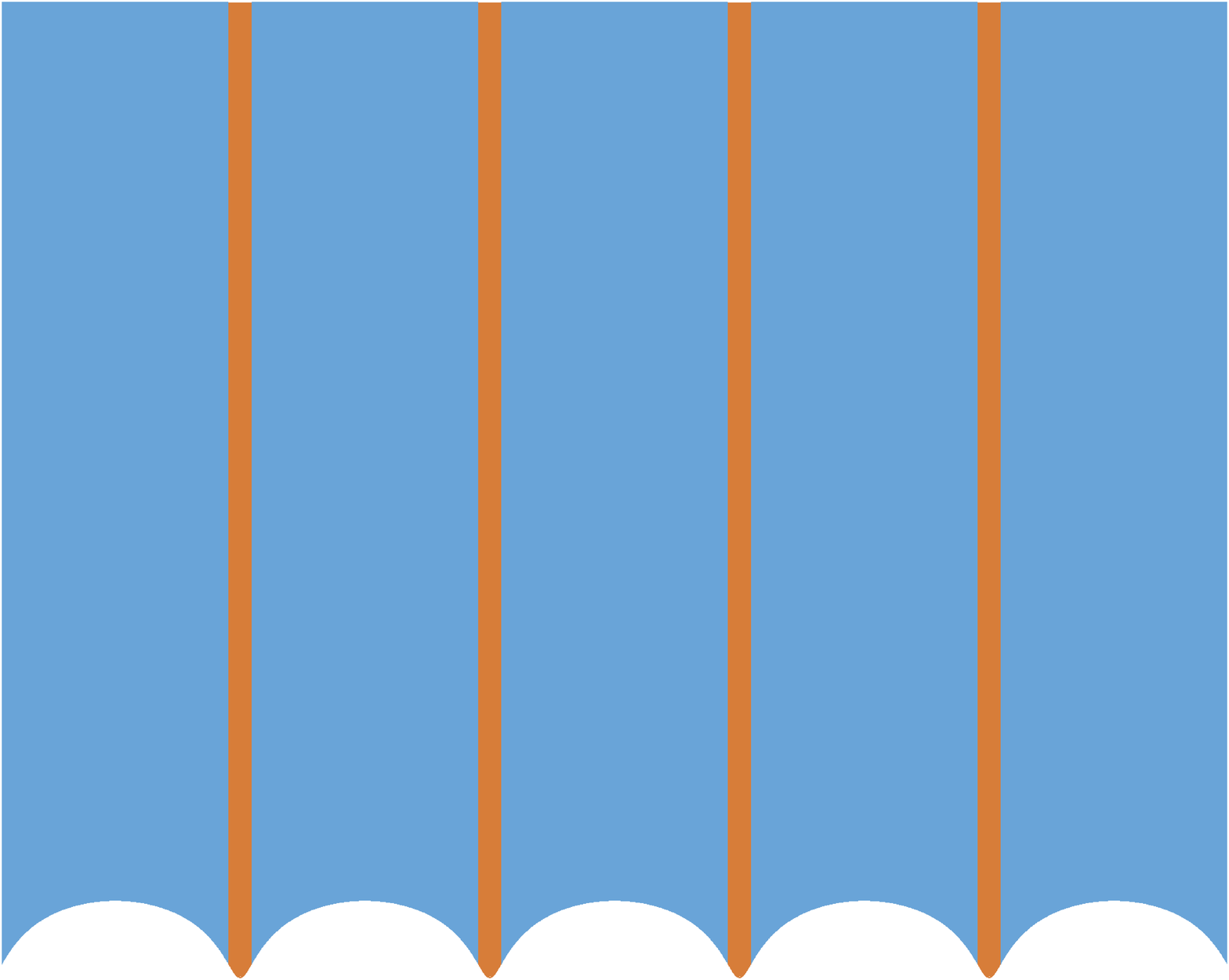}}
\rput(11,8){\epsfbox{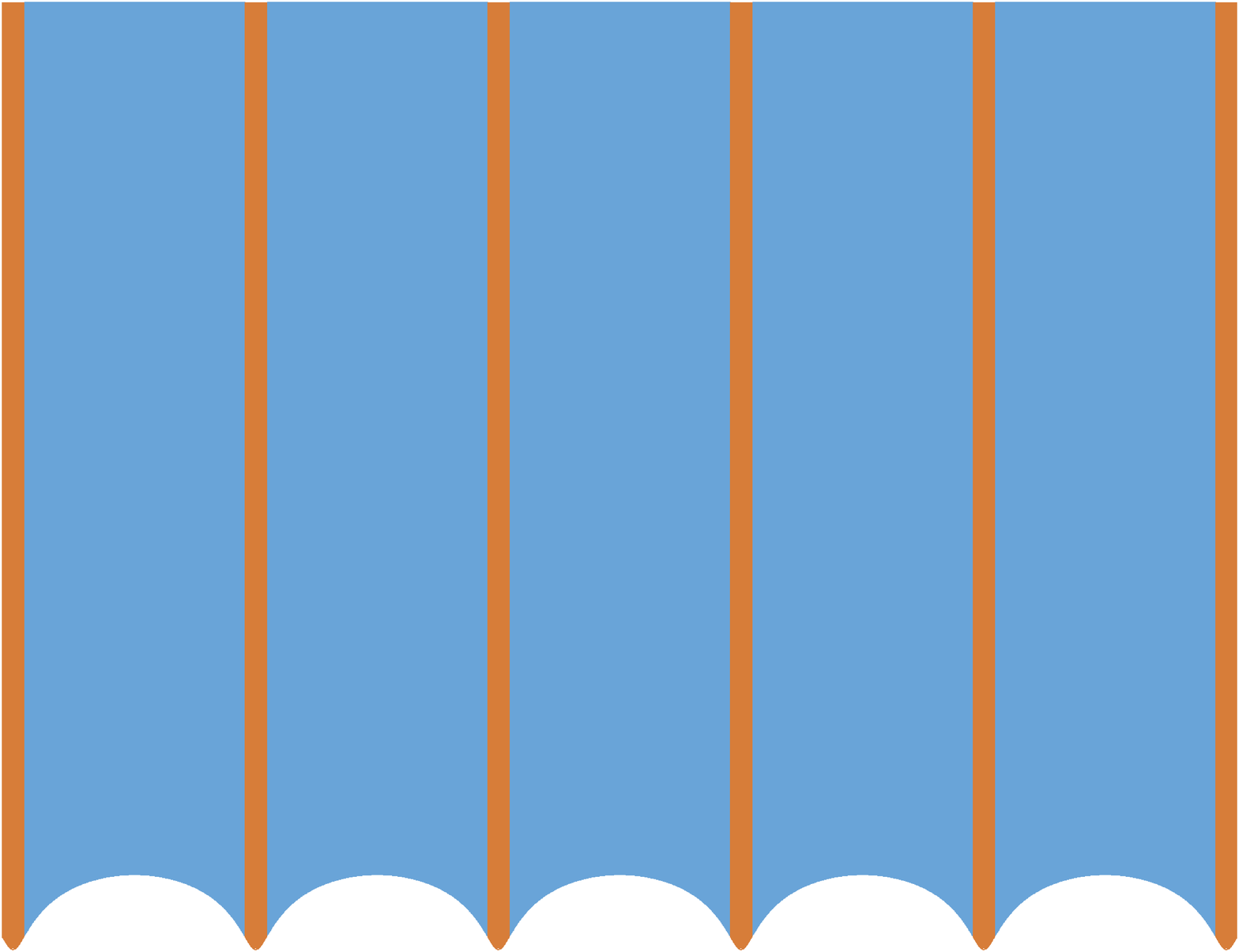}}

\pspolygon[linecolor=newcyan,fillstyle=solid,fillcolor=newcyan](0,0)(5.4545,0)(5.4545,3)(0,3)
\pspolygon[linecolor=neworange,fillstyle=solid,fillcolor=neworange](5.4545,0)(6,0)(6,3)(5.4545,3)
\rput(3,1.5){\small $K_n^0$}
\rput(5.7,1.5){\small $J_n^0$}

\pspolygon[origin={8,0},linecolor=newcyan,fillstyle=solid,fillcolor=newcyan](0,0)(5.4545,0)(5.4545,3)(0,3)
\pspolygon[origin={8,0},linecolor=neworange,fillstyle=solid,fillcolor=neworange](5.4545,0)(6,0)(6,3)(5.4545,3)
\rput(11,1.5){\small $K_n^0$}
\rput(13.7,1.5){\small $J_n^0$}

\psline[linewidth=.5pt,linecolor=darkgray]{->}(2.8,2)(.5,6.5)
\psline[linewidth=.5pt,linecolor=darkgray]{->}(5.7,2)(1.15,6.5)

\psline[linewidth=.5pt,linecolor=darkgray]{->}(3,2)(5.5,6.5)

\rput(1,5){\small $\mathbb{Y}_n$}
\rput(2.3,5){\small $\mathbb{Y}_n$}
\rput(4.7,5){\small $\mathbb{Y}_n+a_{n-1}-1$}

\psline[origin={8,0},linewidth=.4pt]{->}(2.5,2)(.5,6.8)
\psline[origin={8,0},linewidth=.4pt]{->}(5.7,2)(.05,6.8)
\psline[origin={8,0},linewidth=.4pt]{->}(5.8,2)(5.95,6.8)

\rput(9,5){\small $\mathbb{Y}_n$}
\rput(9.8,5){\small $\mathbb{Y}_n$}
\rput(12.8,5){\small $\mathbb{Y}_n+a_{n-1}+1$}
\end{pspicture}
\caption{The left hand picture is for $\gep_n=-1$ and the right hand picture is for $\gep_n=+1$. 
The sets $K_n^0$ and $J_n^0$ are on the lower row,  and the set $I_{n-1}^1$ is on the upper row.}
\label{F:topological-model}
\end{figure}

\begin{lem}\label{L:I_n^j-basic-features}
For all $n \geq 0$ and $j\geq 0$, the sets $I_n^j$, $J_n^j$ and $K_n^j$ are closed and connected subsets of 
$\D{C}$, and each of $\partial I_n^j$, $\partial J_n^j$ and $\partial K_n^j$ is a piece-wise analytic curve. 
Moreover, 
\begin{itemize}
\item[(i)] $\{\Re w \mid w\in I_n^j\}= [0, 1/\ga_n]$;
\item[(ii)] if $\gep_{n+1}=-1$, we have 
\begin{equation}\label{E:lemma-I_n^j--1}
\begin{gathered} 
\{w\in I_n^j  \mid \Re w=0\} \subseteq \mathbb{Y}_{n+1}( i  [-1, \infty)), \\
\{w\in I_n^j  \mid \Re w=1/\ga_n\} \subseteq \mathbb{Y}_{n+1}( i  [-1, \infty)+ 1/\ga_{n+1}-1) +a_n-1;
\end{gathered}
\end{equation}
\item[(iii)] if $\gep_{n+1}=+1$, we have 
\begin{equation}\label{E:lemma-I_n^j-+1}
\begin{gathered}
\{w\in I_n^j  \mid \Re w=0\} \subseteq \mathbb{Y}_{n+1}( i  [-1, \infty)+1/\ga_n)+1, \\
\{w\in I_n^j  \mid \Re w=1/\ga_n\} \subseteq \mathbb{Y}_{n+1}( i  [-1, \infty)+ 1/\ga_{n+1}-1) +a_n+1.
\end{gathered}
\end{equation}
\end{itemize}
\end{lem}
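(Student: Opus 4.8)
The plan is to prove all assertions simultaneously by induction on $j$, fixing $n$ arbitrary throughout. The base case $j=0$ is immediate from the explicit definitions \eqref{E:I_n-J_n-K_n}: $I_n^0$ is a vertical strip $\{\Re w \in [0,1/\ga_n]\} \cap \ol{\D{H}'}$, obviously closed, connected, with piecewise-analytic boundary, and item (i) holds by construction. Items (ii) and (iii) for $j=0$ require a small remark: the left and right edges of $I_n^0$ are the vertical half-lines $\Re w=0$ and $\Re w = 1/\ga_n$, and we must identify these as images under $\mathbb{Y}_{n+1}$ of the appropriate translated imaginary half-lines. This is where \refL{L:Y_r-commutation}(i) enters via \eqref{E:Y_n-comm-1}: since $\mathbb{Y}_{n+1}$ maps vertical lines to vertical lines (a property inherited from $Y_r$, possibly composed with $s$ and negation) and since $\mathbb{Y}_{n+1}(0)=0$, $\mathbb{Y}_{n+1}(i[-1,\infty)) \subseteq i(-1,\infty)$ by \eqref{E:invariant-imaginary-line}; then the $a_n$-fold functional relation \eqref{E:Y_n-comm-1} pins down exactly which vertical line lands on $\Re w = 0$ and $\Re w = 1/\ga_n$. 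One checks the two cases $\gep_{n+1}=\pm 1$ separately, reading off the translation amounts from \eqref{E:I_n^j--1} and \eqref{E:I_n^j-+1}.

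For the inductive step, assume the statement holds at level $j$ for all $n$; in particular it holds for $n+1$ at level $j$. Consider $I_n^{j+1}$ as defined by \eqref{E:I_n^j--1} or \eqref{E:I_n^j-+1}. It is a finite union of sets of the form $\mathbb{Y}_{n+1}(I_{n+1}^j)+l$ together with one copy of $\mathbb{Y}_{n+1}(K_{n+1}^j)$ or $\mathbb{Y}_{n+1}(J_{n+1}^j)$ translated. Each piece is closed (continuous image of a closed set under a homeomorphism — here I use that $\mathbb{Y}_{n+1}$ is injective, \refL{L:Y-domain}, and proper onto its image), connected (continuous image of a connected set), and has piecewise-analytic boundary (real-analyticity of $Y_r$, \refL{L:Y-domain}). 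To conclude that the union is closed and connected, I would show consecutive pieces overlap along a full vertical edge: the right edge of $\mathbb{Y}_{n+1}(I_{n+1}^j)+l$ coincides with the left edge of $\mathbb{Y}_{n+1}(I_{n+1}^j)+(l+1)$. By the inductive hypothesis (ii)/(iii) applied at level $(n+1,j)$, the right edge of $I_{n+1}^j$ is $\mathbb{Y}_{n+2}(\cdots)+\cdots$ and the left edge is a similarly described vertical half-line; applying \eqref{E:Y_n-comm-1} for the index $n+1$ — i.e. $\mathbb{Y}_{n+1}(w+1/\ga_{n+1}) = \mathbb{Y}_{n+1}(w)\pm 1$ — translates the right edge of $\mathbb{Y}_{n+1}(I_{n+1}^j)$ by exactly $\pm 1$ onto the left edge. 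This is the crux: the functional relations are precisely calibrated so that the tiles fit edge-to-edge with unit horizontal spacing, which is why the translation amounts in \eqref{E:I_n^j--1}, \eqref{E:I_n^j-+1} are consecutive integers. The last tile ($K$ or $J$ piece) is a sub-strip of the corresponding $I_{n+1}^j$, so it glues on the same way; this is also what makes item (i) come out as $[0,1/\ga_n]$ rather than something longer — one checks the arithmetic $a_n-1 + (1/\ga_{n+1}-1) = 1/\ga_n - 1 + \gep_{n+1}\ga_{n+1}/\ga_{n+1}$-type identity using \eqref{E:rotations-relations}.

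Having established (i) and the closed/connected/piecewise-analytic claims at level $j+1$, items (ii) and (iii) at level $j+1$ follow by locating the extreme left and right edges of $I_n^{j+1}$: the leftmost tile is $\mathbb{Y}_{n+1}(I_{n+1}^j)+l_{\min}$ (with $l_{\min}=0$ if $\gep_{n+1}=-1$, $l_{\min}=1$ if $\gep_{n+1}=+1$), whose left edge is $\mathbb{Y}_{n+1}$ of the left edge of $I_{n+1}^j$, which by induction (at level $(n+1,j)$) is $\mathbb{Y}_{n+1}(i[-1,\infty))$ possibly precomposed with a translation, giving the stated formula after accounting for the $+l_{\min}$; similarly the rightmost tile is the $K$ or $J$ piece, whose right edge sits at $\Re w = 1/\ga_n$ and is $\mathbb{Y}_{n+1}(i[-1,\infty)+1/\ga_{n+1}-1)$ translated by $a_n\mp 1$. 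I expect the main obstacle to be bookkeeping rather than conceptual: one must carefully track how the two functional relations \eqref{E:Y_n-comm-1} and \eqref{E:Y_n-comm-2} interact with the orientation flips encoded in \eqref{E:Y_n} (the $-s\circ Y_{\ga_n}$ case), making sure the ``left edge'' of a tile after an orientation-reversing map is where one expects, and that the $\gep_{n+1}=+1$ versus $\gep_{n+1}=-1$ cases in \eqref{E:I_n^j--1}/\eqref{E:I_n^j-+1} are matched with the correct clause of \eqref{E:Y_n-comm-1}. A secondary point needing care is verifying the edges glue as honest shared boundary arcs (so the union has the claimed piecewise-analytic boundary with no spurious interior slits), which follows once one knows via \eqref{E:invariant-imaginary-line} and the vertical-line-preserving property that each glued edge is a single analytic arc shared by exactly two adjacent tiles.
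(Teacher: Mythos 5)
Your overall plan (induction on $j$, driven by the arithmetic relations \eqref{E:rotations-relations} and the functional equations \eqref{E:Y_n-comm-1}, \eqref{E:Y_n-comm-2}) is exactly the toolkit the paper points to, but two steps of your argument do not work as written. First, the base case of items (ii)/(iii). The left edge of $I_n^0$ is the full ray $i[-1,\infty)$, while by \refL{L:Y-domain} every point of $\mathbb{Y}_{n+1}(\ol{\D{H}'})$ has imaginary part strictly greater than $-1$; so the point $-i$ lies in $\{w\in I_n^0 \mid \Re w=0\}$ but not in $\mathbb{Y}_{n+1}(i[-1,\infty))$, and your ``small remark'' cannot be carried out (the same applies to the right edge and to the $\gep_{n+1}=+1$ case). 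Items (ii)/(iii) are only correct, and only needed, for $j\geq 1$, and there they are immediate from the defining unions \eqref{E:I_n^j--1}, \eqref{E:I_n^j-+1}: the only tile meeting $\Re w=0$ (resp.\ $\Re w=1/\ga_n$) is the first (resp.\ last) one, and the relevant preimage points lie on the lines $\Re w=0$, $\Re w=1/\ga_{n+1}$ or $\Re w=1/\ga_{n+1}-1$ with imaginary part $\geq -1$ because $I_{n+1}^{j-1}\subseteq\ol{\D{H}'}$ and item (i) holds at level $(n+1,j-1)$. So (ii)/(iii) should not be part of the induction hypotheses at all; only item (i) and the containment in $\ol{\D{H}'}$ are needed.

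Second, and more seriously, the gluing step. You claim the right edge of $\mathbb{Y}_{n+1}(I_{n+1}^j)+l$ \emph{coincides} with the left edge of $\mathbb{Y}_{n+1}(I_{n+1}^j)+(l+1)$, deducing this from the inductive (ii)/(iii) together with \eqref{E:Y_n-comm-1}. But (ii)/(iii) only place those two edges inside two vertical rays that are translates of one another; they do not show the edges themselves are translates, i.e.\ that $it\in I_{n+1}^j$ if and only if $it+1/\ga_{n+1}\in I_{n+1}^j$. That exact matching is precisely \refL{L:model-almost-periodic}(ii), which the paper proves afterwards by a separate induction using the $K$/$J$ pieces and \eqref{E:Y_n-comm-2}; it is not available from your hypotheses unless you prove it simultaneously. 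Connectedness can be salvaged without it: since $\Im Y_r(x+iy)$ is increasing in $y\geq -1$, each $I_m^j$ is a union of upward-closed vertical rays, so the right-edge ray of tile $l$ and the left-edge ray of tile $l+1$ lie on the same vertical line, are nonempty by item (i), and therefore intersect. But then the lower boundary of $I_n^{j+1}$ may a priori jump at a junction line, and your assertion that each glued edge is ``a single analytic arc shared by exactly two adjacent tiles'' again silently uses the unproved periodicity; either run that periodicity argument in parallel, or settle for a boundary description that allows vertical segments at the junctions (which still yields a piecewise analytic boundary).
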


\begin{proof}
The proof is elementary and is left to the reader. One only needs to follow the basic arithmetic relations among 
$\ga_n$, $a_n$ and $\gep_{n+1}$ in \eqref{E:rotations-rest}-\eqref{E:rotations-signs-epsilon-n}, 
and use the functional relations in \eqref{E:Y_n-comm-1} and \eqref{E:Y_n-comm-2}. 
\end{proof}

\begin{lem}\label{L:model-almost-periodic}
For every $n\geq 0$ and $j\geq 0$, the following hold:
\begin{itemize}
\item[(i)] for all $w \in \overline{\mathbb{H}'}$, we have $w \in I_n^j$ if and only if $w+1 \in I_n^j$; 
\item[(ii)] for all $t \in \mathbb{R}$, we have $it \in I_n^j$ if and only if $it +1/\ga_n \in I_n^j$.    
\end{itemize}
\end{lem}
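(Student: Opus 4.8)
The plan is to prove both parts by induction on $j$, exploiting the recursive definition of $I_n^{j+1}$ together with the two functional relations \eqref{E:Y_n-comm-1} and \eqref{E:Y_n-comm-2} recorded for the maps $\mathbb{Y}_n$. For the base case $j=0$, both statements are immediate from the explicit description of $I_n^0$ in \eqref{E:I_n-J_n-K_n}: part (i) holds because $I_n^0$ is the ``slab'' $\{w \in \overline{\mathbb{H}'} : \Re w \in [0, 1/\ga_n]\}$, whose defining condition only constrains $\Re w$, and $1$ is an integer with $1 \le 1/\ga_n$ (using $a_n \geq 2$, so $1/\ga_n > 3/2$); part (ii) holds because $it$ and $it + 1/\ga_n$ have the same imaginary part, and both have real part in $[0,1/\ga_n]$ precisely when $t \cdot 0 = 0$ lies there, i.e.\ always — so both $it \in I_n^0$ and $it+1/\ga_n \in I_n^0$ hold iff $t \geq -1$. (Strictly, one should phrase this so the ``iff'' is between two true-or-two-false statements; since $it \in \overline{\mathbb{H}'}$ iff $t \geq -1$, both sides are equivalent to $t \geq -1$.)

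For the inductive step, suppose (i) and (ii) hold for all $n$ at level $j$; I want them at level $j+1$. Consider first (i), in the case $\gep_{n+1} = -1$, so that
\[
I_n^{j+1} = \bigcup_{l=0}^{a_n-2} \big(\mathbb{Y}_{n+1}(I_{n+1}^j) + l\big) \cup \big(\mathbb{Y}_{n+1}(K_{n+1}^j) + a_n - 1\big).
\]
A point $w \in I_n^{j+1}$ lies in some translate $\mathbb{Y}_{n+1}(I_{n+1}^j) + l$ (or the final $K$-piece), so $w = \mathbb{Y}_{n+1}(v) + l$ with $v \in I_{n+1}^j$. Here is where the functional relation enters: by \eqref{E:Y_n-comm-1} with $\gep_{n+1}=-1$, $\mathbb{Y}_{n+1}(v + 1/\ga_{n+1}) = \mathbb{Y}_{n+1}(v) + 1$, and by part (ii) of the inductive hypothesis at level $j$ (applied at index $n+1$), $v \in I_{n+1}^j \iff v + 1/\ga_{n+1} \in I_{n+1}^j$; moreover the real-part structure recorded in Lemma~\ref{L:I_n^j-basic-features}(i) gives that translating by $1/\ga_{n+1}$ keeps us inside the same fundamental slab, so the piece ($I$ vs.\ $J$ vs.\ $K$) is preserved. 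Combining these, $w + 1 = \mathbb{Y}_{n+1}(v + 1/\ga_{n+1}) + l$, which is again a point of the same union (with the same index $l$, or the $K$-piece) — hence $w+1 \in I_n^{j+1}$. The converse is symmetric (subtract $1$, using $\mathbb{Y}_{n+1}(v - 1/\ga_{n+1}) = \mathbb{Y}_{n+1}(v) - 1$ and the inductive hypothesis the other way). The case $\gep_{n+1} = +1$ is handled identically, now using the $\gep_{n+1}=+1$ branch of \eqref{E:Y_n-comm-1}, where translation by $1/\ga_{n+1}$ in the source corresponds to translation by $-1$ in the target — but since \eqref{E:I_n^j-+1} is itself a union over consecutive integer translates, the bookkeeping of indices goes through in the same way.

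Part (ii) at level $j+1$ follows the same template, now using relation \eqref{E:Y_n-comm-2}: a point on the imaginary axis, $it \in I_n^{j+1}$, lies in the $l=0$ piece $\mathbb{Y}_{n+1}(I_{n+1}^j)$ (when $\gep_{n+1}=-1$; the $l=1$ piece when $\gep_{n+1}=+1$) because by \eqref{E:invariant-imaginary-line} the imaginary axis is the left edge of that first tile; write $it = \mathbb{Y}_{n+1}(is)$ with $is \in I_{n+1}^j \cap i\mathbb{R}$. Then \eqref{E:Y_n-comm-2} identifies $\mathbb{Y}_{n+1}(is + 1/\ga_{n+1} - 1)$ with $\mathbb{Y}_{n+1}(is) + (1 - \ga_{n+1})$, and one checks via Lemma~\ref{L:I_n^j-basic-features}(ii)--(iii) (the boundary-edge identifications) that $is + 1/\ga_{n+1} - 1$ lies on the \emph{right} edge of $I_{n+1}^j$ exactly when $is$ lies on the left edge — so the translated point $it + 1/\ga_n$ lands on the right edge $\{\Re w = 1/\ga_n\}$ of $I_n^{j+1}$, and is in $I_n^{j+1}$ iff the original was. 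I expect the main obstacle to be precisely this last point: tracking which boundary tile of $I_{n+1}^j$ the shifted point $is + 1/\ga_{n+1} - 1$ lands in, and confirming — using \eqref{E:rotations-relations}, \eqref{E:rotations-signs-epsilon-n} and Lemma~\ref{L:I_n^j-basic-features} — that $1/\ga_n$ is the correct total horizontal shift so that the right edge of $I_n^{j+1}$ is reached. This is bookkeeping rather than deep, but it is the step where the arithmetic of $a_n$, $\ga_n$, $\ga_{n+1}$, $\gep_{n+1}$ must be matched exactly against the union in \eqref{E:I_n^j--1}/\eqref{E:I_n^j-+1}, and it is easy to be off by one tile. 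Once these edge-matchings are in place, the induction closes and both (i) and (ii) follow for all $n$ and $j$.
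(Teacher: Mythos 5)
Your overall plan (simultaneous induction on $j$, base case from the explicit form of $I_n^0$, functional equations \eqref{E:Y_n-comm-1} and \eqref{E:Y_n-comm-2} in the step) is the paper's plan, but the inductive step is justified by claims that are false as stated. For part (i) you write $w=\mathbb{Y}_{n+1}(v)+l$ and assert, citing the inductive hypothesis (ii), that $v\in I_{n+1}^j \iff v+1/\ga_{n+1}\in I_{n+1}^j$, and that translating by $1/\ga_{n+1}$ ``keeps us inside the same fundamental slab''. Hypothesis (ii) concerns only purely imaginary points, and for $\Re v>0$ the point $v+1/\ga_{n+1}$ has real part strictly greater than $1/\ga_{n+1}$, so it is never in $I_{n+1}^j$: the equivalence you invoke is simply not available. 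The correct mechanism is not to shift the source point inside the same tile but to pass from the tile with index $l$ to the one with index $l+1$, writing $w+1=\mathbb{Y}_{n+1}(v)+(l+1)$; this is trivial in the interior of the tiles, and the only genuine content sits on the vertical lines $\Re w\in\mathbb{Z}$ where two tiles abut (and at the truncated end piece). There one needs exactly the statement that the left edge of $I_{n+1}^j$ matches its right edge, i.e.\ hypothesis (ii) for purely imaginary points together with \eqref{E:Y_n-comm-1} — which is how the paper argues: it first deduces that $it\in I_n^{j+1}\iff it+1\in I_n^{j+1}$, and then reads off (i) from the tiling \eqref{E:I_n^j--1}--\eqref{E:I_n^j-+1}.

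For part (ii) the gap is of the same kind. The equivalence you need, $is\in I_{n+1}^j \iff is+1/\ga_{n+1}-1\in I_{n+1}^j$, cannot come from \refL{L:I_n^j-basic-features}(ii)--(iii), which only records on which analytic curves the edges lie; it is precisely the induction hypothesis, and it requires \emph{both} parts: (ii) to add $1/\ga_{n+1}$ and (i) to subtract $1$. This is the reason the two statements must be carried through the induction together, and your write-up never actually uses hypothesis (i). Moreover ``right edge of $I_{n+1}^j$'' is off by one tile, exactly the slip you feared: the shifted point has $\Re=1/\ga_{n+1}-1$, the $K/J$ interface, so to place $it+1/\ga_n=\mathbb{Y}_{n+1}(is+1/\ga_{n+1}-1)+(a_n-1)$ in $I_n^{j+1}$ one must check it belongs to the end piece $\mathbb{Y}_{n+1}(K_{n+1}^j)+a_n-1$ (resp.\ $\mathbb{Y}_{n+1}(J_{n+1}^j)+a_n+1$ when $\gep_{n+1}=+1$), using $1-\ga_{n+1}+a_n-1=1/\ga_n$ from \eqref{E:rotations-relations}, and run the converse direction as well. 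Finally, in the case $\gep_{n+1}=+1$ the imaginary-axis points of $I_n^{j+1}$ are the image of the \emph{right} edge of $I_{n+1}^j$ shifted by $+1$, not of the imaginary axis, so even the step ``write $it=\mathbb{Y}_{n+1}(is)$ with $is$ purely imaginary'' already needs \eqref{E:Y_n-comm-1} and hypothesis (ii); the two sign cases are not ``handled identically'' without this adjustment.
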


\begin{proof}
We shall prove both parts at once by an inductive argument on $j$. 
Clearly, both statements hold for $j=0$ and all $n\geq 0$. 
Assume that both parts of the lemma hold for some $j \geq 0$ and all $n\geq 0$. 

Part (ii) for $j$ and \refE{E:Y_n-comm-1} together imply that for real values of $t$, $it \in I_n^{j+1}$ if and only 
if $it+1 \in I_n^{j+1}$. Then, by the definition of $I_n^{j+1}$ in \eqref{E:I_n^j--1} and \eqref{E:I_n^j-+1}, 
one concludes part (i) for $j+1$ and all $n\geq 0$.  

To prove part (ii) of the lemma for $j+1$ and all $n\geq 0$ we need to consider two cases. 
First assume that $\gep_{n+1}=-1$. By \eqref{E:Y_n-comm-2} and \eqref{E:rotations-relations}, 
for $t$ and $t'$ in $[-1, +\infty)$ satisfying $\mathbb{Y}_{n+1}(it')=it$, we have 
\[\mathbb{Y}_{n+1}(it'+ 1/\ga_{n+1} - 1)+a_n-1=
\mathbb{Y}_{n+1}(it')+ (1-\ga_{n+ 1}) + a_n - 1=
it + 1/\ga_n .\] 
If $i t \in I_n^{j+1}$, then by \refE{E:I_n^j--1}, there is $it' \in I_{n+1}^j$ with $\mathbb{Y}_{n+1}(it')=it$. 
By the induction hypotheses (part (i) and (ii) for $j$ and all $n\geq 0$), $it'+ 1/\ga_{n+1} - 1 \in I_{n+1}^j$. 
Then, $it'+ 1/\ga_{n+1} - 1 \in K_{n+1}^j$. Therefore, by the above equation and \eqref{E:I_n^j--1}, 
$it+1/\ga_n \in I_n^{j+1}$. 

On the other hand, if $it+1/\ga_n \in I_n^{j+1}$, then by \eqref{E:I_n^j--1} and the induction hypotheses, 
there is $it'\in I_{n+1}^j$ such that $it'+1/\ga_{n+1} -1 \in K_{n+1}^j$ and 
\[\mathbb{Y}_{n+1}(it'+1/\ga_{n+1} -1) + (a_n-1) = it+1/\ga_n.\] 
Again, by \eqref{E:Y_n-comm-2} and \eqref{E:rotations-relations}, this implies that 
$it+1/\ga_n=\mathbb{Y}_{n+1}(it')+1/\ga_n$. 
Hence, $it= \mathbb{Y}_{n+1}(it')$, which implies that $it\in I_n^{j+1}$. 

The proof when $\gep_{n+1}=+1$ is similar, using \eqref{E:I_n^j-+1} and \eqref{E:Y_n-comm-2}.   
\end{proof}

Recall that $\ga_{-1}=+1$. Let $I_{-1}^0=\{w\in \ol{\D{H}'} \mid \Re w \in [0, 1/\ga_{-1}]\}$, and for $j\geq 1$, 
consider the sets 
\begin{equation}\label{E:I--1}
I_{-1}^j= \mathbb{Y}_0(I_0^{j-1}) + (\gep_0+1)/2.
\end{equation}
By \refL{L:Y-domain}, $I_n^1 \subset I_n^0$, for $n\geq -1$. By an inductive argument, this implies that for all 
$n\geq -1$ and all $j\geq 0$, 
\begin{equation}\label{E:I_n^j-forms-nest}
I_n^{j+1} \subset I_n^j.
\end{equation}

\subsection{The sets \texorpdfstring{$\mathbb{M}_\ga$}{M-ga}}\label{SS:M-ga}
For $n\geq -1$, we define  
\[I_{n}= \cap_{j\geq 1} I_{n}^j.\] 
Each $I_n$ consists of closed half-infinite vertical lines with unbounded imaginary parts. 
However, $I_n$ may or may not be connected. 
We note that $\Re I_{-1} \subset  [0,1]$. 
Indeed, by \refL{L:model-almost-periodic}, for real $t$, $it \in I_{-1}$ if and only if $(it +1) \in I_{-1}$. 
Thus, we may define
\begin{equation}\label{E:M_ga}
\mathbb{M}_\ga=   \{s(e^{2\pi i w}) \mid w \in I_{-1}\} \cup \{0\}.
\end{equation}

\begin{rem}
There may be alternative (simpler) approaches to build the set $\mathbb{M}_\ga$ using the maps $\mathbb{Y}_n$. 
For the sake of applications, here we have selected an approach which closely mimics a construction 
in the renormalisation scheme. 
However, it is worth noting that one cannot simply define the domains $I_n^j$, and subsequently 
$\mathbb{M}_\ga$, by first extending the maps $\mathbb{Y}_n$ $1/\ga_n$-periodically onto $\D{H}'$, 
and then iterating them on $\D{H}'$. 
That is because, such a construction would lead to a set in the limit which is periodic under translations 
by $+1$ and some irrational number. 
It would follow that the interior of that set must be the region above a horizontal line, which cannot be the case for 
arbitrary $\ga$. 
\end{rem}

\begin{propo}\label{P:M-ga-relations-1}
For every $\ga \in \mathbb{R} \setminus \mathbb{Q}$ we have the following:\footnote{A set $M \subseteq \mathbb{C}$ is called \textit{star-like about} $0$ if for every $z \in M$ and every $r\in [0,1]$, $rz \in M$.}
\begin{itemize}
\item[(i)] $\mathbb{M}_\ga$ is a compact set which is star-like about $0$, $+1\in \mathbb{M}_\ga$, 
and $\mathbb{M}_\ga \cap (1, \infty)=\emptyset$;  
\item[(ii)] $\mathbb{M}_{\ga+1}= \mathbb{M}_\ga$ and $s(\mathbb{M}_\ga)= \mathbb{M}_{-\ga}$. 
\end{itemize}
\end{propo}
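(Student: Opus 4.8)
The plan is to reduce every assertion to properties of the nested sets $I_{-1}^j$ and the map $w \mapsto s(e^{2\pi i w})$, using the functional relations of the $\mathbb{Y}_n$ established in Section~\ref{S:change-coordinates} and the periodicity supplied by \refL{L:model-almost-periodic}. First I would record the elementary geometry of the exponential substitution: if $w = x + i y$ with $y \geq -1$, then $s(e^{2\pi i w}) = e^{-2\pi i x} e^{-2\pi y}$, so a half-infinite vertical line $\{x_0 + i t \mid t \geq y_0\}$ in $I_{-1}$ maps onto the half-open radial segment $\{ \rho\, e^{-2\pi i x_0} \mid 0 < \rho \leq e^{-2\pi y_0}\}$, and adding the point $0$ fills in the segment to the origin. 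This is exactly why $\mathbb{M}_\ga$ is star-like about $0$: $\mathbb{M}_\ga$ is, by \eqref{E:M_ga}, a union of such closed radial segments together with $0$, and a union of segments each containing $0$ is star-like. Compactness then follows because each $I_{-1}^j$ is closed (\refL{L:I_n^j-basic-features}) with $\Re I_{-1} \subseteq [0,1]$ and imaginary parts bounded below by $-1$ on $I_{-1}^0$ hence on every $I_{-1}^j \subset I_{-1}^0$; under the exponential map the bound $\Im w \geq -1$ becomes $|s(e^{2\pi i w})| \leq e^{2\pi}$, so $\mathbb{M}_\ga$ is a bounded set, and it is closed as the image of the closed set $I_{-1}$ under the proper map $w \mapsto s(e^{2\pi i w})$ on $\{\Im w \geq -1\}$ together with the limit point $0$ (any sequence in $\mathbb{M}_\ga \setminus \{0\}$ either stays in a compact part of $I_{-1}$, giving a limit in $\mathbb{M}_\ga$, or escapes to $\Im w \to +\infty$, giving limit $0$).

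Next I would treat $+1 \in \mathbb{M}_\ga$ and $\mathbb{M}_\ga \cap (1,\infty) = \emptyset$. For the first, observe $Y_{\ga_n}(0)=0$ hence $\mathbb{Y}_n(0)=0$ by \eqref{E:invariant-imaginary-line}, and moreover \eqref{E:invariant-imaginary-line} gives $\mathbb{Y}_n(i[-1,\infty)) \subset i(-1,\infty)$; running the inductive definitions \eqref{E:I_n^j--1}, \eqref{E:I_n^j-+1} with the $l=0$ (resp.\ $l=1$) term and tracking the imaginary axis, one checks $0 \in I_n^j$ for all $n \geq -1$, $j \geq 0$, and in fact the half-line $i[0,\infty) \cap (\text{something})$ survives; more precisely, $i t \in I_{-1}$ for a set of $t$ whose infimum is some $y_* \geq -1$, and the corresponding radial segment is $[0, e^{-2\pi y_*}] \subset \mathbb{R}_{\geq 0}$. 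Here I need to know this segment reaches exactly to $+1$, i.e.\ that $0 \in I_{-1}$ but the points $it$ with $t<0$ eventually leave; this is where the detailed behaviour of $Y_r$ near $w=0$ (the normalisation $Y_r(0)=0$ and the contraction \refL{L:uniform-contraction-Y_r}) must be used to pin down that the lowest surviving point on the imaginary axis of $I_{-1}$ is $0$ itself, giving radius $e^0 = 1$. For $\mathbb{M}_\ga \cap (1,\infty)=\emptyset$: a positive real $>1$ would require a point $w \in I_{-1}$ with $\Re w \in \D{Z}$ and $\Im w < 0$; combining $\Re I_{-1} \subseteq [0,1]$ with \refL{L:model-almost-periodic}(i) we may take $\Re w = 0$, and then we must show no $it$ with $t \in [-1,0)$ lies in $I_{-1}$ — again the near-$0$ analysis of $\mathbb{Y}_0$ and \eqref{E:I--1}.

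Finally, part (ii). The identity $\mathbb{M}_{\ga+1} = \mathbb{M}_\ga$ is immediate once one notes that the sequences $(\ga_n)_{n\geq 0}$, $(a_n)_{n \geq -1}$, $(\gep_n)_{n\geq 0}$ defined by \eqref{E:rotations-rest}--\eqref{E:signs-introduced} depend on $\ga$ only through $d(\ga,\D{Z})$, hence are unchanged by $\ga \mapsto \ga+1$; therefore all the $\mathbb{Y}_n$, all the $I_n^j$, and $I_{-1}$ are literally the same, and so is $\mathbb{M}_\ga$ by \eqref{E:M_ga}. For $s(\mathbb{M}_\ga) = \mathbb{M}_{-\ga}$: replacing $\ga$ by $-\ga$ changes $\gep_0$ to $-\gep_0$ while leaving $\ga_0 = d(\ga,\D{Z})$ and all subsequent $\ga_n, a_n, \gep_n$ ($n\geq 1$) unchanged. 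Thus the only changes are in $\mathbb{Y}_0$, which by \eqref{E:Y_n} is replaced by $-s \circ \mathbb{Y}_0$ (since flipping $\gep_0$ toggles the two cases and $s\circ s = \mathrm{id}$, $Y_{\ga_0}$ being the common core), and in the shift constant $(\gep_0+1)/2$ in \eqref{E:I--1}, which changes between $0$ and $1$. A short computation shows the net effect on $I_{-1}$ is to replace it by $-s(I_{-1})$ up to a translation by an integer (namely $\pm 1$), and translation by an integer together with $w \mapsto -s(w) = -\ol w$ both act on $s(e^{2\pi i w})$ compatibly with complex conjugation: $s(e^{2\pi i(-\ol w)}) = s(\ol{e^{-2\pi i w}}) = e^{-2\pi i w} = \ol{s(e^{2\pi i w})} = s\big(s(e^{2\pi i w})\big)$ — wait, more carefully, $s(e^{2\pi i(-\ol{w}+m)}) = s(e^{-2\pi i \ol w}) = \ol{e^{-2\pi i \ol w}} = e^{2\pi i w}$, which is $s$ applied to $s(e^{2\pi i w})$, as desired. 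Hence $\mathbb{M}_{-\ga} = s(\mathbb{M}_\ga)$, adjoining $0 = s(0)$ in both cases.

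The main obstacle I anticipate is the sharp claim that the extremal surviving point of $I_{-1}$ on the positive-real side corresponds to radius exactly $+1$ — equivalently, $0 \in I_{-1}$ and $it \notin I_{-1}$ for $t \in [-1,0)$, which is what forces both $+1 \in \mathbb{M}_\ga$ and $\mathbb{M}_\ga \cap (1,\infty) = \emptyset$. This requires a careful induction tracking the image of the segment $i[-1,0]$ under the successive $\mathbb{Y}_n$, using $\mathbb{Y}_n(0)=0$, the invariance \eqref{E:invariant-imaginary-line}, and the explicit monotonicity of $Y_r$ along the imaginary axis from \refL{L:Y-domain}; everything else is bookkeeping with the functional equations \eqref{E:Y_n-comm-1}, \eqref{E:Y_n-comm-2} and the parity of $\gep_0$.
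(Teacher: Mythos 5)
Your proposal is correct and follows essentially the same route as the paper's proof: star-likeness and compactness from the fact that each $I_{-1}^j$ is a region above a graph whose vertical lines project to radial segments, $+1\in\mathbb{M}_\ga$ from $0\in I_n$ for all $n$, and part (ii) by comparing the sequences $(\ga_n,a_n,\gep_n)$ for $\ga+1$ and $-\ga$, which gives $\mathbb{Y}_0'=-s\circ\mathbb{Y}_0$ and $I_{-1}'=-s(I_{-1})+1$ before projecting. The step you flag as the main obstacle is exactly what the paper settles with the uniform contraction \eqref{E:uniform-contraction-Y}: the trace of $I_{-1}^{j}$ on the imaginary axis is the image of a vertical half-line $\{c+it \mid t\geq -1\}$ in some $I_m^0$ under a composition of $j$ maps of the form $\mathbb{Y}_k+l_k$ which sends the base point $c$ (one of $0$, $1/\ga_m$, $1/\ga_m-1$) to $0$ by $\mathbb{Y}_k(0)=0$ together with \eqref{E:Y_n-comm-1}--\eqref{E:Y_n-comm-2}, so its lowest point lies within $0.9^{j}$ of $0$; letting $j\to\infty$ forces $I_{-1}\cap i\mathbb{R}= i[0,+\infty)$, which yields both $+1\in\mathbb{M}_\ga$ as the endpoint of a ray and $\mathbb{M}_\ga\cap(1,\infty)=\emptyset$.
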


\begin{proof}
Since every map $\mathbb{Y}_n$ sends vertical lines to vertical lines, each of $I_n^j$ is the region above the 
graph of a function. 
This implies that for every $n\geq -1$, $I_n$ consists of some half-infinite vertical lines. 
Thus, $\mathbb{M}_\ga$ is star-like about $0$. 
Also, $\mathbb{M}_\ga$ is bounded and closed, hence a compact set. 
On the other hand, for all $n\geq -1$, $0\in I_n$, which implies that $+1 \in \mathbb{M}_\ga$. 
Indeed, by the uniform contraction of the maps $\mathbb{Y}_n$, \refE{E:uniform-contraction-Y}, $+1$ is an 
end point of a ray in $\mathbb{M}_\ga$, that is, for every $\delta>0$, $1+\delta \notin \mathbb{M}_\ga$. 

Recall that in order to define $\mathbb{M}_\ga$, which only depends on $\alpha$, 
we first define the sequences $(\ga_n)_{n\geq 0}$, $(a_n)_{n\geq 0}$, and $(\eps_n)_{n\geq 0}$. 
These fully determine the sequence of maps $(\mathbb{Y}_n)_{n\geq 0}$, and hence the set $\mathbb{M}_\ga$. 
The irrational numbers $\ga$ and $\ga+1$ produce the same sequences $(\ga_n)_{n\geq 0}$, $(a_n)_{n\geq 0}$ 
and $(\gep_n)_{n\geq 0}$. 
Therefore, the sequence of the maps $\mathbb{Y}_n$ are the same for both $\ga$ and $\ga+1$. 
This implies that $\mathbb{M}_{\ga+1}= \mathbb{M}_\ga$. 

To determine $\mathbb{M}_{-\ga}$, we need to compare the corresponding sequences $(\ga_n)_{n\geq 0}$, 
$(a_n)_{n\geq 0}$ and $(\gep_n)_{n\geq 0}$ for $\ga$ and $-\ga$. 
Let us denote the corresponding objects for $-\ga$ using the same notations as the ones for $\ga$ but with a prime, 
that is, $\ga_n'$, $a_n'$, $\eps'_n$, $\mathbb{Y}'_n$, $I'_n$, $K'_n$, $J'_n$, etc. 
Using $\ga= a_{-1}+ \gep_0 \ga_0$, we note that $a'_{-1}=-a_{-1}$, $\ga_0'=\ga_0$, $\gep'_{0}=-\gep_0$. 
These imply that $\mathbb{Y}_0'=-s \circ \mathbb{Y}_0$. 
However, since $\ga_0'=\ga_0$, we conclude that for all $n \geq 1$ we have $a'_{n-1}=a_{n-1}$, 
$\ga'_n=\ga_n$, $\gep'_n=\gep_n$. 
Thus, for all $n\geq 1$, $\mathbb{Y}_n'=\mathbb{Y}_n$. 
These imply that $I_0= I_0'$. 
In particular, in the definition of $\mathbb{M}_{-\ga}$, the only difference with the definition of 
$\mathbb{M}_\ga$ is that $\mathbb{Y}_0$ changes to $-s \circ \mathbb{Y}_0$. 
Therefore, using \refE{E:I--1}, 
\begin{align*}
I'_{-1} = \mathbb{Y}_0'(I'_0) + \frac{1+\epsilon_0'}{2}
& = -s \circ \mathbb{Y}_0(I_0') + \frac{1-\epsilon_0}{2} \\
&= -s \left( \mathbb{Y}_0(I_0) + \frac{1+\epsilon_0}{2} \right) + 1
= -s (I_{-1}) +1. 
\end{align*}
Comparing to \refE{E:M_ga}, we have 
\begin{align*}
\mathbb{M}_{-\ga} 
& = \{ s(e^{2\pi i w}) \mid w \in I'_{-1}\} \cup \{0\} \\
& = \{ s(e^{2\pi i w}) \mid w \in -s (I_{-1}) \} \cup \{0\} \\
&= \{ s \circ s(e^{2\pi i w}) \mid w \in I_{-1} \} \cup \{0\} \\
&= s \left (\{s(e^{2\pi i w}) \mid w \in I_{-1} \}\right) \cup \{0\}
= s (\mathbb{M}_\ga). \qedhere
\end{align*}
\end{proof}

\begin{lem}\label{L:M-ga-relations-2}
For every $\ga \in \mathbb{R} \setminus \mathbb{Q}$ we have the following:  
\begin{itemize}
\item[(i)] if $\ga \in (0, 1/2)$, $\mathbb{M}_{1/\ga}= \{s(e^{2\pi i w})  \mid w \in I_0 \} \cup \{0\}$, 
\item[(ii)] if $\ga \in (-1/2, 0)$, $\mathbb{M}_{-1/\ga}= \{s(e^{2\pi i w})  \mid w \in I_0 \} \cup \{0\}$.  
\end{itemize}
\end{lem}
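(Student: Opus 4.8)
The plan is to reduce both parts of Lemma~\ref{L:M-ga-relations-2} to the construction of $\mathbb{M}_\ga$ in \eqref{E:M_ga}, by unwinding what the successive-rotation-number data look like after one step of the Gauss map. The key observation is that the sequences $(\ga_n)_{n\geq 0}$, $(a_n)_{n\geq -1}$, and $(\gep_n)_{n\geq 0}$ attached to a rotation number are built by iterating $x\mapsto d(1/x,\D{Z})$, so shifting the index by one corresponds exactly to applying the Gauss-type map $\ga_0 \mapsto \ga_1$. First I would treat part~(i): if $\ga\in(0,1/2)$ then $\ga_0 = d(\ga,\D{Z}) = \ga$ (so $a_{-1}=0$, $\gep_0=+1$), and the sequence for $1/\ga$ — call it $(\ga_n')$, $(a_n')$, $(\gep_n')$ — satisfies $\ga_0' = d(1/\ga,\D{Z}) = \ga_1$, and then $\ga_n' = \ga_{n+1}$, $a_n' = a_{n+1}$, $\gep_n' = \gep_{n+1}$ for all $n\geq 0$ (with $a_{-1}'$ determined by $1/\ga = a_0 + \gep_1\ga_1$, i.e. $a_{-1}' = a_0$, $\gep_0' = \gep_1$). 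Consequently the maps $\mathbb{Y}_n'$ from \eqref{E:Y_n} satisfy $\mathbb{Y}_n' = \mathbb{Y}_{n+1}$ for $n\geq 0$, and the nested domains satisfy $I_n^{\prime\,j} = I_{n+1}^{j}$, hence $I_n' = I_{n+1}$; in particular $I_0' = I_1$. Now applying the definition \eqref{E:M_ga} to $1/\ga$ and then \eqref{E:I--1},
\[
\mathbb{M}_{1/\ga} = \{ s(e^{2\pi i w}) \mid w \in I_{-1}' \}\cup\{0\},
\qquad
I_{-1}' = \mathbb{Y}_0'(I_0') + \tfrac{\gep_0'+1}{2} = \mathbb{Y}_1(I_1) + \tfrac{\gep_1+1}{2}.
\]
On the other hand, chasing the definition of $I_0 = \cap_{j\geq 1} I_0^{j}$ and the recursion \eqref{E:I_n^j--1}--\eqref{E:I_n^j-+1} at level $n=0$ (so the building block is $\mathbb{Y}_1$ acting on $I_1^{j-1}$, $K_1^{j-1}$, $J_1^{j-1}$), together with \refL{L:model-almost-periodic} which says $I_0$ is a union of full vertical lines invariant under $w\mapsto w+1$, one sees that $\{s(e^{2\pi i w})\mid w\in I_0\}$ is precisely the image of $I_1$ under $\mathbb{Y}_1$ pushed around the circle — the translation by $\tfrac{\gep_1+1}{2}$ and the integer translations in the tiling recursion all disappear under $w\mapsto s(e^{2\pi i w})$ since they are integer shifts. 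This identifies the right-hand side of (i) with $\mathbb{M}_{1/\ga}$.

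For part~(ii), when $\ga\in(-1/2,0)$ we have $-1/\ga > 0$; write $\gb = -\ga \in (0,1/2)$, so $-1/\ga = 1/\gb$. By \refP{P:M-ga-relations-1}(ii), $\mathbb{M}_\gb = s(\mathbb{M}_\ga)$, and the sequences for $\gb$ and $\ga$ agree from index $0$ onward ($\ga_n$ is the same, only $a_{-1}$ and $\gep_0$ flip), so $I_0$ is literally the same set for $\ga$ and for $\gb = -\ga$ — exactly the identity $I_0 = I_0'$ already extracted inside the proof of \refP{P:M-ga-relations-1}. Hence applying part~(i) to $\gb$ gives $\mathbb{M}_{-1/\ga} = \mathbb{M}_{1/\gb} = \{s(e^{2\pi i w}) \mid w\in I_0(\gb)\}\cup\{0\} = \{s(e^{2\pi i w})\mid w\in I_0(\ga)\}\cup\{0\}$, which is the claim. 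So (ii) is a formal consequence of (i) plus the already-established symmetry $s(\mathbb{M}_\ga)=\mathbb{M}_{-\ga}$ and the index-$0$-stability of $I_0$ under $\ga\mapsto-\ga$.

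I expect the main obstacle to be the bookkeeping in part~(i): precisely verifying that the quotient map $w\mapsto s(e^{2\pi i w})$ collapses the various integer translations ($l$-shifts in \eqref{E:I_n^j--1}--\eqref{E:I_n^j-+1}, the $a_n-1$ / $a_n+1$ shifts, and the $\tfrac{\gep_0+1}{2}$ shift in \eqref{E:I--1}) so that $\{s(e^{2\pi i w})\mid w\in I_{-1}'\}$ and $\{s(e^{2\pi i w})\mid w\in I_0\}$ genuinely coincide as subsets of $\D{C}$. This requires knowing that $I_0$ is $1$-periodic and that $I_{-1}'$, being contained in a fundamental strip of width $1$ (by $\Re I_{-1}\subset[0,1]$, noted right before \eqref{E:M_ga}), is a fundamental domain for that periodicity — so that the exponential is a bijection between them after accounting for the $\mathbb{Y}_1$ we applied. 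The cleanest way to organize this is to prove the stronger statement $I_{-1}' \equiv \mathbb{Y}_1(I_1) \pmod{\text{integer shifts}}$ and separately $I_0 = \bigcup_{l\in\D{Z}}(\mathbb{Y}_1(I_1) + l)$ up to the boundary identifications, both of which follow by passing the nested identities $I_n^{\prime j}=I_{n+1}^j$ and the $n=0$ tiling recursion to the limit in $j$, using \eqref{E:I_n^j-forms-nest} to justify the intersection. Everything else is a matter of substituting definitions.
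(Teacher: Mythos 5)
Your proposal is correct and follows essentially the same route as the paper: shift the nearest-integer continued fraction data by one step to get $\mathbb{Y}'_n=\mathbb{Y}_{n+1}$ and hence $I'_n=I_{n+1}$, then identify the projections of $I'_{-1}$ and $I_0$ using the $+1$-periodicity of $I_0$ and the fact that integer shifts (including $(\gep_1+1)/2$ and the truncated $K$/$J$ pieces, which sit inside full copies) disappear under $w\mapsto s(e^{2\pi i w})$. The only harmless deviations are that the paper passes through $\ga'=\gep_1\ga_1$ and \refP{P:M-ga-relations-1}(ii) rather than computing the data of $1/\ga$ directly, and treats $\ga\in(-1/2,0)$ by the same computation instead of reducing (ii) to (i) via the $\ga\mapsto-\ga$ symmetry of $I_0$.
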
 

\begin{proof}
For $\ga$, let $(\ga_n)_{n\geq 0}$, $(a_n)_{n \geq 0}$ and $(\gep_n)_{n\geq 0}$ denote the sequences defined in 
\refS{SS:modified-fractions-mini}. 
Define $\ga'= \gep_1 \ga_1$. 
Let us use the notations $(\ga'_n)_{n\geq 0}$, $(a'_n)_{n \geq 0}$ and $(\gep'_n)_{n\geq 0}$ for the sequences 
$(\ga_n)_{n \geq 0}$, $(a_n)_{n \geq 0}$ and $(\gep_n)_{n\geq 0}$ corresponding to $\ga'$. 
We have $\ga'_0=\ga_1$, and then 
\[\gep_1 \ga_1= \ga'= a'_{-1}+\gep'_0 \ga'_0= a'_{-1}+\gep'_0 \ga_1.\] 
As $\ga' \in (-1/2, 1/2)$, we must have $a'_{-1}=0$, and hence $\gep'_0=\gep_1$. 
To determine $\mathbb{Y}_0'$, we consider two cases. 
If $\gep'_0=\gep_1=-1$, we have $\mathbb{Y}'_0= Y_{\ga'_0}=\mathbb{Y}_{\ga_1}= \mathbb{Y}_1$, 
and if $\gep'_0=\gep_1=+1$, we have $\mathbb{Y}'_0= -s \circ Y_{\ga'_0}= -s \circ Y_{\ga_1}= \mathbb{Y}_1$. 

The relation $\ga'_0=\ga_1$ implies that for all $n\geq 1$ we have $\ga'_n= \ga_{n+1}$, $a'_n=a_{n+1}$ and 
$\gep'_n = \gep_{n+1}$. 
Hence, for all $n\geq 1$, $\mathbb{Y}'_n= \mathbb{Y}_{n+1}$. 
These imply that $I'_{-1}=I_0$. 
Therefore, according to \refE{E:M_ga}, we must have 
\begin{equation}\label{E:I_0-projection}
\mathbb{M}_{\gep_1 \ga_1}=\mathbb{M}_{\ga'}= \{ s(e^{2\pi i w}) \mid w \in I'_{-1}\} \cup \{0\}
= \{ s(e^{2\pi i w}) \mid w \in I_0 \} \cup \{0\}.
\end{equation}
If $\ga\in (0, 1/2)$, we have $1/\ga=1/\ga_0 = a_{-1}+ \gep_1 \ga_1$, which using \refL{P:M-ga-relations-1}-(ii), 
we obtain 
\[\mathbb{M}_{1/\ga}=\mathbb{M}_{a_{-1}+\gep_1 \ga_1}= \mathbb{M}_{\gep_1 \ga_1}.\]
If $\ga \in (-1/2, 0)$, we have $-1/\ga= 1/\ga_0= a_{-1}+ \gep_1 \ga_1$, and hence, using \refL{P:M-ga-relations-1}-(ii), 
we get  
\[\mathbb{M}_{-1/\ga}= \mathbb{M}_{a_{-1}+\gep_1 \ga_1} = \mathbb{M}_{\gep_1 \ga_1}.\]
Combining the above equations, we obtain the desired properties in parts (i) and (ii).  
\end{proof} 
\section{The map \texorpdfstring{$\mathbb{T}_\ga$}{T} on \texorpdfstring{$\mathbb{M}_\ga$}{M-ga}}
\label{S:T-on-M}
In this section we define the map  
\begin{equation}
\mathbb{T}_\ga: \mathbb{M}_\ga \to \mathbb{M}_\ga, 
\end{equation}
where $\mathbb{M}_\ga$ is the topological model defined in \refS{S:M-ga}. 

The topological description of $\mathbb{M}_\ga$, which is presented in \refS{S:topology-A}, does not employ 
the map $\mathbb{T}_\ga$ in any ways.
However, the topological description can be used to identify the map $\mathbb{T}_\ga$ as follows. 
When $\mathbb{A}_\alpha$ is a Jordan curve, it is the graph of a function of the argument. 
Thus, there is a unique homeomorphism of $\mathbb{A}_\alpha$ which acts as rotation 
by $2\pi \alpha$ in the tangential direction.
Similarly, when, $\mathbb{A}_\alpha$ is a hairy Jordan curve, there is a unique homeomorphism 
of the base Jordan curve which acts as rotation by $2\pi \alpha$. 
This map can be extended onto the end points of the Jordan arcs attached to the Jordan curve by matching the 
corresponding end points. 
Since the set of end points of those arcs is dense in $\mathbb{A}_\alpha$, there may be a 
unique homeomorphism of $\mathbb{A}_\alpha$ which acts as rotation by $2\pi \alpha$ on the 
base Jordan curve. 
However, it is not clear if this map continuously extents to the whole hairy Cantor set. 
Similarly, there may be a unique homeomorphism of a Cantor bouquet $\mathbb{A}_\alpha$ 
which acts as rotation by $2\pi \ga$ in the tangential direction.
Here we take a different approach to build $\mathbb{T}_\ga$ on $\mathbb{M}_\ga$. 
We give a presentation which is aligned with the action of the map on the renormalisation tower; compare with 
\cite{Che13,AC18}.
This helps us later when describing the dynamics of $\mathbb{T}_\ga$ on $\mathbb{A}_\alpha$, 
and may also be employed to link the toy renormalisation scheme we build here to an actual renormalisation scheme. 

\subsection{Definition of the lift of \texorpdfstring{$\mathbb{T}_\ga$}{T-ga}}\label{SS:T-defn}
Let us fix $\ga \in \D{R}\setminus \D{Q}$, and let $I_n$, for $n\geq -1$, be the sets in \refS{SS:tilings-nest}.  
Given $w_{-1} \in I_{-1}$, we inductively identify the integers $l_i$ and then the points $w_{i+1} \in I_{i+1}$ so that 
\[0 \leq  \Re (w_i -l_i) < 1, \  \tif \gep_{i+1}=-1; \q -1 < \Re (w_i -l_i) \leq 0 , \q \tif \gep_{i+1}=+1;\] 
and 
\[\mathbb{Y}_{i+1}(w_{i+1})+l_{i}= w_i.\]
It follows that for all $n\geq 0$, we have 
\begin{equation}\label{E:trajectory-condition-1}
w_{-1}=(\mathbb{Y}_0+l_{-1}) \circ (\mathbb{Y}_1 + l_0)  \circ \dots \circ (\mathbb{Y}_n+l_{n-1})(w_n).
\end{equation}
Also, by the definition of $I_i$ in \eqref{E:I_n^j--1} and \eqref{E:I_n^j-+1}, for all $i \geq 0$, 
\begin{equation}\label{E:trajectory-real-parts}
0 \leq l_i \leq a_i + \gep_{i+1}, \q \tand \q 0 \leq \Re w_i < 1/\ga_i.
\end{equation}
We refer to the sequence $(w_i ; l_i)_{i \geq -1}$ as the \textbf{trajectory} of $w_{-1}$, with respect to $\ga$, or simply, as 
the trajectory of $w_{-1}$, when it is clear from the context what irrational number is used. 

We define the map 
\begin{equation}
\tilde{T}_\ga:I_{-1} \to I_{-1}, 
\end{equation}
as follows. 
Let $w_{-1}$ be an arbitrary point in $I_{-1}$, and let $(w_i; l_i)_{i\geq -1}$ denote the trajectory of $w_{-1}$. 
Then, 
\begin{itemize}
\item[(i)] if there is $n \geq 0$ such that $w_n \in K_n$, and for all $0 \leq i \leq n-1$, $w_i \in I_i \setminus K_i$, then 
\[\tilde{T}_\ga(w_{-1})= \left(\mathbb{Y}_0+\frac{\gep_0+1}{2}\right ) \circ \left(\mathbb{Y}_1+\frac{\gep_1+1}{2}\right ) 
\circ \cdots \circ \left (\mathbb{Y}_n +\frac{\gep_{n}+1}{2} \right)(w_n+1);\] 
\item[(ii)] if for all $n \geq 0$, $w_n \in I_n \setminus K_n$, then 
\[\tilde{T}_\ga(w_{-1})= \lim_{n \to +\infty} \left(\mathbb{Y}_0+\frac{\gep_0+1}{2}\right ) \circ \left(\mathbb{Y}_1+\frac{\gep_1+1}{2}\right ) 
\circ \cdots \circ \left (\mathbb{Y}_n +\frac{\gep_{n}+1}{2} \right) (w_n+1-1/\ga_n).\]
\end{itemize}
It might not be clear that the limit in case (ii) exists. We look into this within the proof of \refL{L:model-map-lift}. 

\subsection{The continuity}\label{SS:continuity-T-ga}

\begin{lem}\label{L:model-map-lift}
For every $\ga \in \D{R} \setminus \D{Q}$, the map $\tilde{T}_\ga : I_{-1} \to I_{-1}$ induces a well-defined, 
continuous and injective map \footnote{Here, $\D{Z}$ acts on $I_{-1}$ by horizontal translations by integers.} 
\[\tilde{T}_{\ga}: I_{-1}/\D{Z} \to I_{-1}/ \D{Z}.\]
\end{lem}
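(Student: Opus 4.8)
The plan is to establish four things in sequence: (1) the trajectory $(w_i;l_i)_{i\geq -1}$ is well-defined for every $w_{-1}\in I_{-1}$; (2) the limit in case (ii) of the definition of $\tilde T_\ga$ exists; (3) $\tilde T_\ga$ descends to a well-defined map on $I_{-1}/\D{Z}$; and (4) the induced map is continuous and injective. For (1), one checks inductively that, given $w_i\in I_i$, the integer $l_i$ is uniquely determined by the normalisation condition on $\Re(w_i-l_i)$ together with \eqref{E:trajectory-real-parts}, and that the resulting point $w_{i+1}$ lies in $I_{i+1}$: this is exactly the content of how $I_i^{j+1}$ is built from $\mathbb{Y}_{i+1}(I_{i+1}^j)$ in \eqref{E:I_n^j--1}--\eqref{E:I_n^j-+1}, passing to the intersection over $j$. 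One also records the dichotomy: either some $w_n\in K_n$ (and then the trajectory ``terminates'' in the sense relevant to case (i)) or $w_n\in I_n\setminus K_n$ for all $n$, so exactly one of the two cases in the definition applies.

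For (2), the existence of the limit is where the uniform contraction \eqref{E:uniform-contraction-Y} does the work. Writing $\Phi_n = (\mathbb{Y}_0+\tfrac{\gep_0+1}{2})\circ\cdots\circ(\mathbb{Y}_n+\tfrac{\gep_n+1}{2})$ and $v_n = w_n+1-1/\ga_n$, I would estimate $|\Phi_{n+1}(v_{n+1})-\Phi_n(v_n)|$. Since each $\mathbb{Y}_j$ is $0.9$-Lipschitz, the map $\Phi_n$ contracts by at least $(0.9)^{n+1}$, so it suffices to bound the discrepancy between $v_n$ and $(\mathbb{Y}_{n+1}+\tfrac{\gep_{n+1}+1}{2})(v_{n+1})$ in terms that grow at most polynomially (or at any rate sub-exponentially) in $n$; here the functional relations \eqref{E:Y_n-comm-1}--\eqref{E:Y_n-comm-2} are exactly designed to identify $\mathbb{Y}_{n+1}(w_{n+1}+1-1/\ga_{n+1})$ with $\mathbb{Y}_{n+1}(w_{n+1}) + (\text{a }\pm(1-\ga_{n+1})\text{ term})$, which after adding the integer part lines up with $w_n+1-1/\ga_n$ up to a bounded error. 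Telescoping the geometric series then gives a Cauchy sequence, hence a limit; and since all the $\Phi_n(\cdot)$ land in $I_{-1}$ (a closed set) the limit lies in $I_{-1}$, so $\tilde T_\ga$ maps $I_{-1}$ to $I_{-1}$.

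For (3), $\D{Z}$-equivariance, I would use \refL{L:model-almost-periodic}(i): $w\in I_n^j \iff w+1\in I_n^j$, hence $w_{-1}$ and $w_{-1}+1$ have trajectories differing only in $l_{-1}$ (shifted by $1$) with all subsequent $(w_i;l_i)$ identical; since the formula for $\tilde T_\ga$ only uses $l_{-1}$ through the term $\tfrac{\gep_0+1}{2}$ (not $l_{-1}$ itself!) the outputs agree, so $\tilde T_\ga(w_{-1}+1)=\tilde T_\ga(w_{-1})$ — actually one wants $\tilde T_\ga(w_{-1}+1)=\tilde T_\ga(w_{-1})+1$ on the quotient, which is automatic once the difference of trajectories is understood; either way the induced map on $I_{-1}/\D{Z}$ is well-defined. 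For (4), continuity: for nearby $w_{-1},w_{-1}'$ their trajectories agree for many steps (the first index where $l_i$ or the case distinction can differ is pushed to infinity as the points converge, using that the ``boundary'' configurations where $w_i\in\partial K_i$ form a measure-zero/nowhere-dense set and the normalisation intervals are half-open in a compatible way), and on the common initial segment the contraction $(0.9)^n$ plus the tail estimate from step (2) forces the images to be close; one has to handle the seam between case (i) and case (ii) by checking the two formulas agree in the limit $w_n\to\partial K_n$, which again comes from \eqref{E:Y_n-comm-2}. Injectivity follows because each $\mathbb{Y}_n$ is injective (\refL{L:Y-domain}), the integers $l_i$ are recoverable from $w_i$, and distinct trajectories yield distinct outputs; on the quotient one must rule out two points differing by a non-integer mapping to the same point, which uses the star-like/vertical-line structure of $I_{-1}$.

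\textbf{Main obstacle.} I expect the crux to be step (4), specifically continuity across the boundary between the ``terminating'' case (i) and the ``non-terminating'' case (ii): one must show that as $w_{-1}$ varies so that $w_n$ approaches $\partial K_n$ for some $n$, the piecewise definition glues continuously, which requires matching $\mathbb{Y}_n(w_n+1)$ against the $n$-th partial composition of case (ii) and controlling the infinite tail uniformly — this is where the functional relation \eqref{E:Y_n-comm-2} and the uniform contraction must be combined most carefully. The existence of the limit in (2) is the second most delicate point, but it is a fairly direct telescoping argument once the right bookkeeping with \eqref{E:Y_n-comm-1}--\eqref{E:Y_n-comm-2} is in place.
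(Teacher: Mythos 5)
Your outline follows the same overall architecture as the paper (trajectories, a telescoping/contraction argument for the limit in case (ii), the functional equations \eqref{E:Y_n-comm-1}--\eqref{E:Y_n-comm-2} to match formulas at the seams), but two steps that you treat as routine are exactly where an argument is required, and one of your mechanisms is wrong as stated. First, injectivity. On the non-terminating set (all $w_n\in I_n\setminus K_n$) the map is a pointwise limit of the finite compositions, and injectivity of each $\mathbb{Y}_n$ does not pass to a limit: a limit of injective maps need not be injective, and ``the integers $l_i$ are recoverable from $w_i$'' says nothing about the limiting formula. One needs a quantitative statement, e.g.\ the uniform bound $|E_n(w_n)-(w_n+1-1/\ga_n)|\leq 30$ obtained from the telescoped tail, combined with the fact that two distinct points of $V^\infty$ have lifts at some level separated by distance at least $61$, so the limit map must separate them; and for global injectivity on the quotient one must also check that the images of the different pieces $W^n$, $W^m$, $V^\infty$ are pairwise disjoint. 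None of this is in your proposal.

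Second, continuity. Your central mechanism --- ``for nearby points the trajectories agree for many steps, and the first index where $l_i$ or the case distinction can differ is pushed to infinity as the points converge'' --- fails precisely in the delicate configurations. If $w$ lies in some $W^n$ (or in $V^\infty$) and is approached by points $w^i\in W^{j_i}$ with $j_i\to\infty$ from the appropriate side, then the trajectories of the $w^i$ differ from that of $w$ already at a bounded depth, no matter how close $w^i$ is to $w$: for instance one gets $\lim_{i\to\infty} w^i_{n-1}= w_{n-1}+1/\ga_{n-1}$, while the trajectory of $w$ itself carries $w_{n-1}$. So the trajectory assignment is genuinely discontinuous at bounded index, and equality of the two computed images must be extracted from \eqref{E:Y_n-comm-1}--\eqref{E:Y_n-comm-2} by an explicit case analysis in $\gep_n=\pm 1$; moreover in some of these configurations the two values agree only modulo $\D{Z}$, which is why $\tilde{T}_\ga$ is not continuous on $I_{-1}$ itself and only the induced map on $I_{-1}/\D{Z}$ is --- a point your step (3)/(4) blurs. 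Finally, when the limit point is in $V^\infty$ and the approximants are in $W^{j_i}$, one must compare the case (i) formula at depth $j_i$ with the infinite case (ii) formula uniformly in $i$ (the paper does this with an explicit uniform bound before applying the contraction), not merely on a ``common initial segment'' of trajectories, since no such common segment of growing length exists.
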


The main idea of the proof for the above statement is to partition the set $I_{-1}$ into infinitely many pieces, 
where the map is continuous on each piece. Then, we show that the maps on the pieces match at the boundary points.  
Below we introduce the partition pieces. 

For $w_{-1} \in I_{-1}$, let $(w_i; l_i)_{i \geq -1}$ denote the trajectory of $w_{-1}$. 
For each $n\geq 0$, let 
\[W^n= \{ w_{-1} \in I_{-1} \mid \text{ for all } 0 \leq i \leq n-1, w_i \in I_i \setminus K_i, \tand w_n \in K_n\},\]  
\[V^n =\{w_{-1} \in I_{-1} \mid \text{ for all } 0 \leq i \leq n, w_i  \in I_i \setminus K_i\}.\] 
We set $V^\infty= \cap_{n\geq 0} V^n$. 
Evidently, we have 
\begin{equation}\label{E:I_-1-decomposed}
I_{-1}=\cup_{n \geq 0} W^n \cup  V^\infty.
\end{equation}

It is also convenient to use some simplified notations for the compositions of the maps which appear in the definition 
of $\mathbb{T}_\ga$. 
That is, for $m \geq n$, let \footnote{\textrm{id} denotes the identity map.} 
\begin{equation*} 
X_n^{n-1}=\textrm{id},  \qquad 
X_n^m= \left(\mathbb{Y}_n+\frac{\gep_n+1}{2}\right ) \circ \left(\mathbb{Y}_{n+1}+\frac{\gep_{n+1}+1}{2}\right ) 
\circ \cdots \circ \left (\mathbb{Y}_{m} +\frac{\gep_{m}+1}{2} \right).
\end{equation*}

We break the proof of \refL{L:model-map-lift} into several lemmas. 

\begin{lem}\label{L:P:model-map-lift-1}
For all $n\geq 1$, the map $w_{-1} \mapsto w_n$ is continuous and injective on the sets $W^n$ and $V^n$. 
In particular, for all $n \geq 1$, $\tilde{T}_\ga: W^n \to I_{-1}$ is continuous and injective. 
\end{lem}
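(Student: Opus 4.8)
The plan is to prove the statement by induction on $n$, unwinding the recursive definition of the trajectory $(w_i; l_i)_{i \ge -1}$. The key observation is that, on each of the sets $W^n$ and $V^n$, the passage $w_{-1} \mapsto w_n$ is obtained by a finite sequence of \emph{explicitly invertible} operations: at each step $i$ one first subtracts the integer $l_i$ (which is locally constant on the relevant set, since on $V^i$ and $W^i$ we stay in $I_i \setminus K_i$ or land in $K_i$, and in either case $\Re w_i$ is confined to an interval of length $1$ forcing a unique $l_i$), then applies $(\mathbb{Y}_{i+1})^{-1}$, which exists and is continuous by \refL{L:Y-domain} (injectivity of $Y_r$, hence of each $\mathbb{Y}_n$) together with the fact that $\mathbb{Y}_{i+1}$ maps vertical lines to vertical lines. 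So the composite $w_{-1}\mapsto w_n$ is, restricted to $W^n$ (resp. $V^n$), a finite composition of restrictions of continuous injections, hence continuous and injective.

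First I would set up the base case $n=1$: on $V^1$ (resp.\ $W^1$) one has $w_{-1} \in I_{-1}\setminus K_{-1}$ is not actually the condition — rather $w_0 \in I_0\setminus K_0$ (resp.\ $w_0\in K_0$) and $w_{-1}\in I_{-1}$ — and by \refE{E:I--1} together with \eqref{E:trajectory-condition-1} the map $w_{-1}\mapsto w_0$ is a branch of $(\mathbb{Y}_0 + l_{-1})^{-1}$; I would check that $l_{-1}$ is determined (indeed $\Re I_{-1}\subset[0,1]$ and $\gep_0$ fixes the half-open normalization, so $l_{-1}=(\gep_0+1)/2 - 1$ or $0$ accordingly — in any case locally constant), and that $\mathbb{Y}_0^{-1}$ is continuous and injective on its image. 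Then $w_0 \mapsto w_1$ is another such branch of $(\mathbb{Y}_1+l_0)^{-1}$, with $l_0$ locally constant by \refE{E:trajectory-real-parts} and the defining inequalities $0\le \Re(w_i - l_i) < 1$ (or the mirror condition for $\gep_{i+1}=+1$). For the inductive step, I would argue that $V^{n+1}\subset V^n$ and $W^{n+1}$ decompose compatibly, so continuity/injectivity of $w_{-1}\mapsto w_n$ on $V^n$ transfers to $w_{-1}\mapsto w_{n+1}$ by composing with the locally-constant-shift-then-$\mathbb{Y}_{n+1}^{-1}$ map, whose domain contains the image of $V^{n+1}$ under $w_{-1}\mapsto w_n$.

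Finally, for the ``in particular'' clause, note that on $W^n$ the map $\tilde T_\ga$ is by definition $X_0^n(w_n+1)$, i.e.\ the composition of the continuous injection $w_{-1}\mapsto w_n$, the translation $w_n\mapsto w_n+1$, and the map $X_0^n$, which is a finite composition of the $\mathbb{Y}_i$'s (continuous) and translations. Continuity is immediate; injectivity follows because $X_0^n$ is injective on the relevant piece — here I would invoke the uniform contraction \refE{E:uniform-contraction-Y}, which gives $|X_0^n(a)-X_0^n(b)|\le (0.9)^{n+1}|a-b|$ but more importantly that each $\mathbb{Y}_i$ is injective, so the composition $X_0^n$ is injective, and composing injections with the injection $w_{-1}\mapsto w_n$ and the injection $w_n\mapsto w_n+1$ yields an injection.

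The main obstacle I anticipate is \textbf{verifying that the integers $l_i$ are genuinely locally constant on $W^n$ and $V^n$}, i.e.\ that the trajectory does not ``branch'' within a single partition piece. This requires care at the boundary between adjacent fundamental domains: a priori a small perturbation of $w_{-1}$ could push $\Re(w_i - l_i)$ across an integer, changing $l_i$. The resolution is that the definition of $W^n$ and $V^n$ already excludes the boundary case where $w_i \in K_i$ but also sits on the seam — one must check, using \refL{L:I_n^j-basic-features} (the description of the vertical boundary lines of $I_n^j$) and \refL{L:model-almost-periodic} (the $+1$ and $1/\ga_n$ periodicities), that the points with ambiguous $l_i$ form the \emph{common boundary} of the pieces and not the interior, so that on each open piece $l_i$ is constant and the map extends continuously to the closed piece. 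This is where the geometry of the equivariant tiling genuinely enters; the rest is bookkeeping with continuous invertible maps.
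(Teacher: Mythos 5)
Your overall architecture coincides with the paper's: on $W^n$ and $V^n$ the map $w_{-1}\mapsto w_n$ is a fixed composition of integer translations and the continuous injective maps $\mathbb{Y}_j^{-1}$, and the ``in particular'' clause follows by composing with $X_0^n$ and the translation by $+1$. The gap is in the one step that carries the content of the lemma, namely your justification that the integers $l_i$ do not vary. Knowing only that $\Re w_i$ is confined to an interval of length one does not force a unique value of $l_i$: for $w_i \in I_i\setminus K_i$ one has $\Re w_i \in (1/\ga_i-1,\,1/\ga_i]$, an interval which straddles an integer, and the normalisation $0\le \Re(w_i-l_i)<1$ (resp.\ $-1<\Re(w_i-l_i)\le 0$) genuinely produces two distinct values of $l_i$ on this set, namely $l_i=a_i+(\gep_{i+1}-1)/2$ and $l_i=a_i+(3\gep_{i+1}-1)/2$. (For $w_i\in K_i$ the real part even ranges over $[0,1/\ga_i-1]$, of length greater than one.) What pins $l_i$ down is not the location of $w_i$ alone but the location of $w_{i+1}$: the two values above correspond exactly to $w_{i+1}\in K_{i+1}$ and to $w_{i+1}\in I_{i+1}\setminus K_{i+1}$, respectively. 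Since membership in $W^n$ or $V^n$ prescribes this itinerary for every index up to $n$, the whole tuple $(l_i)_{i=-1}^{n-1}$ is constant — not merely locally constant — on each of $W^n$ and $V^n$; this short case analysis is precisely the paper's proof and is the piece missing from yours. (Note also that $I_{-1}$ is a union of vertical half-lines over a possibly totally disconnected set of real parts, so $W^n$ and $V^n$ need not be connected and ``locally constant'' would not upgrade to constant by a connectedness argument.)

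Your closing paragraph does flag the right worry, but the proposed resolution is aimed at the wrong place. The delicate seam is not where a point of $K_i$ sits on the boundary of a tile; it is the subdivision of $I_i\setminus K_i$ according to the $K_{i+1}$ versus $I_{i+1}\setminus K_{i+1}$ dichotomy one level down, and points of $W^n$ or $V^n$ lying on that seam are handled by the exact formulas above, with no need for passing to open pieces and ``extending continuously to the closed piece''. Indeed, continuity on $\ol{W^n}$ is a strictly harder statement: there the approximating trajectories carry different integers $l_i$, and reconciling the two expressions requires the functional equations \eqref{E:Y_n-comm-1} and \eqref{E:Y_n-comm-2}; in the paper this is the separate \refL{L:P:model-map-lift-3} and should not be folded into the present lemma.
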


\begin{proof}
For each $i\geq 0$, if $w_i \in I_i \setminus K_i$ and $w_{i+1} \in K_{i+1}$, then $l_i=a_i+ (\gep_{i+1}-1)/2$. 
Similarly, if $w_i \in I_i \setminus K_i$ and $w_{i+1} \in I_{i+1} \setminus K_{i+1}$, then $l_i=a_i+ (3\gep_{i+1}-1)/2$. 
These imply that for all $n\geq 1$ and all $w_{-1} \in W^n$, the entries $(l_i)_{i=-1}^{n-1}$ in the trajectory of $w_{-1}$ 
is independent of $w_{-1}$. 
Similarly, for all $n\geq 1$ the entries $(l_i)_{i=-1}^{n-1}$ in the trajectory of $w_{-1} \in V^n$ is independent of $w_{-1}$. 
In particular, the map $w_{-1} \mapsto w_n$ is continuous and injective on each of $W^n$ and $V^n$. 

As each $\mathbb{Y}_j$ is continuous and injective, we conclude that $\tilde{T}_\ga$ is continuous and injective on $W^n$.
\end{proof}

\begin{lem} \label{L:P:model-map-lift-2}
The map $\tilde{T}_\ga$ is well-defined, continuous and injective on $V^\infty$.
\end{lem}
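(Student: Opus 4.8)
The plan is to show that on $V^\infty$ the formula in case (ii) produces a Cauchy sequence whose limit defines a point of $I_{-1}$, and then to establish continuity and injectivity using the uniform contraction estimate \eqref{E:uniform-contraction-Y}. First I would record the crucial quantitative input: by \refL{L:uniform-contraction-Y_r}, each map $\mathbb{Y}_j$ (and hence each $\mathbb{Y}_j + (\gep_j+1)/2$) is $0.9$-Lipschitz on $\ol{\D{H}'}$, so the composition $X_0^n$ is $(0.9)^{n+1}$-Lipschitz. As in \refL{L:P:model-map-lift-1}, for $w_{-1} \in V^\infty$ the integers $(l_i)_{i\ge -1}$ are determined by the rule $l_i = a_i + (3\gep_{i+1}-1)/2$, so the map $w_{-1}\mapsto w_n$ is continuous on $V^\infty$; moreover $w_n \in I_n \setminus K_n \subset I_n^0$, so $0 \le \Re w_n < 1/\ga_n$ and, since $w_n$ lies in $\mathbb{Y}_n(\,i[-1,\infty))$-type sets by \refL{L:I_n^j-basic-features}, $\Im w_n$ is bounded below uniformly. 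Consequently the points $w_n + 1 - 1/\ga_n$ remain in a fixed bounded-below vertical strip, so the distance between the $n$-th and $(n+1)$-st terms of the sequence defining $\tilde T_\ga(w_{-1})$ is at most $(0.9)^{n+1}$ times a fixed bound; this gives a geometrically convergent Cauchy sequence, so the limit exists, and it lies in $I_{-1} = \cap_j I_{-1}^j$ because the $n$-th term already lies in $I_{-1}^n$ by \eqref{E:I--1}, \eqref{E:I_n^j--1}, \eqref{E:I_n^j-+1} and the truncated composition lands in the appropriate generation.

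Next I would prove continuity of $\tilde T_\ga$ on $V^\infty$. Fix $w_{-1}, w_{-1}' \in V^\infty$ and let $(w_i; l_i)$, $(w_i'; l_i)$ be their trajectories (the $l_i$'s agree). Writing $\tilde T_\ga(w_{-1})$ as the limit of $X_0^n(w_n + 1 - 1/\ga_n)$, and similarly for $w_{-1}'$, I would estimate
\[
|\tilde T_\ga(w_{-1}) - \tilde T_\ga(w_{-1}')| \le |X_0^n(w_n + 1 - 1/\ga_n) - X_0^n(w_n' + 1 - 1/\ga_n)| + 2 \cdot (0.9)^{n+1} C
\]
for a fixed constant $C$ bounding the tail. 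The first term is at most $(0.9)^{n+1}|w_n - w_n'|$, and since $w_n$ and $w_n'$ both lie in the bounded-$\Re$, bounded-below strip $I_n^0$, we have $|w_n - w_n'| \le |\Re w_n - \Re w_n'| + |\Im w_n - \Im w_n'|$; the real parts are controlled because the map $w_{-1}\mapsto w_n$ is continuous on $V^n \supset V^\infty$ (\refL{L:P:model-map-lift-1}), and in fact it suffices to observe that $w_{-1}\mapsto w_n$ is uniformly continuous on the compact-modulo-$\D{Z}$ quotient. Given $\epsilon > 0$, choose $n$ so that $2(0.9)^{n+1}C < \epsilon/2$ and $(0.9)^{n+1}\,\mathrm{diam}(I_n^0/\D{Z}) $ handled, then use continuity of $w_{-1}\mapsto w_n$ to make the remaining term $< \epsilon/2$. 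Injectivity follows the same way: if $\tilde T_\ga(w_{-1}) = \tilde T_\ga(w_{-1}')$ then, reversing the composition (each $\mathbb{Y}_j$ is injective by \refL{L:Y-domain}), one forces $w_n + 1 - 1/\ga_n = w_n' + 1 - 1/\ga_n$ for all $n$ — more carefully, the limit representation together with injectivity of each finite composition $X_0^n$ shows the sequences must coincide at every level, hence $w_{-1} = w_{-1}'$; one must also check across the pieces $W^n$, but that is the content of the later matching lemmas, not of this one.

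The main obstacle I expect is the \emph{uniform} lower bound on $\Im w_n$ and uniform control of $\Re w_n$ as $n\to\infty$ — i.e. verifying that the points $w_n + 1 - 1/\ga_n$ genuinely stay in one fixed bounded region of $\ol{\D{H}'}$ independent of $n$ and of $w_{-1}$, so that the factor multiplying $(0.9)^{n+1}$ in the tail estimate is a genuine constant. This requires combining \refL{L:Y-domain} (which gives $\Im Y_r \ge -9/10 > -1$) with \refL{L:I_n^j-basic-features}(i) (which pins $\Re w_n \in [0, 1/\ga_n]$, but $1/\ga_n$ is \emph{not} bounded); the resolution is that the relevant quantity $w_n + 1 - 1/\ga_n$ has real part in $[1 - 1/\ga_n, 1]$, and after applying $\mathbb{Y}_n$ the $1/\ga_n$-translate collapses by \eqref{E:Y_n-comm-2} — indeed \eqref{E:Y_n-comm-2} is exactly designed so that $\mathbb{Y}_n(w_n + 1 - 1/\ga_n)$ and $\mathbb{Y}_n(w_n)$ differ by the bounded quantity $\pm(1-\ga_n)$, which is how the telescoping tail stays summable. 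Once this bookkeeping is in place the rest is the routine geometric-series argument sketched above.
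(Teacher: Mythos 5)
Your treatment of well-definedness and continuity is essentially the paper's argument: you telescope the sequence $X_0^n(w_n+1-1/\ga_n)$, bound consecutive differences by a fixed constant times $(0.9)^n$ via the commutation relation and the $0.9$-contraction, and conclude uniform (hence continuous) convergence. Two small slips there: the relation you need is \eqref{E:Y_n-comm-1}, which holds for all $w\in \ol{\D{H}'}$, not \eqref{E:Y_n-comm-2}, which is only stated on the vertical line $\Re w=0$; and for $w_n\in I_n\setminus K_n$ the point $w_n+1-1/\ga_n$ has real part in $(0,1)$, not $[1-1/\ga_n,1]$. Neither affects the scheme.

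The injectivity step, however, has a genuine gap. You claim that if $\tilde{T}_\ga(w_{-1})=\tilde{T}_\ga(w'_{-1})$ then ``reversing the composition'' and the injectivity of each finite composition $X_0^n$ force $w_n=w'_n$ for all $n$. But $\tilde{T}_\ga(w_{-1})$ is a limit, not a finite composition applied to $w_n+1-1/\ga_n$; to factor it as $X_0^n$ applied to a level-$n$ quantity you must introduce $E_n(w_n)=\lim_{m\to\infty} X_{n+1}^m(w_m+1-1/\ga_m)$ and prove the functional relation $E_n=(\mathbb{Y}_{n+1}+(\gep_{n+1}+1)/2)\circ E_{n+1}$ by passing to the limit, and then injectivity of $X_0^n$ only yields $E_n(w_n)=E_n(w'_n)$ for every $n$ --- and injectivity of the limit maps $E_n$ is exactly what is in question, so the argument as written is circular. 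The paper closes this quantitatively: the same telescoping estimate gives $|E_n(z)-(z+1-1/\ga_n)|\leq 30$ uniformly, while the uniform contraction of the $\mathbb{Y}_j$ read backwards shows that two distinct points of $V^\infty$ have trajectories with $|w_n-w'_n|\geq 61$ for some $n$; hence $E_n(w_n)\neq E_n(w'_n)$ at that level, and pushing this down through the functional relation using the injectivity of each $\mathbb{Y}_j$ gives $E_0(w_0)\neq E_0(w'_0)$, i.e.\ injectivity of $\tilde{T}_\ga$ on $V^\infty$. Some such quantitative separation argument, comparing $E_n$ with a translation so that it cannot collapse far-apart fibres, is needed; injectivity of the finite-level maps alone does not suffice.
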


\begin{proof}
We know from \refL{L:P:model-map-lift-1} that for every $n \geq 0$, the map $w_{-1} \mapsto w_n$ is continuous and 
injective on $V^n$. 
The image of this map covers $(1/\ga_n-1, 1/\ga_n) \cap I_n$, due to the choice we made in \refE{E:trajectory-real-parts}.
Since the inverse map $w_n \mapsto w_{-1}$ is also continuous and injective on $(1/\ga_n-1, 1/\ga_n) \cap I_n$, it follows that 
$V^n$ is relatively open in $I_{-1}$. It is possible that for some values of $\ga$, the nest $\cap_{n\geq 0} V^n$ is empty 
(for instance when $\gep_i=-1$, for all $i\geq 0$).
Below we assume that $V^\infty$ is not empty. 

For each $n \geq 0$ and $0 \leq i \leq n$, we define the sets $V^n_i$ as the set of $w_i$, for $w_{-1} \in V^n$. 
Then, define $V^\infty_i = \cap_{n \geq i} V^n_i$. It follows that $w_{-1} \mapsto w_i$ is continuous and injective 
from $V^\infty$ to $V^\infty_i$, for all $i\geq 0$. 
Moreover, by the uniform contraction of the map $\mathbb{Y}_j$, each $V^\infty_i$ is a closed half-infinite vertical line.

For $n \geq 0$ and $m \geq n$, we define the map $E_n^m: V_n^m \to I_n$ as follows 
\begin{equation}\label{E:L:model-map-lift-2-1}
E_n^m(w_n) = X_{n+1}^m  (w_m+1-1/\ga_m).
\end{equation}
By the above paragraphs, this is a continuous and injective map on $V_n^m$. 

Note that for $w_n \in I_n \setminus K_n$ we have 
\[|\mathbb{Y}_{n+1}(w_{n+1}) + (\gep_{n+1}+1)/2 - (w_n+1-1/\ga_n)| \leq 1.\]
This is because, $\Im \mathbb{Y}_{n+1}(w_{n+1})= \Im w_n$, $\Re w_n +1 -1/\ga_n \in [0,1]$, and 
$\Re \mathbb{Y}_{n+1} (w_{n+1})+ (\gep_{n+1}+1)/2$ belongs to $[0,+1]$. 

By \refE{E:Y_n-comm-1}, $\mathbb{Y}_{n+1}(w- 1/\ga_{n+1})=\mathbb{Y}_{n+1}(w) + \gep_{n+1}$, and 
by \refL{L:uniform-contraction-Y_r}, 
$|\mathbb{Y}_{n+1}(w +1)-\mathbb{Y}_{n+1}(w)| \leq 0.9$. 
Combining with the above inequality, we conclude that for all $w_n \in V_n^{n+1}$,
\begin{align*}
|E_n^{n+1} (w_n)- E_n^n(w_n)| 
&= \left | \mathbb{Y}_{n+1} (w_{n+1}+1-1/\ga_{n+1})+(\gep_{n+1}+1)/2 - (w_n +1 -1/\ga_n)\right | \\
&\leq \left | \mathbb{Y}_{n+1} (w_{n+1}) + \gep_{n+1}+ (\gep_{n+1}+1)/2 - (w_n +1 -1/\ga_n)\right |+ 0.9\\ 
&= |\gep_{n+1}| + 1 + 0.9 \leq 3. 
\end{align*}
Therefore, using the uniform contraction of $\mathbb{Y}_j$ in \refL{L:uniform-contraction-Y_r}, we conclude that for all 
$m \geq n+2 $, and all $w_n \in V^{m+1}_n$, we have 
\begin{equation}\label{E:L:model-map-lift-2-2}
|E_n^{m+1}(w_n)- E_n^m(w_n)| 
=|X_{n+1}^m \circ E_m^{m+1}(w_m) - X_{n+1}^m \circ E_m^m(w_m)| 
\leq (0.9)^{m-n} \cdot  3. 
\end{equation}
In particular, for $w_n \in V_n^\infty$, the above inequality holds for all $m \geq n$, and hence $E_n^m$ forms a 
uniformly Cauchy sequence. Thus, the map 
\begin{equation}\label{E:E_n-defn}
E_n = \lim_{m \to \infty} E_n^m: V^\infty_n \to I_n
\end{equation}
is well-define, and continuous. 
In particular, since $\tilde{T}_\ga(w_{-1})= (\mathbb{Y}_0+(\gep_0+1)/2) \circ E_0 (w_0)$, we conclude that $\tilde{T}_\ga$ is 
continuous on $V^\infty$. 

In order to show that $\tilde{T}_\ga$ is injective, it is enough to show that $E_0$ is injective. 
To this end, we first note that for all $n\geq 0$ and all $w_n \in V_n^\infty$, we have 
\begin{equation}\label{E:L:model-map-lift-2-3}
|E_n(w_n) - (w_n+1-1/\ga_n) | \leq \sum_{j=n}^\infty  |E_n^{j+1}(w_n) - E_n^j(w_n)| \leq 
3 \sum_{j=n}^\infty 0.9^{j-n} \leq 30.
\end{equation}
Also, for $m \geq n+1$, we may rewrite \refE{E:L:model-map-lift-2-1} as 
\begin{equation}\label{E:L:model-map-lift-2-5}
E_n^m(w_n)= (\mathbb{Y}_{n+1}+ (\gep_{n+1}+1)/2) \circ E_{n+1}^m(w_{n+1}),
\end{equation}
and then take limits as $m \to \infty$ to obtain 
\begin{equation}\label{E:L:model-map-lift-2-4}
E_n(w_n)= (\mathbb{Y}_{n+1}+ (\gep_{n+1}+1)/2) \circ E_{n+1}(w_{n+1}).
\end{equation}
The above relation holds for all $n \geq 0$ and all $w_{n+1} \in V_{n+1}^\infty$. 

Let $w_{-1}$ and  $w'_{-1}$ be distinct elements in $V^\infty$. Let $(w_i ; l_i)_{i\geq -1}$ and $(w_i'; l_i)_{i\geq -1}$ denote the 
trajectories of $w_{-1}$ and $w'_{-1}$, respectively. 
Then, for each $n \geq 0$, both $w_n$ and $w'_n$ belong to $V_n^\infty$. 
By the uniform contraction of $\mathbb{Y}_j$, there is $n \geq 0$ such that $|w_n - w_n'| \geq 61$. 
By virtue of the uniform bound in \refE{E:L:model-map-lift-2-3}, $E_n(w_n) \neq E_n(w_n')$. 
Inductive using \eqref{E:L:model-map-lift-2-4}, and the injectivity of $\mathbb{Y}_j$, we conclude that 
$E_n(w_n)\neq E_n(w_n')$, for all $n\geq 0$.  
In particular, $E_0$ is injective. 
\end{proof}

\begin{lem}\label{L:P:model-map-lift-3}
For all $n \geq 1$, $\tilde{T}_\ga: \ol{W^n} \to I_{-1}/\D{Z}$ is continuous. 
\end{lem}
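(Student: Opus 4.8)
The plan is to express $\tilde{T}_\ga$ on $W^n$ as a fixed composition of continuous maps applied to $w_n$, to observe that this composition extends continuously past $W^n$, and then to check that the extension still computes $\tilde{T}_\ga$.

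By the proof of \refL{L:P:model-map-lift-1}, on $W^n$ the integers $l_{-1},\dots,l_{n-1}$ in the trajectory do not depend on $w_{-1}$; write them $l_{-1}^{*},\dots,l_{n-1}^{*}$ and put
\[\Phi \;:=\; (\mathbb{Y}_0+l_{-1}^{*})\circ(\mathbb{Y}_1+l_0^{*})\circ\cdots\circ(\mathbb{Y}_n+l_{n-1}^{*}),\]
so that on $W^n$ one has $w_{-1}=\Phi(w_n)$ and $\tilde{T}_\ga(w_{-1})=X_0^n(w_n+1)$. The map $\Phi:\ol{\D{H}'}\to\D{C}$ is a composition of the injective continuous maps $\mathbb{Y}_j$ with translations, hence injective and continuous; it carries vertical lines to vertical lines and, by \refL{L:Y-domain}, its restriction to each vertical line is a proper embedding, so $\Phi$ is a homeomorphism onto its image and $\Phi(K_n)$ is closed in $\D{C}$. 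Since $W^n\subseteq\Phi(K_n)$ (this inclusion is exactly how the values $l_i^{*}$ were identified), we get $\ol{W^n}\subseteq\Phi(K_n)\cap I_{-1}$, and on this set the map
\[g\colon w_{-1}\longmapsto X_0^n\big(\Phi^{-1}(w_{-1})+1\big)\]
followed by the projection to $I_{-1}/\D{Z}$ is continuous, being a composition of continuous maps; and $g=\tilde{T}_\ga$ on $W^n$.

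It remains to prove $g=\tilde{T}_\ga$ on $\Phi(K_n)\cap I_{-1}$, hence on $\ol{W^n}\setminus W^n$. For $w_{-1}$ there, the ``official'' trajectory of \refSS{SS:T-defn} (built with the half-open convention) may differ from the one implicit in $\Phi$; this happens precisely when some partial $\Phi$-inverse of $w_{-1}$ lands on a seam $\{\Re w=1/\ga_i-1\}\cap I_i$ between two tiles of \refE{E:I_n^j--1}--\refE{E:I_n^j-+1} --- in which case the official trajectory enters some $K_m$ with $m<n$, so the applicable clause of the definition of $\tilde{T}_\ga$ is case~(i) with index $m$, or case~(ii) if this occurs at all levels --- or on the edge $\{\Re w=1/\ga_i\}\cap I_i$, identified with $\{\Re w=0\}\cap I_i$ via \refL{L:model-almost-periodic}. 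In each case one identifies the two sides of the seam using \refL{L:model-almost-periodic} and then checks, via the two functional relations $\mathbb{Y}_{i+1}(w+1/\ga_{i+1})=\mathbb{Y}_{i+1}(w)\pm1$ and $\mathbb{Y}_{i+1}(it+1/\ga_{i+1}-1)=\mathbb{Y}_{i+1}(it)\pm(1-\ga_{i+1})$ of \refE{E:Y_n-comm-1}--\refE{E:Y_n-comm-2} (which are exactly the identities gluing that seam), that the value assigned by the applicable clause equals $X_0^n(\Phi^{-1}(w_{-1})+1)=g(w_{-1})$; for limit points lying in $V^\infty$ one uses instead \refL{L:P:model-map-lift-2} and the uniform estimate \refE{E:L:model-map-lift-2-3}.

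The main obstacle is this last seam-matching: one must verify, separately for $\gep_{i+1}=+1$ and $\gep_{i+1}=-1$, that the index shift in the trajectory (an $l_i$ increasing by $1$, or the return index dropping from $n$ to $m$) is exactly cancelled by the $\pm1$ and $\pm(1-\ga_{i+1})$ jumps in the functional relations, and that this persists compatibly through every level down to the first seam. The conceptual point is that these jumps are precisely what define the quotient $I_{-1}/\D{Z}$ and the tiling, so the piecewise definition of $\tilde{T}_\ga$ is forced to agree along the seams; establishing this is the real content of the lemma, the continuity in the first two paragraphs being essentially formal.
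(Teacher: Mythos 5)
Your first two paragraphs are fine, but they only repackage what \refL{L:P:model-map-lift-1} already gives: on $W^n$ the integers $l_{-1},\dots,l_{n-1}$ are frozen, so $\tilde{T}_\ga$ agrees there with a fixed continuous expression $w_{-1}\mapsto X_0^n(\Phi^{-1}(w_{-1})+1)$, and the entire difficulty is to show that this expression still computes $\tilde{T}_\ga$ (mod $\D{Z}$) at the points of $\ol{W^n}\setminus W^n$, where the official trajectory of \refSS{SS:T-defn} switches to a different clause of the definition. That verification is exactly what you do not carry out: you describe what would have to be checked and then state yourself that ``establishing this is the real content of the lemma''. As it stands the proposal is an outline. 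The missing content has two parts, both nontrivial. First, one must determine \emph{which} clause applies at a boundary point $w$: the paper does this by proving, for instance in the case $\gep_n=-1$, that $\lim_i\Re w^i_j\notin\D{Z}$ for $j\le n-3$, hence $\Re w_{n-2}\in\D{Z}$, hence $\Re w_{n-1}=0$, and that $n-1$ is the \emph{smallest} index with $w_{n-1}\in K_{n-1}$, so that $\tilde{T}_\ga(w)=X_0^{n-1}(w_{n-1}+1)$; an analogous determination (with a largest index $m$ having $\gep_m=+1$) is needed in the other case. Second, one must run the actual chain of identities, e.g.\ $X_0^n(w'_n+1)=X_0^{n-1}\circ\mathbb{Y}_n(w_n+1/\ga_n)=X_0^{n-1}(w_{n-1}+1)$ using \refE{E:Y_n-comm-1} and \refE{E:Y_n-comm-2}, where $w'_n=\lim_i w^i_n$ and $w'_{n-1}=w_{n-1}+1/\ga_{n-1}$. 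This matching is not formal: in the subcase $\gep_n=+1$ with $\gep_j=-1$ for $1\le j\le n-1$ the two values agree only modulo $\D{Z}$ (they differ by an integer coming from the $(\gep_0+1)/2$ shift), so the ``forced to agree along the seams'' heuristic in your last paragraph is precisely the claim to be proved, and it is only true in the quotient.

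Two further symptoms that the case analysis has not been worked through. Your contingency ``for limit points lying in $V^\infty$'' is vacuous: $V^n$ is relatively open in $I_{-1}$ (proof of \refL{L:P:model-map-lift-2}) and disjoint from $W^n$, so $\ol{W^n}\cap V^\infty\subseteq\ol{W^n}\cap V^n=\emptyset$; the $V^\infty$ difficulty only arises in the global continuity argument of \refL{L:model-map-lift}, where the index of $W^{j_i}$ tends to infinity, not for a fixed $n$. Also, the closedness of $\Phi(K_n)$ (needed for $\ol{W^n}\subseteq\Phi(K_n)$) requires a properness argument on $K_n$ — bounded real parts together with $\Im Y_r(x+iy)\to+\infty$ as $y\to+\infty$ — rather than properness on individual vertical lines; this is fixable, but as written it is asserted, not proved.
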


\begin{proof}
Fix an arbitrary $n \geq 1$. 
By definition, $W^n \subset V^{n-1}$. 
Recall from the proof of \refL{L:P:model-map-lift-2} that the map $w_{-1} \mapsto w_{n-1}$ is a homeomorphism from 
$V^{n-1}$ onto $(1/\ga_{n-1}-1, 1/\ga_{n-1}) \cap I_{n-1}$. Thus, $V^{n-1}$ is relatively open in $I_{-1}$. 
Indeed, for $w_{-1} \in V^{n-1}$, the map $\Re w_{-1} \to \Re w_{n-1}$ is a translation independent of $w_{-1}$. 

We have 
\begin{equation}\label{E:L:P:model-map-lift-3-1}
\begin{aligned}
\{w_{n-1} \mid w_{-1} \in W^n \}=\{ w \in I_{n-1} \mid \Re w \in [a_{n-1}-1, 1/\ga_{n-1})\}, \quad \tif \gep_n=-1; \\
\{w_{n-1} \mid w_{-1} \in W^n\}=\{ w \in I_{n-1} \mid \Re w \in (1/\ga_{n-1}-1, a_{n-1}]\}, \quad \tif \gep_n=+1. 
\end{aligned}
\end{equation}
Combining with the above paragraph, we conclude that there are real numbers $x_n$ and $y_n$ such that 
either $W^n=\{w\in I_{-1} \mid \Re w \in (x_n, y_n]\}$ or $W^n=\{w\in I_{-1} \mid \Re w \in [x_n, y_n)\}$. 

Let $(w^i)_{i\geq 0}$ be a sequence in $W^n$ converging to some $w \in \ol{W^n}$.  
If $w \in W^n$, the continuity at $w$ follows from \refL{L:P:model-map-lift-1}. 
So, below we assume that $w\in \ol{W_n} \setminus W^n$. 
Note that either $\Re w^i < \Re w$ or $\Re w^i > \Re w$ holds for all $i \geq 0$. 

Let $(w_j ; l_j)_{j\geq -1}$ denote the trajectory of $w$, and for each $i \geq 0$, let $(w^i_j, l^i_j)_{j\geq -1}$ denote 
the trajectory of $w^i$. By \eqref{E:L:P:model-map-lift-3-1}, either $\gep_n=+1$ and 
$\Re w^i_{n-1} \searrow 1/\ga_{n-1}-1$, or $\gep_n=-1$ and $\Re w^i_{n-1} \nearrow 1/\ga_{n-1}$.\footnote{We use the 
notation $x_i \searrow x$ to mean that the sequence $x_i $ converges to $x$ and $\Re x_i > x$. Similarly, $x_i \nearrow x$ 
means that the sequence $x_i$ converges to $x$ and $\Re x_i < x$.} 
We consider two cases based on these scenarios. 

(i) $\gep_n=-1$ and $\Re w^i_{n-1} \nearrow 1/\ga_{n-1}$, as $i\to \infty$.   
For integers $j$ with $-1 \leq j \leq n-3$, $\lim_{i \to \infty} \Re w^i_j \notin \mathbb{Z}$. 
That is because, if there is $j$ with $0 \leq j \leq n-3$ and $\lim_{i \to \infty} \Re w^i_j \in \mathbb{Z}$ then we must have 
$\lim_{i \to \infty} \Re w^i_{j+1}= 1/\ga_{j+1}$, then $\lim_{i \to \infty} \Re w^i_{j+2}= 1/\ga_{j+2}-1$, 
and then $\lim_{i \to \infty} \Re w^i_{n-1}= 1/\ga_{n-1}-1$.  
The last property is not possible in this case.
Similarly, if $\lim_{i \to \infty} \Re w^i_{-1} \in \mathbb{Z}$ then we must have 
$\lim_{i \to \infty} \Re w^i_{0}= 1/\ga_{0}$, then $\lim_{i \to \infty} \Re w^i_{1}= 1/\ga_{1}-1$, 
and then $\lim_{i \to \infty} \Re w^i_{n-1}= 1/\ga_{n-1}-1$.  
The same contradiction. 
 
By the previous paragraph, $l^i_j=l_j$, for all $-1 \leq j \leq n-3$. 
This implies that for $-1\leq j \leq n-2$, $\lim_{i \to \infty} w^i_j= w_j$. 

The integer $n-1$ is the smallest integer with $w_{n-1} \in K_{n-1}$.
To see this, first note that either $\gep_{n-1}=-1$ and $\Re w^i_{n-2} \nearrow a_{n-2}-1$, or $\gep_{n-1}=+1$ and 
$\Re w^i_{n-2} \searrow a_{n-2}$. 
Thus, $\Re w_{n-2}= \lim_{i \to \infty} \Re w_{n-2}^i \in \mathbb{Z}$. Hence, $\Re w_{n-1}=0$. 
On the other hand, for $0 \leq j \leq n-3$, $\Re w_j \neq 1/\ga_j -1$, since otherwise $\Re w_{n-1}= 1/\ga_{n-1}-1$. 
With this paragraph, we conclude that 
\[\tilde{T}_\ga (w)=X_0^{n-1} (w_{n-1}+1).\]

Let $w_{n-1}'= \lim_{i\to \infty} w^i_{n-1}$ and $w_{n}'= \lim_{i \to \infty} w^i_{n}$. 
We must have $\Re w'_{n-1}= 1/\ga_{n-1}$ and $\Re w'_n= 1/\ga_{n}-1$.
By \refE{E:Y_n-comm-1}, we have $w'_{n-1}= w_{n-1}+1/\ga_{n-1}$, and then using \refE{E:Y_n-comm-2}, we get 
$w'_{n}+1= w_{n}+1/\ga_n$.
Therefore, 
\begin{align*}
\lim_{i \to \infty} \tilde{T}_\ga(w^i) = \lim_{i \to \infty} X_0^{n} (w^i_n+1) = X_0^n (w'_n+1) 
&= X_0^{n-1} \circ \mathbb{Y}_n (w_n'+1) \\
&= X_0^{n-1} \circ \mathbb{Y}_n (w_n+1/\ga_n) \\
&=X_0^{n-1} (\mathbb{Y}_n (w_n)+1)=X_0^{n-1} (w_{n-1}+1).
\end{align*}
This completes the proof in this case. 

(ii) $\gep_n=+1$ and $\Re w^i_{n-1} \searrow 1/\ga_{n-1}-1$, as $i\to \infty$. 
Let us first assume that for all $1 \leq j \leq n-1$ we have $\gep_j=-1$. 
This implies that $\Re w^i_j \searrow 1/\ga_j-1$, for all $0 \leq j \leq n-1$.  
In particular, $\Re w_j=1/\ga_j-1$, for all $j \geq 0$.

For $0 \leq j \leq n$, choose $t_j \geq -1$ such that $w_j=1/\ga_j-1+it_j$. 
Then, by \refE{E:Y_n-comm-1}, 
\begin{align*}
\tilde{T}_\ga(w)= \left(\mathbb{Y}_0+\frac{\gep_0+1}{2}\right)(w_0+1) = \mathbb{Y}_0 (1/\ga_0 + i t_0) + \frac{\gep_0+1}{2} 
=  \mathbb{Y}_0(it_0) - \gep_0 + \frac{\gep_0+1}{2}.
\end{align*}
On the other hand, by \eqref{E:Y_n-comm-2}, 
\[\Im \mathbb{Y}_j(i t_j)= \Im \mathbb{Y}_j(i t_j +1/\ga_j-1)= \Im \mathbb{Y}_j(w_j)= \Im w_{j-1}= t_{j-1},\]
and hence $\mathbb{Y}_j(it_j)= i t_{j-1}$. 
Therefore,  
\begin{align*}
\lim_{i \to \infty} \tilde{T}_\ga (w^i) = \lim_{i \to \infty} X_0^n(w^i_n+1) 
&= X_0^n(w_n+1) \\
&= \mathbb{Y}_0 \circ \dots \circ \mathbb{Y}_{n-1} \circ (\mathbb{Y}_n + 1)(1/\ga_n + i t_n) + \frac{\gep_0+1}{2} \\
&= \mathbb{Y}_0 \circ \dots \circ \mathbb{Y}_{n-1} \circ (\mathbb{Y}_n( i t_{n})) + \frac{\gep_0+1}{2}\\
&= \mathbb{Y}_0(it_0) + \frac{\gep_0+1}{2}. 
\end{align*}
By the above equations, $\lim_{i \to \infty} \tilde{T}_\ga (w^i) /\mathbb{Z}= \tilde{T}_\ga(w)/\mathbb{Z}$. 

Now assume that there is $1 \leq m \leq n-1$ with $\gep_m=+1$. 
Assume that $m$ is the largest integer with $1 \leq m \leq n-1$ and $\gep_m=+1$. 
As in the above paragraph, we note that for all $j$ with $m \leq j \leq n-1$ we have $\Re w^i_j \searrow 1/\ga_j-1$. 
Since $\gep_m=+1$, we must have $\Re w^i_{m-1} \nearrow 1/\ga_{m-1}$. 
As in case (i), this implies that for $0 \leq j\leq m-3$, $\lim_{i \to \infty} \Re w^i_{j} \notin \mathbb{Z}$. 
Hence, for $-1 \leq j \leq m-2$, $w_j= \lim_{i\to \infty} w^i_j$.   
Moreover, $\lim_{i \to \infty} \Re w^i_{m-2} \in \mathbb{Z}$. 

By the above paragraph, $\Re w_{m-1}=0$, and $m-1$ is the smallest positive integer with $w_{m-1} \in K_{m-1}$.  
Then, 
\[\tilde{T}_\ga (w)=X_0^{m-1} (w_{m-1}+1).\]

For $m-1 \leq j \leq n $, let $w'_j= \lim_{i \to \infty} w^i_j$. 
We have $w_j=w_j'+1-1/\ga_j$. 
Then, as in the previous cases, we note that 
\begin{align*}
\lim_{i \to \infty} \tilde{T}_\ga(w^i) & = \lim_{i \to \infty} X_0^{n} (w^i_n+1) \\
&= X_0^n (w'_n+1) \\
&= X_0^{m-1} \circ (\mathbb{Y}_m+1) \circ \mathbb{Y}_{m+1}  \dots    \circ \mathbb{Y}_{n-1} \circ  (\mathbb{Y}_n+1) (w_n'+1) \\
&= X_0^{m-1} \circ (\mathbb{Y}_m+1) \circ \mathbb{Y}_{m+1}  \dots    \circ \mathbb{Y}_{n-1} \circ  (\mathbb{Y}_n+1) (w_n+1/\ga_n) \\
&= X_0^{m-1} \circ (\mathbb{Y}_m+1) \circ \mathbb{Y}_{m+1}  \dots    \circ \mathbb{Y}_{n-1} (w_{n-1}) \\
&= X_0^{m-1} (w_{m-1}+1).
\end{align*}
This completes the proof in this case. 
\end{proof} 

\begin{lem}\label{L:P:model-map-lift-4}
The set $W^0$ is closed, and $\tilde{T}: W^0/\D{Z} \to I_{-1}/\D{Z}$ is well-defined, continuous, and injective. 
\end{lem}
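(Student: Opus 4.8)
The plan is to exploit that on $W^0$ the map $\tilde T_\ga$ involves only the single change of coordinate $\mathbb{Y}_0$, together with the fact that $\Re Y_r(w)=r\Re w$, which makes the relevant real parts affine. Put $c=(\gep_0+1)/2\in\{0,1\}$, so that $\Re\mathbb{Y}_0(w)=-\gep_0\ga_0\Re w$ and, by \eqref{E:I--1}, $I_{-1}=\mathbb{Y}_0(I_0)+c$. First I would record that $\mathbb{Y}_0:\ol{\D{H}'}\to\mathbb{Y}_0(\ol{\D{H}'})$ is a homeomorphism: it is continuous and injective by \refL{L:Y-domain}, and it sends each vertical line homeomorphically onto a vertical line, with the value $\Re\mathbb{Y}_0(w)$ determining the target line; hence $w$ is recovered continuously from $\mathbb{Y}_0(w)$, so $\mathbb{Y}_0^{-1}$ is continuous on $\mathbb{Y}_0(\ol{\D{H}'})\supseteq\mathbb{Y}_0(I_0)=I_{-1}-c$.

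Next I would pin down $W^0$ explicitly. By definition $w_{-1}\in W^0$ iff $w_0\in K_0$, i.e.\ iff $w_0\in I_0$ and $\Re w_0\in[0,1/\ga_0-1]$. Since $\Re w_0<1/\ga_0$ for such $w_0$, the constraints in the trajectory construction force $l_{-1}=c$, so $w_{-1}=\mathbb{Y}_0(w_0)+c$ and therefore $\Re w_0=-\gep_0(\Re w_{-1}-c)/\ga_0$. Feeding this relation back and using $I_{-1}=\mathbb{Y}_0(I_0)+c$ with $\mathbb{Y}_0$ injective, one checks that
\[
W^0=I_{-1}\cap\bigl\{\,w\in\D{C}\ \big|\ \Re w\in J\,\bigr\},\qquad
J=\begin{cases}[\,0,\ 1-\ga_0\,], & \gep_0=-1,\\[2pt] [\,\ga_0,\ 1\,], & \gep_0=+1,\end{cases}
\]
which is a closed subset of $\D{C}$; the interval $J$ has length $1-\ga_0<1$, and it is this fact that will yield well-definedness and injectivity modulo $\D{Z}$.

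Then I would read off $\tilde T_\ga$ on $W^0$ from case (i) of its definition with $n=0$:
\[
\tilde T_\ga(w_{-1})=\Bigl(\mathbb{Y}_0+\tfrac{\gep_0+1}{2}\Bigr)(w_0+1)=\mathbb{Y}_0(w_0+1)+c,
\qquad w_0=\mathbb{Y}_0^{-1}(w_{-1}-c).
\]
By \refL{L:model-almost-periodic}(i), $w_0\in I_0$ forces $w_0+1\in I_0$, hence $\tilde T_\ga(w_{-1})\in\mathbb{Y}_0(I_0)+c=I_{-1}$, so $\tilde T_\ga$ really does map $W^0$ into $I_{-1}$; and since $\Re(w_0+1)\in[1,1/\ga_0]$ one reads off that $\Re\tilde T_\ga(W^0)$ also lies in an interval of length $1-\ga_0<1$. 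Written this way, $\tilde T_\ga|_{W^0}$ is the composition $w_{-1}\mapsto\mathbb{Y}_0^{-1}(w_{-1}-c)=w_0\mapsto w_0+1\mapsto\mathbb{Y}_0(w_0+1)+c$, each step continuous and injective by the first paragraph, so $\tilde T_\ga:W^0\to I_{-1}$ is continuous and injective.

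Finally I would pass to the quotient. Since $\Re W^0$ and $\Re\tilde T_\ga(W^0)$ each lie in an interval of length strictly less than $1$, the projection $q:I_{-1}\to I_{-1}/\D{Z}$ is injective on a neighbourhood in $I_{-1}$ of each of $W^0$ and $\tilde T_\ga(W^0)$, hence restricts to a homeomorphism onto its image on each. Composing $q$ with $\tilde T_\ga:W^0\to I_{-1}$ then produces a well-defined (nothing to check, as $q|_{W^0}$ is injective), continuous, and injective map $W^0/\D{Z}\to I_{-1}/\D{Z}$. The only genuinely delicate step is the verification in the second paragraph that $l_{-1}=c$ on all of $W^0$ and that the strip description of $W^0$ is exact; this has to be carried out separately for $\gep_0=-1$ and $\gep_0=+1$, using $\Re Y_r(w)=r\Re w$, the functional relations \eqref{E:Y_n-comm-1}--\eqref{E:Y_n-comm-2}, and \refL{L:model-almost-periodic} to handle the boundary lines $\Re w\in\{0,1/\ga_0\}$ of $I_0$.
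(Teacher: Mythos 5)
The paper gives no argument for this lemma (it is left to the reader), and your reduction to the single change of variable $\mathbb{Y}_0$ --- using $\Re Y_r(w)=r\Re w$, the relation $I_{-1}=\mathbb{Y}_0(I_0)+(\gep_0+1)/2$, and periodicity --- is the natural route, and the conclusions you reach are correct. There is, however, one concrete slip in the middle step. The integer $l_{-1}$ is determined by $\Re w_{-1}$ alone: for $\gep_0=-1$ the rule $0\le\Re(w_{-1}-l_{-1})<1$ gives $l_{-1}=0=c$ when $\Re w_{-1}\in[0,1)$, but $l_{-1}=1$ on the line $\Re w_{-1}=1$; for $\gep_0=+1$ the rule $-1<\Re(w_{-1}-l_{-1})\le 0$ gives $l_{-1}=1=c$ when $\Re w_{-1}\in(0,1]$, but $l_{-1}=0$ on the line $\Re w_{-1}=0$. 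On that exceptional boundary line one finds $\Re w_0=0\in[0,1/\ga_0-1]$, and the corresponding lift lies in $I_0$ (use \refE{E:Y_n-comm-1} and \refL{L:model-almost-periodic}), so the line does belong to $W^0$. Hence your identity $W^0=I_{-1}\cap\{\Re w\in J\}$ is not exact: for $\gep_0=-1$, for instance, $W^0=I_{-1}\cap\bigl(\{\Re w\in[0,1-\ga_0]\}\cup\{\Re w=1\}\bigr)$. Closedness survives, but the last step as you wrote it does not: both boundary lines of $I_{-1}$ lie in $W^0$, so $q$ is \emph{not} injective on $W^0$, and there is genuinely something to check for well-definedness --- which is precisely what the word ``well-defined'' in the lemma is about.

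The repair is immediate. If $w$ and $w+1$ both lie in $W^0$ (necessarily $\Re w=0$), their trajectories produce the same point $w_0$: for $w$ one has $l_{-1}=0$ and $w_0=\mathbb{Y}_0^{-1}(w)$, while for $w+1$ one has $l_{-1}=1$ and again $w_0=\mathbb{Y}_0^{-1}((w+1)-1)=\mathbb{Y}_0^{-1}(w)$; since the defining formula for $\tilde T_\ga$ on $W^0$ uses the fixed translation $(\gep_0+1)/2$ rather than $l_{-1}$, it follows that $\tilde T_\ga(w+1)=\tilde T_\ga(w)$ exactly. With this observation the induced map on $W^0/\D{Z}$ is well defined; its classes are faithfully represented by $I_{-1}\cap\{\Re w\in J\}$ (the extra line is glued to the endpoint of $J$), and on these representatives your formula $\tilde T_\ga(w_{-1})=\mathbb{Y}_0\bigl(\mathbb{Y}_0^{-1}(w_{-1}-c)+1\bigr)+c$, together with the fact that the image real parts lie in an interval of length $1-\ga_0<1$, yields continuity and injectivity exactly as you argued. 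A very minor further point: the continuity of $\mathbb{Y}_0^{-1}$ deserves one more line, namely that $y\mapsto\Im Y_r(x+iy)$ is strictly increasing on each vertical line (implicit in the injectivity argument of \refL{L:Y-domain}), which is what lets you recover $\Im w$ continuously from $\mathbb{Y}_0(w)$.
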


\begin{proof}
This is straightforward and is left to the reader. 
\end{proof}

\begin{proof}[Proof of \refL{L:model-map-lift}]
First we prove the injectivity. 
For $n\geq 1$, each equivalence class in $W^n/\D{Z}$ consists of a single element, because the horizontal width of $W^n$ 
is at most $\ga_0 \ga_1 \dots \ga_{n-1} \leq 1/2$. 
Also, the width of $\tilde{T}_\ga (W^n)$ is at most $1/2$, which implies that each equivalence class in $\tilde{T}_\ga(W^n)/\D{Z}$
consists of a single element. 
Then, by \refL{L:P:model-map-lift-1}, $\tilde{T}_\ga: W^n/\D{Z} \to I_{-1}/\D{Z}$ is injective. 
By \refL{L:P:model-map-lift-4}, $\tilde{T}_\ga: W^0/\D{Z} \to I_{-1}/\D{Z}$ is injective.
Similarly, $V^\infty$ consists of a single half-infinite vertical line which is mapped to a single half-infinite vertical 
line by $\tilde{T}_\ga$. 
Thus, by \refL{L:P:model-map-lift-2}, $\tilde{T}_\ga : V^\infty/\D{Z} \to V^\infty/\D{Z}$ is injective.  
on the other hand, for $0 \leq n < m$, the sets $\tilde{T}_\ga (W^n)$, $\tilde{T}_\ga (W^m)$ and 
$\tilde{T}_\ga (V^\infty)$, are pairwise disjoint. Therefore, $\tilde{T}_\ga: I_{-1}/\D{Z} \to I_{-1}/\D{Z}$ is injective. 
 
Now we prove the continuity. 
Let $(w^i)_{i\geq 0}$ be a convergent sequence in $I_{-1}/\D{Z}$. Without loss of generality, we may assume that 
$(w^i)_{i\geq 0}$ converges in $I_{-1}$. 
By Lemmas \ref{L:P:model-map-lift-2} and \ref{L:P:model-map-lift-3},
if a subsequence of this sequence lies in some $W^n$ or in $V^\infty$, then $\tilde{T}_\ga$ is continuous along that subsequence. 
Therefore, it is enough to deal with the case of $w^i \in W^{j_i}$, with $j_i \to +\infty$ as $i \to +\infty$.
Without loss of generality (in order to simplify the notations) we may assume that $w^i \in W^{i+1}$, for $i\geq 0$. 

Let $w=\lim_{i \to \infty} w^i$. Below, we consider three cases, based on the location of $w$.  

{\em Case 1: $w \in V^\infty$.} 
Let $(w_j ; l_j)_{j \geq -1}$ denote the trajectory of $w$, and for $i\geq 0$, let $(w^i_j ; l^i_j)_{j\geq -1}$ 
denote the trajectory of $w^i$. 
Recall from the proof of \refL{L:P:model-map-lift-2} that 
\[\tilde{T}_\ga(w_{-1})=(\mathbb{Y}_0+(\gep_0+1)/2) \circ E_0(w_0),\] 
and $E_0^m \to E_0$, uniformly on $V^\infty_0$. 

Fix an arbitrary $\gep>0$. 
Let us choose an integer $m \geq 5$ such that for all $i \geq m$ we have 
\[|E_0(w_0) - E_0^i(w_0)| \leq \gep/2, \qquad 63 \cdot (0.9)^m \leq \gep/2.\]

For $i\geq m$, $w^i \in W^{i+1} \subset V^i \subset V^m$, and $w \in V^\infty \subset V^m$. 
Then, by \refL{L:P:model-map-lift-1}, $w^i_m \to w_m$, as $i \to \infty$. 
In particular, there is $N \geq 0$ such that for all $i \geq N$ we have $|w_m - w^i_m| \leq 1$. 
Below we assume that $i \geq \max\{N, m\}$, and we aim to show \refE{E:P:model-map-lift-3}. 

Since both $w^i_m$ and $w_m$ belong to $V^i_m$, we may employ the estimate in \refE{E:L:model-map-lift-2-2}, to obtain 
\begin{equation}\label{E:P:model-map-lift-2}
\begin{aligned}
|E_m^{i}(w_m)  & - E_m^{i}(w_m^i) | \\
&\leq \sum_{l=m}^{i-1}|E_m^{l+1}(w_m) - E_m^{l}(w_m)| 
+ |w_m-w_m^i| + \sum_{l=m}^{i-1}|E_m^{l+1}(w^i_m) - E_m^{l}(w^i_m)| \\
& \leq 2 \cdot 3 \sum_{l=m}^{\infty} (0.9)^{l-m} + |w_m-w^i_m| \leq 60 +1=61. 
\end{aligned}
\end{equation}
Note that 
\[\left | (w^i_{i}+1-1/\ga_{i}) - \left (\mathbb{Y}_{i+1}+\frac{\gep_{i+1}+1}{2}\right ) (w^i_{i+1})\right | \leq 1.\]
Combining the above inequality with the uniform contraction in \refL{L:uniform-contraction-Y_r}, we obtain
\begin{align*}
\bigg | E_{i}^{i}&(w^i_{i})  - \left (\mathbb{Y}_{i+1}+\frac{\gep_{i+1}+1}{2}\right ) (w^i_{i+1}+1) \bigg | \\
& = \left | (w^i_{i}+1-1/\ga_{i}) - \left (\mathbb{Y}_{i+1}+\frac{\gep_{i+1}+1}{2}\right ) (w^i_{i+1}) \right. \\
& \left. \quad \quad + \left (\mathbb{Y}_{i+1}+\frac{\gep_{i+1}+1}{2}\right ) (w^i_{i+1}) 
- \left (\mathbb{Y}_{i+1}+\frac{\gep_{i+1}+1}{2}\right ) (w^i_{i+1}+1) \right |
 \leq 1 + 0.9 \cdot 1 \leq 2. 
\end{align*}
Using the uniform contraction one more time, this give us 
\begin{align*}
\left| E_m^{i}(w_m^i) - X_{m+1}^{i+1} (w^i_{i+1}\!+\!1) \right |  
= \left| X_{m+1}^{i} \circ E_{i}^{i}(w^i_{i}) -\!  X_{m+1}^{i} \circ \left (\mathbb{Y}_{i+1} +\frac{\gep_{i}+1}{2} \right) (w^i_{i+1}+1) \right |
\leq 2. 
\end{align*}
Combining the above bound with the bound in \refE{E:P:model-map-lift-2}, we conclude that 
\[\left| E_m^{i}(w_m) -  X_{m+1}^{i+1} (w^i_{i+1}+1) \right | \leq 61+2=63.\]

Using the relation in \refE{E:L:model-map-lift-2-5} several times, we note that 
$(\mathbb{Y}_0+(\gep_0+1)/2) \circ E_0^i(w_0) =X_0^{m-1} \circ E_m^i(w_m)$. 
Thus, using the above equation with $m=0$, we obtain 
\begin{equation}\label{E:P:model-map-lift-3}
\begin{aligned}
|\tilde{T}_\ga(w) - \tilde{T}_\ga(w^i)|
& = \left | (\mathbb{Y}_0+(\gep_0+1)/2) \circ E_0(w_0) - X_0^{m-1} \circ X_m^{i+1}(w^i_{i+1}+1) \right |  \\
& \leq | (\mathbb{Y}_0+(\gep_0+1)/2) \circ E_0(w_0)- (\mathbb{Y}_0+(\gep_0+1)/2) \circ E_0^i(w_0)| \\
& \qquad \qquad \qquad + | X_0^{m-1} \circ E_m^i(w_m) - X_0^{m-1} \circ X_m^{i+1}(w^i_{i+1}+1)| \\
& \leq (0.9) \cdot \gep/2 + (0.9)^m \cdot (63) \leq \gep. 
\end{aligned}
\end{equation} 
This completes the proof in Case 1. 

{\em Case 2: $w \in W^n$ for some $n\geq 1$.}
Recall from the proof of \refL{L:P:model-map-lift-2}, that there are real numbers $x_n$ and $y_n$ such that either 
$W^n= \{w\in I_{-1} \mid \Re w \in [x_n, y_n)\}$ or $W^n= \{w\in I_{-1} \mid \Re w \in (y_n, x_n]\}$. 
Since, $W^n$ and $W^{i+1}$ are pairwise disjoint, for $i+1 > n$, and $w_i \in W^{i+1}$, we must have $\Re w= x_n$. 

Note that for integers $j$ with $0 \leq j \leq n-2$, $\Re w_j \notin \D{Z}$. 
This implies that $w^i_{n-1} \to w_{n-1}$, as $i \to \infty$.
It follows from \refE{E:L:P:model-map-lift-3-1} that $\Re w_{n-1} \in \D{Z}$. 
Hence, $\Re w_n=0$.
Below we consider two scenarios.

{\em Case 2-i: $\gep_n=-1$.}
Since $w^i_n \notin K_n$, for large $i$, we must have $\Re w^i_{n-1} \nearrow \Re w_{n-1}$. 
It follows that $\Re w^i_n \nearrow 1/\ga_n + w_n$. 
Let us define $w_n'= 1/\ga_n + w_n$. 
For $i \geq n$, let $(w'_i; l_i')_{i\geq n}$ denote the trajectory of $w_n'$, defined in the same fashion according to 
\refE{E:trajectory-real-parts}.    
We must have $\Re w'_{i}=1/\ga_{i}-1$, for all $i \geq n+1$, and by \refE{E:Y_n-comm-2}, $w_i=w_i' + 1-1/\ga_i$, for all $i\geq n$.

If $\gep_{n+1}=-1$, then we must have $\Re w_{n+1}^i \nearrow 1/\ga_{n+1}-1$. 
This is a contradiction, since this implies that $w^i \in W^{n+1}$, for sufficiently large $i$. 
Therefore, we have $\gep_{n+1}=+1$ and $\Re w^i_{n+1} \searrow 1/\ga_{n+1}-1$. 
Indeed, for the same reason, we must have $\gep_i=-1$, for all $i\geq n+2$. 

Assume that  $i \geq n$. We have 
\[\left| (w_{i} - \left (\mathbb{Y}_{i+1}+\frac{\gep_{i+1}+1}{2}\right ) (w^i_{i+1}) \right| \leq 1.\]
Therefore, applying $X_{n+1}^i$, we obtain 
\begin{align*}
\left | w_n -   X_{n+1}^{i+1}(w^i_{i+1}+1) \right | \leq 1 \cdot (0.9) ^{i-n}. 
\end{align*}
Then, 
\begin{align*}
\left |\tilde{T}_\ga (w) - \tilde{T}_\ga (w^i) \right | 
&=\left | X_0^{n} (w_n)  - X_0^{i+1}(w^i_{i+1}+1) \right | \\
& \leq \left | X_0^{n} (w_n) - X_0^n \circ X_{n+1}^{i+1}(w^i_{i+1}+1) \right | 
\leq 1 \cdot (0.9)^{i+1}. 
\end{align*}
This completes the proof in this case.  

{\em Case 2-ii: $\gep_n=+1$.}
Here, we must have $\Re w^i_{n-1} \searrow \Re w_{n-1}$.  
It follows that $w^i_n \nearrow  1/\ga_n + w_n$. Let $w_n'= 1/\ga_n + w_n$. 
Let $(w'_i; l'_i)_{i\geq n}$ denote the trajectory of $w'_n$. 
We must have $\gep_{n+1}=+1$, otherwise, $\Re w^i_{n+1} \nearrow 1/\ga_{n+1}-1$ and hence $w^i \in W^n$ 
for sufficiently large $i$. For the same reason, we must have $\gep_i=-1$, for all $i \geq n+2$. 
As in the previous case, we obtain $\tilde{T}_\ga (w^i) \to \tilde{T}_\ga (w)$. 

{\em Case 3: $w \in W_0$.}
If $\gep_0=-1$, either $\Re w^i \nearrow 1$ and hence $\Re w^i_0 \nearrow 1/\ga_0$, or $\Re w^i \searrow 1-\ga_0$
and hence $\Re w^i_0 \searrow 1/\ga_0-1$.
Similarly, if $\gep_0=+1$, either $\Re w^i \nearrow \ga_0$ and hence $\Re w^i_0 \searrow 1/\ga_0-1$, 
or $\Re w^i \searrow 0$ and hence $\Re w^i_0 \nearrow 1/\ga_0$. 
All these scenarios may be dealt with as in Case 2.
\end{proof}

\subsection{The map \texorpdfstring{$\mathbb{T}_\ga$}{T-ga}, and its properties}\label{SS:T-ga-properties}

\begin{propo}\label{P:T-ga-tangential}
For every $\ga \in \D{R}\setminus \D{Q}$, the map $\tilde{T}_{\ga}: I_{-1} \to I_{-1}$ induces 
a homeomorphism
\[\mathbb{T}_\ga:\mathbb{M}_\ga \to \mathbb{M}_\ga,\]
via the projection $w \mapsto s(e^{2\pi i w})$, that is, 
$s(e^{2\pi i \tilde{T}_\ga(w)})= \mathbb{T}_\ga (s(e^{2\pi i w}))$ for all $w\in I_{-1}$. 
Moreover, $\mathbb{T}_\ga$ acts as rotation by $2\pi \ga$ in the tangential direction, 
that is, there is a function $g_\ga$ such that 
\[\mathbb{T}_\ga(r e^{2\pi i \theta})= g_\ga(r, \theta) e^{2\pi i (\theta+\alpha)}\]
for every $r e^{2\pi i\theta} \in \mathbb{M}_\ga$. 
\end{propo}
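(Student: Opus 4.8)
The plan is to transport $\tilde T_\ga$ from $I_{-1}$ to $\D{M}_\ga$ through the projection $\gp(w)=s(e^{2\pi i w})$ of \eqref{E:M_ga}, verify that the resulting map is a continuous bijection of the compact set $\D{M}_\ga$, and then read off the tangential behaviour from the fact that each $\mathbb{Y}_n$ is affine in the real direction. Since $w\mapsto e^{2\pi i w}$ descends to a biholomorphism $\D{C}/\D{Z}\to\D{C}^*$, after post-composing with $s$ and restricting, $\gp$ induces a \emph{homeomorphism} $I_{-1}/\D{Z}\to\D{M}_\ga\setminus\{0\}$ (two points of $I_{-1}$ project to the same point iff they differ by an integer, because $\Re I_{-1}\ci[0,1]$). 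By \refL{L:model-map-lift}, $\tilde T_\ga$ descends to a well-defined, continuous, injective self-map of $I_{-1}/\D{Z}$; conjugating by $\gp$ and setting $\mathbb{T}_\ga(0)=0$ produces a map $\mathbb{T}_\ga\colon\D{M}_\ga\to\D{M}_\ga$ satisfying $s(e^{2\pi i\tilde T_\ga(w)})=\mathbb{T}_\ga(s(e^{2\pi i w}))$, injective and continuous on $\D{M}_\ga\setminus\{0\}$.

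To upgrade this to a homeomorphism of $\D{M}_\ga$ I would prove continuity at $0$ and surjectivity, after which compactness of $\D{M}_\ga$ does the rest (a continuous bijection from a compact space to a Hausdorff space is a homeomorphism). Because $|\gp(w)|=e^{-2\pi\Im w}$, continuity at $0$ reduces to $\Im\tilde T_\ga(w)\to+\infty$ as $\Im w\to+\infty$; I would obtain this from the asymptotics of $Y_r$ along vertical lines (where $\Im Y_r$ is a proper monotone function of $\Im w$, by \refL{L:Y-domain}) together with the contraction estimate of \refL{L:uniform-contraction-Y_r}, so that the imaginary part of the innermost argument $w_n+1$ (resp. $w_n+1-1/\ga_n$) governs $\Im\tilde T_\ga(w_{-1})$ --- with the limiting case (ii) of the definition handled via the uniform Cauchy estimates already established in \refL{L:P:model-map-lift-2}. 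For surjectivity it suffices that $\tilde T_\ga\colon I_{-1}/\D{Z}\to I_{-1}/\D{Z}$ be onto: given $v\in I_{-1}$, one decodes it level by level through \eqref{E:I_n^j--1}--\eqref{E:I_n^j-+1}, writing $v=\mathbb{Y}_0(v^{(0)})+\tfrac{\gep_0+1}{2}$ with $v^{(0)}\in I_0$, then locating $v^{(0)}$ in the appropriate translate of $\mathbb{Y}_1(I_1)$ or $\mathbb{Y}_1(K_1)$, and so on; this produces a point whose trajectory realises the compositions appearing in part (i) or (ii) of the definition of $\tilde T_\ga$, and --- because the vertical half-lines making up $W^n$ and $V^\infty$ are carried onto whole vertical half-lines of $I_{-1}$ --- it recovers $v$ up to an integer. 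Combined with the pairwise disjointness of the images $\tilde T_\ga(W^n)$ and $\tilde T_\ga(V^\infty)$ noted in the proof of \refL{L:model-map-lift}, this gives bijectivity.

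For the tangential-rotation statement, the key point is that $\Re Y_r(w)=r\,\Re w$, hence $\Re\mathbb{Y}_n(w)=-\gep_n\ga_n\,\Re w$. Applying $\Re$ to \eqref{E:trajectory-condition-1} and to the formula defining $\tilde T_\ga$ then gives
\[\Re\tilde T_\ga(w_{-1})-\Re w_{-1}=\sum_{k\ge 0}p_k\Big(\frac{\gep_k+1}{2}-l_{k-1}\Big)+p_{n+1},\qquad p_k:=\prod_{i=0}^{k-1}(-\gep_i\ga_i),\]
where in case (i) the sum runs over $0\le k\le n$, and in case (ii) one takes the $n\to\infty$ limit of this expression. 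Feeding in the trajectory values of the integers $l_i$ (computed in the proof of \refL{L:P:model-map-lift-1}) and the identity $a_{k-1}p_k=p_{k+1}-\gep_{k-1}p_{k-1}$ --- a rewriting of $1/\ga_k=a_k+\gep_{k+1}\ga_{k+1}$ --- the right-hand side telescopes modulo $\D{Z}$ to $p_1=-\gep_0\ga_0$, which is $\equiv-\ga\pmod 1$ since $\ga=a_{-1}+\gep_0\ga_0$. As $z=s(e^{2\pi i w})=e^{-2\pi\Im w}e^{2\pi i(-\Re w)}$, shifting $\Re w$ by $-\ga$ is exactly rotating the argument of $z$ by $+\ga$; so $\mathbb{T}_\ga(re^{2\pi i\theta})=g_\ga(r,\theta)\,e^{2\pi i(\theta+\ga)}$ with $g_\ga(r,\theta):=|\mathbb{T}_\ga(re^{2\pi i\theta})|$.

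The main obstacle I expect is the pair of facts in the second paragraph --- surjectivity of $\tilde T_\ga$ and continuity at $0$ --- since both require reopening the inductive and limiting definition of $\tilde T_\ga$ rather than invoking \refL{L:model-map-lift} as a black box, and in particular the $V^\infty$ case forces one to control the relevant infinite compositions of the $\mathbb{Y}_n$. By comparison the tangential-rotation identity, though it is the substantive conclusion of the proposition, is essentially a bookkeeping computation once one notices that real parts are propagated affinely by the maps $\mathbb{Y}_n$.
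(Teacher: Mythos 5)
Your proposal is correct and, for the part of the statement the paper actually argues, follows the same skeleton: pass to $I_{-1}/\D{Z}$ via $w\mapsto s(e^{2\pi i w})$, invoke \refL{L:model-map-lift} for continuity and injectivity, extend by $\mathbb{T}_\ga(0)=0$ using $\Im \tilde{T}_\ga(w)\to+\infty$ as $\Im w\to +\infty$, use compactness of $\mathbb{M}_\ga$, and read the rotation off the real parts. The two places where you genuinely deviate are instructive. First, you treat surjectivity explicitly, whereas the paper simply concludes ``continuous and injective on the compact set $\mathbb{M}_\ga$, hence a homeomorphism'', which as stated only yields a homeomorphism onto the image; your plan (decode $v$ level by level, re-encode after subtracting $1$ at the first level where the trajectory translation is not the minimal one $(\gep_{i+1}+1)/2$, and use that half-lines map onto half-lines) is the right mechanism, and for preimages coming from the sets $W^n$ it goes through directly from the tiling \eqref{E:I_n^j--1}--\eqref{E:I_n^j-+1} together with the values of $l_i$ from \refL{L:P:model-map-lift-1}. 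The one sub-claim you assert without support is that the single half-line $V^\infty$ is also carried onto a \emph{whole} half-line of $I_{-1}$: this is true but needs an argument, e.g.\ monotonicity of $\Im\tilde{T}_\ga$ along that line (each $E_n^m$ of \refL{L:P:model-map-lift-2} is built from maps monotone in the imaginary part, and injectivity of the limit was proved there) combined with a limiting argument using \refP{P:b_n-liminfs} to see that the bottom of the target half-line is attained; as written this is the only real gap, and it occurs in a step the paper omits altogether. Second, your rotation computation is a single closed-form telescoping, valid simultaneously in cases (i) and (ii), based on $\Re\mathbb{Y}_k(w)=-\gep_k\ga_k\Re w$, the trajectory values of $l_i$, and the identity $a_{k-1}p_k=p_{k+1}-\gep_{k-1}p_{k-1}$; I checked the bookkeeping and it does telescope to $p_1+\gep_0\equiv-\gep_0\ga_0\equiv-\ga \pmod{\D{Z}}$. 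The paper instead reduces to the sets $W^n$ (using that $\cup_n W^n$ is dense and $\tilde{T}_\ga$ is continuous, so the $V^\infty$ case is free) and telescopes level by level there; your version buys uniform treatment of both cases and avoids the paper's restriction to a particular sign pattern of the $\gep_i$, at the cost of carrying the trajectory integers through the whole sum. Either route is fine; if you want to shorten yours, you could keep your telescoping but still dispose of case (ii) by the density-plus-continuity remark, exactly as the paper does.
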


\begin{proof}
By \refL{L:model-map-lift}, $\tilde{T}_{\ga}: I_{-1}/\D{Z} \to I_{-1}/ \D{Z}$ induces a continuous and injective map 
$\mathbb{T}_\ga$ of the set $\{s(e^{2\pi i w}) \mid w\in I_{-1}\}$. 
From the construction, we note that as $\Im z \to \infty$ in $I_{-1}/\mathbb{Z}$, 
$\Im \tilde{T}_\ga(z) \to +\infty$. 
This implies that we may continuously extend $\mathbb{T}_\ga$ onto $0$ by setting $\mathbb{T}_\ga(0)=0$. 
Since $\mathbb{M}_\ga$ is compact, and $\mathbb{T}_{\ga}: \mathbb{M}_\ga \to \mathbb{M}_\ga$ is continuous 
and injective, it must be a homeomorphism. 

In order to show that $\mathbb{T}_\ga$ acts as rotation by $2\pi \ga$ in the tangential direction, it is enough to show 
that $\tilde{T}_\ga$ acts as translation by $-\alpha$ on $\cup_{n\geq 0}W^n \cup V^\infty$. 
However, because $V^\infty$ has empty interior, by the continuity of $\tilde{T}_\ga$, it is enough to prove this 
on the sets $W^n$. 
The latter property follows from the definition of $\tilde{T}_\ga$ and the functional relation in \refE{E:Y_n-comm-1}. 
We present the details below. 

For $w_{-1} \in W^0$, we have 
\[w_{-1}= (\mathbb{Y}_0+(1+\epsilon_0)/2)(w_0), \qquad \tilde{T}_\ga(w_1)= (\mathbb{Y}_0+(1+\epsilon_0)/2)(w_0+1).\] 
Thus, by the definition of $\mathbb{Y}_0$, 
\[\Re (\mathbb{Y}_0+(1+\epsilon_0)/2)(w_0+1) = \Re w_{-1} -\epsilon_0 \ga_0.\] 
Therefore, 
\[\arg \left(\mathbb{T}_{\ga}(s(e^{2\pi i w_{-1}}))\right) 
= \arg (s(e^{2\pi i w_{-1}})) + \epsilon_0\ga_0
=\arg (s(e^{2\pi i w_{-1}})) + \ga.\]

Now, fix an arbitrary $n\geq 1$, and let $w_{-1} \in W^n$ be an arbitrary point with trajectory $(w_i, l_i)_{i\geq -1}$. 
For $j=n, n-1, n-2, \dots, -1$, let us define the points $\xi_j= X_{j+1}^n(w_n+1)$.
We note that in this case we must have $\gep_n=-1$, and $\gep_i=+1$ for all $i=n-1, n-2, \dots, 1$. 
Then, using $1/\ga_{n-1}= a_{n-1} -\ga_n$ and \refE{E:I_n^j--1}, we obtain 
\begin{align*}
\Re w_{n-1} - \Re \xi_{n-1} 
&= \left( \Re (\mathbb{Y}_n(w_n)) + a_{n-1}-1 \right) - \Re (\mathbb{Y}_n(w_n+1))   \\
&= (a_{n-1} -1) - \ga_n \\
&= \left(\frac{1}{\ga_{n-1}} + \ga_n -1 \right) - \ga_n = \frac{1}{\ga_{n-1}}-1.
\end{align*}
For the next step, we have $1/\ga_{n-2}= a_{n-2} + \ga_{n-1}$, and use \refE{E:I_n^j-+1}, to obtain 
\[\Re w_{n-2} - \Re \xi_{n-2}= -\ga_{n-1}\left(\frac{1}{\ga_{n-1}}-1\right) + a_{n-2}=  \frac{1}{\ga_{n-2}} -1.\]
Repeating the above process for levels $n-3, n-4, \dots, 0$, we end up with 
\[\Re w_{0} - \Re \xi_{0}= \frac{1}{\ga_0}-1.\]
In the last stage we apply the map $\mathbb{Y}_0+(1+\gep_0)/2$, to obtain,
\[\Re w_{-1} - \Re \xi_{-1}= -\gep_0 \ga_0 \left( \frac{1}{\ga_0}-1\right) =- \gep_0 + \gep_0\ga_0.\]
Thus, $\tilde{T}_\ga$ acts as translation by $-\gep_0\ga_0$ which is the same as $-\ga$ modulo $\mathbb{Z}$. 
This completes the argument for the set $W^n$. 
\end{proof}

\begin{propo}\label{P:T-ga-relations}
For every $\ga \in \mathbb{R} \setminus \mathbb{Q}$, we have $s \circ \mathbb{T}_\ga \circ s = \mathbb{T}_{-\ga}$ 
on $\mathbb{M}_{-\ga}$.
\end{propo}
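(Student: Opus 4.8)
The plan is to lift the identity to the strips $I_{-1}$ and to use the explicit dictionary between the combinatorial data of $\ga$ and of $-\ga$ that is already recorded inside the proof of \refP{P:M-ga-relations-1}. Write $(\ga'_n)$, $(a'_n)$, $(\gep'_n)$, $\mathbb{Y}'_n$, $I'_n$, $K'_n$, $I'_{-1}$, $\tilde T_{-\ga}$ for the objects attached to $-\ga$. From that proof one has $\ga'_n=\ga_n$ for all $n\geq 0$; $\gep'_n=\gep_n$ and $\mathbb{Y}'_n=\mathbb{Y}_n$ for $n\geq 1$; $\gep'_0=-\gep_0$ and $\mathbb{Y}'_0=-s\circ\mathbb{Y}_0$; and, by the same argument that gives $I_0=I'_0$ there, $I'_n=I_n$ and $K'_n=K_n$ for all $n\geq 0$, while $I'_{-1}=\{\,1-\ol w \mid w\in I_{-1}\,\}$. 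I introduce the involution $\gs(w)=1-\ol w$, so that $\gs(I_{-1})=I'_{-1}$ and $\gs(I'_{-1})=I_{-1}$; writing $p(w)=s(e^{2\pi i w})$ for the projection in \eqref{E:M_ga}, the elementary identity $e^{-2\pi i\ol w}=\ol{e^{2\pi i w}}$ shows $p(\gs(w))=s(p(w))=e^{2\pi i w}$, i.e. $p\circ\gs=s\circ p$. Since $p$ is $1$-periodic and $\mathbb{T}_\ga$, $\mathbb{T}_{-\ga}$ are induced by $\tilde T_\ga$, $\tilde T_{-\ga}$ through $p$ (\refP{P:T-ga-tangential}), it is enough to prove the lifted identity
\[\tilde T_{-\ga}\circ\gs=\gs\circ\tilde T_\ga\qquad\text{on }I_{-1}.\]
Indeed, granting it, for $z=p(w'_{-1})\in\mathbb{M}_{-\ga}\setminus\{0\}$ and $w_{-1}=\gs(w'_{-1})\in I_{-1}$ we obtain
\begin{align*}
s(\mathbb{T}_\ga(s(z))) &= s(\mathbb{T}_\ga(p(w_{-1}))) = s(p(\tilde T_\ga(w_{-1}))) \\
&= p(\gs(\tilde T_\ga(w_{-1}))) = p(\tilde T_{-\ga}(w'_{-1})) = \mathbb{T}_{-\ga}(z),
\end{align*}
using $p\circ\gs=s\circ p$ twice and \refP{P:T-ga-tangential}; and $z=0$ is trivial.

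To establish the lifted identity I would fix $w_{-1}\in I_{-1}$, set $w'_{-1}=\gs(w_{-1})\in I'_{-1}$, and compare the $\ga$-trajectory $(w_i;l_i)_{i\geq-1}$ of $w_{-1}$ with the $(-\ga)$-trajectory $(w'_i;l'_i)_{i\geq-1}$ of $w'_{-1}$, both in the sense of \refSS{SS:T-defn}. The claim is that $l'_{-1}=1-l_{-1}$, $w'_0=w_0$, and $w'_i=w_i$, $l'_i=l_i$ for every $i\geq 0$. The only place the data differ is the top level: the selection of $l_{-1}$ is governed by $\gep_0$ and that of $l'_{-1}$ by $\gep'_0=-\gep_0$, and since $\Re w'_{-1}=1-\Re w_{-1}$ this interchanges a floor with a ceiling, yielding $l'_{-1}=1-l_{-1}$ (the borderline cases $\Re w_{-1}\in\D{Z}$ must be inspected separately but are harmless); together with $\mathbb{Y}'_0=-s\circ\mathbb{Y}_0$ and the injectivity of $\mathbb{Y}_0$ this forces $w'_0=w_0$. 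From level $0$ on, the ingredients $\mathbb{Y}'_i=\mathbb{Y}_i$, $\gep'_i=\gep_i$, $\ga'_i=\ga_i$, $I'_i=I_i$, $K'_i=K_i$ all coincide with those for $\ga$, so the two trajectories run in lockstep by induction; in particular they fall into the same alternative ((i) or (ii)) in the definition of $\tilde T$, with the same value of $n$ in case (i).

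Granting the trajectory comparison, the lifted identity becomes a one-line substitution. In case (i), put $\zeta=X_1^n(w_n+1)$; this is the same point for $\ga$ and $-\ga$ because the tail $X_1^n$ is built from $\mathbb{Y}_1,\dots,\mathbb{Y}_n$, which agree. Then $\tilde T_\ga(w_{-1})=(\mathbb{Y}_0+\tfrac{\gep_0+1}{2})(\zeta)$, and since $\mathbb{Y}'_0=-s\circ\mathbb{Y}_0$ and $\tfrac{\gep'_0+1}{2}=\tfrac{1-\gep_0}{2}=1-\tfrac{\gep_0+1}{2}$,
\[\tilde T_{-\ga}(w'_{-1})=\Big(-s\circ\mathbb{Y}_0+\tfrac{1-\gep_0}{2}\Big)(\zeta)=\gs\Big(\big(\mathbb{Y}_0+\tfrac{\gep_0+1}{2}\big)(\zeta)\Big)=\gs\big(\tilde T_\ga(w_{-1})\big).\]
In case (ii) the inner terms $w_n+1-1/\ga_n=w_n+1-1/\ga'_n$ also agree, so the same identity holds for each finite truncation of the defining limit; the limit exists by \refL{L:P:model-map-lift-2} and $\gs$ is continuous, so it passes to the limit. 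This gives $\tilde T_{-\ga}\circ\gs=\gs\circ\tilde T_\ga$ on $I_{-1}$, hence the proposition.

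The one genuinely delicate point is the top-level bookkeeping in the second paragraph: verifying that the sign reversal $\gep_0\mapsto-\gep_0$ together with the reflection $\Re w\mapsto 1-\Re w$ produces exactly $l'_{-1}=1-l_{-1}$ and thus $w'_0=w_0$, including the borderline cases $\Re w_{-1}\in\D{Z}$. Everything downstream is a direct substitution and a passage to the limit.
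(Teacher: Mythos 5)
Your proposal is correct and follows essentially the same route as the paper: lift to the strips, use the dictionary $\gep_0'=-\gep_0$, $\mathbb{Y}_0'=-s\circ\mathbb{Y}_0$, $\mathbb{Y}_n'=\mathbb{Y}_n$ ($n\geq 1$) from the proof of \refP{P:M-ga-relations-1}, verify the conjugacy $\tilde T_{-\ga}=\gs\circ\tilde T_\ga\circ\gs$ on the lifts, and project via $w\mapsto s(e^{2\pi i w})$. The only cosmetic difference is that you conjugate by $\gs(w)=1-\ol w$ where the paper uses $w\mapsto-\ol w$ (these differ by an integer translation, hence are interchangeable under the $1$-periodic projection), and you spell out the top-level trajectory bookkeeping ($l_{-1}'=1-l_{-1}$, $w_0'=w_0$) that the paper leaves as ``from the definition \dots we can see''.
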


\begin{proof}
Recall that by \refP{P:M-ga-relations-1}, we have $s(\mathbb{M}_{\ga})= \mathbb{M}_{-\ga}$. 
Thus both of the maps $\mathbb{T}_{-\ga}$ and $s\circ \mathbb{T}_\ga \circ s$ are defined on $\mathbb{M}_{-\ga}$
and map into $\mathbb{M}_{-\ga}$. 

When $\ga$ changes to $-\ga$, $\gep_0$ changes to $-\gep_0$, but all the remaining numbers $\ga_i$
and $\gep_{i+1}$, for $i \geq 0$, remain the same. 
Let $I_{-1}$ and $I'_{-1}$ denote the corresponding sets for $\ga$ and $-\ga$, respectively. 
We have $-s(I'_{-1})= I_{-1}$; see proof of \refP{P:M-ga-relations-1}. 
From the definition of the maps $\tilde{T}_\ga$ and $\tilde{T}_{-\ga}$, we can see that 
$(-s) \circ \tilde{T}_{\ga} \circ (- s) = \tilde{T}_{-\ga}$ on $I'_{-1}$.
Projecting onto $\mathbb{M}_{-\ga}$ via $w \mapsto s(e^{2\pi i w})$ we obtain the desired relation. 
\end{proof}

\begin{propo} \label{P:orbit-size}
There is a constant $C$ such that for every $\alpha \in (-1/2, 1/2) \setminus \mathbb{Q}$ and every integer 
$k$ satisfying $0 \leq k < 1/|\ga|$, we have 
\[\frac{C^{-1}}{1+ \min \{k, |\ga|^{-1}-k\}} \leq |\mathbb{T}_\ga\co{k}(+1)| \leq \frac{C}{1+ \min \{k,|\ga|^{-1}-k\}}.\]
\end{propo}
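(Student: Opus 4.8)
The plan is to transport the orbit of $+1$ to the half‑plane model via the covering $w\mapsto s(e^{2\pi i w})$, compute it in closed form for one return, and reduce the claim to an elementary estimate on $Y_r$ along the real axis. Since $\ga\in(-1/2,1/2)\setminus\D{Q}$ we have $\ga_0=|\ga|$, $a_{-1}=0$, $\gep_0=\sign\ga$, and $+1=s(e^{2\pi i\cdot 0})$ with $0\in I_{-1}$. By \refP{P:T-ga-tangential}, $\mathbb{T}_\ga\co{k}(+1)=s\bigl(e^{2\pi i\,\tilde T_\ga\co{k}(0)}\bigr)$; since $|s(e^{2\pi i w})|=e^{-2\pi\Im w}$, it suffices to prove that $e^{-2\pi\,\Im\tilde T_\ga\co{k}(0)}\asymp\bigl(1+\min\{k,\ga_0^{-1}-k\}\bigr)^{-1}$ uniformly in $\ga$ and in the integer $k$ with $0\le k\le\lfloor\ga_0^{-1}\rfloor$ (which, since $\ga_0^{-1}\notin\D{Z}$, is exactly the range $0\le k<\ga_0^{-1}$).

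The first step is the explicit computation of the orbit: for $1\le k\le\lfloor\ga_0^{-1}\rfloor$ one has $\tilde T_\ga\co{k}(0)=\mathbb{Y}_0(k)+\tfrac{\gep_0+1}{2}$, proved by induction from the definition of $\tilde T_\ga$. The trajectory of $0\in I_{-1}$ is $w_i\equiv 0$, $l_i\equiv 0$ (by \refE{E:invariant-imaginary-line} and $0\in I_n$); since $\ga_0^{-1}>2$ we have $0\in K_0$, so $0\in W^0$ and case (i) of the definition with $n=0$ gives $\tilde T_\ga(0)=(\mathbb{Y}_0+\tfrac{\gep_0+1}{2})(1)$. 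For the inductive step, assume $\tilde T_\ga\co{k}(0)=p:=\mathbb{Y}_0(k)+\tfrac{\gep_0+1}{2}$ for some $1\le k<\lfloor\ga_0^{-1}\rfloor$. As $\tilde T_\ga$ maps $I_{-1}$ into itself (\refL{L:model-map-lift}), $p\in I_{-1}$, so by \refE{E:I--1} and the injectivity of $\mathbb{Y}_0$ (\refL{L:Y-domain}) we get $k\in I_0$ and the trajectory of $p$ has $l_{-1}=\tfrac{\gep_0+1}{2}$, $w_0=k$ (the normalization of $\Re(p-l_{-1})$ fixes $l_{-1}$ because $0\le\ga_0 k<1$). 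Since $\Re w_0=k\le\lfloor\ga_0^{-1}\rfloor-1<\ga_0^{-1}-1$, in fact $w_0=k\in K_0$, so case (i) with $n=0$ applies again and yields $\tilde T_\ga(p)=(\mathbb{Y}_0+\tfrac{\gep_0+1}{2})(k+1)$, completing the induction. Finally, $s$ preserves imaginary parts, so $\Im\bigl(\mathbb{Y}_0(k)+\tfrac{\gep_0+1}{2}\bigr)=\Im\mathbb{Y}_0(k)=\Im Y_{\ga_0}(k)$; together with the trivial case $k=0$ (where $Y_r(0)=0$), $\Im\tilde T_\ga\co{k}(0)=\Im Y_{\ga_0}(k)$ for all $0\le k\le\lfloor\ga_0^{-1}\rfloor$.

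It remains to estimate $e^{-2\pi\Im Y_{\ga_0}(k)}$. Writing $r=\ga_0$ and evaluating the formula for $Y_r$ at the real point $k$,
\[e^{-2\pi\,\Im Y_r(k)}=\frac{\bigl|e^{-3\pi r}-e^{\pi r i}\bigr|}{\bigl|e^{-3\pi r}-e^{-\pi r i(1+2k)}\bigr|}.\]
The numerator is $\asymp r$ uniformly in $r\in(0,1/2]$ (contained in the proof of \refL{L:Y-domain}: $\sin\pi r\le|e^{-3\pi r}-e^{\pi r i}|\le(1-e^{-3\pi r})+|1-e^{\pi r i}|$, both bounds being $\asymp r$). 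For the denominator, put $\psi=\pi r(1+2k)\in(0,\tfrac{5\pi}{2}]$ (using $0\le k\le\lfloor r^{-1}\rfloor$); then $\bigl|e^{-3\pi r}-e^{-i\psi}\bigr|^2=(1-e^{-3\pi r})^2+2e^{-3\pi r}(1-\cos\psi)\asymp r^2+d(\psi,2\pi\D{Z})^2$, whence $|e^{-3\pi r}-e^{-i\psi}|\asymp\max\{r,\,d(\psi,2\pi\D{Z})\}=r\max\{1,\,\pi\min\{1+2k,\,|2r^{-1}-1-2k|\}\}$. A short case analysis — for the upper bound using $1+2k\le 2(1+k)$ and $|2r^{-1}-1-2k|\le 2\bigl(1+(r^{-1}-k)\bigr)$, and for the lower bound splitting according to whether $\min\{k,r^{-1}-k\}$ is bounded or large (crossover near $k\approx\tfrac1{2r}$) — shows $|e^{-3\pi r}-e^{-\pi r i(1+2k)}|\asymp r\bigl(1+\min\{k,r^{-1}-k\}\bigr)$. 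Combining, $e^{-2\pi\Im Y_{\ga_0}(k)}\asymp\bigl(1+\min\{k,|\ga|^{-1}-k\}\bigr)^{-1}$ with a constant independent of $\ga$ and $k$, which is the assertion.

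The only non‑formal point is this last $\asymp$ for the denominator; the rest is bookkeeping with the definitions. Its delicacy is that it must hold uniformly both as $r\to 0$ and for $r$ near $1/2$, and that the crude comparison between $d(\psi,2\pi\D{Z})=\pi r\min\{1+2k,|2r^{-1}-1-2k|\}$ and $r\bigl(1+\min\{k,r^{-1}-k\}\bigr)$ degenerates near $k\approx r^{-1}-\tfrac12$ (the top index $k=\lfloor r^{-1}\rfloor$ when $\{r^{-1}\}$ is close to $\tfrac12$), where $d(\psi,2\pi\D{Z})$ can be much smaller than $r$. There one must instead invoke the lower bound $|e^{-3\pi r}-e^{-i\psi}|\ge 1-e^{-3\pi r}\asymp r$ — the quotient above never has a vanishing denominator, since $r$ irrational forces $\psi\notin 2\pi\D{Z}$ — together with the observation that $1+\min\{k,r^{-1}-k\}\asymp 1$ at such indices.
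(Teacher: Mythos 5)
Your proposal is correct and follows essentially the same route as the paper: lift $+1$ to the strip, check from the definition of $\tilde{T}_\ga$ that the one-return orbit is $\mathbb{Y}_0(k)+(\gep_0+1)/2$, and reduce to showing the explicit quotient has numerator $\asymp \ga_0$ and denominator $\asymp \ga_0\bigl(1+\min\{k,\ga_0^{-1}-k\}\bigr)$. The only differences are cosmetic: you treat $\ga<0$ directly instead of invoking \refP{P:T-ga-relations}, you spell out the induction the paper leaves implicit, and you bound the denominator via the identity $|e^{-3\pi r}-e^{-i\psi}|^2=(1-e^{-3\pi r})^2+2e^{-3\pi r}(1-\cos\psi)$ rather than the paper's triangle-inequality split at $k\le 5$ or $\ga^{-1}-k\le 5$.
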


\begin{proof}
By virtue of \refP{P:T-ga-relations}, it is enough to show this for $\ga \in (0, 1/2) \setminus \mathbb{Q}$. 
In that case, we have $\ga_0=\ga$, $\gep_0=+1$, and $\mathbb{Y}_0=-s \circ Y_{\ga_0}$. 
The point $+1$ may be lifted under the projection $w \mapsto s(e^{2\pi i w})$ to the point $+1$ in $I_{-1}$. 
In the trajectory of $+1$, we have $l_{-1}=+1$ and $w_0=0$. 
Therefore, by the definition of $\tilde{T}_{\ga}$, for all integers $k$ satisfying $0 \leq k < 1/\ga$, we have 
$\tilde{T}_\ga\co{k} (+1)= \mathbb{Y}_0(k)+(1+\gep_0)/2=\mathbb{Y}_0(k)+1$. 
This implies that 
\[|\mathbb{T}_\ga\co{k}(+1)| =\left  | s\left (e^{2\pi i \mathbb{Y}_0(k)} \right) \right | 
= \Big |\frac{e^{-3\pi\ga}- e^{\pi\ga i}}{e^{-3\pi\ga}- e^{-\pi\ga i}e^{-2\pi\ga i k }}\Big|.\]
We need to estimate the right hand side of the above equation. 
The numerator of that formula is proportional to $\ga$; 
see Equations \eqref{E:P:Y_r-vs-h_r-0} and \eqref{E:P:Y_r-vs-h_r^-1} for a more precise estimate on this.
For the denominator we have, 
\begin{align*}
\left |e^{-3\pi\ga}- e^{-\pi\ga i}e^{-2\pi\ga i k }\right | 
&\leq \left|e^{-3\pi \ga}-1 \right| + \left|1 - e^{-\pi \ga i}\right| + \left| e^{-\pi \ga i} - e^{-\pi\ga i}e^{-2\pi\ga ik}\right| \\
& \leq 3 \pi \ga + \pi \ga + 2\pi \ga \min \{k, \ga^{-1} -k\} \\
& \leq 4 \pi \ga \left ( 1+ \min \{k, \ga^{-1} -k\}\right ). 
\end{align*}
To give a lower bound on the left hand side of the above equation, let us consider two cases. 
If $k \leq 5$, or $\ga^{-1} -k \leq 5$, we have 
\begin{align*}
\left |e^{-3\pi\ga}- e^{-\pi\ga i}e^{-2\pi\ga i k }\right | 
&  \geq \left| e^{-3\pi \ga}  -1 \right| \\
&  \geq \ga \\
&  \geq \frac{\ga}{6} \left (1+ \min \{k, \ga^{-1} -k\}\right ). 
\end{align*}
If $k$ satisfies $5 \leq k \leq \ga^{-1} -5$, by the triangle inequality, we have 
\begin{align*}
\left |e^{-3\pi\ga}- e^{-\pi\ga i}e^{-2\pi\ga i k }\right | 
&\geq \left|e^{-\pi\ga i}e^{-2\pi\ga ik} - e^{-\pi \ga i} \right| - \left|e^{-\pi \ga i} - e^{-3\pi \ga} \right| \\
& \geq 2\pi \ga \min \{k, \ga^{-1} -k\} - (\pi \ga + 3\pi \ga) \\
& \geq 2 \pi \ga \left (\min \{k, \ga^{-1} -k\} -2 \right) \\
& \geq \pi \ga \left (\min \{k, \ga^{-1} -k\} + 1 \right).
\end{align*}
Combining the above inequalities, we obtain the desired estimate in the proposition for a suitable constant $C$. 
\end{proof}

The estimate of the sizes of the orbit presented in \refP{P:orbit-size} widely holds for holomorphic map of the 
from $e^{2\pi i \ga} z+ a_2 z^2+ \dots$, with $a_2 \neq 0$. 
More precisely, for a fixed nonlinearity (higher order terms), and a choice of a point close enough to the fixed point 
at $0$, there is a constant $C$ such that the upper and lower bounds in the estimate
hold for those integers. 
The constant $C$ is independent of the rotation number $\ga$ and the number of iterates $k$. 
For large values of $\ga$, this mostly follows from continuity of the map. 
For smaller values of $\ga$, one employs the existence of perturbed Fatou-coordinates, and elementary estimates on 
their behaviour. See \cite{Sh00} for more details on this. 
\section{The renormalisation operator \texorpdfstring{$\mathcal{R}_m$}{R-m}}\label{S:renormalisation-model}
In this section we define the renormalisation of $\mathbb{T}_\ga: \mathbb{M}_\ga \to \mathbb{M}_\ga$
using a suitable return map to a fundamental set in $\mathbb{M}_\ga$. 
This construction is in the spirit of the sector renormalisation, qualitatively presented by Douady and 
Ghys \cite{Do86}, and quantitatively employed by Yoccoz in the study of the small divisors problem in 
complex dimension one \cite{Yoc95}.
Our construction of renormalisation relies on the model for the changes of coordinates we defined 
in \refS{S:change-coordinates}. 

\subsection{Definition of \texorpdfstring{$\mathcal{R}_m$}{R-m}}\label{SS:definition-Renorm}
Fix an arbitrary $\ga \in \mathbb{R} \setminus \mathbb{Q}$. 
Since $\mathbb{M}_{\ga+1}= \mathbb{M}_\ga$ and $\mathbb{T}_{\ga+1}= \mathbb{T}_\ga$, we may assume that 
$\ga \in (-1/2,1/2)$. 
Let us first assume that $\ga \in (0, 1/2)$. 

Consider the set 
\[S_\ga= \{ z \in \mathbb{M}_\ga \setminus \{0\}  \mid  \arg z \in ([0, 2\pi \ga)+2 \pi \mathbb{Z})\}.\]
This consists of all the non-zero points in $\mathbb{M}_\ga$ which lie in a ``sector'' of angle $2\pi \alpha$ at $0$. 
Because $\mathbb{T}_\ga$ acts as rotation by $2\pi \alpha$ in the tangential direction, \refP{P:T-ga-tangential}, 
for every $z \in S_\ga$, there is the smallest integer $k_z \geq 2$ such that $\mathbb{T}_\ga^{\circ k_z}(z) \in S_\ga$. 
Indeed, for every $z \in S$, either $k_z$ is equal to the integer part of $1/\ga$ or $k_z$ is equal to the 
integer part of $1/\ga$ plus $1$. 
The map $\mathbb{T}_\ga^{\circ k_z}$ is the first return map to $S_\ga$ in the dynamical system 
$\mathbb{T}_\ga: \mathbb{M}_\ga \to \mathbb{M}_\ga$.   

Recall that $\mathbb{Y}_0(w+ 1/\ga_0)= \mathbb{Y}_0(w) + (1+\gep_0)/2=\mathbb{Y}_0+1$. 
Let us consider the map 
\[\psi_\ga: \mathbb{H}' \to \mathbb{C} \setminus \{0\}\] 
defined as 
\[\psi_\ga(w)= s \left( e^{2\pi i \mathbb{Y}_0(w)} \right)
=  s \left( e^{2\pi i (-s\circ Y_{\ga_0}(w))} \right)
=\Big |\frac{e^{-3\pi\ga}- e^{\pi\ga i}}{e^{-3\pi\ga}- e^{-\pi\ga i}e^{-2\pi\ga iw}}\Big| e^{2\pi i \ga \Re w}.\]
We note that 
\begin{equation}\label{E:renorm-def-1}
S_\ga \subset \psi_\ga \left( \{w \in \mathbb{H}'  \mid \Re w \in [0, 1)\}\right).  
\end{equation}
The map $\psi_\ga$ is continuous and injective on $ \{w \in \mathbb{H}'  \mid \Re w \in [0, 1)\}$, and 
it sends half-infinite vertical lines in that set to straight rays landing at $0$. 

There is a continuous inverse branch of $\psi_\ga$ defined on $S_\ga$, that is, 
\begin{equation}\label{E:renorm-def-2}
\phi_\ga: S_\ga \to \{w \in \mathbb{H}'  \mid \Re w\in [0, 1)\}.
\end{equation}
The above map is continuous and injective. 
(This map is the analogue of the perturbed Fatou coordinate for $\mathbb{T}_\ga$) 

The return map $\mathbb{T}_\ga\co{k_z}: S_\ga \to S_\ga$ induces a map  
\[h_\ga: \phi_\ga(S_\ga) \to \phi_\ga(S_\ga)\] 
via $\psi_\ga$ and $\phi_\ga$, that is,  
\[h_\ga(w)= \phi_\ga \circ \mathbb{T}_\ga^{\circ k_{\psi_\ga(w)}} \circ \psi_\ga(w).\]
Using the projection $w \mapsto e^{2\pi i w}$, the map $h_\ga$ projects to a map $E_\ga$, defined on 
$e^{2\pi i  \phi_\ga(S_\ga)} \subset \mathbb{C}\setminus \{0\}$. 
That is, $e^{2\pi i h_\ga(w)}= E_\ga (e^{2\pi i w})$ for all $w\in \phi_\ga(S_\ga)$. 
We may extend $E_\ga$ onto $0$ by letting it map $0$ to $0$. 
We call the map $E_\ga$, and its domain of definition, the renormalisation of 
$\mathbb{T}_\ga: \mathbb{M}_\ga \to \mathbb{M}_\ga$, and denote it by 
$\mathcal{R}_m(\mathbb{T}_\ga: \mathbb{M}_\ga \to \mathbb{M}_\ga)$. 

\begin{rem}
In the above definition, one may replace the set $S_\ga$ by any set of the form 
\[S_\ga^\eta 
=\{z \in \mathbb{M}_\ga \setminus \{0\} \mid \arg z \in \left([2\pi \eta, 2\pi\eta+2\pi\alpha) + 2\pi\mathbb{Z}\right)\}\]
for a fixed $\eta \in \mathbb{R}$, and use the return map of $\mathbb{T}_\ga$ on $S_\ga^\eta$ to
define the renormalisation of $\mathbb{T}_\ga: \mathbb{M}_\ga \to \mathbb{M}_\ga$ in the same fashion. 
The definition of renormalisation is independent of the choice of $\eta$. 
That is, one obtains the same map defined on the same set of points. 
We avoid considering this generality for the sake of simplicity, although, the independence from $\eta$ will 
become evident from the proof of \refP{P:renormalisation-relations}.

Also, when projecting the map $h_\ga$ to obtain $E_\ga$ we have used the projection map $w \mapsto e^{2\pi i w}$ 
for the sake of consistency with how the sector renormalisation is defined.
This may appear in contrast to the projection map $w\mapsto s\left(e^{2\pi i w} \right)$ used in the definition of the 
sets $\mathbb{M}_\ga$. 
Due to the properties in \refP{P:M-ga-relations-1} and \refL{L:M-ga-relations-2}, this does not cause any problems. 
See the proof of \ref{P:renormalisation-relations} for more details. 
\end{rem} 

The above process defines the renormalisation of $\mathbb{T}_\ga: \mathbb{M}_\ga \to \mathbb{M}_\ga$ 
when $\ga \in (0, 1/2)$. 
For $\ga \in (-1/2, 0)$, we first use dynamical conjugation via the complex conjugation map so that the 
rotation number of the map becomes $-\ga \in (0, 1/2)$ and then repeat the above process. 
By \refP{P:M-ga-relations-1}, we have $s(\mathbb{M}_\ga)= \mathbb{M}_{-\ga}$ and by \refP{P:T-ga-relations} 
we have $s \circ \mathbb{T}_\ga \circ s= \mathbb{T}_{-\ga}$. 
Therefore, for $\ga \in (-1/2, 0)$, we define 
\begin{equation}\label{E:def-renorm-neg-alpha}
\begin{aligned}
\mathcal{R}_m(\mathbb{T}_\ga: \mathbb{M}_\ga \to \mathbb{M}_\ga) 
&=s \Big ( \mathcal{R}_m \big(s \circ \mathbb{T}_\ga \circ s: s\left(\mathbb{M}_\ga\right) \to s\left(\mathbb{M}_\ga\right)\big)\Big )  \\
&= s\big(\mathcal{R}_m(\mathbb{T}_{-\ga}: \mathbb{M}_{-\ga} \to \mathbb{M}_{-\ga}) \big),
\end{aligned}
\end{equation}
where the complex conjugation $s$ on the right hand side of the above equation means that we consider the
dynamical conjugate of the map and its domain of definition using the complex conjugation map. 
More precisely, in terms of our notations, that means 
\[s(f: X \to Y)= \left( s\circ f \circ s:  s(X) \to s(Y)\right).\] 

\subsection{Invariance of the class of maps under the renormalisation operator}\label{SS:invariance-renormalisation}
\begin{propo}\label{P:renormalisation-relations}
For every $\ga\in (-1/2, 1/2) \setminus \mathbb{Q}$ we have 
\[\mathcal{R}_m(\mathbb{T}_\ga: \mathbb{M}_\ga \to \mathbb{M}_\ga)
= (\mathbb{T}_{-1/\ga}: \mathbb{M}_{-1/\ga} \to \mathbb{M}_{-1/\ga}).\]
\end{propo}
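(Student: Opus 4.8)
The plan is to reduce first to $\ga\in(0,1/2)$ and then to pin down both the domain and the action of $\mathcal R_m(\mathbb T_\ga\colon\mathbb M_\ga\to\mathbb M_\ga)$ by transporting the trajectory construction of $\tilde T_\ga$ from \refSS{SS:T-defn} one level down the tower. For $\ga\in(-1/2,0)$ the assertion follows from the case $-\ga\in(0,1/2)$ together with \refE{E:def-renorm-neg-alpha}, since the $s$-conjugate of $(\mathbb T_{1/\ga}\colon\mathbb M_{1/\ga}\to\mathbb M_{1/\ga})$ is $(\mathbb T_{-1/\ga}\colon\mathbb M_{-1/\ga}\to\mathbb M_{-1/\ga})$ by \refP{P:M-ga-relations-1} and \refP{P:T-ga-relations}. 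So assume $\ga\in(0,1/2)$; then $\ga_0=\ga$, $\gep_0=+1$, $\mathbb Y_0=-s\circ Y_{\ga_0}$, and $1/\ga=a_0+\gep_1\ga_1$. Put $\ga'=\gep_1\ga_1$ and attach primes to the objects of \refS{S:M-ga} associated to $\ga'$. As noted in the proof of \refL{L:M-ga-relations-2}, $a'_{-1}=0$, $\gep'_0=\gep_1$, and $\ga'_n=\ga_{n+1}$, $a'_n=a_{n+1}$, $\gep'_n=\gep_{n+1}$, $\mathbb Y'_n=\mathbb Y_{n+1}$, $I'_n=I_{n+1}$ for all $n\ge0$; moreover $\mathbb M_{1/\ga}=\mathbb M_{\ga'}$ and $\mathbb T_{1/\ga}=\mathbb T_{\ga'}$ by invariance under integer translations (\refP{P:M-ga-relations-1}, together with the fact that $\ga$ and $\ga+1$ determine the same maps $\mathbb Y_n$, $n\ge0$, hence the same $\tilde T_\ga$, used already in \refSS{SS:definition-Renorm}).

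I would first identify the domain. Since $\psi_\ga(w)=s\bigl(e^{2\pi i\mathbb Y_0(w)}\bigr)$ and $\Re\mathbb Y_0(w)=-\ga\Re w$, a point $\psi_\ga(w)$ with $\Re w\in[0,1)$ lies in $\mathbb M_\ga$ exactly when $\mathbb Y_0(w)+1\in I_{-1}=\mathbb Y_0(I_0)+1$, i.e.\ when $w\in I_0$; its argument is then automatically in $[0,2\pi\ga)$, so it lies in $S_\ga$. Hence $\phi_\ga(S_\ga)=\{w\in I_0\mid\Re w\in[0,1)\}$. Applying $w\mapsto e^{2\pi iw}$ and using that $I_0$ is invariant under $w\mapsto w+1$ (\refL{L:model-almost-periodic}) gives $e^{2\pi i\phi_\ga(S_\ga)}=\{e^{2\pi iw}\mid w\in I_0\}$, which by \refL{L:M-ga-relations-2}(i) equals $s\bigl(\mathbb M_{1/\ga}\setminus\{0\}\bigr)=\mathbb M_{-1/\ga}\setminus\{0\}$ (using $s(\mathbb M_{1/\ga})=\mathbb M_{-1/\ga}$ from \refP{P:M-ga-relations-1}). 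Adjoining $0$, the domain of $\mathcal R_m(\mathbb T_\ga\colon\mathbb M_\ga\to\mathbb M_\ga)$ is $\mathbb M_{-1/\ga}$; the conjugation by $s$ when passing from $h_\ga$ to $E_\ga$ is precisely what makes the induced map on rotation numbers the Gauss map $\ga\mapsto-1/\ga$ rather than $\ga\mapsto1/\ga$.

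The core step is the computation of the first return map. Fix $w\in\phi_\ga(S_\ga)$; the structure of $I_0$ (equivalently \refL{L:M-ga-relations-2}) shows that $w$, up to translation by $\D Z$, lies in $I'_{-1}$, so $\tilde T_{\ga'}(w)$ is defined. Lift $\psi_\ga(w)$ to $v_{-1}:=\mathbb Y_0(w)+1\in I_{-1}$. Unwinding \refSS{SS:T-defn}, the $\ga$-trajectory of $v_{-1}$ begins $(v_{-1};1),(v_0;l_0)$ with $v_0=w$, and — because the recursion $\mathbb Y_{i+1}(v_{i+1})+l_i=v_i$ together with the $\gep_{i+1}$-normalisation of $\Re(v_i-l_i)$ becomes, after the index shift $\mathbb Y'_i=\mathbb Y_{i+1}$, $\gep'_i=\gep_{i+1}$, $I'_i=I_{i+1}$, precisely the recursion defining the $\ga'$-trajectory of $w$ — the tail $(v_0;l_0),(v_1;l_1),\dots$ coincides with the $\ga'$-trajectory of $w$. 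Now $\mathbb T_\ga$ acts on $v_{-1}$ as $\tilde T_\ga$, which, whenever the $v_0$-coordinate of the current point lies in $K_0$, merely replaces it by $v_0+1$ (here $\gep_0=+1$ and \refE{E:Y_n-comm-1} are used). Let $N$ be the least integer with $w+N\in I_0\setminus K_0$; since $\Re K_0=[0,1/\ga_0-1]$ and $1/\ga_0>2$, $N$ is finite, the points $w,w+1,\dots,w+N-1$ lie in $K_0$ while $w+N\in I_0\setminus K_0$, and for $1\le m\le N$ the point $\mathbb T_\ga^{\circ m}(\psi_\ga(w))$ has argument $2\pi\ga(\Re w+m)\notin[0,2\pi\ga)\pmod{2\pi}$, hence is outside $S_\ga$. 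The $(N{+}1)$-st application of $\tilde T_\ga$ is made at a point whose $v_0$-coordinate $w+N$ has left $K_0$ and whose deeper coordinates are still those of the $\ga'$-trajectory of $w$; by the definition of $\tilde T_\ga$ — both cases (i) and (ii), matched via the shift $n\leftrightarrow n+1$ of the first descent index into the sets $K_\bullet$ — its value is $(\mathbb Y_0+1)\bigl(\tilde T_{\ga'}(w)\bigr)$, that is, the lift of $\psi_\ga\bigl(\tilde T_{\ga'}(w)\bigr)$, a point of $S_\ga$. Therefore $k_{\psi_\ga(w)}=N+1$ and $h_\ga(w)=\phi_\ga\circ\mathbb T_\ga^{\circ(N+1)}\circ\psi_\ga(w)=\tilde T_{\ga'}(w)$ modulo $\D Z$.

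To finish, for $w\in\phi_\ga(S_\ga)$ we get $E_\ga(e^{2\pi iw})=e^{2\pi ih_\ga(w)}=e^{2\pi i\tilde T_{\ga'}(w)}$, and $E_\ga(0)=0$. On the other hand $\mathbb T_{-1/\ga}=\mathbb T_{-\ga'}=s\circ\mathbb T_{\ga'}\circ s$ by \refP{P:T-ga-relations}, and $\mathbb T_{\ga'}$ is, by \refP{P:T-ga-tangential}, the map induced from $\tilde T_{\ga'}$ through $w\mapsto s(e^{2\pi iw})$; hence $\mathbb T_{-1/\ga}(e^{2\pi iw})=s\bigl(s(e^{2\pi i\tilde T_{\ga'}(w)})\bigr)=e^{2\pi i\tilde T_{\ga'}(w)}$ and $\mathbb T_{-1/\ga}(0)=0$. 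So $E_\ga$ and $\mathbb T_{-1/\ga}$ agree on a dense subset of $\mathbb M_{-1/\ga}$ (away from the countably many boundary configurations), hence everywhere by continuity (\refL{L:model-map-lift}), which together with the description of the domain gives $\mathcal R_m(\mathbb T_\ga\colon\mathbb M_\ga\to\mathbb M_\ga)=(\mathbb T_{-1/\ga}\colon\mathbb M_{-1/\ga}\to\mathbb M_{-1/\ga})$. The main obstacle is the bookkeeping of the third paragraph: checking that the integers $l_i$, the first descent index into the sets $K_n$, and the dichotomy between cases (i) and (ii) in the definition of $\tilde T_\ga$ all transfer correctly under the level shift, and dealing with the half-open boundary of $S_\ga$ so that $N+1$ is genuinely the first return time and $\tilde T_{\ga'}(w)$ again represents a point of $S_\ga$.
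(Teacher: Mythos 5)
You follow essentially the same route as the paper: reduce to $\ga\in(0,1/2)$ via \refE{E:def-renorm-neg-alpha}, identify the domain of the renormalisation with $\{e^{2\pi i w}\mid w\in I_0\}\cup\{0\}=s(\mathbb{M}_{1/\ga})=\mathbb{M}_{-1/\ga}$ using \refP{P:M-ga-relations-1} and \refL{L:M-ga-relations-2}, and recognise the return map, read in the coordinate $(\mathbb{Y}_0+1)^{-1}$, as finitely many translations by $+1$ inside $K_0$ followed by one application of the index-shifted map, i.e.\ of $\tilde{T}_{\gep_1\ga_1}$, whose projection under $w\mapsto e^{2\pi i w}$ is $s\circ\mathbb{T}_{\gep_1\ga_1}\circ s=\mathbb{T}_{-1/\ga}$. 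The level-shift bookkeeping of your third paragraph is correct, and the paper carries out the same computation after conjugating $\tilde{T}_\ga$ to $\hat{T}_\ga=(\mathbb{Y}_0+1)^{-1}\circ\tilde{T}_\ga\circ(\mathbb{Y}_0+1)$ on $I_0$.

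There is, however, one genuine soft spot: the assertion that $k_{\psi_\ga(w)}=N+1$ for every $w$, justified by the claim that for $1\le m\le N$ the argument $2\pi\ga(\Re w+m)$ avoids $[0,2\pi\ga)$ modulo $2\pi$. This fails exactly on the ray $\Re w=1/\ga-\lfloor 1/\ga\rfloor$ (nonempty, for instance, whenever $\ga\in\E{B}$): there $\Re(w+N)=1/\ga$, so the $N$-th iterate lands on the positive real axis, which does belong to $S_\ga$ since the sector is closed at argument $0$; hence the first return time is $N$, not $N+1$. Your patch --- agreement on a dense set plus continuity --- does not close this: \refL{L:model-map-lift} gives continuity of $\tilde{T}_{\gep_1\ga_1}$ and hence of $\mathbb{T}_{-1/\ga}$, but continuity of $E_\ga$ (defined through the genuine first return) at these points is not known a priori, and it is precisely what is at stake. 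The repair is a short direct computation on that ray: the landing point is the projection of $\mathbb{Y}_0(w+N)+1$ with $\Re(w+N)=1/\ga_0$, so by \refE{E:Y_n-comm-1} its $\phi_\ga$-image is $w+N-1/\ga_0$, and applying \refE{E:Y_n-comm-2} at the trajectory point of real part $1/\ga_1-1$ shows that $w+N-1/\ga_0$ agrees with $\tilde{T}_{\gep_1\ga_1}(w)$ modulo $\D{Z}$, so $E_\ga=\mathbb{T}_{-1/\ga}$ there as well. (To be fair, the paper's own proof is equally brisk at this boundary ray; the same functional relations are what make its asserted correspondence of return maps harmless there.)
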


\begin{proof}
Let us first assume that $\ga \in (0, 1/2) \setminus \mathbb{Q}$. 
Recall the numbers $(\ga_n)_{n\geq 0}$ and $(\gep_n)_{n\geq 0}$ defined in \refS{SS:modified-fractions-mini}. 
We shall also use the notations $I_n$, $K_n$ and $J_n$ introduced in \refS{SS:tilings-nest}.  

Let us consider the set 
\[\hat{S}_\ga= \{w\in I_0 \mid \Re w \in [0, 1)\}.\]
By the definition of $\mathbb{M}_\ga$ in \refS{S:M-ga}, in particular Equations \eqref{E:I--1} and \eqref{E:M_ga}, 
we have $\psi_\ga(I_0) \cup\{0\}= \mathbb{M}_\ga$, and 
\[\psi_\ga (\hat{S}_\ga)=S_\ga.\] 
Indeed, $\psi_\ga:   \hat{S}_\ga \to S_\ga$ is a homeomorphism. 
This implies that 
\[\phi_\ga: S_\ga \to \hat{S}_\ga\]
is a homeomorphism as well. 
Therefore, we have 
\[h_\ga: \hat{S}_\ga \to \hat{S}_\ga.\] 
Because $\ga \in (0, 1/2)$ and $I_0$ is periodic of period $+1$, by \refP{P:M-ga-relations-1} 
and \refL{L:M-ga-relations-2}, we have 
\[\{e^{2\pi i w} \mid w \in \hat{S}_\ga\} \cup \{0\} = s (\mathbb{M}_{1/\ga})= \mathbb{M}_{-1/\ga}.\]
This shows that the map $h_\ga$ projects to the map $E_\ga$ defined on $\mathbb{M}_{-1/\ga}$, via the projection 
$w \mapsto e^{2\pi i w}$. 
In other words, the domain of definition of 
$\mathcal{R}_m(\mathbb{T}_\ga: \mathbb{M}_\ga \to \mathbb{M}_\ga)$ is $\mathbb{M}_{-1/\ga}$. 
Now, we need to show that $E_\ga= \mathbb{T}_{-1/\ga}$. 

We continue to assume that $\ga \in (0, 1/2)$.
Let us consider the set 
\[\tilde{S}_\ga= \{w \in I_{-1} \mid \Re w \in (1-\ga, 1].\]
The map $w \mapsto s(e^{2\pi i w})$ is a homeomorphism from $\tilde{S}_\ga$ to $S_\ga$. 
We also have the homeomorphism 
\[\mathbb{Y}_0+1: \hat{S}_\ga \to \tilde{S}_\ga.\]
Recall from \refS{S:T-on-M}, that $\mathbb{T}_\ga$ on $\mathbb{M}_\ga$ 
is induced from $\tilde{T}_\ga$ on $I_{-1}$, via the projection $w \mapsto s(e^{2\pi i w})$. 
Therefore, the return map of $\mathbb{T}_\ga: \mathbb{M}_\ga \to \mathbb{M}_\ga$ on $S_\ga$ 
corresponds to the return map of $\tilde{T}_\ga: I_{-1} \to I_{-1}$ on $\tilde{S}_\ga$.  

The map $\mathbb{Y}_0+ (1+\gep_0)/2= \mathbb{Y}_0+1: I_0 \to I_{-1}$ is a homeomorphism. 
Let us consider the map 
\[\hat{T}_\ga= (\mathbb{Y}_0+1)^{-1} \circ \tilde{T}_\ga \circ (\mathbb{Y}_0+1): I_0 \to I_0.\]
Then, through the conjugacy $\mathbb{Y}_0+1$, the return map of $\tilde{T}_\ga: I_{-1} \to I_{-1}$ on $\tilde{S}_\ga$ 
corresponds to the return map of $\hat{T}_\ga: I_0 \to I_0$ on $\hat{S}_\ga$.
Because $\hat{S}_\ga = \phi_\ga(S_\ga)$, by the definition of renormalisation, the return map of 
$\hat{T}_\ga: I_0 \to I_0$ on $\hat{S}_\ga$ projects via $w \mapsto e^{2\pi i w}$ to the map $E_\ga$. 
Below, we investigate this return map in more details. 

Let $w\in I_{-1}$ be an arbitrary point, and let $(w_i; l_i)_{i\geq -1}$ denote the trajectory of $w$ defined in 
\refS{SS:T-defn}. 
By the definition of $\tilde{T}_\ga$, if $w_0 \in K_0$ we have 
$\tilde{T}_\ga(w_{-1})= (\mathbb{Y}_0+1)((\mathbb{Y}_0+1)^{-1}(w_{-1})+1)$. 
This implies that for $w\in K_0$, $\hat{T}_\ga (w)=w+1$. 
On the other hand, if $w_0 \in I_0\setminus K_0$, $\hat{T}_\ga(w_0)$ is defined as follows:  
\begin{itemize}
\item[(i)] if there is $n \geq 1$ such that $w_n \in K_n$, and for all $1 \leq i \leq n-1$, $w_i \in I_i \setminus K_i$, then 
\[\hat{T}_\ga (w_0)= \left(\mathbb{Y}_1+\frac{\gep_1+1}{2}\right ) \circ \left(\mathbb{Y}_2+\frac{\gep_2+1}{2}\right ) 
\circ \cdots \circ \left (\mathbb{Y}_n +\frac{\gep_{n}+1}{2} \right)(w_n+1);\] 
\item[(ii)] if for all $i \geq 1$, $w_i \in I_i \setminus K_i$, then 
\[\tilde{T}_\ga(w_0)= \lim_{n \to +\infty} \left(\mathbb{Y}_1+\frac{\gep_1+1}{2}\right ) \circ \left(\mathbb{Y}_2+\frac{\gep_2+1}{2}\right ) 
\circ \cdots \circ \left (\mathbb{Y}_n +\frac{\gep_{n}+1}{2} \right) (w_n+1-1/\ga_n).\]
\end{itemize}
Combining the above together, we conclude that the return map of $\hat{T}_\ga: I_0 \to I_0$ on the set  
$\hat{S}_\ga$ consists of a finite number of translations by $+1$ which take a point in 
$\hat{S}_\ga$ to a point in $I_0 \setminus K_0$, and then one iterate of either the map in 
item (i) or the the map in item (ii), depending on which scenario takes place. 

Since any point in $I_0$ and its integer translations are sent by $w \mapsto s(e^{2\pi i w})$ to the same point, 
each translation by +1 on $K_0$ induce the identity map via the projection $w \mapsto s(e^{2\pi i w})$. 
Thus, by the above paragraph, the rerun map of $\hat{T}_\ga: I_0 \to I_0$ on the set $\hat{S}_\ga$ 
and the map $\hat{T}_\ga: I_0\setminus K_0 \to \hat{S}_\ga$ induce the same map via the 
projection $w \mapsto s(e^{2\pi i w})$. 
On the other hand, by the definition in \refS{SS:T-defn}, the map specified in items (i) and (ii) is 
$\tilde{T}_{\gep_1\ga_1}: I_0/\mathbb{Z} \to I_0/\mathbb{Z}$. 
Therefore, the rerun map of $\hat{T}_\ga: I_0 \to I_0$ on the set $\hat{S}_\ga$, and the map 
$\tilde{T}_{\gep_1\ga_1}: I_0/\mathbb{Z} \to I_0/\mathbb{Z}$ induce the same map via the projection 
$w \mapsto s(e^{2\pi i w})$. 

By the definition in \refS{SS:T-defn}, $\tilde{T}_{\gep_1 \ga_1}: I_0/\mathbb{Z} \to I_0/\mathbb{Z}$ 
induces $\mathbb{T}_{\gep_1 \ga_1}: \mathbb{M}_{\gep_1\ga_1} \to \mathbb{M}_{\gep_1\ga_1}$ 
via the projection $w \mapsto s(e^{2\pi i w})$. 
If we project via $w \mapsto e^{2\pi i w}$, $\tilde{T}_{\gep_1 \ga_1}$ induces 
$s \circ \mathbb{T}_{\gep_1 \ga_1} \circ s$ on $s(\mathbb{M}_{\gep_1\ga_1})$. 
By \refP{P:M-ga-relations-1}, 
$s(\mathbb{M}_{\gep_1 \ga_1})= \mathbb{M}_{-\gep_1\ga_1} = \mathbb{M}_{-1/\ga_0}= \mathbb{M}_{-1/\ga}$ 
and by \refP{P:T-ga-relations}, $s \circ \mathbb{T}_{\gep_1 \ga_1} \circ s= \mathbb{T}_{-\gep_1 \ga_1}
= \mathbb{T}_{-1/\ga}$. 
Note that here we have used $\ga \in (0, 1/2)$, which implies that $\ga_0=\ga$ and 
$1/\ga_0= -\gep_1 \ga_1, \mod \mathbb{Z}$.

Now assume that $\ga \in (-1/2,0)$. 
By the definition of renormalisation for $\ga \in (-1/2, 0)$, we have 
\begin{align*}
\mathcal{R}_m (\mathbb{T}_\ga: \mathbb{M}_\ga \to \mathbb{M}_\ga)
& = s \left ( \mathcal{R}_m (\mathbb{T}_{-\ga}: \mathbb{M}_{-\ga} \to \mathbb{M}_{-\ga})\right) \\
& = s \left ( \mathbb{T}_{1/\ga}: \mathbb{M}_{1/\ga} \to \mathbb{M}_{1/\ga}\right) \\
& = s \circ \mathbb{T}_{1/\ga} \circ : s (\mathbb{M}_{1/\ga}) \to s(\mathbb{M}_{1/\ga}) \\
& = \mathbb{T}_{-1/\ga}: \mathbb{M}_{-1/\ga} \to \mathbb{M}_{-1/\ga}. \qedhere
\end{align*}
\end{proof}

\section{Arithmetic classes of Brjuno and Herman}\label{S:arithmetic}
In this section we define the arithmetic classes of Brjuno and Herman. 
This requires the action of the modular group $\mathrm{PGL}(2, \D{Z})$ on the real line, which produces 
continued fraction type representation of irrational numbers. 
To study the action of this group, one may choose a fundamental interval for the action of $z \mapsto z+1$
and study the action of $z \mapsto 1/z$ on that interval. 
When the interval $(0,1)$ is chosen, one obtains the standard representation (continued fraction). 

Because of the nature of the renormalisation, we work with the fundamental interval $(-1/2,1/2)$ for the translation. 
This is partly due to the symmetry of the renormalisation scheme 
$\{\mathbb{T}_\ga, \mathbb{M}_\ga\}$ 
with respect to the rotation stated in part (v) of \refT{T:toy-model-renormalisation}.
This choice of the fundamental interval leads to a modified representation (continued fraction) of irrationals, 
also known as nearest integer continued fraction.

The modified continued fraction was used by Yoccoz in \cite{Yoc1988,Yoc95} to 
characterise the Brjuno numbers. 
He also studied the relation between the Brjuno functions (see below) in terms of the standard and the 
modified continued fractions. 
A systematic in-depth study of the Brjuno condition, its properties, and its dependence on the choice of the continued 
fraction is carried out by Marmi, Moussa and Yoccoz in \cite{MMY97,MMY01,MMY06}. 
A key point in those papers is that the Brjuno function is a cocycle under the action of $\mathrm{PGL}(2, \D{Z})$.

In \cite{Yoc02}, Yoccoz uses the standard continued fraction to identify the arithmetic class $\E{H}$ 
(see \refD{D:Herman-Yoccoz-criterion}). 
The main aim of this section is to identify the equivalent form of the Herman condition in terms of the modified 
(nearest integer) continued fraction. 
Some of our technical arguments may be found in, or may follow from, \cite{MMY97,MMY01,MMY06}.
We present a quick route to the equivalent form of the Herman condition in terms of the modified 
continued fractions.

We shall only use the modified representation beyond this section. 
For basic properties of continued fractions one may consult \cite{Khin64}. 

\subsection{Standard continued fraction}\label{SS:standard-fraction} 
For $x\in \D{R}$, let $\langle x \rangle \in (0,1)$ denote the fractional part of $x$, that is, $x\in \D{Z}+\langle x \rangle$. 
For $\ga \in \D{R}\setminus \D{Q}$, we may define the numbers $\tilde{\ga}_n \in (0,1)$ as $\tilde{\ga}_0=\langle \ga \rangle$ and 
$\tilde{\ga}_{n+1}=\langle 1/\tilde{\ga}_n \rangle$, for $n\geq 0$.
Then we identify the unique integers $\tilde{a}_n$, for $n\geq -1$, according to 
\begin{equation}\label{E:recursive-factions-standard}
\ga = \tilde{a}_{-1}+ \tilde{\ga}_0, \quad 1/\tilde{\ga}_{n}=\tilde{a}_{n} + \tilde{\ga}_{n+1}.
\end{equation}
These may be combined to obtain  
\begin{equation*}\label{E:modified-expansion-alpha-standard}
\ga= \tilde{a}_{-1}+\cfrac{1}{\tilde{a}_0+\cfrac{1}{\ddots + \cfrac{1}{\tilde{a}_n + \tilde{\ga}_{n+1}}}}, \; \tfor n\geq -1.
\end{equation*}
The $n$-th convergent of $\ga$ is define as 
\[\frac{\tilde{p}_n}{\tilde{q}_n}=\tilde{a}_{-1}+\cfrac{1}{\tilde{a}_0+\cfrac{1}{\ddots + \cfrac{1}{\tilde{a}_n}}}, \;  \tfor n\geq -1.\] 
Then,  
\begin{equation*}\label{E:ga-ga_n-formula}
\ga=(\tilde{p}_n+\tilde{p}_{n-1}\tilde{\ga}_{n+1})/(\tilde{q}_n+ \tilde{q}_{n-1}\tilde{\ga}_{n+1}), \; \tfor n\geq -1,
\end{equation*}
which implies 
\begin{equation}\label{E:ga_n-ga-formula}
\ga_{n+1}=-(\ga \tilde{q}_n- \tilde{p}_n)/(\ga \tilde{q}_{n-1}-\tilde{p}_{n-1}), \; \tfor n\geq -1.
\end{equation}
In \cite{Brj71}, see also \cite{Cherry64}, Brjuno introduced the important series  $\sum_{n=-1}^{+\infty} \tilde{q}_n^{-1} \log \tilde{q}_{n+1}$.
In \cite{Yoc95}, Yoccoz defines a closely related series which enjoys remarkable equivariant properties 
with respect to the action of $\mathrm{PGL}(2, \D{Z})$.  
To define that, we need to introduce the numbers
\[\tilde{\gb}_{-2}=\ga, \quad \tilde{\gb}_{-1}=+1, \quad  \tilde{\gb}_n= \prod_{i=0}^n \tilde{\ga}_i, \; \tfor n \geq 0.\] 
In terms of the convergents, \refE{E:ga_n-ga-formula} gives us 
\begin{equation}\label{E:beta_n-p_n-q_n}
\tilde{\gb}_n= (-1)^n (\ga \tilde{q}_{n-1} - \tilde{p}_{n-1}), \;  \tfor n\geq -1.
\end{equation}
Define the (standard Brjuno) function $\tilde{\C{B}}: \D{R}\setminus \D{Q} \to (0, +\infty) \cup \{+\infty\}$ as 
\begin{equation}\label{E:Brjuno-condition-standard}
\tilde{\C{B}}(\ga)= \sum_{n=-1}^{+\infty} \tilde{\gb}_{n} \log \frac{1}{\tilde{\ga}_{n+1}}
=  \sum_{n=-1}^{+\infty}   (-1)^n (\ga \tilde{q}_{n-1} -\tilde{p}_{n-1}) \log \frac{\ga \tilde{q}_{n-1}-\tilde{p}_{n-1}}{\tilde{p}_n-\ga \tilde{q}_n}.
\end{equation}
This is a highly irregular function; $\tilde{\C{B}}(\ga)=+\infty$ for generic choice of $\ga \in \D{R}$. 
One may refer to \cite{MMY97,MMY01,JM18}, and the extensive list of references therein, 
for detailed analysis of the regularity properties of this function. 
In this paper we are not concerned with the regularity, but only exploit the equivariant properties of the Brjuno 
function with respect to the action of $PGL(2,\mathbb{Z})$.

\begin{def-numbered}\label{D:brjuno-numbers-beta-version}
An irrational number $\ga$ is called a \textbf{Brjuno number} if $\tilde{\C{B}}(\ga)<+\infty$.  
\end{def-numbered}

The function $\tilde{\C{B}}$ satisfies the remarkable relations 
\begin{equation}\label{E:Brjuno-functional-equations-standard}
\tilde{\C{B}}(\ga)= \tilde{\C{B}}(\ga+n), \; 
\tilde{\C{B}}(\ga)= \ga \tilde{\C{B}}(1/\ga) + \log (1/\ga), 
\end{equation}
for all $\ga \in (0,1)$ and all $n\in \D{Z}$. These show that the set of Brjuno numbers is 
$\mathrm{PGL}(2, \D{Z})$-invariant. 

Let $\E{B}$ denote the set of Brjuno numbers. 
The definition of the Brjuno numbers given in the introduction is consistent with the one given in 
\refD{D:brjuno-numbers-beta-version}. 
That is because, $| \sum_{n=-1}^{+\infty} \tilde{q}_n^{-1} \log \tilde{q}_{n+1} - \tilde{\C{B}}(\ga) |$ is uniformly 
bounded from above.

The set of Herman numbers is defined in a different fashion. 
To that end, we need to consider the diffeomorphisms  $h_r: \D{R} \to (0, +\infty)$, for $r\in (0,1)$:
\[h_r(y)= 
\begin{cases}
r^{-1} (y- \log r^{-1} +1)  & \tif y \geq \log r^{-1}, \\
e^{y} & \tif y \leq \log r^{-1}.
\end{cases}
 \]
Each $h_r$ satisfies
\begin{equation}\label{E:h_r-properties}
\begin{gathered}
h_r (\log r^{-1}) = h_r'(\log r^{-1}) = r^{-1},  \\
e^y \geq h_r(y) \geq y+1, \forall y\in \D{R}, \\
h_r'(y)\geq 1, \forall y\geq 0.
\end{gathered}
\end{equation} 

Following Yoccoz \cite{Yoc02}, we give the following definition.

\begin{def-numbered}\label{D:Herman-Yoccoz-criterion}
An irrational number $\ga$ is of \textbf{Herman type}, if for all $n\geq 0$ there is $m\geq n$ such that 
\[h_{\tilde{\ga}_{m-1}} \circ \dots \circ h_{\tilde{\ga}_n} (0)\geq \tilde{\C{B}}(\tilde{\ga}_{m}).\]
\end{def-numbered}
 
In the above definition, the composition $h_{\tilde{\ga}_{m-1}} \circ \dots \circ h_{\tilde{\ga}_n}$ is understood as the
identity map when $m=n$, and as $h_{\tilde{\ga}_n}$ when $m=n+1$. 
The set of Herman numbers is denoted by $\E{H}$. 
It follows from the definition that\footnote{In this paper we do not make a distinction between 
``$\subset$'' and ``$\subseteq$''. If strict inclusion is meant, we use ``$\subsetneq$''.}
\[\E{H} \subset \E{B}.\] 
That is because, if $\ga \notin \E{B}$, then $\tilde{\C{B}}(\tilde{\ga})=\tilde{\C{B}}(\tilde{\ga}_{0})= +\infty$. 
Repeatedly using the functional equations in \refE{E:Brjuno-functional-equations-standard}, one concludes that 
$\tilde{\C{B}}(\tilde{\ga}_{m})=+\infty$ for all $m\geq 0$. 
In particular, the inequality in the above definition never holds. 


In \refD{D:Herman-Yoccoz-criterion}, one may only require that for large $n$ there is $m$ such that the inequality 
holds. That is because, if $m'$ works for some $n'$, then the same $m'$ works for all $n \leq n'$. 
This shows that $\tilde{\ga}_0 \in \E{H}$ if and only if $\tilde{\ga}_1 \in \E{H}$. 
On the other hand, since $\ga$ and $\ga+1$ produce the same sequence of $\tilde{\ga}_i$, we see that $\E{H}$ is 
invariant under $z\mapsto z+1$. These show that  $\E{H}$ is invariant under the action of 
$\mathrm{PGL}(2,\D{Z})$. 

Recall that $\ga$ is a Diophantine number, if there are $\gt \geq 0$ and $c>0$ such that 
for all $p/q \in \D{Q}$ with $q\geq 1$ we have $|\ga-p/q|\geq c/q^{2+\gt}$. 
Any Diophantine number is of Herman type. Since the set of Diophantine numbers has full Lebesgue measure in 
$\D{R}$, the sets $\E{H}$ and $\E{B}$ have full Lebesgue measure in $\D{R}$. 

\begin{lem}
The set $\E{B} \setminus \E{H}$ is dense in $\D{R}$. 
\end{lem}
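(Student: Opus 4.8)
The plan is to reduce the statement to producing a single Brjuno number that is not of Herman type, and then to spread it around by the modular group. By the functional equations \refE{E:Brjuno-functional-equations-standard} the set $\E{B}$ is $\mathrm{PGL}(2,\D{Z})$-invariant, and by the remarks following \refD{D:Herman-Yoccoz-criterion} so is $\E{H}$; hence $\E{B}\setminus\E{H}$ is $\mathrm{PGL}(2,\D{Z})$-invariant, and it consists of irrational numbers. The $\mathrm{PGL}(2,\D{Z})$-orbit of any irrational is dense in $\D{R}$ — given $x\in\D{R}$ and $\gep>0$, apply a product of Gauss steps to reach a tail of the continued fraction of the orbit point, prepend finitely many partial quotients approximating those of $x$, and finish with an integer translation — so it suffices to exhibit one $\ga_*\in\E{B}\setminus\E{H}$, since then its whole orbit, being dense, lies in $\E{B}\setminus\E{H}$.

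To build $\ga_*$ I would prescribe its standard continued fraction $\ga_*=[0;\tilde a_0,\tilde a_1,\dots]$: fix a large integer $n_1$, put $n_{k+1}=n_k+2$ for $k\geq1$, set $\tilde a_j=1$ for all $j\geq0$ with $j\notin\{n_k\}_{k\geq1}$, and choose the ``spike'' values $\tilde a_{n_k}$ (enormous integers) by induction on $k$; the index set $\{n_k\}$ is fixed from the outset. Write $H_m=h_{\tilde\ga_{m-1}}\circ\cdots\circ h_{\tilde\ga_0}(0)$. I would then establish, by elementary estimates only, three facts. (i) Since $h_r(y)\le e^y$ always (by \refE{E:h_r-properties}), since $1/\tilde\ga_j\le2$ whenever $\tilde a_{j+1}=1$, and since $h_r(y)\le\max\{2,2(y+1)\}$ in that case, the $H_m$ satisfy $H_m\le b_m$, where $b_0=0$, $b_{m+1}=2b_m+2$ if $m+1\notin\{n_k\}$, and $b_{n_k}=e^{b_{n_k-1}}$; with the chosen gaps this gives $b_{n_k-1}=2e^{b_{n_{k-1}-1}}+2$ and $b_{n_1-1}=2^{n_1}-2$, so $H_m$ grows only like a tower of height the number of spikes below $m$, \emph{independently of the spike values}. (ii) Keeping only the single term of the series for $\tilde{\C{B}}(\tilde\ga_m)$ in which $\log(1/\tilde\ga_{n_k-1})$ appears, and using that $\tilde\ga_j>1/2$ whenever $\tilde a_{j+1}=1$, one gets $\tilde{\C{B}}(\tilde\ga_m)>\tfrac12\log\tilde a_{n_k}$ for $m\in\{n_{k-1},n_{k-1}+1\}$ (with $n_0:=0$), and $\tilde{\C{B}}(\tilde\ga_m)>2^{-(n_1-1-m)}\log\tilde a_{n_1}$ for $0\le m\le n_1-1$. (iii) In $\tilde{\C{B}}(\ga_*)=\sum_n\tilde\gb_n\log(1/\tilde\ga_{n+1})$ the only terms that are not uniformly small are the ``spike terms'' $\tilde\gb_{n_k-2}\log(1/\tilde\ga_{n_k-1})\le\log(\tilde a_{n_k}+1)/\tilde q_{n_k-2}$; all other terms sum to a bounded quantity because $\tilde q_n$ grows at least geometrically, so $\sum_n 1/\tilde q_n<\infty$.

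Now the induction. At stage $k$ (with $\tilde a_{n_1},\dots,\tilde a_{n_{k-1}}$ already chosen; note $b_m$ is known from $\{n_k\}$ alone) I would pick $\tilde a_{n_k}$ in the window cut out by a lower bound and, for $k\ge2$, an upper bound. The lower bound is $\log\tilde a_{n_k}>b_{n_k-1}$ with a margin growing linearly in $k$; by (i) and (ii) this forces $H_m\le b_m<\tilde{\C{B}}(\tilde\ga_m)$ for every $m\in\{n_{k-1},n_{k-1}+1\}$ (for $k=1$, for every $m\le n_1-1$), and since these index sets, as $k$ varies, exhaust $\{0,1,2,\dots\}$, this yields $H_m<\tilde{\C{B}}(\tilde\ga_m)$ for all $m\ge0$, i.e.\ $n=0$ witnesses $\ga_*\notin\E{H}$. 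The upper bound is $\log(\tilde a_{n_k}+1)\le2^{-k}\tilde a_{n_{k-1}}$, which since $\tilde q_{n_k-2}=\tilde q_{n_{k-1}}\ge\tilde a_{n_{k-1}}$ keeps, by (iii), the spike contributions summable, so $\tilde{\C{B}}(\ga_*)<\infty$ and $\ga_*\in\E{B}$. The window is non-empty: the lower bound imposed at stage $k-1$ already makes $\tilde a_{n_{k-1}}$ exceed $2^{k}$ times $e^{b_{n_{k-1}-1}}$ by a factor that grows geometrically in $k$, while $b_{n_k-1}=2e^{b_{n_{k-1}-1}}+2$, so the compatibility inequality $b_{n_k-1}+(\text{margin})<2^{-k}\tilde a_{n_{k-1}}$ holds for every $k\ge1$ (for $k=1$, where $\tilde a_{n_1}$ has no upper constraint, one only needs $\log\tilde a_{n_1}$ to exceed $\max\{2^{n_1},\,b_{n_1-1}+c\}$ for an absolute constant $c$). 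Passing to the limit produces $\ga_*$ with the two required properties, and the proof is complete.

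The delicate point — the one I would set up most carefully — is exactly the non-emptiness of these windows: if the gap between consecutive spikes were larger than $2$, then $H_m$ would grow geometrically between the exponentiations, at a rate exceeding that of $\tilde q_n$, and the two size constraints on $\tilde a_{n_k}$ would become incompatible. Placing the spikes two apart ensures $H$ gains only an additive constant from one spike to the next, which is precisely what makes the estimates close.
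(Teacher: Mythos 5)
Your argument is correct in substance, and it rests on the same underlying mechanism as the paper's proof: manufacture irrationals whose continued fraction contains enormous, extremely fast growing entries, so that the term $\log(1/\tilde{\ga}_{m})$ coming from the next huge entry makes $\tilde{\C{B}}(\tilde{\ga}_m)$ dominate the $h$-orbit $h_{\tilde{\ga}_{m-1}}\circ\cdots\circ h_{\tilde{\ga}_n}(0)$ for every $m$, while the entries grow slowly enough (only exponentially in the previous data) that the Brjuno series still converges. The packaging differs. The paper makes \emph{every} entry beyond some index a ``spike'' subject to the two-sided condition $e^{\tilde{a}_i}\le \tilde{a}_{i+1}\le e^{2\tilde{a}_i}-1$: the upper bound gives $\tilde{\gb}_i\log(1/\tilde{\ga}_{i+1})\le 2\tilde{\gb}_{i-1}$, hence $\E{B}$, and the lower bound keeps the $h$-orbit in the exponential branch, giving $h_{\tilde{\ga}_{i-1}}\circ\cdots\circ h_{\tilde{\ga}_k}(0)\le \tilde{a}_{i-1}\le\log(1/\tilde{\ga}_i)\le\tilde{\C{B}}(\tilde{\ga}_i)$ by a two-line induction; density is free because the initial block of entries is arbitrary. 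You instead interleave spikes with $1$'s, choose the spike values inductively inside windows (your key observation that the $h$-orbit bound $b_m$ depends only on the spike \emph{positions}, not their values, is exactly what makes this induction close), and recover density from $\mathrm{PGL}(2,\D{Z})$-invariance of $\E{B}\setminus\E{H}$ together with density of orbits of irrationals. Both routes work; the paper's is shorter because the single growth condition does both jobs at once and no window bookkeeping or group action is needed.

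Two details of your write-up need adjusting, though neither is a structural gap. First, you are using the shifted convention $1/\tilde{\ga}_j=\tilde{a}_{j+1}+\tilde{\ga}_{j+1}$, whereas the paper's is $1/\tilde{\ga}_j=\tilde{a}_j+\tilde{\ga}_{j+1}$; so ``$1/\tilde{\ga}_j\le 2$ whenever $\tilde{a}_{j+1}=1$'' should read $\tilde{a}_j=1$ in the paper's notation, and the indices in your recursion for $b$, in the coefficient $\tilde{\gb}_{n_k-2}$, and in the location of the big logarithm all shift by one accordingly; the argument is insensitive to the choice, but the write-up must use one convention consistently. Second, your Herman-side conclusion for $m\in\{n_{k-1},n_{k-1}+1\}$ uses $\tilde{\C{B}}(\tilde{\ga}_m)>\tfrac12\log\tilde{a}_{n_k}$, so the lower constraint on the spike must be $\log\tilde{a}_{n_k}>2\,b_{n_k-1}+(\text{margin})$, not just $b_{n_k-1}+(\text{margin})$; with this factor $2$ the window non-emptiness computation goes through unchanged.
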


\begin{proof}
Let $\ga$ be an irrational number such that there is an integer $k \geq 0$ such that 
$\tilde{a}_{k}=1$, and for all $i\geq k$ we have $e^{\tilde{a}_i} \leq  \tilde{a}_{i+1} \leq e^{2 \tilde{a}_i}-1$. 
Evidently, the set of such irrational numbers is dense in $\D{R}$. Below we show that any such $\ga$ belongs to 
$\E{B} \setminus \E{H}$. 
 
For integers $i \geq k$,
\[\tilde{\gb}_i \log (1/\tilde{\ga}_{i+1}) 
\leq \tilde{\gb}_i \log (\tilde{a}_{i+1}+1) 
\leq  \tilde{\gb}_i 2 \tilde{a}_{i} 
\leq 2 \tilde{\gb}_{i-1}.\]
Then, 
\begin{align*}
\tilde{\C{B}}(\ga)
&= \textstyle{ \sum_{i=-1}^{k-1}\tilde{\gb}_i\log (1/\tilde{\ga}_{i+1}) + \sum_{i=k}^{+\infty}\tilde{\gb}_i\log(1/\tilde{\ga}_{i+1}) } \\
& \leq \textstyle{ \sum_{i=-1}^{k-1}\tilde{\gb}_i\log (1/\tilde{\ga}_{i+1}) + 2 \sum_{i=k}^{+\infty} \tilde{\gb}_{i-1} }
\leq \textstyle{ \sum_{i=-1}^{k-1}\tilde{\gb}_i \log (1/\tilde{\ga}_{i+1}) + 8.} 
\end{align*}
This proves that $\ga$ belongs to $\E{B}$. 

On the other hand, for all $i \geq k+1$, 
\[\tilde{a}_{i-1} \leq \log \tilde{a}_{i} \leq \log (1/\tilde{\ga}_i) \leq \tilde{\C{B}}(\tilde{\ga}_i).\]
Then, by an inductive argument, for all 
$i \geq k+1$, $h_{\tilde{\ga}_{i-1}} \circ \dots \circ h_{\tilde{\ga}_k}(0) \leq \tilde{a}_{i-1}$.
Therefore, for all integers $m\geq n \geq k+1$, 
\[h_{\tilde{\ga}_{m-1}} \circ \dots \circ h_{\tilde{\ga}_n}(0) 
\leq h_{\tilde{\ga}_{m-1}} \circ \dots \circ h_{\tilde{\ga}_n} (h_{\tilde{\ga}_{n-1}} \circ \dots \circ h_{\tilde{\ga}_k}(0))  
\leq \tilde{a}_{m-1} \leq \tilde{\C{B}}(\tilde{\ga}_m).\]
This shows that $\ga \notin \E{H}$. 
\end{proof}

By classical results of Siegel \cite{Sie42} and Brjuno \cite{Brj71}, if $\ga \in \E{B}$, then every germ of 
holomorphic map $f(z)=e^{2\pi i \ga} z+ O(z^2)$ is locally conformally conjugate to the rotation by $2\pi\ga$
near $0$. 
On the other hand, Yoccoz in \cite{Yoc1988,Yoc95} proved that this condition is optimal in the quadratic family 
$e^{2\pi i \ga} z+ z^2$, i.e.\ if $\ga \notin \E{B}$ then $e^{2\pi i \ga} z+ z^2$ is not linearisable near $0$. 
His approach is geometric, and avoids formidable calculations one encounters in the study of 
small-divisors. 
The optimality of this condition has been (re)confirmed for several classes of maps 
\cite{PM93,Ge01,BC04,Ok04,Ok2005,FMS2018,Che19}, but in its general form for rational functions remains a 
significant challenge in the field of holomorphic dynamics. 
In this paper we do not rely on the optimality of this condition in any class of maps. 

In \cite{Her79}, Herman carried out a comprehensive study of the problem of linearisation of orientation-preserving 
diffeomorphisms of the circle $\D{R}/\D{Z}$ with irrational rotation number. 
In particular, he presented a rather technical arithmetic condition which guaranteed the linearisation of such analytic 
diffeomorphisms. 
Although the linearisation problem for analytic circle diffeomorphisms close to rotations was successfully studied 
earlier by Arnold \cite{Ar61}, no progress had been made in between. 
Enhancing the work of Herman, Yoccoz identified the optimal arithmetic condition $\E{H}$ for the analytic 
linearisation of analytic diffeomorphisms of the circle, \cite{Yoc95-ICM,Yoc02}.
The name, Herman numbers, for the class $\E{H}$ was suggested by Yoccoz in honour of the work of Herman on this problem.
Similarly, in this paper we do not use this form of the optimality of the condition $\E{H}$.

\subsection{Modified continued fraction, and the equivalent form of Herman numbers}
\label{SS:modified-fractions}
Let us recall the modified continued fraction algorithm we mentioned in \refS{SS:modified-fractions-mini}. 
For $x\in \D{R}$, define $d(x, \D{Z})= \min_{k\in \D{Z}} |x-k|$. Let us fix an irrational number $\ga \in \D{R}$. 
Define the numbers $\ga_n\in (0,1/2)$, for $n\geq 0$, according to 
\begin{equation}
\ga_0=d(\ga, \D{Z}), \quad  \ga_{n+1}=d(1/\ga_n, \D{Z}),
\end{equation} 
Then, there are unique integers $a_n$, for $n\geq -1$, and $\gep_n \in \{+1, -1\}$, for $n\geq 0$, such that 
\begin{equation}
\ga= a_{-1}+ \gep_0 \ga_0, \quad  1/\ga_n= a_n + \gep_{n+1} \ga_{n+1}. 
\end{equation}
Evidently, for all $n\geq 0$,
\begin{equation}
1/\ga_n \in (a_n-1/2, a_n+1/2), \quad a_n\geq 2,
\end{equation}
and 
\begin{equation}
\gep_{n+1}= 
\begin{cases}
+1 & \text{if } 1/\ga_n \in (a_n, a_n+1/2), \\
-1 & \text{if } 1/\ga_n \in (a_n-1/2, a_n).
\end{cases}
\end{equation}
We also defined $\ga_{-1}=+1$. 

The sequences $\{a_n\}$ and $\{\gep_n\}$ provide us with the infinite continued fraction 
\begin{equation*}\label{E:modified-expansion-alpha}
\ga=a_{-1}+\cfrac{\gep_0}{a_0+\cfrac{\gep_1}{a_1+\cfrac{\gep_2}{a_2+\dots}}}.
\end{equation*}
Consider the numbers 
\[\gb_{-2}=\ga, \; \gb_{-1}=+1, \; \gb_n= \gb_n(\ga)= \textstyle{ \prod_{i=0}^n \ga_i, \; \tfor n \geq 0}.\] 
In \cite{Yoc95}, Yoccoz defines the arithmetic series 
\begin{equation}\label{E:Brjuno-condition}
\textstyle{ \C{B}(\ga)= \sum_{n=0}^\infty \gb_{n-1} \log \ga_n^{-1},}
\end{equation}
and calls it the Brjuno function. This function is defined on the set of irrational numbers, and takes values in 
$(0, +\infty]$. One may extend $\C{B}$ onto $\D{Q}$, by setting $\C{B}(p/q)=+\infty$, for all $p/q\in \D{Q}$. 

The Brjuno function satisfies the remarkable relations 
\begin{equation}\label{E:Brjuno-functional-equations}
\begin{gathered}
\C{B}(\ga)= \C{B}(\ga+1)= \C{B}(-\ga), \; \tfor \ga \in \D{R}, \\
\C{B}(\ga)= \ga \C{B}(1/\ga)+ \log (1/\ga), \; \tfor \ga \in (0,1/2).
\end{gathered}
\end{equation}
These show that one may think of the Brjuno function as a $\mathrm{PGL}(2,\D{Z})$-cocycle. 
This point of view drives some of the technical arguments we present later in the paper, 
notably in \refS{SS:Herman-tower}. 
See \cite{MMY97, MMY01} for a systematic approach to employing this mechanism.

One may formally define the arithmetic classes of $\E{B}$ and $\E{H}$ using the modified continued fraction and the 
modified function $\C{B}$ in the same fashion. The following two propositions guarantee that through this 
we identify the same classes of irrational numbers.  

\begin{propo}\label{P:brjuno-standard-vs-modified}
For all $\ga \in \D{R}\setminus \D{Q}$ we have $|\C{B}(\ga)- \tilde{\C{B}}(\ga)| \leq 29$.
In particular, $\ga$ is a Brjuno number if and only if $\C{B}(\ga)< +\infty$. 
\end{propo}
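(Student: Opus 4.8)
The plan is to use the classical \emph{singularisation} relation between the regular continued fraction and the nearest integer continued fraction of a fixed irrational, matching the two Brjuno series block by block. By the functional equations \refE{E:Brjuno-functional-equations} and \refE{E:Brjuno-functional-equations-standard}, $\C{B}(\ga)=\C{B}(\langle\ga\rangle)$ and $\tilde{\C{B}}(\ga)=\tilde{\C{B}}(\langle\ga\rangle)$, so I may assume $\ga\in(0,1)$; then $\ga_0=d(\ga,\D{Z})$ equals $\ga$ with $\gep_0=+1$ if $\ga<1/2$, and equals $1-\ga$ with $\gep_0=-1$ if $\ga>1/2$.

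First I would describe how the regular remainders $\tilde\ga_m$ are produced from the modified data $(\ga_n,a_n,\gep_{n+1})$, starting from $\tilde\ga_0=\ga$. An elementary computation with $1/\ga_n=a_n+\gep_{n+1}\ga_{n+1}$, using $a_n\geq 2$ and the fact that $\gep_{n+1}=-1$ forces $a_n\geq 3$ (because $1/\ga_n>2$), shows: when the regular remainder $\ga_n$ is reached and $\gep_{n+1}=+1$, the next regular remainder is $\ga_{n+1}$; when $\gep_{n+1}=-1$, the regular algorithm instead produces $1-\ga_{n+1}$ with partial quotient $a_n-1\geq 2$, then $\ga_{n+1}/(1-\ga_{n+1})$ with partial quotient $1$, and then the fractional part of $1/\ga_{n+1}$, i.e.\ it lands where the $\gep_{n+1}=+1$ case would land after one step. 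This groups the regular expansion of $\ga$ into consecutive blocks indexed by $n\geq 0$: block $n$ is the single remainder $\ga_n$ when $\gep_n=+1$, and the pair $\{\,1-\ga_n,\ \ga_n/(1-\ga_n)\,\}$ when $\gep_n=-1$ (the case $n=0$ amounting to the initial remainder $\tilde\ga_0=\ga$). The essential bookkeeping point is that in either case the product of the $\tilde\ga_i$ over block $n$ equals $\ga_n$, so an induction on $n$ shows that the weight $\tilde\gb_{m-1}=\prod_{i<m}\tilde\ga_i$ attached to the first remainder of block $n$ is exactly $\gb_{n-1}$.

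With this dictionary, I would compare the contribution of block $n$ to $\tilde{\C{B}}(\ga)$ with the $n$-th term $\gb_{n-1}\log(1/\ga_n)$ of $\C{B}(\ga)$. When $\gep_n=+1$ the block contributes exactly $\gb_{n-1}\log(1/\ga_n)$, so these cancel. When $\gep_n=-1$ the block contributes $\gb_{n-1}\log\frac{1}{1-\ga_n}+\gb_{n-1}(1-\ga_n)\log\frac{1-\ga_n}{\ga_n}$, and a one-line simplification gives the per-block difference
\[
\gb_{n-1}\log\tfrac1{\ga_n}-\gb_{n-1}\log\tfrac1{1-\ga_n}-\gb_{n-1}(1-\ga_n)\log\tfrac{1-\ga_n}{\ga_n}
=\gb_{n-1}\,\ga_n\log\tfrac{1-\ga_n}{\ga_n}=\gb_n\log\tfrac{1-\ga_n}{\ga_n}.
\]
Hence the $N$-th partial sum of $\C{B}(\ga)$ and the sum of the first $N$ blocks of $\tilde{\C{B}}(\ga)$ differ by $\sum_{n\leq N:\,\gep_n=-1}\gb_n\log\frac{1-\ga_n}{\ga_n}$, a sum of nonnegative terms. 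Since $1<\frac{1-\ga_n}{\ga_n}<\frac1{\ga_n}$ for $\ga_n\in(0,1/2)$, each term is at most $\gb_{n-1}\,\ga_n\log(1/\ga_n)\leq\gb_{n-1}\,\sup_{x\in(0,1)}x\log(1/x)=\gb_{n-1}/e$, and $\gb_{n-1}<2^{-n}$ because every $\ga_i<1/2$; so this difference lies in $[0,1/e]$ uniformly in $N$. It follows that $\C{B}(\ga)<+\infty$ if and only if $\tilde{\C{B}}(\ga)<+\infty$ (regrouping the nonnegative series $\tilde{\C{B}}$ into blocks does not affect convergence), and that when finite the two differ by at most $1/e$ — comfortably within the stated bound $29$.

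The step I expect to be the main obstacle is the combinatorics of the second paragraph: one must verify, by a simultaneous induction along the regular and nearest integer algorithms, that the regular remainders decompose exactly into the asserted blocks carrying the asserted weights, handling the edge cases (the leading partial quotient when $\gep_0=-1$, the value $a_n=2$, and long runs of $\gep_n=-1$) carefully enough that every intervening partial quotient is a positive integer and every remainder stays in $(0,1)$. Once this singularisation dictionary is in place, the rest is short algebra.
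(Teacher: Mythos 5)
Your proposal is correct, and it rests on the same dictionary between the two expansions that the paper isolates as \refL{L:fractions-related}: your ``blocks'' are exactly the segments $\tilde{\ga}_{c(n-1)+1},\dots,\tilde{\ga}_{c(n)}$, your product claim is part (ii), and your weight claim $\tilde{\gb}_{c(n-1)}=\gb_{n-1}$ is part (iii); the induction you flag as the main obstacle is precisely what that lemma proves (your bookkeeping of the regular partial quotients after a block with $\gep_n=-1$ is slightly off --- they are $a_n-1$ or $a_n-2$ according to $\gep_{n+1}$ --- but this is immaterial, since only the remainders and their products enter the Brjuno sums). Where you genuinely diverge is in how the two series are compared. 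The paper rewrites both sums through terms of the form $\gb_{n-1}\log\gb_n$ and $\tilde{\gb}_{i-1}\log\tilde{\gb}_i$, telescopes, and controls three error series with $|x\log x|\le 2\sqrt{x}$, which is what produces the loose constant $29$. Your per-block subtraction instead gives the exact identity
\[
\C{B}(\ga)-\tilde{\C{B}}(\ga)=\sum_{n\ge 0,\ \gep_n=-1}\gb_n\log\frac{1-\ga_n}{\ga_n},
\]
a series of nonnegative terms, so you get for free that $\C{B}\ge\tilde{\C{B}}$ and that the two are simultaneously finite (regrouping a nonnegative series is harmless). One small slip: each term is at most $\gb_{n-1}/e$, but $\gb_{-1}=1$, so the sum is bounded by $\tfrac1e\sum_{n\ge0}\gb_{n-1}\le 2/e$, not $1/e$; either way this is far sharper than $29$ and even quantifies the H\"older-type closeness of the two Brjuno functions that the paper only cites from Marmi--Moussa--Yoccoz. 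In short: same combinatorial dictionary, but a cleaner comparison that trades the paper's generic $x\log x$ estimates for an exact cancellation.
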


In \cite{MMY97}, the authors go very far in this direction, by showing the remarkable property that 
the difference $\C{B}(\ga)- \tilde{\C{B}}(\ga)$ extends to a $1/2$-holder continuous function over all of $\mathbb{R}$.

\begin{propo}\label{P:Herman-Yoccoz-criterion}
An irrational number $\ga$ is a Herman number if and only if for all $n\geq 0$ there is $m\geq n$ such that 
\[h_{\ga_{m-1}} \circ \dots \circ h_{\ga_n} (0)\geq \C{B}(\ga_{m}).\]
\end{propo}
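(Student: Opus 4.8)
The plan is to deduce the equivalence by passing between the two continued fraction expansions term by term, using the cocycle structure of the Brjuno function and \refP{P:brjuno-standard-vs-modified}. First I would record two elementary reductions. Since each $h_r$ is increasing with $h_r(y)\ge y+1$, $h_r'(y)\ge 1$ on $[0,\infty)$ and $h_r(0)=1$, the compositions $h_{\tilde\ga_{m-1}}\circ\cdots\circ h_{\tilde\ga_n}(0)$ are non-decreasing as $n$ decreases; hence, exactly as already observed in the text after \refD{D:Herman-Yoccoz-criterion}, ``$\ga\in\E{H}$'' is equivalent to ``$h_{\tilde\ga_{m-1}}\circ\cdots\circ h_{\tilde\ga_0}(0)\ge\tilde{\C{B}}(\tilde\ga_m)$ for infinitely many $m$'', and the analogous reduction holds for the right-hand side of the proposition, with $\ga_n$ and $\C{B}$ in place of $\tilde\ga_n$ and $\tilde{\C{B}}$. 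So it suffices to compare the two ``infinitely often'' statements.

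Next I would set up the dictionary between the two algorithms. The nearest-integer expansion is obtained from the regular one by singularizing every partial quotient equal to $1$, through the identity $b+\cfrac{1}{1+\cfrac{1}{c+z}}=(b+1)+\cfrac{-1}{(c+1)+z}$. This produces a strictly increasing reindexing $j\mapsto n_j$ of the regular continued fraction tails by the nearest-integer tails, with $n_{j+1}-n_j\le 2$, the jump $2$ occurring precisely at the steps with $\gep_{j+1}=-1$ (the absorbed $1$'s); moreover $\ga_j=\tilde\ga_{n_j}$ when $\gep_j=+1$, while when $\gep_j=-1$ the number $\ga_j$ has regular expansion $[0;\tilde a_{n_j}+1,\tilde a_{n_j+1},\dots]$, so in all cases $1/\ga_j$ and $1/\tilde\ga_{n_j}$ differ by at most $1$ and the deeper tails coincide. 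Feeding this into the functional equations for $\C{B}$ and $\tilde{\C{B}}$ together with \refP{P:brjuno-standard-vs-modified}, one obtains a uniform two-sided comparison $\C{B}(\ga_m)\asymp\tilde{\C{B}}(\tilde\ga_{n_m})$, with absolute multiplicative and additive constants.

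The technical heart, and the step I expect to be the main obstacle, is the matching comparison for the $h$-compositions: that $h_{\ga_{m-1}}\circ\cdots\circ h_{\ga_0}(0)$ and $h_{\tilde\ga_{n_m-1}}\circ\cdots\circ h_{\tilde\ga_0}(0)$ are comparable up to uniform constants, so that ``$\ge\C{B}(\ga_m)$ i.o.'' and ``$\ge\tilde{\C{B}}(\tilde\ga_{n_m})$ i.o.'' are equivalent. At a clean step $\gep_{j+1}=+1$ the two $h$-maps agree. The delicate case is a step with an absorbed $1$, where $h_{\ga_j}$ has to be compared with $h_{\tilde\ga_{n_j}}\circ h_{\tilde\ga_{n_j-1}}$: a direct calculation shows these differ by an amount of size $\log(1/\tilde\ga_{n_j})+1/\tilde\ga_{n_j}$, which is not bounded — but it is of the same order as the corresponding excess of $\tilde{\C{B}}(\tilde\ga_{n_j})$ over $\C{B}(\ga_j)$, so the two sides of the Herman inequality are perturbed \emph{compatibly}. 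I would carry this out by induction on $m$, propagating a two-sided multiplicative control on the ratio of the nearest-integer and regular partial compositions, using that for $y$ past the breakpoint $\log(1/r)$ the quotients $h_r(\lambda y)/h_r(y)$ and $h_r(y+c)/h_r(y)$ are controlled while $h_r(y)=e^y$ below it, and that after a bounded number of initial steps all relevant arguments are large, so the exponential regime does no harm. Once both sides of each inequality are pinned down up to uniform factors, the equivalence of the two ``infinitely often'' conditions — hence of the two forms of $\E{H}$ — follows in both directions at once. Throughout, the required estimates on singularization and on the Brjuno cocycle are available in, or follow from, \cite{MMY97,MMY01}.
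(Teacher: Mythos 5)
There are two genuine gaps. First, your opening ``elementary reduction'' is not what the paper observes and does not follow from the monotonicity you cite. The remark after \refD{D:Herman-Yoccoz-criterion} says only that one may restrict to \emph{large} $n$, because if $m'$ works for $n'$ then $m'$ works for every $n\le n'$; this monotonicity (compositions increase as $n$ decreases) gives exactly the easy implication $\E{H}\Rightarrow$ ``$h_{\tilde\ga_{m-1}}\circ\cdots\circ h_{\tilde\ga_0}(0)\ge\tilde{\C{B}}(\tilde\ga_m)$ infinitely often'', and nothing in the converse direction. The converse is precisely the delicate quantifier structure of $\E{H}$: starting the composition at a later level $n$ produces a strictly smaller orbit, and since the $h_r$ are strongly expanding one can arrange (by a sandwich of $\tilde{\C{B}}(\tilde\ga_m)$ between the orbit started at level $0$ and the orbit started at level $1$) that the $n=0$ inequality holds infinitely often while it fails for all $m$ at some fixed $n\ge 1$. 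So the ``infinitely often with $n=0$'' statement you reduce to is weaker than $\E{H}$, and the rest of your argument proves an equivalence of the wrong conditions. Note that the paper's proof is careful on exactly this point: for a given $n$ it chooses an auxiliary level $n'$ much larger than $n$ (e.g.\ $n'\ge n+\exp^{\circ 2}(2)+2(1+e^{-1})$) and uses $h_r(y)\ge y+1$ to convert the slack $n'-n$ into room for the errors, never collapsing the quantifier to $n=0$.

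Second, the ``technical heart'' is not carried out, and the mechanism you sketch cannot work as stated. By \refL{L:fractions-related}(iv) and \refP{P:brjuno-standard-vs-modified}, the right-hand sides satisfy $|\tilde{\C{B}}(\tilde\ga_{c(m)+1})-\C{B}(\ga_{m+1})|\le 29$, a \emph{bounded} discrepancy; whereas, as you yourself compute, a single singularised step changes the forward composition by an unbounded amount of order $\tilde\ga^{-1}\log(1/\tilde\ga)$, which is then further amplified by the subsequent expanding maps. Hence the two sides of the Herman inequality are not ``perturbed compatibly'', and a two-sided multiplicative control cannot be propagated: in the exponential regime $h_r(y)=e^y$ a multiplicative error $\lambda$ on the argument becomes $e^{(\lambda-1)y}$ on the value, and since the breakpoint $\log(1/r)$ can be arbitrarily large the exponential regime recurs at all scales, so ``all relevant arguments are large'' does not neutralise it. This is exactly why the paper compares \emph{inverse} compositions instead: \refL{L:herman-blocks} gives an additive error at most $1+e^{-1}$ per singularised block for arguments $\ge e^2$, \refL{L:herman-chains} shows these errors do not accumulate because $(h_r^{-1})'\le 1/2$ on $[4,\infty)$, and the comparison is applied only at the moment the inverse orbit of $\C{B}(\ga_{m+1})$ (resp.\ $\tilde{\C{B}}(\tilde\ga_{c(m)+1})$) passes through a fixed window such as $[e^2,\exp^{\circ 2}(2)]$, with the additive constant $29$ absorbed via \refL{L:h-vs-cocycle}. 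To repair your argument you would need either to switch to this inverse/contracting comparison or to supply a genuinely new estimate controlling forward compositions across singularisations, neither of which is present in the proposal.
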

Although the criterion in the above proposition appears identical to the one given in 
\refD{D:Herman-Yoccoz-criterion}, the value of $m$ for a given $n$ may be different. 
As before, here $h_{\ga_{m-1}} \circ \dots \circ h_{\ga_n}$ is understood as the identity map when $m=n$, 
and as $h_{\ga_n}$ when $m=n+1$.

The remaining of this section is devoted to the proof of the above two propositions. 
The main reason here is that the sequences $\{\tilde{\ga}_n\}$ and $\{\ga_n\}$ are closely related, and it is possible to 
identify one from the other using an algorithm. 

Let us define the sequence 
\[ \textstyle{ c(-1)=-1,\;  c(n)=-1+ \sum_{i=0}^{n} (3-\gep_i)/2,\; \tfor n\geq 0}.\]
That is, $c(n)$ is obtained from $c(n-1)$ by adding $+2$ if $\gep_n=-1$ or adding $+1$ if $\gep_n=+1$.  
Clearly, $n \mapsto c(n)$ is strictly monotone with $c(n) \to +\infty$ as $n \to +\infty$. 
For more general properties of $c(n)$, and its dependence on the choice of the continued fraction, one may refer to \cite{MMY97} (where $c(n)$ is denoted as $k^{1/2}(n)$).

\begin{lem}\label{L:fractions-related}
For all $n\geq -1$, the following hold: 
\begin{itemize}
\item[(i)] if $\gep_{n+1}=-1$, $\ga_{n+1}= 1- \tilde{\ga}_{c(n)+1}$, 
and if $\gep_{n+1}=+1$, $\ga_{n+1}=\tilde{\ga}_{c(n)+1}$;  
\item[(ii)] $\ga_{n+1}= \prod_{c(n)+1}^{c(n+1)} \tilde{\ga}_i$;
\item[(iii)] $\tilde{\gb}_{c(n)}=\gb_n$;
\item[(iv)] $\C{B}(\tilde{\ga}_{c(n)+1})= \C{B}(\ga_{n+1})$.
\end{itemize}
\end{lem}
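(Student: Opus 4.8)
The plan is to prove the four parts simultaneously by a single induction on $n$, the engine being the auxiliary assertion $(\star_n)$, valid for $n\ge 0$:
\[\frac{1}{\tilde\ga_{c(n)}}=\frac{1}{\ga_n}+\frac{\gep_n-1}{2},\]
i.e.\ $\tilde\ga_{c(n)}=\ga_n$ when $\gep_n=+1$ and $\tilde\ga_{c(n)}=\ga_n/(1-\ga_n)$ when $\gep_n=-1$. Two elementary preliminary observations are needed. First, since $\ga_n\in(0,1/2)$ we have $1/\ga_n>2$, so the relation $1/\ga_n=a_n+\gep_{n+1}\ga_{n+1}$ forces $\gep_{n+1}=+1$ whenever $a_n=2$; equivalently, $\gep_{n+1}=-1$ implies $a_n\ge 3$. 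Second, from the definition of $c(\cdot)$ one has $c(n+1)-c(n)=(3-\gep_{n+1})/2$, so $c(n+1)=c(n)+1$ if $\gep_{n+1}=+1$ and $c(n+1)=c(n)+2$ if $\gep_{n+1}=-1$, with $c(-1)=-1$.

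For the base case $n=0$ I would use $\ga=a_{-1}+\gep_0\ga_0$ together with $\tilde\ga_0=\langle\ga\rangle$. If $\gep_0=+1$ then $\tilde\ga_0=\ga_0$ and $c(0)=0$, which is $(\star_0)$ and also yields (i), (ii) for $n=-1$. If $\gep_0=-1$ then $\tilde\ga_0=1-\ga_0\in(1/2,1)$, so $\tilde a_0=1$ and $\tilde\ga_1=1/(1-\ga_0)-1=\ga_0/(1-\ga_0)$; since $c(0)=1$ this is $(\star_0)$, and $\tilde\ga_0\tilde\ga_1=\ga_0$ gives (i), (ii) for $n=-1$. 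For the inductive step, assume $(\star_n)$ and substitute $1/\ga_n=a_n+\gep_{n+1}\ga_{n+1}$ to obtain $1/\tilde\ga_{c(n)}=A+\gep_{n+1}\ga_{n+1}$ with $A:=a_n+(\gep_n-1)/2\ge a_n-1\ge 1$. If $\gep_{n+1}=+1$, then since $\ga_{n+1}\in(0,1/2)$ one reads off $\tilde a_{c(n)}=A$ and $\tilde\ga_{c(n)+1}=\ga_{n+1}$; as $c(n+1)=c(n)+1$ this is $(\star_{n+1})$, and the identity $\tilde\ga_{c(n)+1}=\ga_{n+1}$ is precisely (i) and (ii) for this index $n$. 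If $\gep_{n+1}=-1$, then $a_n\ge 3$, so $A\ge 2$ and $A-\ga_{n+1}=(A-1)+(1-\ga_{n+1})$ with $A-1\ge 1$, giving $\tilde a_{c(n)}=A-1$ and $\tilde\ga_{c(n)+1}=1-\ga_{n+1}\in(1/2,1)$; one further step of the standard algorithm gives $\tilde a_{c(n)+1}=1$ and $\tilde\ga_{c(n)+2}=1/(1-\ga_{n+1})-1=\ga_{n+1}/(1-\ga_{n+1})$. Since $c(n+1)=c(n)+2$ this is $(\star_{n+1})$; and $\tilde\ga_{c(n)+1}=1-\ga_{n+1}$ together with $\tilde\ga_{c(n)+1}\tilde\ga_{c(n)+2}=\ga_{n+1}$ gives (i) and (ii) for this index $n$.

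It then remains to deduce (iii) and (iv). For (iii), writing $\gb_n=\prod_{k=0}^{n}\ga_k=\prod_{k=-1}^{n-1}\ga_{k+1}$ and applying (ii) to each factor yields $\gb_n=\prod_{k=-1}^{n-1}\prod_{i=c(k)+1}^{c(k+1)}\tilde\ga_i$; since $c(-1)=-1$ and $c(\cdot)$ is strictly increasing, the blocks $\{c(k)+1,\dots,c(k+1)\}$ for $k=-1,\dots,n-1$ partition $\{0,1,\dots,c(n)\}$, so the product collapses to $\prod_{i=0}^{c(n)}\tilde\ga_i=\tilde\gb_{c(n)}$ (for $n=-1$ this is the trivial identity $\tilde\gb_{-1}=\gb_{-1}=1$). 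For (iv), part (i) gives either $\ga_{n+1}=\tilde\ga_{c(n)+1}$, where there is nothing to prove, or $\ga_{n+1}=1-\tilde\ga_{c(n)+1}$, where the functional equations $\C{B}(x)=\C{B}(x+1)=\C{B}(-x)$ from \refE{E:Brjuno-functional-equations} give $\C{B}(\ga_{n+1})=\C{B}(1-\tilde\ga_{c(n)+1})=\C{B}(\tilde\ga_{c(n)+1}-1)=\C{B}(\tilde\ga_{c(n)+1})$.

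The one genuinely delicate point is the inductive step when $\gep_{n+1}=-1$: one must be certain that the integer $A-1$ produced as $\tilde a_{c(n)}$ is a legitimate positive partial quotient of the standard continued fraction, and this is exactly where the preliminary observation $\gep_{n+1}=-1\Rightarrow a_n\ge 3$ is used, since it rules out $A-1=0$. Everything else is routine bookkeeping with the two continued fraction algorithms and the counter $c(\cdot)$; alternatively, the whole statement can be read as the standard singularization identity $t-1/u=(t-1)+1/\bigl(1+1/(u-1)\bigr)$ applied at every index where $\gep=-1$.
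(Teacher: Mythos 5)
Your proof is correct and follows essentially the same route as the paper: an induction on $n$ through the two continued fraction algorithms, where your auxiliary statement $(\star_n)$, namely $1/\tilde{\ga}_{c(n)}=1/\ga_n+(\gep_n-1)/2$, is exactly the intermediate relation the paper derives inside its own inductive step (there phrased as $1/\tilde{\ga}_{c(n)}=1/\ga_n$ when $\gep_n=+1$ and $1/\tilde{\ga}_{c(n)}=1/\ga_n-1$ when $\gep_n=-1$), with one modified step corresponding to two standard steps when $\gep_{n+1}=-1$. Your treatment of (ii) by the product $\tilde{\ga}_{c(n)+1}\tilde{\ga}_{c(n)+2}=\ga_{n+1}$, of (iii) by telescoping the blocks, and of (iv) by the functional equations \eqref{E:Brjuno-functional-equations} coincides with the paper's argument, so only the bookkeeping is reorganised, not the method.
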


\begin{proof}
We prove (i) by induction on $n$. 
We start with $n=-1$. If $\gep_0=+1$, $\ga_0=\tilde{\ga}_0= \tilde{\ga}_{c(-1)+1}$. 
If $\gep_0=-1$, $\ga_0 =1-\tilde{\ga}_0=1-\tilde{\ga}_{c(-1)+1}$.  
Now assume that the assertion in (i) is true for $n-1$. To prove it for $n$, we consider two cases:  

First assume that $\gep_n=+1$. By the induction hypothesis for $n-1$, $\ga_n= \tilde{\ga}_{c(n-1)+1}=\tilde{\ga}_{c(n)}$. 
Hence, $1/\ga_n=1/\tilde{\ga}_{c(n)}$. 
Now, if $\gep_{n+1}=+1$, $1/\ga_n=1/\tilde{\ga}_{c(n)}$ leads to $\ga_{n+1}=\tilde{\ga}_{c(n)+1}$. 
If $\gep_{n+1}=-1$, $1/\ga_n=1/\tilde{\ga}_{c(n)}$ leads to $\ga_{n+1}=1-\tilde{\ga}_{c(n)+1}$.

Now assume that $\gep_n=-1$. By the induction hypothesis, $\ga_n=1-\tilde{\ga}_{c(n-1)+1}=1-\tilde{\ga}_{c(n)-1}$. 
As $\ga_n \in (0,1/2)$, $\tilde{\ga}_{c(n)-1} \in (1/2,1)$. 
Then, $\tilde{\ga}_{c(n)}= 1/\tilde{\ga}_{c(n)-1}-1= 1/(1-\ga_n)-1= \ga_n /(1-\ga_n)$. 
Hence, $1/\tilde{\ga}_{c(n)}= 1/\ga_n-1$. 
Now, if $\gep_{n+1}=+1$, $1/\tilde{\ga}_{c(n)}= 1/\ga_n-1$ leads to $\tilde{\ga}_{c(n)+1}= \ga_{n+1}$. 
If $\gep_{n+1}=-1$, $1/\tilde{\ga}_{c(n)}= 1/\ga_n-1$ leads to $1-\tilde{\ga}_{c(n)+1}=\ga_{n+1}$. 

Part (ii): If $\gep_{n+1}=+1$, by Part (i), $\ga_{n+1}= \tilde{\ga}_{c(n)+1}= \tilde{\ga}_{c(n+1)}$. 
If $\gep_{n+1}=-1$, by Part (i), $\ga_{n+1}=1-\tilde{\ga}_{c(n)+1}$. As 
$\ga_{n+1}\in (0,1/2)$ we conclude that $\tilde{\ga}_{c(n)+1}\in (1/2,1)$, which implies that 
$\tilde{\ga}_{c(n)+2}= 1/\tilde{\ga}_{c(n)+1}-1$. 
Therefore, 
\[\tilde{\ga}_{c(n+1)} \tilde{\ga}_{c(n)+1}=\tilde{\ga}_{c(n)+2} \tilde{\ga}_{c(n)+1}=1-\tilde{\ga}_{c(n)+1}= \ga_{n+1}.\] 

Part (iii): By the formula in Part (ii), and the definition of $c(n)$, 
\[\textstyle{ \gb_n=\prod_{m=0}^n \ga_m =\prod_{m=0}^n \Big (\prod_{i=c(m-1)+1}^{c(m)} \tilde{\ga}_i\Big) 
=\prod_{i=0}^{c(n)} \tilde{\ga}_i= \tilde{\gb}_{c(n)}. }\] 

Part (iv): If $\gep_{n+1}=+1$, by Part (i), $\ga_{n+1}= \tilde{\ga}_{c(n)+1}$, and hence 
$\C{B}(\tilde{\ga}_{c(n)+1})= \C{B}(\ga_{n+1})$.  
If $\gep_{n+1}=-1$, by Part (i), $\ga_{n+1}= 1-\tilde{\ga}_{c(n)+1}$. Using \refE{E:Brjuno-functional-equations}, 
\[\C{B}(\ga_{n+1})=\C{B}(1-\tilde{\ga}_{c(n)+1})=\C{B}(-\tilde{\ga}_{c(n)+1})= \C{B}(\tilde{\ga}_{c(n)+1}). \qedhere\]
\end{proof}

\begin{proof}[Proof of \refP{P:brjuno-standard-vs-modified}]
Fix $n \geq 0$. 
If $\gep_n=+1$ then $c(n)=c(n-1)+1$, and by \refL{L:fractions-related}-(iii), 
\begin{equation}\label{E:P:brjuno-standard-vs-modified-1}
\gb_{n-1}\log \gb_n = \tilde{\gb}_{c(n-1)}\log \tilde{\gb}_{c(n-1)+1}= \tilde{\gb}_{c(n)-1}\log \tilde{\gb}_{c(n)}.
\end{equation}
If $\gep_n=-1$ then $c(n)= c(n-1)+2$, and by \refL{L:fractions-related}-(iii), 
\[\tilde{\gb}_{c(n)-1}
=\tilde{\gb}_{c(n-1)+1} = \tilde{\gb}_{c(n-1)} \tilde{\ga}_{c(n-1)+1} = \gb_{n-1} (1-\ga_n)= \gb_{n-1}- \gb_{n}.\] 
and therefore 
\begin{equation}\label{E:P:brjuno-standard-vs-modified-2}
\begin{aligned}
\big(\tilde{\gb}_{c(n-1)}\log\tilde{\gb}_{c(n-1)+1}&+\tilde{\gb}_{c(n)-1}\log\tilde{\gb}_{c(n)} \big) -\gb_{n-1}\log\gb_n\\
&=\big( \gb_{n-1}\log (\gb_{n-1}- \gb_n) + (\gb_{n-1} - \gb_n) \log \gb_{n}\big) - \gb_{n-1}\log\gb_n\\
&= \gb_{n-1}\log (\gb_{n-1}- \gb_n) - \gb_n \log \gb_{n}
\end{aligned}
\end{equation}
Combing \eqref{E:P:brjuno-standard-vs-modified-1} and \eqref{E:P:brjuno-standard-vs-modified-2}, 
and using $\gb_{n-1}- \gb_n = \gb_{n-1}(1-\ga_n)$, we conclude that for all $m\geq 0$ we have 
\begin{align*}
\textstyle{ \sum_{i=0}^{c(m)} \tilde{\gb}_{i-1} \log \tilde{\gb}_i}  & - \textstyle{\sum_{n=0}^m \gb_{n-1} \log \gb_n} \\
&=\textstyle{  \sum_{n=0}^m \left (\sum_{i=c(n-1)+1}^{c(n)} (\tilde{\gb}_{i-1} \log \tilde{\gb}_i) -  \gb_{n-1}\log \gb_n \right) }\\
&= \textstyle{ \sum_{n=0\, ;\, \gep_n=-1}^m  \left (\gb_{n-1}\log \gb_{n-1} + \gb_{n-1} \log (1-\ga_n) - \gb_n \log \gb_n\right).}  
\end{align*} 
On the other hand, 
\begin{equation*}
\textstyle{  \sum_{n=0}^m \gb_{n-1} \log (1/\ga_n) + \sum_{n=0}^m \gb_{n-1} \log \gb_n
= \sum_{n=0}^m \gb_{n-1} \log \gb_{n-1}, }
\end{equation*}
and similarly, 
\[\textstyle{  - \sum_{i=0}^{c(m)} \tilde{\gb}_{i-1} \log \tilde{\gb}_i  - \sum_{i=0}^{c(m)} \tilde{\gb}_{i-1} \log (1/\tilde{\ga}_i)
= - \sum_{i=0}^{c(m)} \tilde{\gb}_{i-1} \log \tilde{\gb}_{i-1}. }\]
Recall that $\tilde{\gb}_{-1}=\gb_{-1}=1$. Adding the above three equations, we conclude that 
\begin{equation*}
\begin{aligned}
\textstyle{ \left |\sum_{n=0}^m \gb_{n-1} \log (1/\ga_n) \right. } &  \textstyle{ \left .-  \sum_{i=0}^{c(m)} \tilde{\gb}_{i-1} \log (1/\tilde{\ga}_i) \right |} \\
& \textstyle{ \leq 3 \sum_{n=0}^{+\infty} |\gb_{n} \log \gb_{n}| + \sum_{n=0}^{+\infty} |\gb_{n-1} \log (1-\ga_n)| }
+ \textstyle{ \sum_{i=0}^{+\infty} |\tilde{\gb}_{i} \log \tilde{\gb}_{i}|} 
\end{aligned}
\end{equation*}
Since $|x \log x| \leq 2 \sqrt{x}$ for $x\in (0,1)$, 
\begin{equation}\label{E:P:Brjuno-Yoccoz-equivalent}
\begin{aligned}
\textstyle{ \sum_{n=0}^{+\infty} |\tilde{\gb}_{n}\log \tilde{\gb}_{n}| }
\textstyle{ \leq  2 \sum_{n=0}^{+\infty} (\tilde{\gb}_{n})^{1/2} }
& \textstyle{ \leq 2 \sum_{n=0}^{+\infty} (\tilde{\gb}_{2n})^{1/2} + 2 \sum_{n=0}^{+\infty} (\tilde{\gb}_{2n+1})^{1/2} } \\ 
& \textstyle{ \leq 2 (\tilde{\gb}_0)^{1/2} \sum_{n=0}^{+\infty} 2^{-n/2} + 2 \sum_{n=1}^{+\infty} 2^{-n/2} } \\
& \leq 6+ 4 \cdot 2^{1/2}. 
\end{aligned}
\end{equation}
On the other hand, $\gb_n\leq 2^{-n-1}$, for all $n\geq 0$. 
Using $|x \log x| \geq 2 \sqrt{x}$, for $x \in (0,1)$, $\ga_j \in (0, 1/2)$, we obtain 
\begin{equation*}
\textstyle{ \sum_{n=0}^k |\gb_{n} \log \gb_{n}| \leq 2 \sum_{n=0}^{+\infty} \sqrt{\gb_{n}} \leq
2 \sum_{n=0}^{+\infty} 1/2^{(n+1)/2} = 2+ 2 \sqrt{2}, }
\end{equation*}
and
\[ \textstyle{\sum_{n=0}^{+\infty} |\gb_{n-1} \log (1-\ga_n)| \leq \log 2 \sum_{n=0}^{+\infty} \gb_{n-1} = 2 \log 2. }\]
This completes the proof of the proposition. 
\end{proof} 

\begin{lem}\label{L:h-vs-cocycle}
We have, 
\begin{itemize}
\item[(i)] for all $r \in (0,1)$,
\[h_r (\C{B}(r)) \geq \C{B}(1/r)+1, \quad h_r (\tilde{\C{B}}(r)) \geq \tilde{\C{B}}(1/r)+1.\]
\item[(ii)] if there are $m\geq n \geq 0$ satisfying 
$h_{\tilde{\ga}_{m-1}} \circ \dots \circ h_{\tilde{\ga}_n} (0)\geq \tilde{\C{B}}(\tilde{\ga}_{m})$, 
then 
\[\lim_{m \to +\infty} h_{\tilde{\ga}_{m-1}} \circ \dots \circ h_{\tilde{\ga}_n} (0) - \tilde{\C{B}}(\tilde{\ga}_{m})=+\infty.\]
\item[(iii)] if there are $m\geq n \geq 0$ satisfying 
$h_{\ga_{m-1}} \circ \dots \circ h_{\ga_n} (0)\geq \C{B}(\ga_{m})$, then
\[\lim_{m\to +\infty} h_{\ga_{m-1}} \circ \dots \circ h_{\ga_n} (0) - \C{B}(\ga_{m})=+\infty.\]
\end{itemize} 
\end{lem}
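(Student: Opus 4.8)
The analytic content of the lemma is concentrated in part (i); once it is available, parts (ii) and (iii) follow from a short self-improvement induction. The plan for (i) is as follows. First I would check that for every irrational $r\in(0,1)$ one has $\tilde{\C{B}}(r)>\log r^{-1}$, and likewise $\C{B}(r)>\log r^{-1}$: for $r\in(0,1/2)$ this is immediate from \refE{E:Brjuno-functional-equations} and \refE{E:Brjuno-functional-equations-standard}, which give $\C{B}(r)=r\,\C{B}(1/r)+\log r^{-1}$ with $\C{B}(1/r)>0$ (and the analogous identity for $\tilde{\C{B}}$, valid there for all $r\in(0,1)$); the case $r\in[1/2,1)$ of the $\C{B}$-statement reduces to this via $\C{B}(r)=\C{B}(1-r)$, whence $\C{B}(r)\geq\log\frac{1}{1-r}\geq\log 2>\log r^{-1}$. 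Thus the argument $\C{B}(r)$ (resp. $\tilde{\C{B}}(r)$) lies in the affine branch of $h_r$, so by the definition of $h_r$,
\[
h_r(\C{B}(r))=r^{-1}\big(\C{B}(r)-\log r^{-1}+1\big)=r^{-1}\big(\C{B}(r)-\log r^{-1}\big)+r^{-1}.
\]
For $r\in(0,1/2)$ the functional equation gives $r^{-1}\big(\C{B}(r)-\log r^{-1}\big)=\C{B}(1/r)$ exactly, hence $h_r(\C{B}(r))=\C{B}(1/r)+r^{-1}\geq\C{B}(1/r)+1$ since $r^{-1}>2$; the $\tilde{\C{B}}$-version is verbatim the same for all $r\in(0,1)$, using that \refE{E:Brjuno-functional-equations-standard} holds on $(0,1)$ and that here already $r^{-1}>1$. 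The remaining case ($\C{B}$, $r\in[1/2,1)$) I would settle by the same computation after writing $r=1-s$ with $s\in(0,1/2)$ and rewriting $\C{B}(1/r)=\C{B}(1/r-1)$ through \refE{E:Brjuno-functional-equations} (iterating the reduction if $1/r-1\geq 1/2$); a direct calculation then reduces the desired inequality to a manifestly nonnegative quantity, e.g. $\frac{s}{1-s}\big(\log\frac{1-s}{s}+1\big)\geq 0$. If $\C{B}(r)=+\infty$ (in particular if $r$ is rational) both sides are $+\infty$ and there is nothing to prove.

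For parts (ii) and (iii) I would set $G_m=h_{\tilde{\ga}_{m-1}}\circ\cdots\circ h_{\tilde{\ga}_n}(0)$ (so $G_n=0$), assume $G_{m_0}\geq\tilde{\C{B}}(\tilde{\ga}_{m_0})$ for some $m_0\geq n$ — which forces $\ga$ to be a Brjuno number, so all the values $\tilde{\C{B}}(\tilde{\ga}_j)$ are finite and positive — and prove by induction on $k\geq 0$ that
\[
G_{m_0+k}\ \geq\ \tilde{\C{B}}(\tilde{\ga}_{m_0+k})+k .
\]
The base case $k=0$ is the hypothesis. For the step, since each $h_r$ is an increasing diffeomorphism, $G_{m_0+k+1}=h_{\tilde{\ga}_{m_0+k}}(G_{m_0+k})\geq h_{\tilde{\ga}_{m_0+k}}\big(\tilde{\C{B}}(\tilde{\ga}_{m_0+k})+k\big)$; since $\tilde{\C{B}}(\tilde{\ga}_{m_0+k})\geq 0$ and $h_r'\geq 1$ on $[0,+\infty)$ by \refE{E:h_r-properties}, we have $h_r(x+k)\geq h_r(x)+k$ for $x\geq 0$, so the right side is $\geq h_{\tilde{\ga}_{m_0+k}}\big(\tilde{\C{B}}(\tilde{\ga}_{m_0+k})\big)+k$; and by part (i) together with the $1$-periodicity of $\tilde{\C{B}}$ (which gives $\tilde{\C{B}}(1/\tilde{\ga}_{m_0+k})=\tilde{\C{B}}(\tilde{\ga}_{m_0+k+1})$, as $\tilde{\ga}_{m_0+k+1}=\langle 1/\tilde{\ga}_{m_0+k}\rangle$), this is $\geq\tilde{\C{B}}(\tilde{\ga}_{m_0+k+1})+1+k$, which is the bound for index $k+1$. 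Letting $k\to+\infty$ then yields $\lim_{m\to+\infty}\big(G_m-\tilde{\C{B}}(\tilde{\ga}_m)\big)=+\infty$. Part (iii) is identical with $\ga_j$ in place of $\tilde{\ga}_j$ and $\C{B}$ in place of $\tilde{\C{B}}$: one uses that $\C{B}$ is even and $1$-periodic (\refE{E:Brjuno-functional-equations}) to identify $\C{B}(1/\ga_{m_0+k})=\C{B}(\ga_{m_0+k+1})$, and applies part (i) with $r=\ga_{m_0+k}\in(0,1/2)$.

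The one point requiring genuine care is the $\C{B}$-version of part (i) for $r$ close to $1$, where \refE{E:Brjuno-functional-equations} is only usable after a reduction step, so the bookkeeping has to be followed through; this is routine but not quite automatic. Conceptually, however, the whole lemma rests on the single observation in part (i) that the affine branch of $h_r$ is built precisely to realise the Brjuno cocycle relation, the slope $r^{-1}>1$ supplying the crucial ``$+1$'' (with a little to spare), after which parts (ii)–(iii) are a clean monotone bootstrap using only $h_r'\geq 1$ and the monotonicity of $h_r$.
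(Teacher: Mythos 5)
Your proof is correct and follows essentially the same route as the paper: part (i) rests on the affine behaviour of $h_r$ combined with the cocycle relations \eqref{E:Brjuno-functional-equations} and \eqref{E:Brjuno-functional-equations-standard} (the paper uses the single minorant $h_r(y)\geq r^{-1}y+r^{-1}\log r+1$, valid for all $y$ via $x\geq 1+\log x$, rather than first locating $\C{B}(r)$ on the affine branch, but the computation is the same), and parts (ii)--(iii) are the same bootstrap from (i), monotonicity of $h_r$, and $h_r(y+1)\geq h_r(y)+1$. The only place you go beyond this is the $\C{B}$-version of (i) for $r\in[1/2,1)$, where your reduction is only sketched; note that this case is never needed (the lemma is invoked only with $r=\ga_j\in(0,1/2)$ for $\C{B}$ and $r=\tilde{\ga}_j\in(0,1)$ for $\tilde{\C{B}}$), and the paper's own proof does not treat it separately either.
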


\begin{proof}
Note that for all $r\in (0,1)$ and all $y \in \D{R}$, 
\[h_r(y) \geq r^{-1} y + r^{-1} \log r+ 1.\]
If $y\geq \log r^{-1}$, $h_r(y)=r^{-1}y + r^{-1} \log r + r^{-1} \geq r^{-1}y + r^{-1} \log r +1$. 
Using the inequality $x \geq 1+ \log x$, for $x>0$, we note that 
$r e^y \geq 1+ \log (re^y) = y + \log r +1 \geq y + \log r + r$. 
This implies the above inequality for $y < \log r^{-1}$. 

By the above inequality, as well as \eqref{E:Brjuno-functional-equations-standard} and \eqref{E:Brjuno-functional-equations}, we obtain 
\[h_r (\C{B}(r)) \geq r^{-1} \C{B}(r)+ \log r +1 = \C{B}(1/r)+1, \quad 
h_r (\tilde{\C{B}}(r)) \geq r^{-1} \tilde{\C{B}}(r)+ \log r +1 = \tilde{\C{B}}(1/r)+1.\]

If the inequalities in (ii) and (iii) hold, we may use the inequality in (i) and $h_r(y+1)\geq h_r(y)+1$ in 
\refE{E:h_r-properties}, to obtain
\[h_{\tilde{\ga}_{m+j-1}} \circ \dots \circ h_{\tilde{\ga}_n} (0)\geq \tilde{\C{B}}(\tilde{\ga}_{m+j})+j, \quad 
h_{\ga_{m+j-1}} \circ \dots \circ h_{\ga_n} (0)\geq \C{B}(\ga_{m+j})+j.\qedhere\] 
\end{proof}

\begin{lem}\label{L:herman-blocks}
Let $r_1 \in (1/2,1)$, $r_2=1/r_1-1\in (0,1)$, and $r=r_1 r_2 \in (0, 1/2)$. Then, for all $y \geq e^2$ we have 
\[\big| h^{-1}_r(y)-  h^{-1}_{r_1} \circ h^{-1}_{r_2}(y)\big|  \leq 1+ e^{-1}.\]
\end{lem}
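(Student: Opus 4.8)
The plan is to write down explicit piecewise formulas for the inverse maps involved and then control the difference by a short, monotonicity‑based case analysis in $y$. From the definition of $h_r$ one reads off that
\[
h_r^{-1}(v)=
\begin{cases}
\log v & \tif 0<v\leq r^{-1},\\
rv+\log r^{-1}-1 & \tif v\geq r^{-1},
\end{cases}
\]
and similarly for $h_{r_1}^{-1}$ and $h_{r_2}^{-1}$. Since $h_{r_1}^{-1}\circ h_{r_2}^{-1}=(h_{r_2}\circ h_{r_1})^{-1}$, it suffices to compare $h_r^{-1}$ with $h_{r_1}^{-1}\circ h_{r_2}^{-1}$. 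The two arithmetic identities that drive the computation are $r=r_1r_2=1-r_1$ (so $\log r^{-1}=\log r_1^{-1}+\log r_2^{-1}$) and $r\,r_2^{-1}=r_1$, equivalently $r_2^{-1}-r^{-1}=-1$; note in particular that $r_2^{-1}<r^{-1}$, that $r_1^{-1}\in(1,2)$, and that $\log r_1^{-1}\in(0,\log 2)$.

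Next I would fix $y\geq e^2$. Since $h_{r_2}(u)\leq e^u$ by \refE{E:h_r-properties}, we get $h_{r_2}^{-1}(y)\geq h_{r_2}^{-1}(e^2)\geq 2>r_1^{-1}$, so $h_{r_1}^{-1}$ is always evaluated on its linear branch inside $h_{r_1}^{-1}\circ h_{r_2}^{-1}(y)$. Splitting $h_{r_2}^{-1}(y)$ at the breakpoint $y=r_2^{-1}$ and $h_r^{-1}(y)$ at $y=r^{-1}$, and writing $D(y):=h_r^{-1}(y)-h_{r_1}^{-1}\circ h_{r_2}^{-1}(y)$, the identities above yield, after simplification,
\begin{align*}
D(y)&=(1-r_1)\log y-\log r_1^{-1}+1, & \tfor e^2\leq y\leq r_2^{-1},\\
D(y)&=\log y-ry-r_1\log r_2^{-1}+r_1-\log r_1^{-1}+1, & \tfor \max\{e^2,r_2^{-1}\}\leq y\leq r^{-1},\\
D(y)&=(1-r_1)\log r_2^{-1}+r_1=:c, & \tfor y\geq\max\{e^2,r^{-1}\}
\end{align*}
(any of the first two ranges may be empty, and continuity of $D$ at $r_2^{-1}$ and at $r^{-1}$ is exactly where the identities $r\,r_2^{-1}=r_1$ and $\log r^{-1}=\log r_1^{-1}+\log r_2^{-1}$ are used). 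On the three pieces $D$ has derivative $(1-r_1)/y$, $1/y-r$ and $0$, each $\geq 0$ for $y\leq r^{-1}$, so $D$ is continuous and non‑decreasing on $[e^2,\infty)$ and eventually constant, equal to $c$. Using $1-r_1=r_1r_2$ and $\sup_{t\in(0,1)}t\log(1/t)=e^{-1}$ gives $0\leq(1-r_1)\log r_2^{-1}=r_1\,(r_2\log r_2^{-1})\leq r_1e^{-1}$, hence $0<c\leq r_1(1+e^{-1})<1+e^{-1}$.

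It then remains to bound $D$ from below, i.e.\ to estimate $D(e^2)$, which by the previous paragraph is the infimum of $D$ on $[e^2,\infty)$. There are three cases, according to whether $e^2$ lies in the first, second or third range. If $e^2\leq r_2^{-1}$, then $D(e^2)=2(1-r_1)-\log r_1^{-1}+1\geq 1-\log 2>0$. If $r_2^{-1}<e^2\leq r^{-1}$, then $re^2\leq 1$ and $\log r_2^{-1}\leq 2$, so $D(e^2)=3+r_1-re^2-r_1\log r_2^{-1}-\log r_1^{-1}\geq 2-r_1-\log 2>1-\log 2>0$. If $r^{-1}<e^2$, then $D(e^2)=c\geq r_1>1/2>0$. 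In every case $D(e^2)>0$, and therefore $0<D(y)\leq c<1+e^{-1}$ for all $y\geq e^2$, which is the asserted estimate (and incidentally shows $h_r^{-1}\geq h_{r_1}^{-1}\circ h_{r_2}^{-1}$ on $[e^2,\infty)$). The only thing requiring care is the branch bookkeeping in assembling the formula for $D$ and checking its continuity across the two breakpoints; beyond that everything is a routine estimate, so I do not anticipate a genuine obstacle here.
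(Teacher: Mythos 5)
Your proof is correct. It uses the same ingredients as the paper's argument: the explicit piecewise formulas for the inverses, the observation that $h_{r_1}^{-1}$ is always evaluated on its linear branch (via $h_{r_2}^{-1}(y)\ge \log y\ge 2>1/r_1$), the same two breakpoints $1/r_2<1/r$, and the same key estimate $(1-r_1)\log r_2^{-1}=r_1r_2\log r_2^{-1}\le e^{-1}$. The genuine difference is how the bound is extracted. The paper treats the three ranges separately and bounds the difference on each by triangle-inequality estimates (e.g. $|1-\log r_1^{-1}|\le 1$, $\log r_1\ge -\log 2$, $r_1(1-r_2y)\in[-1/2,0]$), while you note that $D=h_r^{-1}-h_{r_1}^{-1}\circ h_{r_2}^{-1}$ is continuous across the breakpoints (this is exactly where $r/r_2=r_1$ and $\log r^{-1}=\log r_1^{-1}+\log r_2^{-1}$ enter) and non-decreasing on $[e^2,\infty)$, eventually constant; so the whole lemma reduces to evaluating the terminal constant $c=(1-r_1)\log r_2^{-1}+r_1\le r_1(1+e^{-1})$ and checking $D(e^2)>0$ in the three possible positions of $e^2$, all of which you do correctly. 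Your organisation buys a slightly stronger conclusion for free, namely the one-sided inequality $0<D\le r_1(1+e^{-1})$ on $[e^2,\infty)$ (so $h_{r_1}^{-1}\circ h_{r_2}^{-1}\le h_r^{-1}$ there), whereas the paper's per-case estimates yield only the two-sided bound $|D|\le 1+e^{-1}$; the cost is the (routine) continuity and monotonicity bookkeeping at the two breakpoints, which you handle properly.
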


\begin{proof}
The inverse map $h_r^{-1}: (0, +\infty) \to \D{R}$ is given by the formula 
\[h_r^{-1}(y)=   
\begin{cases}
r y + \log r^{-1} -1 & \tif  y \geq 1/r,  \\
\log y  & \tif 0 < y \leq 1/r.
\end{cases}\]
Since $y\geq e^2$, one can see that $h_{r_2}^{-1}(y)\geq  1+ r_2 = 1/r_1$. Thus, 
$h^{-1}_{r_1} \circ h^{-1}_{r_2}(y)= r_1 h_{r_2}^{-1}(y)+ \log r_1^{-1}-1$. 
Using $r_2=1/r_1-1$ and the elementary inequality $|x \log x| \leq 1/e$, for $x\in (0,1)$, we have 
\[(1-r_1) \log (1/r_2) = (r_2/(1+ r_2)) \log r^{-1}_2 \leq e^{-1}/(1+r_2) \leq e^{-1}.\] 
We consider three cases:

\noindent (1) $y \leq 1/r_2$: 
Since $1/r_2 \leq 1/r$, we get 
\begin{align*}
\big| h^{-1}_r(y)-  h^{-1}_{r_1} \circ h^{-1}_{r_2}(y)\big| 
&= \big |  \log y  - (r_1 \log y + \log r_1^{-1} -1)\big| \\
& \leq (1-r_1) \log y + \big |1- \log r^{-1}_1 \big| \\
& \leq (1-r_1) \log r_2^{-1} + \big |1- \log r^{-1}_1 \big| \leq e^{-1}+ 1. 
\end{align*}

\noindent (2) $1/r_2 \leq y \leq 1/r$: 
Then, 
\begin{align*}
h^{-1}_r(y)-  h^{-1}_{r_1} \circ h^{-1}_{r_2}(y)
& = \log y  - (r_1 (r_2 y+\log r_2^{-1}-1) + \log r_1^{-1} -1) \\
&= \log (yr_1) - r_1 \log r_2^{-1} + r_1 (1-r_2 y) + 1. 
\end{align*}
On the other hand, we have 
\[-1/2 \leq r_1 -1 \leq r_1 - ry = r_1 - r_1 r_2 y = r_1(1-r_2y) \leq 0,\] 
and, using $y r_1 r_2= y r \leq 1$, we get 
\[\log yr_1- r_1 \log r_2^{-1} \leq  \log r^{-1}_2 - r_1 \log r_2^{-1} = (1-r_1) \log r_2^{-1} \leq e^{-1},\]
and, using $y\geq 1/r_2$,  
\[\log yr_1- r_1 \log r_2^{-1} \geq \log (r_1/r_2) - r_1 \log r_2^{-1} = \log r_1 + (1-r_1) \log r_2 ^{-1} 
\geq \log r_1 \geq - \log 2.\]
Combining the above inequalities we get 
\[-1 - e^{-1} \leq 1-1/2 -\log 2  \leq h^{-1}_r(y)-  h^{-1}_{r_1} \circ h^{-1}_{r_2}(y) \leq 1+ e^{-1}.\]

\noindent (3) $y \geq 1/r$: 
Using $r_1 r_2=r$, we get 
\begin{align*}
\big| h^{-1}_r(y)-  h^{-1}_{r_1} \circ h^{-1}_{r_2}(y)\big| 
& = \big | r y + \log r^{-1} -1  - (r_1 (r_2 y+\log r_2^{-1}-1) + \log r_1^{-1} -1)\big| \\
&= (1-r_1) \log r_2^{-1} + r_1 \leq e^{-1}+ 1. 
\end{align*}
This completes the proof of the lemma. 
\end{proof} 

\begin{lem}\label{L:herman-chains}
Let $m > n \geq 0$ and $y \geq e^2$. Assume that at least one of the following holds: 
\begin{itemize}
\item[(i)] $h^{-1}_{\ga_n} \circ h^{-1}_{\ga_{n+1}} \circ \dots \circ h^{-1}_{\ga_m}(y)$ is defined and is at least $e^2$, 
\item[(ii)]
$h^{-1}_{\tilde{\ga}_{c(n-1)+1}} \circ h^{-1}_{\tilde{\ga}_{c(n-1)+2}} \circ \dots \circ h^{-1}_{\tilde{\ga}_{c(m)}}(y)$ 
is defined and is at least $e^2$. 
\end{itemize}
Then, 
\[\big |h^{-1}_{\ga_n} \circ h^{-1}_{\ga_{n+1}} \circ \dots \circ h^{-1}_{\ga_m}(y) 
- h^{-1}_{\tilde{\ga}_{c(n-1)+1}} \circ h^{-1}_{\tilde{\ga}_{c(n-1)+2}} \circ \dots \circ h^{-1}_{\tilde{\ga}_{c(m)}}(y)
\big |\leq 2(1+e^{-1}).\] 
\end{lem}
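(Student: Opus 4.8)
The plan is to compare the two compositions one ``block'' at a time, matching each factor $h^{-1}_{\ga_k}$ on the left with the corresponding one‑ or two‑factor product of $h^{-1}_{\tilde{\ga}_\cdot}$'s on the right, using \refL{L:fractions-related} for the matching and \refL{L:herman-blocks} to estimate the two‑factor blocks. Concretely, \refL{L:fractions-related}(i)--(ii) give, for each $k$ with $n\le k\le m$: if $\gep_k=+1$ then $c(k)=c(k-1)+1$ and $\ga_k=\tilde{\ga}_{c(k)}$, so the factor $h^{-1}_{\ga_k}$ equals the single $\tilde{\ga}$-factor $h^{-1}_{\tilde{\ga}_{c(k)}}$; if $\gep_k=-1$ then $c(k)=c(k-1)+2$, $\tilde{\ga}_{c(k)-1}\in(1/2,1)$, $\tilde{\ga}_{c(k)}=1/\tilde{\ga}_{c(k)-1}-1$ and $\tilde{\ga}_{c(k)-1}\tilde{\ga}_{c(k)}=\ga_k\le 1/2$, so \refL{L:herman-blocks} applied with $r_1=\tilde{\ga}_{c(k)-1}$, $r_2=\tilde{\ga}_{c(k)}$, $r=\ga_k$ yields $|h^{-1}_{\ga_k}(z)-h^{-1}_{\tilde{\ga}_{c(k)-1}}\circ h^{-1}_{\tilde{\ga}_{c(k)}}(z)|\le 1+e^{-1}$ for all $z\ge e^2$. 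Writing $\tilde{\Psi}_k$ for the $\tilde{\ga}$-block attached to index $k$, $A_k=h^{-1}_{\ga_k}\circ\cdots\circ h^{-1}_{\ga_m}(y)$ and $B_k=\tilde{\Psi}_k\circ\cdots\circ\tilde{\Psi}_m(y)$ (with $A_{m+1}=B_{m+1}=y$), the assertion becomes $|A_n-B_n|\le 2(1+e^{-1})$ together with the statement that if one of $A_n,B_n$ is defined and $\ge e^2$ then so is the other.

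Next I would isolate the two elementary properties of $h_r$ coming from \refE{E:h_r-properties}. First, $h_r(u)\ge u+1$ for all $u$, so each application of $h^{-1}_r$ to an argument $\ge 1$ decreases the value by at least $1$; reading a composition from the innermost factor outward, this shows that if its outermost value ($A_n$, resp. $B_n$) is $\ge e^2$ then every partial value of that composition is $\ge e^2$ (in fact $A_k\ge e^2+(k-n)$, resp. $B_k\ge e^2+(k-n)$, for $n\le k\le m+1$). Second, on $[2,\infty)$ one has $(h^{-1}_r)'(y)\le\max(1/y,r)$, so $h^{-1}_r$ is $\max(1/2,r)$-Lipschitz there; since every $\ga_k\le 1/2$, each $h^{-1}_{\ga_k}$ is a $\tfrac12$-contraction on $[2,\infty)$, and a short computation (using $\tilde{\ga}_{c(k)-1}<1$ and $\tilde{\ga}_{c(k)-1}\tilde{\ga}_{c(k)}=\ga_k\le 1/2$) shows each block $\tilde{\Psi}_k$ is likewise a $\tfrac12$-contraction on $[e^2,\infty)$. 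I would then run a backward induction on $k$ from $m$ down to $n$ bounding $e_k:=|A_k-B_k|$: splitting $|h^{-1}_{\ga_k}(A_{k+1})-\tilde{\Psi}_k(B_{k+1})|$ through a suitable intermediate term so that \refL{L:herman-blocks} is applied at an argument known to be $\ge e^2$ (namely $A_{k+1}$ in case (i), or $B_{k+1}$ in case (ii)) and the $\tfrac12$-contraction is applied to the accumulated error $e_{k+1}$, while noting the arguments involved stay $\ge 2$ because they differ from the corresponding $A$-values by at most $e_{k+1}<3$. This gives $e_k\le\tfrac12 e_{k+1}+(1+e^{-1})$ when $\gep_k=-1$ (and $e_k\le\tfrac12 e_{k+1}$ when $\gep_k=+1$), with $e_{m+1}=0$, so $e_n\le(1+e^{-1})\sum_{l\ge 0}2^{-l}=2(1+e^{-1})$; the same estimate, together with $h_r(u)\ge u+1$, keeps all the building blocks of the ``other'' composition $\ge e^2-3>0$, so it is defined.

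The main obstacle is precisely this last bookkeeping: one must simultaneously keep every quantity to which \refL{L:herman-blocks} is applied in the regime $[e^2,\infty)$ where that lemma is valid, keep every quantity to which an $h^{-1}_{\ga_k}$- or $\tilde{\Psi}_k$-contraction is applied in the regime $[2,\infty)$, and maintain this for the ``mixed'' compositions interpolating between $A_k$ and $B_k$ — all while only one of the two pure compositions is assumed a priori to be defined. I expect to handle it by choosing the splitting point at each step so that \refL{L:herman-blocks} is always invoked at an $A$-value in case (i) and at a $B$-value in case (ii), and by verifying via the bound $h_r(u)\ge u+1$ that the perturbed partial values never leave $[2,\infty)$; the finitely many ``bottom'' steps where a perturbed value could dip below $e^2$ are absorbed by the slack in the finite geometric sum $\sum_{l=0}^{m-n}2^{-l}<2$.
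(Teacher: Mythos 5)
Your proposal is correct and follows essentially the same route as the paper's proof: match each $h^{-1}_{\ga_k}$ with the one- or two-factor $\tilde\ga$-block via \refL{L:fractions-related}, control the two-factor blocks by \refL{L:herman-blocks}, keep the partial values of the assumed composition above $e^2$ via $h_r(u)\geq u+1$, and run a backward induction using the $\tfrac12$-contraction of $h_r^{-1}$ (and of the blocks) on large arguments. The only cosmetic difference is that you unroll the recursion $e_k\leq \tfrac12 e_{k+1}+(1+e^{-1})$ into a geometric sum, whereas the paper carries the bound $2(1+e^{-1})$ as an invariant of the induction; these are equivalent, and your explicit handling of case (ii) by splitting at the $B$-values matches what the paper leaves to the reader.
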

In the above lemma, it is part of the conclusion that if one of the compositions in items (i) and (ii) is defined and 
is at least $e^2$, then the composition in the other item is defined as well. 

\begin{proof}
First assume that item (i) holds. 
By \refE{E:h_r-properties}, $h_r(t) \geq t$ for all $t \geq 0$ and $r\in (0,1)$. 
This implies that for all $j$ with $n \leq j \leq m$, 
$h^{-1}_{\ga_j} \circ h^{-1}_{\ga_{j+1}} \circ \dots \circ h^{-1}_{\ga_m}(y)\geq e^2$. 
Let 
\[y_j=h^{-1}_{\ga_j} \circ h^{-1}_{\ga_{j+1}} \circ \dots \circ h^{-1}_{\ga_m}(y), \; \tfor n \leq j \leq m.\] 
By an inductive argument, we show that for all $j$ with $n \leq j \leq m$, 
\[\tilde{y}_j
=h^{-1}_{\tilde{\ga}_{c(j-1)+1}} \circ h^{-1}_{\tilde{\ga}_{c(n-1)+2}} \circ \dots \circ h^{-1}_{\tilde{\ga}_{c(m)}}(y),\]
 is defined, and satisfies 
\[|y_j -\tilde{y}_j| \leq 2 (1+ e^{-1}).\]

We start with $j=m$. 
If $\gep_m=+1$, $c(m-1)+1=c(m)$, and since $y \geq e^2 > 0$, $\tilde{y}_m=h^{-1}_{\tilde{\ga}_{c(m)}}(y)$ is 
defined. Moreover, by \refL{L:fractions-related}, $\ga_m=\tilde{\ga}_{c(m-1)+1}= \tilde{\ga}_{c(m)}$, and hence, 
$y_m=\tilde{y}_m$. 

If $\gep_m=-1$, $c(m-1)+1=c(m)-1$. Since $y\geq e^2 >1$,
$h^{-1}_{\tilde{\ga}_{c(m)}}(y) > h^{-1}_{\tilde{\ga}_{c(m)}}(1)=0$, and hence 
$\tilde{y}_m=h^{-1}_{\tilde{\ga}_{c(m-1)+1}} \circ h^{-1}_{\tilde{\ga}_{c(m)}}(y)$ is defined. 
Moreover, by \refL{L:fractions-related}, we have $\ga_m=1- \tilde{\ga}_{c(m-1)+1}$ and 
$\ga_m=\tilde{\ga}_{c(m-1)+1} \tilde{\ga}_{c(m)}$. 
As $\ga_m \in (0,1/2)$, $\tilde{\ga}_{c(m-1)+1}\in (1/2,1)$, and hence 
$\tilde{\ga}_{c(m)}=\tilde{\ga}_{c(m-1)+2}=1/\tilde{\ga}_{c(m-1)+1}-1$. 
We use \refL{L:herman-blocks} with $r=\ga_m$, $r_1=\tilde{\ga}_{c(m-1)+1}$ and $r_2=\tilde{\ga}_{c(m)}$ 
to conclude that 
$|y_m-\tilde{y}_m|= |h_{\ga_m}^{-1}(y)-  h^{-1}_{\tilde{\ga}_c(m-1)+1} \circ h^{-1}_{\tilde{\ga}_{c(m)}}(y)| 
\leq 1+e^{-1}$.

Now assume that the assertion holds for $j+1$, with $n < j+1 \leq m$. To prove it for $j$ we consider two cases. 

If $\gep_j=+1$, $c(j-1)+1=c(j)$. By the induction hypothesis for $j+1$, $|y_{j+1}-\tilde{y}_{j+1}| \leq 2 (1+e^{-1})$.
Combining with $y_{j+1}\geq  e^2$, we obtain $\tilde{y}_{j+1}\geq e^2 - 2 (1+e^{-1})>4$. This implies that 
$\tilde{y}_j=h^{-1}_{\tilde{\ga}_{c(j)}}(\tilde{y}_{j+1})$ is defined. 
On the other hand, by \refL{L:fractions-related}, $\ga_j=\tilde{\ga}_{c(j-1)+1}= \tilde{\ga}_{c(j)}$. 
Moreover, for all $t \geq 4$ and all $r \in (0,1)$, $(h_r^{-1})'(t)\leq 1/2$. 
Therefore,  
\begin{align*}
|y_j-\tilde{y}_j|= |h^{-1}_{\ga_j}(y_{j+1}) - h^{-1}_{\tilde{\ga}_{c(j)}}(\tilde{y}_{j+1})|
& = |h^{-1}_{\ga_j}(y_{j+1}) - h^{-1}_{\ga_j}(\tilde{y}_{j+1})|   \\
& \leq (1/2) |y_{j+1}- \tilde{y}_{j+1}|  \leq (1/2) 2 (1+e^{-1}) \leq 2 (1+e^{-1}).
\end{align*}

If $\gep_j=-1$, $c(j-1)+1=c(j)-1$. As in the previous case, the induction hypothesis for $j+1$ gives us 
$|y_{j+1}-\tilde{y}_{j+1}| \leq 2 (1+e^{-1})$. 
Combining with $y_{j+1}\geq e^2$ we obtain $\tilde{y}_{j+1}\geq e^2 - 2 (1+e^{-1})>4$. 
This implies that $\tilde{y}_j=h^{-1}_{\tilde{\ga}_{c(j)-1}}\circ h^{-1}_{\tilde{\ga}_{c(j)}}(\tilde{y}_{j+1})$ is defined. 

On the other hand, by \refL{L:fractions-related}, $\ga_j=1- \tilde{\ga}_{c(j-1)+1}$, and 
$\tilde{\ga}_{c(j)}=\tilde{\ga}_{c(j-1)+2}=1/\tilde{\ga}_{c(j-1)+1}-1$. 
Using \refL{L:herman-blocks} with $r=\ga_j$, $r_1=\tilde{\ga}_{c(j-1)+1}$ and $r_2=\tilde{\ga}_{c(j)}$ 
we get
\[|h_{\ga_j}(y_j) - h^{-1}_{\tilde{\ga}_{c(j)-1}}\circ h^{-1}_{\tilde{\ga}_{c(j)}}(y_j) |\leq 1+e^{-1}.\]
By elementary calculations one may see that for all $t\geq 4$, 
$\big(h^{-1}_{\tilde{\ga}_{c(j)-1}}\circ h^{-1}_{\tilde{\ga}_{c(j)}}\big)'(t)\leq 1/2$. 
Therefore,  
\begin{align*}
|y_j-\tilde{y}_j|
&=|h^{-1}_{\ga_j}(y_{j+1})-h^{-1}_{\tilde{\ga}_{c(j)-1}}\circ h^{-1}_{\tilde{\ga}_{c(j)}}(\tilde{y}_{j+1})|\\
& \leq |h^{-1}_{\ga_j}(y_{j+1}) -  h^{-1}_{\tilde{\ga}_{c(j)-1}}\circ h^{-1}_{\tilde{\ga}_{c(j)}}(y_{j+1})|  \\
& \quad \quad + |h^{-1}_{\tilde{\ga}_{c(j)-1}}\circ h^{-1}_{\tilde{\ga}_{c(j)}}(y_{j+1})
 - h^{-1}_{\tilde{\ga}_{c(j)-1}}\circ h^{-1}_{\tilde{\ga}_{c(j)}}(\tilde{y}_{j+1})|  \\               
& \leq  (1+ e^{-1})+ (1/2) \cdot  |y_{j+1}- \tilde{y}_{j+1}| \\
& \leq (1+ e^{-1})+ (1/2) \cdot 2 (1+e^{-1}) = 2 (1+e^{-1}).
\end{align*}
This completes the proof of the induction. 

The proof of the lemma when item (ii) holds is similar, indeed easier, and is left to the reader. 
\end{proof}

\begin{proof}[Proof of \refP{P:Herman-Yoccoz-criterion}]
Let $\exp^{\circ i}$ denote the $i$-fold composition of the exponential map $x\mapsto e^x$. 

Assume that $\ga$ satisfies the criterion in \refP{P:Herman-Yoccoz-criterion}. 
Fix an arbitrary $n \geq 0$. 
Let $n'$ be an integer with 
\begin{equation}\label{E:P:Herman-Yoccoz-criterion-2}
n' \geq n + \exp^{\circ 2}(2) + 2 (1+e^{-1}).
\end{equation}
Applying the criterion in \refP{P:Herman-Yoccoz-criterion} with $n'$, there is $m' \geq n'$ such that 
\[h_{\ga_{m'-1}} \circ \dots \circ h_{\ga_{n'}} (0) \geq \C{B}(\ga_{m'}).\]
By \refL{L:h-vs-cocycle}-(iii), there is $m \geq n'$ such that 
\[h_{\ga_{m}} \circ h_{\ga_{m-1}} \circ \dots \circ h_{\ga_{n'}} (0) 
\geq \C{B}(\ga_{m+1})+29.\]
We are going to show that the pair $c(m)+1, n$ satisfies the inequality in \refD{D:Herman-Yoccoz-criterion}.

By \refP{P:brjuno-standard-vs-modified} and \refL{L:fractions-related}-(iv),  
\[\C{B}(\ga_{m+1})+ 29= \C{B}(\tilde{\ga}_{c(m)+1})+ 29 \geq \tilde{\C{B}}(\tilde{\ga}_{c(m)+1}).\]
Combining with the previous inequality, we obtain
\begin{equation*}
h_{\ga_{m}} \circ h_{\ga_{m-1}} \circ \dots \circ h_{\ga_{n'}} (0) \geq \tilde{\C{B}}(\tilde{\ga}_{c(m)+1}).
\end{equation*}
This implies that there is an integer $j$, with $n' \leq j \leq m$ such that 
\begin{equation}\label{E:P:Herman-Yoccoz-criterion-1} 
h^{-1}_{\ga_j} \circ \dots \circ h^{-1}_{\ga_{m}} (\tilde{\C{B}}(\tilde{\ga}_{c(m)+1})) \leq 0.
\end{equation}
Now we consider two cases based on the size of $\tilde{\C{B}}(\tilde{\ga}_{c(m)+1})$. 

First assume that $\tilde{\C{B}}(\tilde{\ga}_{c(m)+1}) < e^2$. 
Using $h_r(y)\geq y+1$ several times, we obtain 
\[e^2 \leq n'-n \leq  m-n \leq c(m)-n \leq h_{\tilde{\ga}_{c(m)}} \circ \dots \circ h_{\tilde{\ga}_n} (0).\]
Thus,  we have the desired inequality 
$\tilde{\C{B}}(\tilde{\ga}_{c(m)+1}) \leq h_{\tilde{\ga}_{c(m)}} \circ \dots \circ h_{\tilde{\ga}_n} (0)$. 

Now assume that $\tilde{\C{B}}(\tilde{\ga}_{c(m)+1}) \geq e^2$. 
Combining with \refE{E:P:Herman-Yoccoz-criterion-1}, there is an integer $j'$, with $j \leq j' \leq m$, such that 
\[h^{-1}_{\ga_{j'}} \circ \dots \circ h^{-1}_{\ga_{m}} (\tilde{\C{B}}(\tilde{\ga}_{c(m)+1})) 
\in [e^2,\exp^{\circ 1}(e^2)].\]
This is because any such orbit must pass through the interval $[e^2,\exp^{\circ 1}(e^2)]$.
We may now apply \refL{L:herman-chains}, to conclude that 
$h^{-1}_{\tilde{\ga}_{c(j'-1)+1}} \circ \dots \circ h^{-1}_{\tilde{\ga}_{c(m)}}(\tilde{\C{B}}(\tilde{\ga}_{c(m)+1}))$ 
is defined and 
\begin{align*}
h^{-1}_{\tilde{\ga}_{c(j'-1)+1}} \circ \dots \circ & h^{-1}_{\tilde{\ga}_{c(m)}}(\tilde{\C{B}}(\tilde{\ga}_{c(m)+1})) \\
& \leq 
h^{-1}_{\tilde{\ga}_{c(j'-1)+1}} \circ \dots \circ h^{-1}_{\tilde{\ga}_{c(m)}}(\tilde{\C{B}}(\tilde{\ga}_{c(m)+1}))
- h^{-1}_{\ga_j'} \circ \dots \circ h^{-1}_{\ga_{m}}(\tilde{\C{B}}(\tilde{\ga}_{c(m)+1}))  \\
& \qquad \qquad + h^{-1}_{\ga_{j'}} \circ \dots \circ h^{-1}_{\ga_{m}}(\tilde{\C{B}}(\tilde{\ga}_{c(m)+1})) \\
& \leq 2(1+e^{-1}) + \exp^{\circ 1}(e^2) = 2 (1+ e^{-1}) + \exp^{\circ 2}(2).
\end{align*}
On the other hand, for all $j'\geq 0$, $j' \leq c(j')$. 
Then, combining with \refE{E:P:Herman-Yoccoz-criterion-2}, we have 
\[c(j'-1)-n+1 \geq j'-1-n+1 \geq j - n \geq n' -n \geq 2 (1+e^{-1})+\exp^{\circ 2}(2).\]
Now, using $h_r(y)\geq y+1$ several times, we obtain 
\[2(1+e^{-1}) +\exp^{\circ 2}(2) \leq c(j'-1)-n+1 \leq h_{\tilde{\ga}_{c(j'-1)}} \circ \dots \circ h_{\tilde{\ga}_n}(0).\]
Combining the above inequalities we obtain 
\[h^{-1}_{\tilde{\ga}_{c(j'-1)+1}} \circ \dots \circ h^{-1}_{\tilde{\ga}_{c(m)}}(\tilde{\C{B}}(\tilde{\ga}_{c(m)+1}))
\leq h_{\tilde{\ga}_{c(j'-1)}} \circ \dots \circ h_{\tilde{\ga}_n}(0),\] 
which gives the desired relation 
\[\tilde{\C{B}}(\tilde{\ga}_{c(m)+1})  \leq h_{\tilde{\ga}_{c(m)}} \circ \dots \circ h_{\tilde{\ga}_n}(0).\]
This completes the proof of one side of the proposition. 

The other side of the proposition may be proved along the same lines, but there are some technical 
differences. 

Assume that $\ga$ satisfies \refD{D:Herman-Yoccoz-criterion}. 
Fix an arbitrary $n \geq 0$. 
Let $n'$ be an integer with 
\begin{equation}\label{E:P:Herman-Yoccoz-criterion-3}
n' \geq 2n + 2 \exp^{\circ 3}(2) + 4 (1+e^{-1}).
\end{equation}
Applying \refD{D:Herman-Yoccoz-criterion} with $n'$, there is $m' \geq n'$ such that 
\[h_{\tilde{\ga}_{m'-1}} \circ \dots \circ h_{\tilde{\ga}_{n'}} (0) \geq \tilde{\C{B}}(\tilde{\ga}_{m'}).\]
Recall that $c(m)\to +\infty$ as $m\to +\infty$. By \refL{L:h-vs-cocycle}-(ii), there is $m \geq n'$ such that 
\[h_{\tilde{\ga}_{c(m)}} \circ h_{\tilde{\ga}_{c(m)-1}} \circ \dots \circ h_{\tilde{\ga}_{n'}} (0) 
\geq \tilde{\C{B}}(\tilde{\ga}_{c(m)+1})+29.\]
Note that $c(m)\geq m \geq n'$. 
We are going to show that the pair $m+1, n$ satisfies the inequality in \refP{P:Herman-Yoccoz-criterion}.

By \refP{P:brjuno-standard-vs-modified} and \refL{L:fractions-related}-(iv),  
\[ \tilde{\C{B}}(\tilde{\ga}_{c(m)+1})+29 \geq \C{B}(\tilde{\ga}_{c(m)+1})= \C{B}(\ga_{m+1}).\]
Combining with the previous inequality, we obtain
\begin{equation*}
h_{\tilde{\ga}_{c(m)}} \circ h_{\tilde{\ga}_{c(m)-1}} \circ \dots \circ h_{\tilde{\ga}_{n'}} (0) 
\geq \C{B}(\ga_{m+1}).
\end{equation*}
This implies that there is an integer $j$, with $n' \leq j \leq c(m)$ such that 
\begin{equation}\label{E:P:Herman-Yoccoz-criterion-4} 
h^{-1}_{\tilde{\ga}_j} \circ \dots \circ h^{-1}_{\tilde{\ga}_{c(m)}} (\C{B}(\ga_{m+1})) \leq 0.
\end{equation}
Now we consider two cases based on the size of $\C{B}(\ga_{m+1})$. 

First assume that $\C{B}(\ga_{m+1}) < \exp^{\circ 3}(2)$. 
Using $h_r(y)\geq y+1$ several times, we obtain 
\[\exp^{\circ 3}(2) \leq n'-n \leq  m-n \leq h_{\ga_{m}} \circ \dots \circ h_{\ga_n} (0).\]
Thus, we have the desired inequality $\C{B}(\ga_{m+1}) \leq h_{\ga_{m}} \circ \dots \circ h_{\ga_n} (0)$.

Now assume that $\C{B}(\ga_{m+1}) \geq  \exp^{\circ 3}(2)= \exp^{\circ 2}(e^2)$. 
Combining with \refE{E:P:Herman-Yoccoz-criterion-4}, there is an integer $j'$, with $j \leq c(j')-1 \leq c(m)$, such that 
\[h^{-1}_{\tilde{\ga}_{c(j')-1}} \circ\dots\circ h^{-1}_{\tilde{\ga}_{c(m)}} (\C{B}(\ga_{m+1})) 
\in [e^2,\exp^{\circ 2}(e^2)].\]
This is because any such orbit has two consecutive elements in the interval $[e^2,\exp^{\circ 3}(2)]$, 
and the image of the map $i \mapsto c(i)$ covers at least one element of any pair of consecutive integers. 
We may now apply \refL{L:herman-chains}, to conclude that 
$h^{-1}_{\ga_{j'}} \circ \dots \circ h^{-1}_{\ga_{m}}( \C{B}(\ga_{m+1}))$ is defined and 
\begin{align*}
h^{-1}_{\ga_{j'}} \circ \dots \circ h^{-1}_{\ga_{m}}( \C{B}(\ga_{m+1})) 
& \leq 
h^{-1}_{\ga_{j'}} \circ \dots \circ h^{-1}_{\ga_{m}}( \C{B}(\ga_{m+1})) 
- h^{-1}_{\tilde{\ga}_{c(j')-1}} \circ\dots\circ h^{-1}_{\tilde{\ga}_{c(m)}} (\C{B}(\ga_{m+1})) \\
& \qquad + h^{-1}_{\tilde{\ga}_{c(j')-1}} \circ\dots\circ h^{-1}_{\tilde{\ga}_{c(m)}} (\C{B}(\ga_{m+1})) \\
& \leq 2(1+e^{-1}) +\exp^{\circ 3}(2).
\end{align*}
On the other hand, for all $j'\geq 0$, $1+ 2j' \geq c(j')$. 
Then, combining with our choice of $n'$ in \refE{E:P:Herman-Yoccoz-criterion-3}, we have 
\[j'-n \geq (c(j')-1)/2 -n \geq j/2 -n \geq n'/2 -n \geq \exp^{\circ 3}(2) + 2 (1+e^{-1}).\]
Now, using $h_r(y)\geq y+1$ several times, we obtain 
\[2(1+e^{-1}) +\exp^{\circ 3}(2) \leq j'-n \leq h_{\ga_{j'-1}} \circ \dots \circ h_{\ga_n}(0).\]
Combining the above inequalities we obtain 
\[h^{-1}_{\ga_{j'}} \circ \dots \circ h^{-1}_{\ga_{m}}( \C{B}(\ga_{m+1}))\leq h_{\ga_{j'-1}}\circ\dots\circ h_{\ga_n}(0),\] 
which gives the desired relation $\C{B}(\ga_{m+1}) \leq h_{\ga_{m}} \circ \dots \circ h_{\ga_n}(0)$.
This completes the proof of the other side of the proposition. 
\end{proof} 
\section{Elementary properties of the change of coordinates}\label{S:elementary-properties-change-coord}
In this section we establish some basic properties of the changes of coordinates $Y_r$ introduced in 
\refS{S:change-coordinates}. 
The important properties are the relation between $Y_r$ and the function $h_r$ employed in the definition of the 
Herman numbers in \refS{S:arithmetic}, and the relation between $Y_r$ and the functional relation 
for the Brjuno function in \refE{E:Brjuno-functional-equations}. 
In \refP{P:Y_r-vs-h_r^-1} we show that a suitable rescaling of the restriction of $Y_r$ to the 
$y$-axis is uniformly close to $h_r$. 
In \refL{L:Y_n-on-horizontals} we relate the behaviour of $Y_r$ to the main functional relation 
for the Brjuno function. 
In \refL{L:Y_r-distances} we list some geometric properties of the mapping $Y_r: \mathbb{H}' \to \mathbb{H}'$. 

Recall from \refS{SS:standard-fraction} that $h_r$ is a diffeomorphism from $\D{R}$ onto $(0, +\infty)$. 

\begin{propo}\label{P:Y_r-vs-h_r^-1}
For all $r \in (0, 1/2]$ and all $y \geq 1$, 
\[|2 \pi \Im Y_r( i  y/(2\pi)) - h_r^{-1}(y)| \leq \pi.\] 
\end{propo}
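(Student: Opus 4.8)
The plan is to compute the left-hand side explicitly and then reduce the inequality to a short list of elementary estimates, in the same spirit as the proofs of \refL{L:Y-domain} and \refL{L:uniform-contraction-Y_r}. First I would substitute $w=iy/(2\pi)$ into the defining formula for $Y_r$. Since $\Re w=0$ and $-2\pi r i w=ry$, we get $e^{-2\pi r i w}=e^{ry}$, and as $\log|\cdot|$ is real only the $\tfrac{i}{2\pi}\log|\cdot|$ term contributes to $\Im Y_r$. Pulling out the unit–modulus factors $e^{\mp\pi r i}$ from numerator and denominator turns the quotient into
\[2\pi\,\Im Y_r\!\left(\tfrac{iy}{2\pi}\right)=\log\frac{|e^{ry}-\zeta|}{|1-\zeta|},\qquad \zeta:=e^{-3\pi r+i\pi r},\qquad |\zeta|=e^{-3\pi r}<1 .\]
Call the right-hand side $g_r(y)$. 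Exponentiating, the assertion $|g_r(y)-h_r^{-1}(y)|\le\pi$ is equivalent to
\[e^{-\pi}\ \le\ \frac{|e^{ry}-\zeta|}{|1-\zeta|\,e^{h_r^{-1}(y)}}\ \le\ e^{\pi},\]
and by the explicit formula for $h_r$ in \refS{SS:standard-fraction} one has $e^{h_r^{-1}(y)}=y$ for $1\le y\le 1/r$ and $e^{h_r^{-1}(y)}=e^{ry}/(er)$ for $y\ge 1/r$. So it suffices to bound this single ratio two-sidedly in each of the two ranges.

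The second step is to record the needed estimates on $\zeta$. By the triangle inequality and $1-e^{-3\pi r}\le 3\pi r$, $|1-e^{i\pi r}|\le\pi r$, one gets $|1-\zeta|\le 4\pi r$; and $|1-\zeta|\ge 1-|\zeta|=1-e^{-3\pi r}$. Since $r\mapsto 1-e^{-3\pi r}$ is concave with value $0$ at $r=0$, it dominates its chord on $(0,1/2]$, so $1-e^{-3\pi r}\ge 2r\,(1-e^{-3\pi/2})$, i.e. $r/(1-e^{-3\pi r})\le C_1:=\tfrac1{2(1-e^{-3\pi/2})}<0{.}52$ there. For the inner factor one uses $t\le e^t-1\le t e^t$; and, when $ry\ge1$, $|\zeta|e^{-ry}=e^{-3\pi r-ry}\le e^{-1}$, whence $|1-\zeta e^{-ry}|\in[1-e^{-1},\,1+e^{-1}]$.

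The third step is the case analysis. For $y\ge 1/r$ write the ratio as $er\,|1-\zeta e^{-ry}|/|1-\zeta|$; the upper bound follows from $|1-\zeta e^{-ry}|\le 1+e^{-1}$ together with $|1-\zeta|\ge 1-e^{-3\pi r}$ (giving $\le (e+1)C_1<e^{\pi}$), and the lower bound from $|1-\zeta e^{-ry}|\ge 1-e^{-1}$ together with $|1-\zeta|\le 4\pi r$ (giving $\ge (e-1)/(4\pi)>e^{-\pi}$). For $1\le y\le 1/r$ the ratio is $|e^{ry}-\zeta|/(|1-\zeta|\,y)$; from $|e^{ry}-\zeta|\le (e^{ry}-1)+|1-\zeta|\le ery+4\pi r\le (e+4\pi)\,ry$ (the last step because $r\le ry$) and $|1-\zeta|\ge 1-e^{-3\pi r}$ one gets the upper bound $\le (e+4\pi)C_1<e^{\pi}$, while $|e^{ry}-\zeta|\ge e^{ry}-1\ge ry$ with $|1-\zeta|\le 4\pi r$ gives the lower bound $\ge 1/(4\pi)>e^{-\pi}$. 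In all four estimates the constants sit comfortably inside $[e^{-\pi},e^{\pi}]$, so the stated bound holds with room to spare.

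I do not expect a genuine obstacle: the whole argument is elementary. The one point that needs care is the lower bound on $|1-\zeta|$ — the naive estimate is of order $r e^{-3\pi r}$, which degrades badly as $r\to 1/2$, so one must instead combine $|1-\zeta|\ge 1-e^{-3\pi r}$ (sharp for $r$ near $1/2$) with the concavity/chord bound (which recovers the correct order $\asymp r$ as $r\to 0$) in order to keep every constant strictly inside $[e^{-\pi},e^{\pi}]$.
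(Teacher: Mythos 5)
Your proof is correct and follows essentially the same route as the paper's: substitute $w=iy/(2\pi)$, reduce the claim to two-sided bounds on the ratio $|e^{ry}-\zeta|/\big(|1-\zeta|\,e^{h_r^{-1}(y)}\big)$, and treat the cases $1\le y\le 1/r$ and $y\ge 1/r$ by elementary estimates. The only substantive difference is the lower bound on the denominator, the point you flag as delicate: where you combine $|1-\zeta|\ge 1-e^{-3\pi r}$ with a concavity/chord argument, the paper gets the uniform order-$r$ bound in one line via $|e^{-3\pi r}-e^{\pi r i}|\ge |\Im(e^{-3\pi r}-e^{\pi r i})|=\sin(\pi r)\ge \pi r/2$; both yield constants well inside $[e^{-\pi},e^{\pi}]$.
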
 

\begin{proof}
The map $h_r^{-1}: (0, +\infty) \to \D{R}$ is given by the formula
\[h_r^{-1}(y)=   
\begin{cases}
r y + \log r^{-1} -1 & \tif  y \geq 1/r,  \\
\log y  & \tif 0 < y \leq 1/r.
\end{cases}\]

We first give some basic estimates needed for the proof. 
For all $t\in [0,1]$ we have $t\leq e^t-1 \leq e t$, for all $t\in [0, 3\pi/2]$ we have $1-e^{-t}\leq t$, and for 
all $t\in \D{R}$ we have $|1-e^{it}|\leq |t|$. 
Then, for all $r \in (0, 1/2]$, we have 
\begin{equation}\label{E:P:Y_r-vs-h_r-0}
|e^{-3\pi r}-e^{\pi r i}| \leq |e^{-3\pi r} -1| + |1-e^{\pi r i}| \leq 3\pi r+\pi r= 4\pi r, 
\end{equation}
and 
\begin{equation}\label{E:P:Y_r-vs-h_r^-1}
|e^{-3\pi r}-e^{\pi r i}| \geq |\Im (e^{-3\pi r}-e^{\pi r i})| =\sin (\pi r) \geq \pi r/2.
\end{equation}

Now we consider two cases. First assume that $1 \leq y \leq 1/r$. We note that 
\begin{align*}
\big| e^{-3\pi r}-e^{-\pi r i}e^{r y} \big| 
&\leq |e^{-3\pi r} -1| + |1- e^{-\pi r i}| + |e^{-\pi r i} - e^{-\pi r i}e^{r y}|  \\
&\leq 3\pi r + \pi r + e r y= 4 \pi r+ e r y,
\end{align*}
and 
\begin{equation*}
\big| e^{-3\pi r}-e^{-\pi r i}e^{r y} \big|  \geq |e^{\pi r i} e^{r y}| - |e^{-3\pi r}| \geq e^{r y} -1 \geq r y.
\end{equation*}
These imply that 
\begin{equation*}
\Big|\frac{e^{-3\pi r}-e^{-\pi r i}e^{r y}}{y(e^{-3\pi r}-e^{\pi r i}) }\Big|
\leq \frac{4\pi r+ e r y}{y \pi r/2}
\leq \frac{4\pi/y + e}{\pi/2} 
\leq \frac{4\pi + e}{\pi/2} \leq e^{\pi}, 
\end{equation*}
and 
\begin{equation*}  
\Big|\frac{e^{-3\pi r}-e^{-\pi r i}e^{r y}}{y(e^{-3\pi r}-e^{\pi r i}) }\Big| 
\geq \frac{r y}{y(4\pi r)} = \frac{1}{4\pi}\geq e^{-\pi}.
\end{equation*}
For $1\leq y \leq 1/r$, we have 
\begin{equation*}
2\pi \Im Y_r ( i  y/(2\pi)) - h_r^{-1}(y) = 
\log \Big| \frac{e^{-3\pi r}-e^{-\pi r i}e^{r y}}{e^{-3\pi r}-e^{\pi r i}}\Big| 
- \log y 
= \log \Big| \frac{e^{-3\pi r}-e^{-\pi r i}e^{r y}}{y(e^{-3\pi r}-e^{\pi r i}) }\Big|.
\end{equation*} 
Combining these, one obtains the inequality in the proposition when $1\leq y \leq 1/r$.

Now assume that $y \geq 1/r$. 
First note that 
\[|e^{-3\pi r}-e^{-\pi r i}e^{r y}| \leq |e^{-3\pi r}| + |e^{-\pi r i}e^{r y}| \leq 1+ e^{ry} 
\leq 2 e^{ry},\]
and 
\begin{equation*}
|e^{-3\pi r}-e^{-\pi r i}e^{r y}| \geq |e^{-\pi r i}e^{r y}| - |e^{-3\pi r}| \geq e^{ry} -1 \geq e^{ry}/2.
\end{equation*}

For $y \geq 1/r$, we have 
\begin{align*}
2\pi \Im Y_r ( i  y/(2\pi)) - h_r^{-1}(y) &= 
\log \Big| \frac{e^{-3\pi r} - e^{-\pi r i}e^{r y}}{e^{-3\pi r}-e^{\pi r i}}\Big| 
- r y - \log r^{-1} + 1\\
&=\log \Big|\frac{r(e^{-3\pi r} - e^{-\pi r i}e^{r y})}
{e^{r y}(e^{-3\pi r}-e^{\pi r i})}\Big| + 1 \\
&=\log \Big| \frac{r}{e^{-3\pi r} - e^{\pi r i}}\Big| 
+ \log \Big| \frac{e^{-3\pi r}-e^{-\pi r i}e^{r y}}{e^{r y}}\Big|+ 1. 
\end{align*}
Using $\log (2/\pi)+ \log 2 + 1 \leq \pi$ and $- \log (4\pi) -\log 2+1 \geq -\pi$, these imply the desired inequality in 
the proposition when $y \geq 1/r$. 
\end{proof}

Compare the item (iii) in the following lemma to the second functional equation in 
\eqref{E:Brjuno-functional-equations}.

\begin{lem}\label{L:Y_n-on-horizontals}
Let $r\in (0, 1/2]$. We have  
\begin{itemize}
\item[(i)] for all $y\geq 0$ and all $x_0 \in [0, 1/r]$,
\[\max_{x\in x_0+\D{Z}} \Im Y_r (x+iy)  = \Im Y_r(x'+iy),\]
where $x' \in (x_0 + \D{Z}) \cap [1/(2r)-1, 1/(2r)]$; 
\item[(ii)] for all $y\geq 0$, and all $x\in[1/(2r)-1, 1/(2r)]$ we have 
\[2\pi ry+ \log (1/r) - 4 \leq 2\pi \Im Y_r (x+iy) \leq 2\pi ry + \log (1/r) + 2.\]  
\end{itemize}
\end{lem}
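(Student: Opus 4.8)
The plan is to reduce both parts to an elementary study of $\cos\theta$ where $\theta=\theta(x)=\pi r(1+2x)$. First I would rewrite the imaginary part of $Y_r$ explicitly: since $e^{-\pi r i}e^{-2\pi r i(x+iy)}=e^{2\pi r y}e^{-i\pi r(1+2x)}$, the defining formula gives
\[
2\pi\,\Im Y_r(x+iy)=\log\bigl|e^{-3\pi r}-e^{2\pi r y}e^{-i\theta(x)}\bigr|-\log\bigl|e^{-3\pi r}-e^{\pi r i}\bigr|,
\]
and, writing $\rho:=e^{2\pi r y}\ge 1$ and $a:=e^{-3\pi r}\in(0,1)$,
\[
\bigl|a-\rho e^{-i\theta}\bigr|^{2}=a^{2}+\rho^{2}-2a\rho\cos\theta .
\]
Hence for fixed $y$ the quantity $\Im Y_r(x+iy)$ depends on $x$ only through $-\cos\theta(x)$, and increasingly so; moreover $x\mapsto\Im Y_r(x+iy)$ is $1/r$-periodic, because $\theta$ increases by $2\pi$ when $x$ increases by $1/r$. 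So in part (i) the supremum over $x_0+\D{Z}$ is attained already on one period, and the hypothesis $x_0\in[0,1/r]$ lets me compare the translates of $x_0$ lying in the fundamental interval $[0,1/r]$.

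For part (i): as $x$ runs over $[0,1/r]$ the angle $\theta(x)$ runs over the length-$2\pi$ interval $[\pi r,\pi r+2\pi]$, and $\theta(x)\in[\pi-\pi r,\pi+\pi r]$ exactly when $x\in[1/(2r)-1,1/(2r)]$. Since $r\le 1/2$ this latter interval has length $1$ and is contained in $[0,1/r]$, so it contains exactly one point $x'$ of $x_0+\D{Z}$ (two, with the same $\theta$-value, only when both endpoints are hit). I would then split $[\pi r,\pi r+2\pi]$ as $[\pi r,\pi]\cup[\pi,2\pi]\cup[2\pi,\pi r+2\pi]$, on each of which $\cos$ is monotone, to conclude that $\cos\theta\ge-\cos(\pi r)$ for every $\theta$ in $[\pi r,\pi r+2\pi]$ lying outside $(\pi-\pi r,\pi+\pi r)$, while $\cos\theta(x')\le-\cos(\pi r)$. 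By the monotone dependence recorded above, $\Im Y_r(x'+iy)\ge\Im Y_r(x+iy)$ for every $x\in(x_0+\D{Z})\cap[0,1/r]$, which is (i).

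For part (ii): fix $x\in[1/(2r)-1,1/(2r)]$, so $\theta=\theta(x)\in[\pi-\pi r,\pi+\pi r]\subset[\pi/2,3\pi/2]$ and $\cos\theta\le 0$. Factor $\bigl|e^{-3\pi r}-e^{2\pi r y}e^{-i\theta}\bigr|=e^{2\pi r y}\bigl|c-e^{-i\theta}\bigr|$ with $c:=e^{-3\pi r-2\pi r y}\in(0,1]$. From $\bigl|c-e^{-i\theta}\bigr|^{2}=c^{2}+1-2c\cos\theta$ together with $\cos\theta\le 0$ and $c\le 1$ one gets $1\le\bigl|c-e^{-i\theta}\bigr|^{2}\le(c+1)^{2}\le 4$, hence
\[
2\pi r y\ \le\ \log\bigl|e^{-3\pi r}-e^{2\pi r y}e^{-i\theta}\bigr|\ \le\ 2\pi r y+\log 2 .
\]
Combining this with the bounds $\pi r/2\le\bigl|e^{-3\pi r}-e^{\pi r i}\bigr|\le 4\pi r$ from \refE{E:P:Y_r-vs-h_r-0} and \refE{E:P:Y_r-vs-h_r^-1} — which give $\log(1/r)-\log(4\pi)\le-\log\bigl|e^{-3\pi r}-e^{\pi r i}\bigr|\le\log(1/r)+\log(2/\pi)$ — and adding the two estimates, one obtains
\[
2\pi r y+\log(1/r)-\log(4\pi)\ \le\ 2\pi\,\Im Y_r(x+iy)\ \le\ 2\pi r y+\log(1/r)+\log(4/\pi).
\]
Since $\log(4\pi)<4$ and $\log(4/\pi)<2$, the inequalities claimed in (ii) follow, with room to spare.

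The one point that needs genuine care is part (i): one must compare values on a single fundamental period $[0,1/r]$ of the $1/r$-periodic function $x\mapsto\Im Y_r(x+iy)$ rather than on the whole coset $x_0+\D{Z}$, and then run the monotonicity split of $\cos$ over $[\pi r,\pi r+2\pi]$ correctly to pin the maximiser inside $[1/(2r)-1,1/(2r)]$. Everything else, in particular all of part (ii), is a routine estimate.
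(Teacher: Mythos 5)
Your overall route is the same as the paper's: rewrite the imaginary part in terms of $-\cos\theta(x)$ with $\theta(x)=\pi r(1+2x)$, use the monotone dependence of $|a-\rho e^{-i\theta}|$ on $-\cos\theta$, and pin the maximiser by noting that $\theta(x)\in[\pi-\pi r,\pi+\pi r]$ exactly for $x\in[1/(2r)-1,1/(2r)]$. Your part (ii) is correct and in fact a little cleaner than the paper's argument: you avoid the case split on $2\pi r y$ versus $1$, and the constants $\log(4\pi)<4$ and $\log(4/\pi)<2$ come out with room to spare, using the same bounds $\pi r/2\le|e^{-3\pi r}-e^{\pi r i}|\le 4\pi r$ from \refE{E:P:Y_r-vs-h_r-0} and \refE{E:P:Y_r-vs-h_r^-1}.

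The problem is precisely the step you flag as needing care, and your justification of it does not work. You claim that since $x\mapsto\Im Y_r(x+iy)$ is $1/r$-periodic, ``the supremum over $x_0+\D{Z}$ is attained already on one period.'' But the coset $x_0+\D{Z}$ has spacing $1$, not $1/r$, so it is not invariant under the period: the angles $\theta(x_0+k)\bmod 2\pi$, $k\in\D{Z}$, form the full orbit of the rotation by $2\pi r$, which for irrational $r$ is dense in the circle. Hence over the whole coset the supremum of $-\cos\theta$ equals $1$, is in general not attained, and strictly exceeds $-\cos\theta(x')$ unless $\theta(x')=\pi$; so the reduction to $(x_0+\D{Z})\cap[0,1/r]$ cannot be deduced from periodicity, and indeed the identity for the maximum over all of $x_0+\D{Z}$ is not literally true for such $r$. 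What your within-period comparison does prove correctly is the restricted statement: among the coset points in $[0,1/r]$ (whose angles form a $2\pi r$-net covering the circle once) the value $\Im Y_r(x+iy)$ is maximised at the unique point $x'\in[1/(2r)-1,1/(2r)]$. That restricted version is what the paper's own terse proof is implicitly about (``the closest available angle to the negative real axis is within $\pi r$ of $\pi$'' only makes sense for the $2\pi r$-net), and it is the version used later, e.g.\ in the proof of \refP{P:b_n-sup-B}, where the relevant points lie in $[0,1/\ga_n]$. So: state and prove the maximum over $(x_0+\D{Z})\cap[0,1/r]$, and drop the periodicity reduction; as written, that step fails.
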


\begin{proof}
(i): Note that 
\begin{align*}
\max_{x\in x_0+\D{Z}} \Im Y_r (x+iy) 
& =\frac{1}{2\pi}\max_{x \in x_0+\D{Z}}
\log\Big|\frac{e^{-3\pi r}-e^{-\pi ri}e^{-2\pi rix}e^{2\pi r y}}{e^{-3\pi r}-e^{\pi ri}}
\Big|.
\end{align*}
The maximum of $|e^{-3\pi r}-e^{-\pi ri}e^{-2\pi ri(x_0+k)}e^{2\pi r y}|$ occurs when 
$e^{-\pi ri}e^{-2\pi ri(x_0+k)}$ is closest to the negative real axis. 
This happens when $|\pi r i+ 2\pi ri (x_0+k) -\pi i| \leq \pi r$, which implies that $|(x_0+k)-(1/(2r) -1/2)|\leq 1/2$. 

(ii): For $x \in \D{R}$, 
\begin{align*}
2\pi \Im Y_r (x+iy) - 2\pi ry - \log \frac{1}{r}
& =\log\Big|\frac{e^{-3\pi r}-e^{-\pi ri}e^{-2\pi rix}e^{2\pi r y}}{e^{-3\pi r}-e^{\pi ri}} 
\cdot r e^{-2\pi ry} \Big |.
\end{align*}

Assume that $2\pi ry\geq 1$. By \refE{E:P:Y_r-vs-h_r^-1}, 
\begin{align*}
\Big| \frac{e^{-3\pi r}-e^{-\pi ri}e^{-2\pi rix}e^{2\pi r y}}{e^{-3\pi r}-e^{\pi ri}} 
\cdot r e^{-2\pi ry} \Big | 
& \leq \frac{|e^{-3\pi r}|+ |e^{-\pi ri}e^{-2\pi rix}e^{2\pi r y}|}{|e^{-3\pi r}-e^{\pi ri}|} 
\cdot r e^{-2\pi ry} \\
& \leq \frac{2 e^{2\pi ry}}{\pi r/2} \cdot r e^{-2\pi ry} 
= \frac{4}{\pi} \leq e^2. 
\end{align*}
On the other hand, by \refE{E:P:Y_r-vs-h_r-0}, 
\begin{align*}
\Big| \frac{e^{-3\pi r}-e^{-\pi ri}e^{-2\pi rix}e^{2\pi r y}}{e^{-3\pi r}-e^{\pi ri}} 
\cdot r e^{-2\pi ry} \Big| 
& \geq \frac{|e^{-\pi ri}e^{-2\pi rix}e^{2\pi r y}| - |e^{-3\pi r}|}{|e^{-3\pi r}-e^{\pi ri}|} 
\cdot r e^{-2\pi ry} \\
& \geq \frac{e^{2\pi ry} -1}{|e^{-3\pi r}-e^{\pi ri}|} \cdot r e^{-2\pi ry} \\
& \geq \frac{e^{2\pi ry}/2}{4\pi r} \cdot r e^{-2\pi ry} 
= \frac{1}{8\pi} \geq e^{-4}. 
\end{align*}

Now assume that $0 \leq 2\pi ry \leq 1$. 
For all $x\in \D{R}$, by \refE{E:P:Y_r-vs-h_r^-1}, 
\begin{align*}
\Big| \frac{e^{-3\pi r}-e^{-\pi ri}e^{-2\pi rix}e^{2\pi r y}}{e^{-3\pi r}-e^{\pi ri}} 
\cdot r e^{-2\pi ry} \Big | 
&\leq \Big| \frac{e^{-3\pi r}+ e^{2\pi r y}}{e^{-3\pi r}-e^{\pi ri}} \Big | \cdot r e^{-2\pi ry}\\
&\leq \frac{3\pi r+ e 2\pi ry}{\pi r/2} \cdot  r e^{-2\pi ry} \\
&= (6r+ e 4ry) e^{-2\pi ry}.
\end{align*}
Hence, using $y\leq 1/(2\pi r)$ and $r\in (0,1/2]$, we obtain  
\[ (6r+ e 4ry) e^{-2\pi ry}
\leq (3 + \frac{2e}{\pi}) e^{-2\pi ry}
\leq 3 + \frac{2e}{\pi} \leq e^2.\]
On the other hand, for $x\in [1/(2r)-1, 1/(2r)]$, 
we have 
\[|e^{-3\pi r}-e^{-\pi ri}e^{-2\pi rix}e^{2\pi r y}| \geq e^{2\pi ry}.\]
Hence, by \refE{E:P:Y_r-vs-h_r-0}, 
\[\Big| \frac{e^{-3\pi r}-e^{-\pi ri}e^{-2\pi rix}e^{2\pi r y}}{e^{-3\pi r}-e^{\pi ri}} 
\cdot r e^{-2\pi ry} \Big | 
\geq \frac{e^{2\pi ry}}{4\pi r} \cdot r e^{-2\pi ry} = \frac{1}{4\pi} \geq e^{-4}. 
\]
These imply the desired inequality in part (iii). 
\end{proof}

\begin{lem}\label{L:Y_r-distances}
For all $r\in (0, 1/2]$, we have 
\begin{itemize}
\item[(i)] for all $x \in [0,1/r]$ and all $y\geq -1$, 
\[\Im Y_r(x+ iy) \geq \Im Y_r(iy) -1/(2\pi);\] 
\item[(ii)]for all $x\in [0, 1/r]$ and all $y_1 \geq y_2 \geq -1$,
\[\Im Y_r(x+iy_1) - \Im Y_r(x+ iy_2) \leq \Im Y_r(iy_1)- \Im Y_r(iy_2) + 1/(2\pi);\]
\item[(iii)]for all $y_1 \geq y_2 \geq -1$ and $y \geq 0$,
\[\Im Y_r(i y+iy_1)- \Im Y_r(i y+iy_2) \leq  \Im Y_r(iy_1)- \Im Y_r(iy_2)+ 1/(4\pi);\]
\item[(iv)]for all $y_1 \geq y_2\geq -1$ and $y \in [0,5/\pi]$,
\[\Im Y_r(iy_1) - \Im Y_r(iy_2) \leq  \Im Y_r(iy+ iy_1)- \Im Y_r(iy+ iy_2)+ 5/\pi.\]
\end{itemize}
\end{lem}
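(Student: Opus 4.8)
The plan is to read all four inequalities directly off the explicit formula. For $w=x+iy$ one has
\[\Im Y_r(x+iy)=\frac{1}{2\pi}\log\bigl|e^{-3\pi r}-e^{-\pi r i}e^{-2\pi r i w}\bigr|-\frac{1}{2\pi}\log\bigl|e^{-3\pi r}-e^{\pi r i}\bigr|,\]
and the second term is a constant that cancels in each of the four differences. Put $A=e^{-3\pi r}\in(0,1)$; since $\bigl|e^{-\pi r i}e^{-2\pi r i w}\bigr|=e^{2\pi r y}$, every quantity occurring in the lemma is, up to that constant, of the form $\frac{1}{2\pi}\log|A-\zeta\rho|$ with $|\zeta|=1$ and $\rho=e^{2\pi r\,\Im w}$. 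The one structural fact used throughout is that for $\Im w\ge-1$ one has $\rho/A=e^{\pi r(2\,\Im w+3)}\ge e^{\pi r}>1$, i.e.\ $A$ always lies strictly inside the circle of radius $\rho$ by a definite margin; consequently $\rho-A\le|A-\zeta\rho|\le\rho+A$ for every unit $\zeta$, and $\rho-A\ge A(e^{\pi r}-1)\ge A\pi r$. For part (i), fix $y$, put $\mu=\rho/A\ge e^{\pi r}$ and $\phi=-\pi r-2\pi r x$; the left side equals $\Im Y_r(iy)+\frac{1}{2\pi}\log\frac{|A-e^{i\phi}\rho|}{|A-e^{-\pi r i}\rho|}$, so it suffices to show $|A-e^{i\phi}\rho|\ge e^{-1}|A-e^{-\pi r i}\rho|$. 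Using $|A-e^{i\phi}\rho|\ge\rho-A$ and the identity $|A-e^{-\pi r i}\rho|^{2}=(\rho-A)^{2}+2A\rho(1-\cos\pi r)$, this reduces to $(e^{2}-1)(\mu-1)^{2}\ge2\mu(1-\cos\pi r)$, which follows from $1-\cos\pi r\le(\pi r)^{2}/2$, $\mu-1\ge e^{\pi r}-1\ge\pi r$, and $(e^{2}-1)(1-e^{-\pi r})\ge\pi r$ on $(0,1/2]$.

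Parts (ii)--(iv) all compare the increment of $\tau\mapsto\Im Y_r(\,\cdot+i\tau)$ along a vertical segment with the same increment after a translation, so I would work with the $\tau$-derivative. With $\rho=e^{2\pi r\tau}$ one computes the closed form
\[D_\phi(\tau):=\frac{\partial}{\partial\tau}\Bigl(\frac{1}{2\pi}\log\bigl|A-e^{i\phi}e^{2\pi r\tau}\bigr|\Bigr)=\frac{r\rho\,(\rho-A\cos\phi)}{|A-e^{i\phi}\rho|^{2}},\]
which is $\partial_y\Im Y_r(x+iy)$ for $\phi=-\pi r-2\pi r x$ and is $\partial_\tau\Im Y_r(i\tau)$ for $\phi=-\pi r$. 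In the imaginary-axis case the margin $\rho\ge Ae^{\pi r}$ yields $\rho-A\cos\pi r>0$, so $D_{-\pi r}\ge0$; substituting $s=\rho/A$ and inspecting the sign of the derivative of $s\mapsto s(s-\cos\pi r)/(s^{2}-2s\cos\pi r+1)$ shows $D_{-\pi r}$ is unimodal in $\tau$ (increasing on a short initial interval ending at some $\tau^{*}\in[-1,+\infty]$, then decreasing), with maximum value $r\,\frac{1+\sin\pi r}{2\sin\pi r}\le\frac{1}{2}$ by $\sin\pi r\ge2r$. Hence $0\le D_{-\pi r}\le\frac{1}{2}$. Part (iv) is then immediate: writing $H(\tau):=\int_{\tau}^{\tau+y}D_{-\pi r}$, its left-hand side is $H(y_{2})-H(y_{1})\le H(y_{2})\le\tfrac{y}{2}\le\tfrac{5}{2\pi}<\tfrac{5}{\pi}$.

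For part (iii) the inequality to prove is $H(y_{1})-H(y_{2})\le\tfrac{1}{4\pi}$ for $-1\le y_{2}\le y_{1}$, a bound on how far the sliding-window integral $H$ of the unimodal $D_{-\pi r}$ can rise. Since $D_{-\pi r}(u+y)-D_{-\pi r}(u)\le0$ once $u\ge\tau^{*}$, one gets $H(y_{1})-H(y_{2})=\int_{y_{2}}^{y_{1}}\bigl(D_{-\pi r}(u+y)-D_{-\pi r}(u)\bigr)\,du\le\int_{-1}^{\tau^{*}}\bigl(D_{\max}-D_{-\pi r}(u)\bigr)\,du$. This last integral I would evaluate using the explicit primitive of $D_{-\pi r}(u)-r$, which in the variable $s=\rho/A$ is $\frac{1}{2\pi}\log\frac{|s-e^{i\pi r}|}{s}$ (it vanishes as $s\to\infty$); plugging in the endpoints $s=e^{\pi r}$ and $s=\frac{1+\sin\pi r}{\cos\pi r}$ and using the elementary estimates above gives a bound well below $\tfrac{1}{4\pi}$ uniformly in $r$ (for instance at $r=1/2$, where $\tau^{*}=+\infty$, the integral equals $\tfrac{1}{4\pi}\log(1+e^{-\pi})$). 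Part (ii) is handled along the same lines: with $g_x(y):=\Im Y_r(x+iy)-\Im Y_r(iy)=\frac{1}{2\pi}\log\frac{|A-e^{i\phi}\rho|}{|A-e^{-\pi r i}\rho|}$ and $g_x(y)\to0$ as $y\to\infty$, one must show the oscillation of $g_x$ on $[-1,+\infty)$ is at most $\tfrac{1}{2\pi}$, which again follows by tracking $|A-e^{i\phi}\rho|$ against $|A-e^{-\pi r i}\rho|$ as $\rho$ runs over $[Ae^{\pi r},+\infty)$, using $\rho-A\le|A-\zeta\rho|\le\rho+A$ and the margin $\rho\ge Ae^{\pi r}$.

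The main obstacle is the regime (relevant to parts (ii) and (iii)) in which $r$ is small and $\Im w$ is near $-1$, so $\rho=e^{2\pi r\,\Im w}$ exceeds $A$ only by the factor $e^{\pi r}\approx1$: there $|A-\zeta\rho|$ can be as small as a constant multiple of $r$, the derivative $D_{-\pi r}$ is near its maximum, and naive triangle-inequality bounds degrade. One is forced to use precisely the quadratic-in-$r$ estimates $1-\cos\pi r\le(\pi r)^{2}/2$ and $\sin\pi r\ge2r$ together with the margin $\rho/A\ge e^{\pi r}\ge1+\pi r$, which compensate exactly and keep the additive losses below $\tfrac{1}{2\pi}$, $\tfrac{1}{4\pi}$ and $\tfrac{5}{\pi}$ respectively. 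Carrying the numerical constants correctly through the various subcases --- $\rho$ comparable to $A$ versus $\rho\gg A$, and $\phi$ near $0$ versus $\phi$ bounded away from $0$ --- is where the bulk of the computation lies.
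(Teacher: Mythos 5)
Your parts (i) and (iv) are fine, and in fact cleaner than the paper's own arguments: for (i) you replace the paper's integration of $\Im \partial_s Y_r$ with its four-case analysis by the direct two-point estimate $|A-e^{i\phi}\rho|\ge \rho-A\ge e^{-1}|A-e^{-i\pi r}\rho|$ (your reduction $(e^2-1)(\mu-1)^2\ge 2\mu(1-\cos\pi r)$ for $\mu\ge e^{\pi r}$ does check out), and for (iv) the uniform bound $0\le \partial_\tau \Im Y_r(i\tau)\le \tfrac12$ gives the conclusion immediately with the better constant $5/(2\pi)$, bypassing the paper's use of (iii) and its explicit estimate of $Y_r(i(5-\pi)/\pi)$. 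Your plan for (iii) (sliding-window integral of the unimodal derivative $D_{-\pi r}$, evaluated via the primitive $\tfrac{1}{2\pi}\log(|s-e^{i\pi r}|/s)$) is a genuinely different route from the paper's M\"obius/circle-geometry argument and the quantity you must bound is indeed tiny uniformly in $r$; but as written the uniform bound is asserted rather than proved, with only the endpoint $r=1/2$ computed, so that verification still has to be carried out.

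The genuine gap is part (ii). You reduce it to the claim that the oscillation of $g_x(y)=\Im Y_r(x+iy)-\Im Y_r(iy)$ on $[-1,+\infty)$ is at most $1/(2\pi)$, and that claim is false: for $x$ with $\phi=-\pi r-2\pi rx\equiv \pi \pmod{2\pi}$ one has $g_x(-1)=\tfrac{1}{2\pi}\log\bigl((\rho+A)/|A-e^{-i\pi r}\rho|\bigr)\approx \tfrac{1}{2\pi}\log(\sqrt2/(\pi r))$ at $\rho=Ae^{\pi r}$, while $g_x\to 0$ as $y\to+\infty$, so the oscillation grows like $\log(1/r)$. What (ii) actually needs is only the one-sided statement that $g_x$ cannot \emph{rise} by more than $1/(2\pi)$, and the tools you cite ($\rho-A\le|A-\zeta\rho|\le\rho+A$ plus the margin $\rho\ge Ae^{\pi r}$) cannot deliver that in exactly the small-$r$, $y$ near $-1$ regime you flag as the obstacle. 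The missing ingredient is an angle-comparison step, which is the heart of the paper's proof: there one shows the vertical increment is maximal at angle $0$ (monotonicity of $g(\vartheta,t_1,t_2)$ in $\vartheta$) and then compares angle $0$ with angle $\pi r$ using part (i). In your notation the same step reads: with $N_\phi(\rho)=A^2+\rho^2-2A\rho\cos\phi$, the ratio $\rho\mapsto N_\phi(\rho)/N_{\pi r}(\rho)$ has logarithmic derivative $2A(\cos\phi-\cos\pi r)(\rho^2-A^2)/(N_\phi N_{\pi r})$, hence is monotone on $(A,\infty)$ with the sign of $\cos\phi-\cos\pi r$; if $\cos\phi\le\cos\pi r$ it is decreasing and (ii) holds with no loss, and if $\cos\phi>\cos\pi r$ it increases to $1$, so (ii) follows from your part (i) bound $N_\phi(\rho_2)\ge e^{-2}N_{\pi r}(\rho_2)$. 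Without some such step your part (ii) does not go through.
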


\begin{proof}
Part (i): Recall from the proof of \refL{L:uniform-contraction-Y_r} that 
\begin{align*}
\frac{\partial Y_r}{\partial s}(w)=\frac{\partial Y_r}{\partial w}(w) + \frac{\partial Y_r}{\partial \ol{w}}(w) 
= r+ \frac{i r \Im \xi}{|\xi-1|^2},
\end{align*}
where $w=s+iy$ and $\xi=e^{-3\pi r} e^{\pi r i } e^{2\pi r  i  w}$. 
We note that when $s \in [0, 1/(2r)-1/2] \cup [1/r-1/2, 1/r]$, $\Im \partial Y_r (s+iy)/\partial s \geq 0$, 
and when $s \in [1/(2r)-1/2, 1/r-1/2]$, $\Im \partial Y_r (s+iy)/\partial s \leq 0$. 
Moreover, when $s \in [1/r-1, 1/r-1/2]$, we have 
\begin{align*}
\Big| \Im \frac{\partial Y_r}{\partial s}(s+iy)\Big|   
= \frac{r |\Im \xi|}{|\xi-1|^2} \leq \frac{r |e^{-3\pi r}e^{2\pi r} \sin (\pi r+ 2\pi rs)|}{(1-e^{-\pi r})^2}
\leq \frac{r e^{-\pi r} \sin {(\pi r)}}{1+ e^{-2\pi r}-2e^{-\pi r}}\leq \frac{1}{\pi}.  
\end{align*}
To see the last inequality, it is enough to show that $g(u)=u \sin u-e^{u}-e^{-u}+2 \leq 0$ for all $u=\pi r\in[0,\pi/2]$. 
By Taylor's remainder theorem, for all $u \in [0, \pi/2]$, there is $u_0 \in [0, u]$ such that 
\[g(u)= g(0)+ u g'(0)+ u^2 g''(0)/2+u_0^3 g^{(3)}(u_0)/6 
= u_0^3 (-3\sin u_0 -u_0 \cos u_0 - e^{u_0}+ e^{-u_0})/6 \leq 0.\]

For $x\in [0,1/r]$, we use the formula 
\begin{align*}
\Im Y_r(x+iy) - \Im Y_r(iy) = \int_0^x \Im \frac{\partial Y_r(s+iy)}{\partial s} \, ds
\end{align*}
to obtain a lower bound. We consider four cases: 

$\bullet$ When $x\in [0, 1/(2r)-1/2]$, the integrand is non-negative, and hence 
\[\int_0^x \Im \frac{\partial Y_r(s+iy)}{\partial s} \, ds\geq 0.\]    

$\bullet$ When $x\in [1/(2r)-1/2, 1/r-1]$, then $\frac{1}{r}-1-x \in [0, 1/(2r)-1/2]$ and by the previous case, 
\begin{align*}
\int_0^x \Im \frac{\partial Y_r(s+iy)}{\partial s} \, ds
& =\int_0^{1/r-1-x} \Im \frac{\partial Y_r(s+iy)}{\partial s} \, ds 
+ \int_{1/r-1-x}^{x} \Im \frac{\partial Y_r(s+iy)}{\partial s} \, ds \\
&\geq \int_{1/r-1-x}^{x} \Im \frac{\partial Y_r(s+iy)}{\partial s} \, ds \\
&= \int_{1/r-1-x}^{x} \frac{r \sin (\pi r+ 2\pi rs)}{1+e^{-6\pi r - 4\pi ry} -2 e^{-3\pi r -2\pi ry} \cos (\pi r+ 2\pi rs)} \,ds\\
&= \int_{2\pi -\pi r-2\pi rx}^{\pi r+ 2\pi rx} \frac{r \sin (\gj)}{1+e^{-6\pi r - 4\pi ry} -2 e^{-3\pi r -2\pi ry} \cos \gj} \, \frac{d\gj}{2\pi r}=0. 
\end{align*}

$\bullet$ When $x\in [1/r-1, 1/r-1/2]$, by the previous case and the above estimate,  
\begin{align*}
\int_0^x \Im \frac{\partial Y_r(s+iy)}{\partial s} \, ds
& =\int_0^{1/r-1} \Im \frac{\partial Y_r(s+iy)}{\partial s} \, ds 
+ \int_{1/r-1}^{1/r-1/2} \Im \frac{\partial Y_r(s+iy)}{\partial s} \, ds \\
&\geq \int_{1/r-1}^{1/r-1/2} \Im \frac{\partial Y_r(s+iy)}{\partial s} \, ds 
\geq \frac{-1}{\pi} \cdot \frac{1}{2}= \frac{-1}{2\pi}.
\end{align*}
 
$\bullet$ When $x \in [1/r-1/2, 1/r]$, by the previous case,  
\begin{align*}
\int_0^x \Im \frac{\partial Y_r(s+iy)}{\partial s} \, ds
& =\int_0^{1/r-1/2} \Im \frac{\partial Y_r(s+iy)}{\partial s} \, ds 
+ \int_{1/r-1/2}^{x} \Im \frac{\partial Y_r(s+iy)}{\partial s} \, ds \\
&\geq  \frac{-1}{2\pi} + \int_{1/r-1/2}^{x} \Im \frac{\partial Y_r(s+iy)}{\partial s} \, ds \geq \frac{-1}{2\pi}.
\end{align*}
This completes the proof of Part (i).  

Part (ii): Let  
\[g(\gj, t_1, t_2)= \frac{1+ e^{2t_1} - 2e^{t_1}\cos \gj}{1+ e^{2t_2} - 2e^{t_2}\cos \gj}, 
\quad \gj \in \D{R}, t_1\geq t_2 >  0.\]
Since $1+ e^{2t_2} - 2e^{t_2}\cos \gj \geq 1+ e^{2t_2} - 2e^{t_2} = (e^{t_2}-1)^2>0$, $g$ is well-defined, 
and is positive.  
Moreover, 
\[\partial_\gj g(\gj, t_1, t_2)= \frac{-2\sin \gj (e^{t_1}-e^{t_2}) (e^{t_1+t_2}-1)}{(1+ e^{2t_2} - 2e^{t_2}\cos \gj)^2}.\]
Thus, $g$ is a decreasing function of $\gj$ on $[0,\pi]+2\pi \D{Z}$, and increasing on $[\pi, 2\pi]+ 2\pi \D{Z}$. 
This implies that $g(\gj, t_1, t_2)\leq g(0, t_1, t_2)$, for all $\gj\in \D{R}$. 
Applying this inequality with $\gj=-\pi r-2\pi rx$, $t_1=3\pi r+2\pi r y_1$, and $t_2=3\pi r+2\pi r y_2$, one obtains  
\[\Big|\frac{e^{-3\pi r}-e^{-\pi r i} e^{-2\pi r ix} e^{2\pi r y_1}}{e^{-3\pi r} - e^{-\pi r i} e^{-2\pi r ix}e^{2\pi r y_2}}\Big|^2 
= g(\gj, t_1, t_2) \leq g(0, t_1, t_2)
= \Big|\frac{e^{-3\pi r} - e^{2\pi r y_1}}{e^{-3\pi r} - e^{2\pi r y_2}} \Big|^2.\]
Therefore, for all $y_1 \geq y_2 \geq -1$, we have 
\begin{align*}
\Im Y_r(x+iy_1) - \Im Y_r(x+ iy_2) 
& = \frac{1}{2\pi}\log\Big|\frac{e^{-3\pi r} - e^{-\pi r i} e^{-2\pi r ix} e^{2\pi r y_1}}
{e^{-3\pi r} - e^{-\pi r i} e^{-2\pi r ix} e^{2\pi r y_2}} \Big |   \\
& \leq \frac{1}{2\pi}\log \Big|\frac{e^{-3\pi r} - e^{2\pi r y_1}}{e^{-3\pi r} - e^{2\pi r y_2}} \Big | \\
&= \Im Y_r(1/r-1/2+iy_1) - \Im Y_r(1/r-1/2+ iy_2). 
\end{align*}
On the other hand, 
\begin{align*}
\Im Y_r(1/r-1/2+iy_1) & = \frac{1}{2\pi} \log \Big|\frac{e^{-3\pi r} - e^{2\pi r y_1}}{e^{-3\pi r} - e^{-\pi ri}} \Big | \\
& \leq \frac{1}{2\pi} \log \Big|\frac{e^{-3\pi r} - e^{-\pi ri} e^{2\pi r y_1}}{e^{-3\pi r} - e^{-\pi ri}} \Big |
=\Im Y_r(iy_1),
\end{align*}
and by the inequality in Part (i),
\[\Im Y_r(1/r-1/2+ iy_2)\geq \Im Y_r(iy_2)-1/(2\pi).\] 
Combining the above inequalities together, we conclude Part (ii) of the lemma. 

Part (iii): 
If $y_1=y_2$, the inequality trivially holds. Below we assume that $y_1> y_2$.  

By the definition of $Y_r$, it is enough to prove that for all $r\in (0, 1/2]$, $y_1\geq y_2\geq 0$, and $y\geq 0$ 
we have 
\[\Big |\frac{e^{-3\pi r}- e^{-\pi r  i }e^{2\pi r (y+y_1)}}{e^{-3\pi r}- e^{-\pi r  i }e^{2\pi r (y+y_2)}}\Big|
\leq \sqrt{2} \Big |\frac{e^{-3\pi r}- e^{-\pi r  i }e^{2\pi r y_1}}{e^{-3\pi r}- e^{-\pi r  i }e^{2\pi r y_2}}\Big|.\]
Let us fix $y_1> y_2$, and consider the M\"obius transformations 
\[M(z)=\frac{e^{-3\pi r}- e^{-\pi ri}e^{2\pi r y_1}z}{e^{-3\pi r}- e^{-\pi ri}e^{2\pi r y_2}z}, \quad 
M_1(z)=\frac{e^{-3\pi r} + e^{2\pi r y_1}z}{e^{-3\pi r} + e^{2\pi r y_2}z}.\]
We have $M(0)=M_1(0)=1$, and $M(\infty)= M_1(\infty)=e^{2\pi r(y_1-y_2)}$. 
Then, there is a M\"obius transformation $M_2$ with $M_2(1)=1$, $M_2(e^{2\pi r(y_1-y_2)})=e^{2\pi r(y_1-y_2)}$,
and $M=M_2 \circ M_1$. The precise form of  $M_2$ is not relevant here. 

An elementary algebraic calculation shows that 
\[M_1(1)\geq (1+ e^{2\pi r(y_1-y_2)})/2.\]
Since $M_1$ maps the real interval $[0, +\infty]$ to $[1, e^{2\pi r (y_1-y_2)}]$, we must have 
\[M_1(e^{2\pi ry}) \geq M_1(1) \geq (1+ e^{2\pi r(y_1-y_2)})/2.\] 
That is, $M_1(1)$ and $M_1(e^{2\pi ry})$ belong to the right hand of the middle point of the interval 
$[1, e^{2\pi r(y_1-y_2)}]$. 

Note that $|M_2'(1)| = |M'(0)/M_1'(0)|=1$. Thus, $M_2$ preserves the line $\Re z= (1+ e^{2\pi r(y_1-y_2)})/2$, 
and maps the real interval $[(1+ e^{2\pi r(y_1-y_2)})/2, e^{2\pi r (y_1-y_2)}]$ to an arc $\gga$ of a circle 
whose center $B$ lies on the vertical line $\Re z= (1+ e^{2\pi r(y_1-y_2)})/2$.
By the above paragraph, $M(e^{2\pi ry})$ and $M(1)$ belong to $\gga$. 
By looking at $\arg M_2'(1)$, one can see that $\gga$ meets the line $\Im z=0$ at $e^{2\pi r(y_1-y_2)}$ 
with asymptotic 
angle $+ \pi r \leq \pi/2$. That is, $\gga$ and $B$ lie in the same component of $\D{C}\setminus \D{R}$.  
See Figure~\ref{F:L:Y_r-distances} for an illustration of the following argument. 

\begin{figure}[ht]
\begin{pspicture}(0,1)(8,4.4) 

\psline[linewidth=.5pt]{->}(0.5,2)(7.5,2)
\psline[linewidth=.5pt]{->}(1,1.2)(1,3.8)
\pscircle[linewidth=.8pt,linestyle=dotted](5,2.8){1}
\psarc[linewidth=.8pt](5,2.8){1}{-53}{90}

\psdots[dotsize=2.5pt](5,3.8)(5,2.8)
\psdots[dotsize=2.5pt](4.4,2)(5.6,2)
\psdot[dotsize=2.5pt](5.98,3)

\psline[linewidth=.5pt](1,2)(5.98,3)
\psline[linewidth=.5pt](5,2.8)(5,3.8)
\psline[linewidth=.5pt](1,2)(5,3.8)

\rput(1.2,1.7){\small $O$}
\rput(4.4,1.7){\small $1$}
\rput(6.1,1.7){\small $e^{2\pi r(y_1-y_2)}$}

\rput(5,4){\small $A$}
\rput(5.1,2.6){\small $B$}

\rput(6.2,3){\small $C$} 

\rput(5.6,3.9){\tiny $M(1)$}
\rput(6.4,3.6){\tiny $M(e^{2\pi ry})$}
\end{pspicture}
\caption{Illustration of the inequality in \refL{L:Y_r-distances}-(iii). The curve $\gga$ is the arc on the dotted circle.}
\label{F:L:Y_r-distances}
\end{figure}

Let $C$ be the point on $\gga$ where $|C|$ achieves its maximum, and let $O$ denote the origin. 
Then, the line from $O$ to $C$ must pass through $B$. 
Let $A$ denote the point where the arc $\gga$ meets the line $\Re z= (1+ e^{2\pi r(y_1-y_2)})/2$. 
In the triangle $OAB$, the angle $\gj$ at the vertex $B$ is at least $\pi/2$. 
By the cosine formula, 
\[|OA|^2=|OB|^2+ |AB|^2-2|OB||AB| \cos \gj \geq |OB|^2 + |AB|^2.\]  
Using the inequality $|OB|^2 + |AB|^2 \geq 2 |OB| |AB|$, we conclude that 
\[2|OA|^2 \geq |OB|^2 + |AB|^2 + 2 |OB||AB|= (|OB|+|AB|)^2.\] 

By the above discussion, $M(1)$ and $M(e^{2\pi ry})$ belong to $\gga$, with $M(1)$ lying between $A$ and 
$M(e^{2\pi ry})$. 
Now, we consider two cases. If $M(1)$ lies between $C$ and $e^{2\pi r (y_1-y_2)}$, then we must have 
$M(e^{2\pi ry})/M(1) \leq 1$.  
If $M(1)$ lies between $A$ and $C$,  by the above inequality, we obtain
\[\frac{|M(e^{2\pi ry})|}{|M(1)|} \leq \frac{|OC|}{|OA|} = \frac{|OB|+ |BA|}{|OA|} \leq \sqrt{2}.\]
This completes the proof of the desired inequality. 

Part (iv): Recall that $Y_r(0)=0$ and hence by \refL{L:uniform-contraction-Y_r}, $\Im Y_r(-i) \geq -9/10$. 
Moreover, $\Im Y_r(iy+ iy_1) \geq \Im Y_r(iy_1)$. 
By rearranging terms and using the inequality in Part (iii), we note that 
\begin{align*}
\Im Y_r(iy_1) - \Im Y_r(iy_2) & - (\Im Y_r(iy+ iy_1) - \Im Y_r(iy+ iy_2)) \\
&=\big(\Im Y_r(iy+ iy_2) -  \Im Y_r(iy_2)\big) + \big( \Im Y_r(iy_1)- \Im Y_r(iy+ iy_1)\big)  \\
& \leq \Im Y_r(iy+ iy_2) -  \Im Y_r(iy_2) \\
& \leq \Im Y_r(i(y-1)) - \Im Y_r(-i) + 1/(4\pi) \\
& \leq \Im Y_r(i (5-\pi)/\pi) + 9/10 + 1/(4\pi).
\end{align*}
On the other hand, by \refE{E:P:Y_r-vs-h_r^-1}, for all $r\in (0, 1/2]$, we have  
\begin{align*}
Y_r(i (5-\pi)/\pi) &=\frac{1}{2\pi} \log \Big| \frac{e^{-3\pi r} - e^{-\pi r i} e^{2r(5-\pi)}}{e^{-3\pi r}- e^{\pi ri}}\Big| \\
&\leq \frac{1}{2\pi} \log \frac{|e^{-3\pi r}-1|+ |1-e^{-\pi r i}|+ |e^{-\pi ri} - e^{-\pi ri}e^{2r(5-\pi)}|}{\pi r/2} \\
&\leq \frac{1}{2\pi} \log \frac{3\pi r+ \pi r + (e^{2r(5-\pi)}-1)}{\pi r/2} \\
&\leq \frac{1}{2\pi} \log \frac{3\pi r+ \pi r + (e^3-8)\pi r/2}{\pi r/2} = \frac{3}{2\pi}.
\end{align*}
Using the inequality $9/10 \leq 3/\pi$, we obtain the desired estimate in Part (iv). 
\end{proof} 
\section{Topology of the sets \texorpdfstring{$\mathbb{A}_\ga$}{A-ga}}\label{S:topology-A}
\subsection{Cantor bouquets and hairy Jordan curves}\label{SS:CB-HJC}
In this section we describe the topology of the sets $\mathbb{A}_\ga$ in terms of the arithmetic properties of $\ga$. 
In particular, here we will prove \refT{T:model-trichotomy-thm}. 
Let us start by presenting the definition of the two main topological objects which appear in the theorem.

A \textbf{Cantor bouquet} is any subset of the plane which is ambiently homeomorphic to a set of the form 
\begin{equation*}\label{E:straight-cantor-bouquet}
\{re^{2\pi i \gj} \in \D{C} \mid 0 \leq \gj \leq 1, 0 \leq r \leq R(\gj) \}
\end{equation*}
where $R: \D{R}/\D{Z} \to [0, 1]$ satisfies the following:
\begin{itemize}
\item[(a)] $R=0$ on a dense subset of $\D{R}/\D{Z}$,  and $R > 0$ on a dense subset of $\D{R}/\D{Z}$, 
\item[(b)] for each $\gj_0\in \D{R}/\D{Z}$ we have 
\[\limsup_{\gj \to \gj_0^+ } R(\gj) = R(\gj_0) = \limsup_{\gj \to \gj_0^-} R(\gj).\]
\end{itemize}

A \textbf{one-sided hairy Jordan curve} is any subset of the plane which is ambiently homeomorphic to a set 
of the form 
\begin{equation*}\label{E:straight-hairy-circle}
\{re^{2\pi i \gj} \in \D{C} \mid 0 \leq \gj \leq 1, 1\leq r \leq 1+ R(\gj) \}
\end{equation*}
where $R: \D{R}/\D{Z} \to [0,1]$ satisfies properties (a) and (b) in the above definition. 

The Cantor bouquet and one-sided hairy Jordan curve enjoy similar topological feature as the standard Cantor set. 
Under an additional mild condition (topological smoothness) they are uniquely characterised by some 
topological axioms, see \cite{AaOv93}.

To study the topology of the sets $\mathbb{M}_\ga$ (and $\mathbb{A}_\ga$), it is convenient to work with the sets 
$I_n^j$ and $I_n$ from \refS{SS:tilings-nest}. That is because each of the sets $I_n^j$ and $I_n$ is the region 
above the graph of a function. Since the sets $I_n^j$, for $j\geq 0$, forms a nest, one is led to an increasing 
collection of functions on a fixed domain. 
Since the nest may shrink to an empty set along a fixed vertical line, we are led to considering functions which attain 
$+\infty$ at some points. 
It turns out that there are two main collections of functions required to capture the topological features of these sets. 
In the next section we build these functions and study their properties. 

\subsection{Height functions}\label{SS:height-functions}
\footnote{The letter $b$ stands for ``base'' and ``p'' for ``pinnacle''; the reason for these will become clear in a moment.}
Recall that the sets $I_n^j$ and $I_n$ consist of closed half-infinite vertical lines. 
For $n\geq -1$, and $j\geq 0$, define $b_n^j:[0, 1/\ga_n] \to [-1, +\infty)$ as 
\begin{equation}\label{E:I_n^j-b_n^j}
b_n^j(x)= \min \{y \mid x+iy \in I_n^j\}.
\end{equation}
Since each $\mathbb{Y}_n$ preserves vertical lines, it follows that 
\[I_n^j= \{w\in \D{C} \mid 0 \leq \Re w \leq 1/\ga_n, \Im w\geq b_n^j(\Re w)\}.\]
By the definition of the sets $I_n^j$ and the functional equations \eqref{E:Y_n-comm-1}--\eqref{E:Y_n-comm-2}, 
one can see that for all $n\geq -1$ and $j\geq 0$, $b_n^j:[0, 1/\ga_n] \to [-1, +\infty)$ is continuous. 
Moreover, by \refE{E:I_n^j-forms-nest}, we must have $b_n^{j+1} \geq b_n^j$ on $[0, 1/\ga_n]$. 
For $n\geq -1$, we define $b_n:[0, 1/\ga_n] \to [-1,+\infty]$ as
\[b_n(x)= \lim_{j\to + \infty} b_n^j(x)= \sup_{j\geq 1} b_n^j(x).\]
Note that $b_n$ is allowed to take $+\infty$. The function $b_n$ describes the set $I_n$, that is, 
\begin{equation}\label{E:I_n-b_n}
I_n= \{w\in \D{C} \mid 0 \leq \Re w \leq 1/\ga_n, \Im w \geq b_n(\Re w)\}.
\end{equation}

By \refL{L:model-almost-periodic}, $b_n^j(0)=b_n^j(1/\ga_n)$, and $b_n^j(x+1)= b_n^j(x)$ for all $x\in [0, 1/\ga_n-1]$. 
Taking limits as $j\to +\infty$, we obtain 
\begin{equation}\label{E:b_n^j-cont-periodic}
b_n(0)=b_n(1/\ga_n), \qquad b_n(x+1)=b_n(x), \text{ for all } x\in [0, 1/\ga_n-1] \text{ and } n\geq -1.   
\end{equation}

Only when $\ga$ is a Brjuno number, for $n\geq -1$ and $j\geq 0$ we inductively define the functions 
\[p_n^j: [0,1/\ga_n] \to [-1, +\infty).\] 
For all $n\geq -1$, we set $p_n^0\equiv (\C{B}(\ga_{n+1})+5\pi)/(2\pi)$. 
Assume that $p_n^j$ is defined for some $j\geq 0$ and all $n\geq -1$. 
We define $p_n^{j+1}$ on $[0, 1/\ga_n]$ as follows. 
For $x_n \in [0,1/\ga_n]$, we find $x_{n+1} \in [0, 1/\ga_{n+1}]$ and $l_{n}\in \D{Z}$ such that 
$-\gep_{n+1} \ga_{n+1} x_{n+1}= x_n - l_n$, and define  
\[p_n^{j+1}(x_n)=\Im \mathbb{Y}_{n+1}\big(x_{n+1} + i p_{n+1}^j (x_{n+1})\big).\]
In other words, the graph of $p_n^{j+1}$ is obtained from applying $\mathbb{Y}_{n+1}$ to the graph of $p_{n+1}^j$, 
and then applying suitable translations by integers. 

Evidently, for all $n\geq -1$ and all $j\geq 0$, we have 
$p_n^{j+1}(x+1)= p_n^{j+1}(x)$, for $x \in [0, 1/\ga_n-1]$.
Moreover, it follows from \eqref{E:Y_n-comm-1} and \eqref{E:Y_n-comm-2} that each 
$p_n^j: [0,1/\ga_n]\to \D{R}$ is continuous, and $p_n^j(0)=p_n^j(1/\ga_n)$. 

Note that for every $n\geq -1$, by \refL{L:Y_n-on-horizontals}-(i)-(ii) and \refE{E:Brjuno-functional-equations}, 
we have 
\begin{align*}
p_n^1&  \leq \max_{x \in [0, 1/\ga_{n+1}]} \Im \mathbb{Y}_{n+1}\big (x+ i (\C{B}(\ga_{n+2})+5\pi)/(2\pi)\big ) \\
& \leq \ga_{n+1} \frac{\C{B}(\ga_{n+2})+5\pi}{2\pi} + \frac{1}{2\pi} \log \frac{1}{\ga_{n+1}}+\frac{1}{\pi} \\
&=\frac{1}{2\pi}\big(\ga_{n+1}\C{B}(\ga_{n+2})+\log\frac{1}{\ga_{n+1}}\big)
+\frac{\ga_{n+1}5\pi}{2\pi}+\frac{1}{\pi}\\
& \leq  \frac{1}{2\pi} \C{B}(\ga_{n+1}) + \frac{5}{4} + \frac{1}{\pi} 
\leq  p_n^0. 
\end{align*}
Using an induction argument, starting with $p_n^ 1 \leq p_n^0$, for every $n\geq -1$, one may show that for all 
$n\geq -1$ and $j\geq 0$ we have 
\[p_n^{j+1}(x) \leq p_n^j(x), \quad \forall x\in [0, 1/\ga_n].\] 
Therefore, we may define the functions 
\[p_n(x)= \lim_{j \to +\infty}  p_n^j(x), \quad \forall x\in [0, 1/\ga_n].\]
It follows that 
\begin{equation}\label{E:p_n-peiodic}
p_n(0)=p_n(1/\ga_n), \quad p_n(x)=p_{n}(x+1), \forall x\in [0, 1/\ga_n-1].
\end{equation}

On the other hand, by definition, $p_n^0 \geq b_n^0$, for all $n\geq -1$. Since the graphs of $b_{n+1}^0$ 
and $p_{n+1}^0$ are mapped to the graphs of $b_n^1$ and $p_n^1$, respectively, by $\mathbb{Y}_{n+1}$ 
and its integer translations, we must have $p_{n}^1 \geq b_n^1$, for all $n\geq -1$. 
By induction, this implies that for all $n\geq -1$ and all $j\geq 0$,  
\begin{equation}\label{E:p_n>=b_n}
p_n^j(x) \geq b_n^j(x), \quad \forall x\in [0, 1/\ga_n].
\end{equation}
In particular, $p_n \geq b_n$ on $[0, 1/\ga_n]$. 

\subsection{Accumulation of the hairs}\label{SS:hairs-accumulate}

\begin{propo}\label{P:b_n-liminfs}
For all $n\geq -1$, we have \footnote{To be clear, the notation $s \to x^+$ means that $s \to x$ and $s>x$. Similarly, 
$s\to x^-$ means that $s\to x$ and $s< x$.} 
\begin{itemize}
\item[(i)] for all $x\in [0, 1/\ga_n)$, $\liminf_{s\to x^+} b_n(s)= b_n(x)$;
\item[(ii)] for all $x\in (0, 1/\ga_n]$, $\liminf_{s\to x^-} b_n(s)= b_n(x)$.
\end{itemize}
\end{propo}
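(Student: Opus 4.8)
The plan is to prove the statement by induction on $n$, descending from the functional structure of the sets $I_n^j$. The base of the induction is the observation that the two claims are closely tied to the recursive definition of $I_n^{j+1}$ from $I_{n+1}^j$ via the maps $\mathbb{Y}_{n+1}$, together with the fact (from \refL{L:uniform-contraction-Y_r}) that $\mathbb{Y}_{n+1}$ is a uniform contraction. First I would record the elementary facts: $b_n^j \to b_n$ pointwise and increasingly, each $b_n^j$ is continuous and $1$-periodic in the sense of \eqref{E:b_n^j-cont-periodic}, and $b_n$ inherits that periodicity. Since $b_n = \sup_j b_n^j$ is a supremum of continuous functions, it is lower semi-continuous, so $\liminf_{s\to x^\pm} b_n(s) \geq b_n(x)$ always holds; the content of the proposition is the reverse inequality, i.e.\ that along some sequence $s_k \to x^+$ (resp.\ $x^-$) we have $b_n(s_k) \to b_n(x)$. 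By the periodicity \eqref{E:b_n^j-cont-periodic} it suffices to treat $x$ in a fundamental domain; and by part (v)-type symmetry arguments (complex conjugation) the two parts (i) and (ii) are interchangeable, so I would prove (i) and deduce (ii), or prove both in parallel.

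The core of the argument is the inductive step. Fix $n \geq -1$ and $x \in [0, 1/\ga_n)$, and let $(w_i; l_i)$ be the trajectory of $w = x + i b_n(x)$ (when $b_n(x) < +\infty$; the case $b_n(x)=+\infty$ needs separate, easier treatment using lower semi-continuity and the fact that a vertical line in $I_n$ with $b_n=+\infty$ is a limit of lines where $b_n^j$ blows up). The point $x$ is carried, via a composition $X := (\mathbb{Y}_{n+1}+l_n)$ of one change of coordinate and an integer translation, to a point $x_{n+1} + i(\cdots)$ on a vertical line of $I_{n+1}$, where $b_{n+1}(x_{n+1})$ equals the corresponding height (using that $\mathbb{Y}_{n+1}$ preserves vertical lines and maps the graph of $b_{n+1}^j$ to that of $b_n^{j+1}$, up to integer translation). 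By the induction hypothesis applied at level $n+1$ and at the point $x_{n+1}$, there is a sequence $s'_k \to x_{n+1}$ (from the appropriate side, determined by $\gep_{n+1}$ and whether $\mathbb{Y}_{n+1}$ is orientation-preserving or reversing) with $b_{n+1}(s'_k) \to b_{n+1}(x_{n+1})$. Pushing these forward by $X$ and using the continuity of $\mathbb{Y}_{n+1}$, one gets a sequence $s_k \to x$ with $b_n(s_k) \to b_n(x)$, which is exactly the claim. The one technical point is that $\mathbb{Y}_{n+1}$ maps the graph of $b_{n+1}$ \emph{to} the graph of $b_n$ only over the sub-interval that one piece of $I_{n+1}$ occupies, and near $x$ the relevant piece of $I_n$ might be coming from the ``$K_{n+1}$'' truncated copy (the last term in \eqref{E:I_n^j--1} or \eqref{E:I_n^j-+1}); I would check that in all cases the local structure of $I_n$ near the vertical line over $x$ is governed by the graph of $b_{n+1}$ near the single point $x_{n+1}$, so the one-sided limit statement transfers cleanly. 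The boundary cases $x = 0$ and $x = 1/\ga_n - 1$ (where the period-$1$ gluing meets, and where the $J/K$ decomposition has its seam) need the functional relations \eqref{E:Y_n-comm-1}, \eqref{E:Y_n-comm-2} and the equalities $b_n(0)=b_n(1/\ga_n)$ to match the two sides.

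I expect the main obstacle to be bookkeeping the interaction between the side from which $s \to x$ is taken and the orientation of $\mathbb{Y}_{n+1}$ (equivalently the sign $\gep_{n+1}$), together with correctly identifying which of the translated copies $\mathbb{Y}_{n+1}(I_{n+1}^j)+l$ sits immediately to the right of the vertical line over $x$. Because $\mathbb{Y}_{n+1}$ reverses orientation when $\gep_{n+1}=+1$, a right-limit at level $n$ becomes a left-limit at level $n+1$, so the induction must be stated for both (i) and (ii) simultaneously and the two parts feed into each other at each step. A secondary subtlety is the treatment of points $x$ where $b_n(x) = +\infty$: there I would argue that for every $M$ there is $j$ with $b_n^j(x) > M$, and then by continuity of $b_n^j$ a whole one-sided neighbourhood of $x$ has $b_n^j > M$ hence $b_n > M$, giving $\liminf_{s\to x^\pm} b_n(s) = +\infty = b_n(x)$ directly, without invoking the inductive machinery. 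Once these case distinctions are laid out, each individual verification is a short computation with the explicit formulas for $\mathbb{Y}_n$ and the relations \eqref{E:Y_n-comm-1}--\eqref{E:Y_n-comm-2}.
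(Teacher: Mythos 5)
Your plan has a decisive structural flaw: the ``descending induction on $n$'' is ill-founded. You propose to deduce the statement at level $n$ from the statement at level $n+1$, but $n$ ranges over $\{-1,0,1,2,\dots\}$ with no top level, so there is no base case from which the induction can start; the claim at level $n+1$ is exactly as hard as the claim at level $n$, and nothing in your outline supplies an independent source of one-sided approximating points at any level. The paper's proof is not an induction of this kind: for a \emph{fixed} $n$ and a prescribed side $e_n$, it lifts the point $x_n+ib_n(x_n)$ up the tower to levels $m\gg n$, uses the $+1$-periodicity of $I_m$ (the analogue of \eqref{E:b_n^j-cont-periodic}) to produce a point of $I_m$ at bounded distance (at most $2$) from the lift and on the correct side, and then pushes it back down; the uniform contraction of \refL{L:uniform-contraction-Y_r} makes the resulting point at level $n$ lie within $2\cdot 0.9^{m-n}$ of $x_n+ib_n(x_n)$, which is what yields $\liminf_{s\to x^{\pm}}b_n(s)\le b_n(x)$. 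Your correct observations (lower semi-continuity gives the inequality $\ge$; the case $b_n(x)=+\infty$ is automatic; orientation reversal when $\gep_{n+1}=+1$ swaps left and right limits, encoded in the paper by $e_{m+1}=-\gep_{m+1}e_m$) survive, but the engine of the proof is the quantitative ``go up far, translate by one period, contract back down'' mechanism, not a transfer of the full statement from level $n+1$ to level $n$. Note also that deducing (ii) from (i) by complex conjugation is not available levelwise: conjugation relates $\ga$ to $-\ga$ and only changes $\gep_0$, so the sets $I_n$ for $n\ge 0$ are unchanged and carry no left--right symmetry.

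A second, substantive gap is your one-sentence dismissal of the boundary cases. The translation-by-one trick at a high level $m$ is only legitimate when it does not push the point outside $[0,1/\ga_m]$ and when the displaced point sits in the same piece ($K_m$ or $J_m$) as the original, since only then is the downward composition $(\mathbb{Y}_{n+1}+l_n)\circ\cdots\circ(\mathbb{Y}_{m+1}+l_m)$ guaranteed to land in $I_n$. The lifted sequence $x_m$ can sit persistently in thin end-intervals of $[0,1/\ga_m]$ where this fails, and the bulk of the paper's argument (the ``eligible level'' cases $(E_1)$--$(E_4)$ and its Steps 2--3) is devoted to showing that either eventually $x_m=1/\ga_m-1$ (where the translation does work) or eligible levels occur infinitely often, in which case a substitute nearby point is built from level $m+1$ data. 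Your proposal contains no argument for these cases, and without one the construction of one-sided approximants breaks down precisely at the points where it is delicate.
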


\begin{proof}
By \refL{L:I_n^j-basic-features}, the sets $I_n^j$ are closed. Thus, their intersection $I_n$ is also a closed set. 
Since $I_n$ is the set above the graph of $b_n$, this implies that  
\begin{gather*}
\forall x\in [0, 1/\ga_n), \liminf_{s\to x^+} b_n(s)\geq b_n(x), \quad \tand \quad 
\forall x\in (0, 1/\ga_n], \liminf_{s\to x^-} b_n(s) \geq b_n(x). 
\end{gather*}
So we need to show that the equality holds in both cases. If $b_n(x)=+\infty$, then we automatically have the 
equalities. Indeed, $\liminf$ can be replaced by $\lim$ in that case. 
Below we assume that $b_n(x)\neq \infty$. 

Fix $x_n \in [0, 1/\ga_n]$, and let $e_n \in \{+1, -1\}$ be arbitrary, except that $e_n=+1$ if $x_n=0$, and $e_n=-1$ 
if $x_n=1/\ga_n$. 
Define the sequence $\{e_m\}_{m\geq n}$ according to $e_{m+1}= - \gep_{m+1} e_m$. 

We use $e_n$ to deal with both statements at once. 
That is, to prove the desired equalities, it is enough to show that for every $\delta>0$ there is $x_n'$ strictly 
between $x_n$ and $x_n+ \delta e_n$ such that $b_n(x_n')\leq b_n(x_n)+\delta$. 
The idea of the proof is rather elementary. 
We map $x_n+i b_n(x_n) \in I_n$ to $z_m \in I_m$ using the maps $\mathbb{Y}_j^{-1}$, for large $m\geq n$. 
Then, using \refE{E:b_n^j-cont-periodic}, we find $z_m'=z_m+e_m \in I_m$, which may be mapped to  
$z'_n \in I_n$ using $\mathbb{Y}_j$. 
It follows that $\sign (\Re z'_n - x_n)=e_n$, and by the uniform contraction of $\mathbb{Y}_j$,  
$z'_n$ is close to $x_n +i b_n(x_n)$. 
However, there are some technical difficulties due to $\gep_j=\pm 1$ and $z'_m=z_m+e_m \notin I_m$. 
We present the details in several steps. 

\smallskip

{\em Step 1.} There is an infinite sequence $\{(l_m, x_{m+1})\}_{m\geq n}$, with $l_m \in \D{Z}$ and 
$x_{m+1} \in [0, 1/\ga_{m+1}]$ such that $\Re \mathbb{Y}_{m+1}(x_{m+1}) + l_m= x_m$ for all $m\geq n$. 
Moreover, if $x_m=0$ for some $m\geq n$, then $e_m=+1$. 

\smallskip 

We define the sequence inductively. Assume that $x_m \in [0,1/\ga_m]$ is defined for some $m\geq n$. 
To define $l_m$ and $x_{m+1}$ we proceed as follows:
\begin{itemize}
\item[(a)]if $x_m \in [0, 1/\ga_m] \cap \D{Z}$, we let $l_m= x_m+  (e_m+\gep_{m+1})/2$ and define 
\[x_{m+1}=\frac{1}{-\gep_{m+1}\ga_{m+1}} \cdot (x_m-l_m) =  
\frac{1+ \gep_{m+1}e_m}{2\ga_{m+1}}.\] 
\item[(b)] if $x_m  \in [0, 1/\ga_m] \setminus \D{Z}$ we choose $l_m \in \D{Z}$ such that 
$x_m-l_m \in ((-1-\gep_{m+1})/2,(1-\gep_{m+1})/2)$, and define 
\[x_{m+1} = \frac{x_m- l_m}{-\gep_{m+1}\ga_{m+1}}.\] 
\end{itemize} 

In part (a), $x_{m+1}\in \{0, 1/\ga_{m+1}\}$, depending on the sign of $\gep_{m+1} e_m$. 
In part (b), if $\gep_{m+1}=-1$, then $x_m-l_m \in (0,1)$ and therefore $x_{m+1} \in (0, 1/\ga_{m+1})$. 
If $\gep_{m+1}=+1$, then $x_m - l_m \in (-1,0)$ and therefore $x_{m+1} \in (0, 1/\ga_{m+1})$. 
Thus, in both cases $x_{m+1}$ belongs to $[0, 1/\ga_{m+1}]$. 

By \refE{E:Y_n}, $\Re \mathbb{Y}_{m+1}(x_{m+1})= -\gep_{m+1} \ga_{m+1}x_{m+1}$. 
Thus, $\Re \mathbb{Y}_{m+1}(x_{m+1})+l_m=x_m$.

To prove the latter part of Step 1, note that if $j=n$ then $e_{j}=+1$ by the definition of $e_n$ at the beginning of 
the proof. If $j > n$, by the definition of the sequence $(x_m, l_m)$, $x_j$ must be generated in part (a), since part (b) 
produces values in $(0, 1/\ga_{m})$. 
But, part (a) produces $x_{j}=0$ only if $\gep_{j} e_{j-1}=-1$. Then, $e_{j}= - \gep_{j} e_{j-1}=+1$. 

For all $m\geq n$ we have 
\begin{equation}\label{E:P:bounds-on-l_m}
(1+\gep_{m+1})/2 \leq l_m \leq  a_m + \gep_{m+1}.
\end{equation}
That is because, if $l_m$ is produced in (b) and $\gep_{m+1}=-1$ then the largest integer in 
$[0, 1/\ga_{m}]$ is $a_m-1$ and there is $0 \leq l_m \leq a_m-1$ with $x_m-l_m\in (0,1)$. 
If $l_m$ is produced in (b) and $\gep_{m+1}=+1$ then the largest integer in $[0,1/\ga_m]$ is $a_m$ and there is 
$1 \leq l_m \leq a_m+1$ with $x_m-l_m \in (-1,0)$. 
If $l_m$ is produced in (a), then $x_m \leq a_m + (\gep_{m+1}-1)/2$ and therefore 
$l_m \leq a_m+\gep_{m+1} +e_m/2-1/2 \leq a_m+\gep_{m+1}$. On the other hand, in (a) 
$l_m = x_m + (e_m + \gep_{m+1})/2 \geq (1+\gep_{m+1})/2$, since, by the above paragraph, if $x_m=0$ then 
$e_m=+1$. 
We are done with Step 1. 

Let us say that some level $m \geq n$ is \textbf{eligible}, if one of the following four cases occurs: 
\begin{itemize}
\item[$(E_1)$]  $(e_m, \gep_{m+1})= (+1, -1)$ and $x_m$ belongs to 
\[[0, 1/\ga_m-2] \cup [a_m-2-\ga_{m+1}, a_m-2] \cup [a_m-1-\ga_{m+1}, a_m-1];\]
\item[$(E_2)$] $(e_m, \gep_{m+1})= (+1, +1)$ and $x_m$ belongs to 
\[[0, 1/\ga_m-2] \cup [a_m-1-\ga_{m+1}, a_m-1] \cup [a_m-\ga_{m+1}, a_m];\]
\item[$(E_3)$] $(e_m, \gep_{m+1})= (-1, -1)$ and $x_m$ belongs to 
\[[1, 1/\ga_m-1] \cup [1-\ga_{m+1},1] \cup [a_m-1, a_m-1+\ga_{m+1}];\]
\item[$(E_4)$] $(e_m, \gep_{m+1})= (-1, +1)$ and $x_m$ belongs to 
\[[1, 1/\ga_m-1] \cup [\ga_{m+1}, 2\ga_{m+1}] \cup [a_m, a_m+\ga_{m+1}].\]
\end{itemize}  
Recall the numbers $\gb_j$ introduced in \refS{SS:modified-fractions}. 

\smallskip 

{\em Step 2.} Let $k \geq j \geq n$ be integers. Either there is an eligible $m \in [j,k]$, or 
\begin{equation}\label{E:P:tiny-end-intervals}
x_j \in 
\begin{cases}
[0,\gb_k/\gb_j] \cup [1/\ga_j-1, 1/\ga_j-1+ \gb_k/\gb_j] & \tif e_j=-1 \\
[1/\ga_j-1-\gb_k/\gb_j, 1/\ga_j-1] \cup [1/\ga_j-\gb_k/\gb_j, 1/\ga_j]  & \tif e_j=+1. 
\end{cases}
\end{equation}

\smallskip 

We prove this by induction on $k-j$. 
Assume that $k-j=0$.  
If $e_j=+1$ then either $x_j \in [0, 1/\ga_j-2]$ and $j$ is eligible as in ($E_1$) and $(E_2)$, 
or $x_j \in [1/\ga_j-2, 1/\ga_j-1] \cup [1/\ga_j-1, 1/\ga_j]$ and \eqref{E:P:tiny-end-intervals} holds.
If $e_j=-1$ then either $x_j \in [1, 1/\ga_j-1]$ and $j$ is eligible as in ($E_3$) and $(E_4)$, or 
$x_j \in [0,1] \cup [1/\ga_j-1, 1/\ga_j]$ and \eqref{E:P:tiny-end-intervals} holds. 

Now assume that the statement holds for integers $k$ and $j$ with $k-j=t \geq 0$. 
We aim to prove it for integers $k$ and $j$ with $k-j=t+1$.
By the induction hypotheses applied to the pair $j+1$ and $k$ we conclude that either there is an eligible $m\in [j+1, k]$, 
or 
\begin{equation}\label{E:P:tiny-end-intervals-2}
x_{j+1} \in 
\begin{cases}
[0,\gb_k/\gb_{j+1}] \cup [1/\ga_{j+1}-1, 1/\ga_{j+1}-1+\gb_k/\gb_{j+1}] & \tif e_{j+1}=-1 \\
[1/\ga_{j+1}-1-\gb_k/\gb_{j+1}, 1/\ga_{j+1}-1] \cup [1/\ga_{j+1}-\gb_k/\gb_{j+1}, 1/\ga_{j+1}]  & \tif e_{j+1}=+1. 
\end{cases}
\end{equation}
If there is an eligible $m \in [j+1, k] \subset [j,k]$ then we are done. We show that if 
\eqref{E:P:tiny-end-intervals-2} holds, either $j$ is eligible or \eqref{E:P:tiny-end-intervals} holds. 
To prove this, we consider four cases based on the values of $e_j$ and $\gep_{j+1}$.

\smallskip

I) If $(e_j, \gep_{j+1})=(+1,-1)$, $e_{j+1}= -\gep_{j+1} e_j=+1$, and by \refE{E:P:tiny-end-intervals-2}, 
\[x_{j+1}\in [1/\ga_{j+1}-1-\gb_k/\gb_{j+1}, 1/\ga_{j+1}-1] \cup [1/\ga_{j+1}-\gb_k/\gb_{j+1}, 1/\ga_{j+1}].\] 
Since $x_m - l_m=-\gep_{j+1}\ga_{j+1}x_{j+1}= \ga_{j+1}x_{j+1}$, this implies that 
\[x_j \in \big([1-\ga_{j+1} - \gb_k/\gb_j, 1-\ga_{j+1}] \cup [1 - \gb_k/\gb_j, 1]\big)  +\D{Z}.\] 
If $x_j \in [0, 1/\ga_j-2]$ then $j$ is eligible through ($E_1$). 
If $x_j \in [1/\ga_j-2, 1/\ga_j]$, using $1/\ga_j=a_j-\ga_{j+1}$, $x_j$ must belong to one of the intervals 
\begin{gather*}
[a_j-2-\gb_k/\gb_j, a_j-2], \; [a_j-1- \gb_k/\gb_j, a_j-1] \\
[1/\ga_j-1-\gb_k/\gb_j, 1/\ga_j-1], \;  [1/\ga_j - \gb_k/\gb_j, 1/\ga_j].
\end{gather*}
If $x_j$ belongs to one of the bottom two intervals then \eqref{E:P:tiny-end-intervals} holds. 
If $x_j$ belongs to one of the top two interval, then $j$ is eligible though ($E_1$), since $[a_j-2-\gb_k/\gb_j, a_j-2]$ 
is contained in $[a_j-2-\ga_{j+1}, a_j-2]$ and $[a_j-1- \gb_k/\gb_j, a_j-1]$ is contained in $[a_j-1- \ga_{j+1}, a_j-1]$. 
Here we use $\gb_k/\gb_j\leq \ga_{j+1}$, which is valid due to $k\geq j+1$. 

\smallskip 

II) If $(e_j, \gep_{j+1})=(+1,+1)$, $e_{j+1}=-\gep_{j+1} e_j= -1$, and by \refE{E:P:tiny-end-intervals-2},
\[x_{j+1}\in [0,\gb_k/\gb_{j+1}] \cup [1/\ga_{j+1}-1, 1/\ga_{j+1}-1+\gb_k/\gb_{j+1}] .\] 
Since $x_j-l_j= -\gep_{j+1}\ga_{j+1} x_{j+1}=-\ga_{j+1} x_{j+1}$, this implies that 
\[x_j \in \big([-\gb_k/\gb_j,0] \cup [-1+\ga_{j+1} - \gb_k/\gb_j, -1+\ga_{j+1}]\big)+\D{Z}.\] 
If $x_j \in [0, 1/\ga_j-2]$ then $j$ is eligible though ($E_2$). 
If $x_j \in [1/\ga_j-2, 1/\ga_j]$, using $1/\ga_j= a_j + \ga_{j+1}$, $x_j$ must belong to one of the intervals 
\begin{gather*}
[a_j- \gb_k/\gb_j, a_j],  \; [a_j-1 - \gb_k/\gb_j, a_j-1] \\
[1/\ga_j-1 - \gb_k/\gb_j, 1/\ga_j-1], \; [1/\ga_j-\gb_k/\gb_j, 1/\ga_j].
\end{gather*}
If $x_j$ belongs to one of the bottom two intervals, then \eqref{E:P:tiny-end-intervals} holds. 
If $x_j$ belongs to one of the top two intervals then $j$ is eligible as in ($E_2$), since $[a_j- \gb_k/\gb_j, a_j]$ 
is contained in $[a_j- \ga_{j+1}, a_j]$ and $[a_j-1 - \gb_k/\gb_j, a_j-1]$ is contained in $[a_j-1 - \ga_{j+1}, a_j-1]$. 

\smallskip 

III) If $(e_j, \gep_{j+1})=(-1,-1)$, $e_{j+1}=-\gep_{j+1} e_j=-1$, and by \refE{E:P:tiny-end-intervals-2},
\[x_{j+1}\in [0,\gb_k/\gb_{j+1}] \cup [1/\ga_{j+1}-1, 1/\ga_{j+1}-1+\gb_k/\gb_{j+1}] .\] 
Since $x_j-l_j= -\gep_{j+1}\ga_{j+1} x_{j+1}=\ga_{j+1} x_{j+1}$, this implies that 
\[x_j \in \big([0,\gb_k/\gb_j] \cup [1-\ga_{j+1}, 1-\ga_{j+1} +\gb_k/\gb_j]\big)  +\D{Z}.\] 
If $x_j \in [1, 1/\ga_j-1]$ then $j$ is eligible through ($E_3$).  
If $x_j \in [0,1] \cup [1/\ga_j-1, 1/\ga_j]$, using $1/\ga_j=a_j-\ga_{j+1}$, $x_j$ must belong to one of the intervals 
\begin{gather*}
[0, \gb_k/\gb_j], [1/\ga_j-1, 1/\ga_j-1 + \gb_k/\gb_j] \\
[1-\ga_{j+1},1-\ga_{j+1} + \gb_k/\gb_j], [a_j-1, a_j-1+ \gb_k/\gb_j]. 
\end{gather*}
If $x_j$ belongs to one of the top two intervals, then \eqref{E:P:tiny-end-intervals} holds. 
If $x_j$ belongs to one of the bottom two interval then $j$ is eligible through ($E_3$) since 
$[1-\ga_{j+1},1-\ga_{j+1}+\gb_k/\gb_j]$ is contained in $[1-\ga_{j+1},1]$ 
and $[a_j-1, a_j-1+ \gb_k/\gb_j]$ is contained in $[a_j-1, a_j-1+ \ga_{j+1}]$. 

\smallskip 

IV) If $(e_j, \gep_{j+1})=(-1,+1)$, $e_{j+1}= - \gep_{j+1} e_j= +1$, and by \refE{E:P:tiny-end-intervals-2},
\[x_{j+1}\in [1/\ga_{j+1}-1-\gb_k/\gb_{j+1}, 1/\ga_{j+1}-1] \cup [1/\ga_{j+1}-\gb_k/\gb_{j+1}, 1/\ga_{j+1}].\] 
Since $x_j-l_j= -\gep_{j+1}\ga_{j+1} x_{j+1}=-\ga_{j+1} x_{j+1}$, this implies that 
\[x_j \in \big([-1+\ga_{j+1}, -1+\ga_j + \gb_k/\gb_j] \cup [-1, -1 + \gb_k/\gb_j]\big)  +\D{Z}.\] 
If $x_j \in [1, 1/\ga_j-1]$ then $j$ is eligible through ($E_4$). 
If $x_j \in [0,1] \cup [1/\ga_j-1, 1/\ga_j]$, using $1/\ga_j=a_j + \ga_{j+1}$, $x_j$ must belong to one of the intervals  
\begin{gather*}
[\ga_{j+1}, \ga_{j+1}+ \gb_k/\gb_j], \; [a_j,a_j+ \gb_k/\gb_j]\\
[0, \gb_k/\gb_j], \; [1/\ga_j-1, 1/\ga_j-1+ \gb_k/\gb_j]
\end{gather*}
If $x_j$ belongs to one of the bottom two intervals, then \eqref{E:P:tiny-end-intervals} holds. 
If $x_j$ belongs to one of the top two intervals then $j$ is eligible through ($E_4$), 
since $[\ga_{j+1}, \ga_{j+1}+ \gb_k/\gb_j]$ is contained in $[\ga_{j+1}, 2\ga_{j+1}]$ and $[a_j,a_j+ \gb_k/\gb_j]$ 
is contained in $[a_j, a_j+ \ga_{j+1}]$.
 
\smallskip

{\em Step 3.} Either there is $j \geq n$ such that for all $m \geq j$ we have $x_m=1/\ga_m-1$, or there are arbitrarily 
large eligible $m \geq n$.

\smallskip 

If there is $j \geq n$ such that $x_j \in \{1/\ga_j-1, 1/\ga_j\}$, then for all $m \geq j+1$ we have $x_m=1/\ga_m-1$. 
That is because, by the definition of the sequence $\{(x_m, l_m)\}$, if $x_j \in \{1/\ga_j-1, 1/\ga_j\}$ then 
$x_{j+1}\in \{1/\ga_{j+1}-1\}$. 
Therefore, for all $m \geq j+1$ we have $x_m \in \{1/\ga_m-1\}$.  

Assume that there is $j\geq n$ such that $x_j=0$. 
Recall from Step 1 that whenever $x_m=0$, $e_m=+1$. 
By the definition of the sequence $(x_m, l_m)$, if some $x_m=0$ then either 
$\gep_{m+1}= -1$ and hence $x_{m+1}=0$, or $\gep_{m+1}=+1$ and hence $x_{m+1}=1/\ga_{m+1}$. 
By the above paragraph, it follows that either eventually $x_m=0$, or eventually $x_m= 1/\ga_m-1$. 
When eventually $x_m=0$, all sufficiently large $m$ becomes eligible through ($E_1$) and ($E_2$). 

By the above paragraphs, if there is $j \geq n$ with $x_j \in \{0, 1/\ga_j-1, 1/\ga_j\}$, we are done. 
Below we assume that there are no such $j$. 

Fix an arbitrary $j \geq n$. Since $\gb_k/\gb_j \to 0$ as $k\to \infty$, there is $k \geq j$ such that 
$x_j \notin [0, \gb_k/\gb_j] \cup [1/\ga_j-1- \gb_k/\gb_j, 1/\ga_j-1 + \gb_k/\gb_j] \cup [1/\ga_j - \gb_k/\gb_j]$. 
It follows from Step 2 that there must be an eligible $j' \in [j, k]$. 
This proves that there are arbitrarily large eligible $m\geq n$. 

\smallskip 

{\em Step 4.} For every $\delta>0$ there is $x_n'$ strictly between $x_n$ and $x_n+ \delta e_n$ such that 
$b_n(x_n')\leq b_n(x_n)+\delta$.

\smallskip 

For $m\geq n$, let $z_m=x_m+ i b_m(x_m) \in I_m$, which is the lowest point on $I_m$ with real part 
equal to $x_m$. 
By Step 1, for $m\geq n$, we have $\Re \mathbb{Y}_{m+1}(x_{m+1}) + l_m= x_m$. 
It follows from the definition of the sets $I_n^j$ and $I_n$ that $\mathbb{Y}_{m+1}(z_{m+1})+l_m=z_m$. 

For $m\geq 0$, let $K_m= \cap _{j\geq 1} K_m^j$ and $J_m= \cap_{j\geq 1} J_m^j$.

Assume that $m\geq n$, and either $x_m=1/\ga_m-1$ or $m$ is eligible. 
We claim that there is $z_m'\in I_m$ satisfying the following properties: 
\begin{itemize}
\item[(i)] $\sign (\Re z'_m - \Re z_m)=e_m$, 
\item[(ii)] $|z'_m-z_m|\leq 2$, 
\item[(iii)] either both $z'_m$ and $z_m$ belong to $K_m$, or both $z'_m$ and $z_m$ belong to $J_m$. 
\end{itemize}

If $x_m = 1/\ga_m-1$ we simply let $z'_m=z_m+e_m$. By \refE{E:b_n^j-cont-periodic}, $z'_m \in I_m$. 
Moreover, $\{z_m, z'_m\} \subset K_m$ if $e_m=-1$ and $\{z_m, z'_m\} \subset J_m$ if $e_m=+1$. 
If $m$ is eligible, we show this by looking at cases $(E_1)$ through $(E_4)$ in the definition of eligibility. 
The main tool for each of those cases is the periodic property of $I_m$ and $I_{m+1}$ in \refE{E:b_n^j-cont-periodic}. 

$(E_1)$: If $x_m \in [0, 1/\ga_m-2]$ we let $z'_m=z_m+1=z_m+e_m$. Here $\{z_m ,z'_m\} \subset K_m$.

If $x_m \in [a_m-2-\ga_{m+1},a_m-2]= [1/\ga_m-2, a_m-2]$, then $x_{m+1}\in [1/\ga_{m+1}-1,1/\ga_{m+1}]$. 
Choose an integer $k_{m+1}$ with $x_{m+1}-k_{m+1} \in [0,1]$ and define 
$z'_m= \mathbb{Y}_{m+1}(z_{m+1}-k_{m+1})+ a_m-2$. 
Note that $\Re z_m \leq a_m-2 \leq \Re z'_m\leq a_m-2+\ga_{m+1} \leq 1/\ga_m-1$. 
This implies that $\sign (\Re z_m' -\Re z_m)=+1=e_m$, and $\{z_m, z'_m\} \subset K_m$. 
Here, $a_m = 1/\ga_m+\ga_{m+1}\geq 2+ \ga_{m+1}$, so $a_m\geq 3$. 
Using \refE{E:Y_n-comm-1} with $e_m=+1$ and \refE{E:uniform-contraction-Y},  
\begin{align*}
|z_m-z'_m| &= |(\mathbb{Y}_{m+1}(z_{m+1})+a_m-3) - (\mathbb{Y}_{m+1}(z_{m+1} -k_{m+1}) + a_m-2)| \\
&= |\mathbb{Y}_{m+1}(z_{m+1}) -1  - \mathbb{Y}_{m+1}(z_{m+1} -k_{m+1})| \\
&= |\mathbb{Y}_{m+1}(z_{m+1}-1/\ga_{m+1}) - \mathbb{Y}_{m+1}(z_{m+1} -k_{m+1})| \\
&\leq 0.9 |(z_{m+1} -1/\ga_{m+1}) - (z_{m+1}-k_{m+1})| \\ 
&= 0.9 |1/\ga_{m+1} -k_{m+1}| \leq 0.9 \cdot 2 \leq 2.  
\end{align*}

If $x_m \in [a_m-1-\ga_{m+1},a_m-1]= [1/\ga_m-1, a_m-1]$ we let $z'_m= \mathbb{Y}_{m+1}(z_{m+1}-k_{m+1})+ a_m-1$. 
In this case, $\Re z_m \leq a_m-1 \leq \Re z'_m \leq a_m-1 +\ga_{m+1} \leq 1/\ga_m$. 
Hence, $\sign (\Re z_m' -\Re z_m)=+1=e_m$, and $\{z_m, z'_m\} \subset J_m$. 
As in the previous case, $|z_m-z'_m|\leq 2$. 

$(E_2)$: If $x_m \in [0, 1/\ga_m-2]$ we let $z'_m=z_m+1=z_m+e_m$.
If $x_m$ belongs to $[a_m-1-\ga_{m+1}, a_m-1]$, then $x_{m+1} \in [0,1]$. 
Choose an integer $k_{m+1}$ with $x_{m+1}+k_{m+1} \in [1/\ga_{m+1}-1, 1/\ga_{m+1}]$ and define 
$z'_m = \mathbb{Y}_{m+1}(z_{m+1}+ k_{m+1})+ a_m$. 
If $x_m$ belongs to $[a_m-\ga_{m+1}, a_m]$, we let $z'_m = \mathbb{Y}_{m+1}(z_{m+1}+ k_{m+1})+ a_m+1$. 
As in the previous case, one can see that $z'_m$ enjoys the desired properties. 

$(E_3)$: If $x_m$ belongs to $[1, 1/\ga_m-1]$, we let $z'_m=z_m-1=z_m+e_m$.   
If $x_m$  belongs to $[1-\ga_{m+1},1]$, then $x_{m+1} \in [1/\ga_{m+1}-1, 1/\ga_{m+1}]$. 
We define $z'_m= \mathbb{Y}_{m+1}(z_{m+1}-1)$. 
If $x_m$ belongs to $[a_m-1, a_m-1+\ga_{m+1}]$ then $x_{m+1}\in [0,1]$. 
We choose an integer $k_{m+1}$ with $x_{m+1}+k_{m+1}\in [1/\ga_{m+1}-1, 1/\ga_{m+1}]$ and define 
$z'_m= \mathbb{Y}_{m+1}(z_{m+1}+k_{m+1})+ a_m-2$. 
One can see that $z'_m$ enjoys the desired properties. 

$(E_4)$: If $x_m$ belongs to $[1, 1/\ga_m-1]$, we let $z'_m=z_m-1=z_m+e_m$.   
If $x_m$ belongs to $[\ga_{m+1}, 2\ga_{m+1}]$  then $x_{m+1}\in [1/\ga_{m+1}-2,1/\ga_{m+1}-1]$. 
We define $z'_m=\mathbb{Y}_{m+1}(z_{m+1}+1)+1$. 
If $x_m$ belongs to $[a_m, a_m+\ga_{m+1}]=[a_m, 1/\ga_m]$, then $x_{m+1} \in [1/\ga_{m+1}-1, 1/\ga_{m+1}]$. 
Choose an integer $k_{m+1}$ with $x_{m+1}-k_{m+1}\in [0,1]$ and define 
$z'_m= \mathbb{Y}_{m+1}(z_{m+1}-k_{m+1})+ a_m$. 
One can see that $z'_m$ enjoys the desired properties. 
This completes the proof of the existence of $z'_m$. 

By the definition of the domains $I_n^j$, and \refE{E:P:bounds-on-l_m}, for all $m\geq n$ 
\begin{itemize}
\item either $\mathbb{Y}_{m+1}(K_{m+1})+l_m \subset K_m$ or $\mathbb{Y}_{m+1}(K_{m+1})+l_m \subset J_m$,
\item either $\mathbb{Y}_{m+1}(J_{m+1})+l_m \subset K_m$ or $\mathbb{Y}_{m+1}(J_{m+1})+l_m \subset J_m$. 
\end{itemize}
It follows from the above properties that any composition of the form $(\mathbb{Y}_{n+1}+l_n) \circ \dots \circ (\mathbb{Y}_{m+1}+l_m)$ 
is defined and continuous on both $K_m$ and $J_m$.

Fix an arbitrary $\delta>0$ and choose $n' \geq n$ such that $2 \cdot 0.9^{(n'-n)}< \delta$. 
By Step 3 there is $m \geq n'$ such that either $x_m=1/\ga_m-1$ or $m$ is eligible. 
Then, by the above argument, there is $z'_m$ satisfying the three items listed above. 
Let us define $z_n^m= (\mathbb{Y}_{n+1}+l_n) \circ \dots \circ (\mathbb{Y}_{m+1}+l_m)(z'_m) \in I_n$. 
By \refE{E:uniform-contraction-Y}, we have $|z_n^m -z_n|\leq 2 \cdot 0.9^{m-n}< \delta$. 
Let $x'_n =x_n^m= \Re z_n^m$. We have $|x'_n - x_n| = |\Re z_n^m - \Re z_n| < \delta$ and 
$b_n(x_n^m)- b_n(x_n) \leq \Im z_n^m-\Im z_n < \delta$. 
Moreover, the relations $\sign (\Re z'_m-\Re z_m)=e_m$, $e_{m-1}= -\gep_{m}e_{m}$, and 
$\Re \mathbb{Y}_{m}(x)= -\gep_{m} \ga_{m} x$ imply that $\sign (x_n^m-x_n)= \sign (\Re x_n^m - \Re z_n)=e_n$. 
\end{proof}

\subsection{The Brjuno condition in the renormalisation tower}\label{SS:Brjuno-Tower}
\begin{propo}\label{P:b_n-sup-B}
For all $\ga \in \E{B}$ and all $n\geq -1$ we have   
\[\Big|2 \pi  \sup_{x\in [0, 1/\ga_n]} b_n(x) - \C{B}(\ga_{n+1})\Big | \leq  5 \pi.\] 
\end{propo}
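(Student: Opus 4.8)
The key is the pair of auxiliary functions $b_n^j$ and $p_n^j$ introduced in \refS{SS:height-functions}, together with the estimate \refE{E:p_n>=b_n}, namely $p_n^j \geq b_n^j$ on $[0,1/\ga_n]$, and hence $p_n \geq b_n$. The plan is to sandwich $2\pi\sup_x b_n(x)$ between $\C{B}(\ga_{n+1}) - 5\pi$ and $\C{B}(\ga_{n+1}) + 5\pi$ by establishing two separate bounds: an \emph{upper} bound coming from the functions $p_n$ (the ``pinnacle'' functions), and a \emph{lower} bound coming from an explicit lift of a horizontal segment high up in the tower, using \refL{L:Y_n-on-horizontals}.

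For the upper bound, I would first show $2\pi \sup_x p_n(x) \leq \C{B}(\ga_{n+1}) + 5\pi$ for all $n \geq -1$, and conclude the same for $\sup_x b_n(x)$ since $b_n \leq p_n$. Now $p_n^0 \equiv (\C{B}(\ga_{n+1}) + 5\pi)/(2\pi)$ by definition, so $\sup_x p_n^0 = (\C{B}(\ga_{n+1})+5\pi)/(2\pi)$ exactly, and the inductive computation displayed right before the definition of $p_n$ in \refS{SS:height-functions} already shows $p_n^1 \leq p_n^0$ pointwise, hence by the monotonicity $p_n^{j+1} \leq p_n^j$ we get $p_n \leq p_n^0$, which gives $2\pi \sup_x p_n(x) \leq \C{B}(\ga_{n+1}) + 5\pi$. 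This half is essentially already in the text and just needs to be assembled.

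For the lower bound, I need $2\pi \sup_x b_n(x) \geq \C{B}(\ga_{n+1}) - 5\pi$. The idea: fix a large $m > n$, take a point high on the vertical line $\Re w \in [1/(2\ga_m) - 1, 1/(2\ga_m)]$ inside $I_m$ — say at height roughly $(\C{B}(\ga_{m+1}) + \text{const})/(2\pi)$, which lies in $I_m$ because $p_m^0$ has exactly that height and $I_m$ lies above $b_m$ which... wait, that is the wrong direction. Instead I should use that $I_m^j$ \emph{contains} the region above $b_m^j$, and $b_m^j \leq p_m^j \leq p_m^0$; I actually want a point \emph{in} $I_m$, i.e.\ above $b_m$, which is automatic for any sufficiently large height. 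So take $z_m = x_m + iy_m$ with $y_m$ chosen so that, after pulling back down the tower via the maps $\mathbb{Y}_{n+1} + l_n, \dots, \mathbb{Y}_m + l_{m-1}$ (with $l_i$ the integers from the tiling \refE{E:I_n^j--1}--\refE{E:I_n^j-+1}), the image point $z_n$ has large imaginary part. The height accumulated along this chain of maps is governed, level by level, by \refL{L:Y_n-on-horizontals}-(ii): $2\pi \Im \mathbb{Y}_{n+1}(x + iy) \approx 2\pi\ga_{n+1} y + \log(1/\ga_{n+1})$ up to an additive error of size $\leq 4$ (this is essentially $\mathbb{Y}$-version of the functional equation $\C{B}(\ga_{n+1}) = \ga_{n+1}\C{B}(1/\ga_{n+1}) + \log(1/\ga_{n+1})$, compare \refE{E:Brjuno-functional-equations}). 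Telescoping these estimates down from level $m$ to level $n$ and using $\C{B}(\ga_{n+1}) = \sum_{i\geq 0} \gb_{n+i}/\gb_n \cdot \log(1/\ga_{n+i+1})$ with $\gb_k/\gb_n \to 0$, I get that for $m$ large the accumulated height at level $n$ is at least $(\C{B}(\ga_{n+1}) - \text{const})/(2\pi)$, and the constant can be tracked to be $\leq 5\pi/(2\pi)$ with the stated error bounds from \refL{L:Y_n-on-horizontals}. The point $z_n$ then lies in $I_n$ (being the image under the tiling maps of a point in $I_m$), witnessing $\sup_x b_n(x) \geq \Im z_n$.

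\textbf{Main obstacle.} The delicate part is the lower bound: I must ensure the pull-back point genuinely lands in $I_n$ (not merely in $I_n^j$ for small $j$), which requires choosing $z_m$ so that its real part stays in the correct sub-tile ($K$ versus $J$) at every level — exactly the combinatorics handled by \refL{L:I_n^j-basic-features} and the $x'_m \in [1/(2\ga_m)-1, 1/(2\ga_m)]$ centering that maximizes $\Im Y_{\ga_m}$ per \refL{L:Y_n-on-horizontals}-(i). Keeping the additive error uniformly bounded through the telescoping — so that it does not grow with $m$ — is where \refL{L:Y_r-distances}-(iii),(iv) will be needed, to control the drift of $\Im Y_r$ along a vertical line versus along the $y$-axis; the geometric decay $\gb_k/\gb_n \leq 2^{-(k-n)}$ makes the tail of the series negligible, but the bookkeeping of the constant to land inside $[-5\pi, 5\pi]$ rather than a worse bound is the real work.
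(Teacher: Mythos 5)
The upper-bound half of your plan is sound: since $p_n^j \geq b_n^j$ for all $j$, and $p_n^{j+1} \leq p_n^j \leq p_n^0 \equiv (\C{B}(\ga_{n+1})+5\pi)/(2\pi)$, you do get $2\pi \sup_x b_n(x) \leq \C{B}(\ga_{n+1})+5\pi$. This is a legitimate shortcut, although the paper does not use the functions $p_n^j$ in this proposition at all; it extracts both inequalities from a single argument.

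The lower bound, however, contains a genuine logical error. You choose $z_m=x_m+iy_m$ in $I_m$ at a convenient large height, push it down the tower, and claim that the image $z_n\in I_n$ having large imaginary part ``witnesses $\sup_x b_n(x)\geq \Im z_n$''. It does not: $I_n$ is the closed region above the graph of $b_n$, so on every vertical line all points of sufficiently large imaginary part belong to $I_n$, no matter how small $b_n$ is. Exhibiting high points of $I_n$ therefore gives no information about $b_n$; what is needed is to show that points at height close to $\C{B}(\ga_{n+1})/(2\pi)$ on some vertical line are \emph{excluded} from $I_n^j$ for suitable $j$, i.e.\ you must track the boundary graphs rather than interior points (the worry in your ``main obstacle'' paragraph, about the pushed point landing in $I_n$ and not merely in $I_n^j$, is aimed at the wrong target for the same reason). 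The paper's proof does exactly this: it sets $D_n^j=\max_{x} b_n^j(x)$, uses \refL{L:Y_n-on-horizontals}-(i)-(ii) together with the $+1$-periodicity of the $b_n^j$ to obtain the two-sided approximate cocycle relation $|2\pi D_{n-1}^{j}-2\pi\ga_n D_{n}^{j-1}-\log(1/\ga_n)|\leq 4$ (the graph of $b_{n}^{j-1}$ is carried by $\mathbb{Y}_{n}$ and its integer translates onto the graph of $b_{n-1}^{j}$, so the maxima at consecutive levels control each other), telescopes this down to depth $0$ where $D_{n+j}^0=-1$, concluding that $2\pi D_n^j$ differs from the $j$-th partial sum of the series defining $\C{B}(\ga_{n+1})$ by at most $8+2\pi\leq 5\pi$, and finally lets $j\to+\infty$ using $\sup_x b_n=\lim_{j} D_n^j$. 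Your ingredients (the maximizing abscissa in $[1/(2\ga)-1,1/(2\ga)]$ from \refL{L:Y_n-on-horizontals}-(i), the estimate in part (ii), the geometric decay of the $\gb$'s) are the right ones, but they must be applied to the maxima of the height functions $b_n^j$; once this is done the two-sided bound falls out at once, \refL{L:Y_r-distances} is not needed, and the separate upper bound via $p_n^0$ becomes superfluous.
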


\begin{proof}
For $n\geq -1$ and $j\geq 0$ we define
\[D_n^j= \max \{b_n^j(x) \mid x\in [0, 1/\ga_n]\}.\]
We first show that the numbers $D_n^j$ nearly satisfy the recursive relation for the 
Brjuno function, see \refE{E:Brjuno-functional-equations}. That is, for all $n\geq 0$ and all $j\geq 1$, we have 
\begin{equation}\label{E:L:max-relations}
\big |2\pi D_{n-1}^{j} - 2\pi \ga_n D_{n}^{j-1} - \log (1/\ga_n) \big | \leq 4.
\end{equation}
Since $b_{n-1}^j$ and $b_n^{j-1}$ are periodic of period $+1$, we may choose 
$x_{n-1} \in [1/(2\ga_{n-1})-1,1/(2\ga_{n-1})]$ and $x_n \in [1/(2\ga_n)-1,1/(2\ga_n)]$ such that 
$b_{n-1}^j(x_{n-1})=D_{n-1}^j$ and $b_{n}^{j-1}(x_{n})=D_{n}^{j-1}$. 
Choose $x_n' \in [0, 1/\ga_n]$ such that $-\gep_n \ga_n x'_n \in x_{n-1}+\D{Z}$. 
By \refL{L:Y_n-on-horizontals}-(i), we must have $x_n' \in [1/(2\ga_n)-1,1/(2\ga_n)]$.
We apply \ref{L:Y_n-on-horizontals}-(ii) with $y=b_n^{j-1}(x_n')$ and $x=x_n'$, to obtain 
\begin{align*}
2\pi \ga_n D_{n}^{j-1}+ \log 1/\ga_n = 2\pi \ga_n b_{n}^{j-1}(x_n) + \log 1/\ga_n 
& \geq  2\pi \ga_n b_{n}^{j-1}(x'_n) + \log 1/\ga_n \\
& \geq 2\pi \Im \mathbb{Y}_{n}(x_n'+i b_n^{j-1}(x_n')) - 2\\
&= 2\pi b_{n-1}^j(x_{n-1}) - 2= 2\pi D_{n-1}^j -2. 
\end{align*}  
Similarly, 
\begin{align*}
2\pi \ga_n D_n^{j-1} + \log 1/\ga_n  = 2\pi \ga_n b_{n}^{j-1}(x_n) + \log 1/\ga_n 
& \leq 2\pi \Im(x_n+ ib_n^{j-1}(x_n)) + 4 \\
& \leq 2\pi b_{n-1}^j(x_{n-1}) + 4 = 2\pi D_{n-1}^j+ 4. 
\end{align*}  
This completes the proof of inequality \eqref{E:L:max-relations}. 

Fix an arbitrary $n\geq -1$ and $j\geq 1$. Let us define $\gb_{n+i}(\ga_{n+1}) = \prod_{l=1}^{i} \ga_{n+l}$ for $i\geq 1$, 
and $\gb_{n}(\ga_{n+1})=1$. 
Then for integers $k\in [0, j]$ define the numbers 
\[X_k= 2\pi \gb_{n+k}(\ga_{n+1}) D_{n+k}^{j-k} + \sum_{i=1}^{k} \gb_{n+i-1}(\ga_{n+1}) \log 1/\ga_{n+i}.\]
We have $X_{0}= 2 \pi D_{n}^{j}$. 
With this notation, we form a telescoping sum 
\[2\pi D_n^j = \sum_{k=0}^{j-1} (X_{k} - X_{k+1}) + X_j.\]
By \refE{E:L:max-relations}, 
\[|X_k-X_{k+1}|
=\big |\gb_{n+k}(\ga_{n+1})\big (2\pi D_{n+k}^{j-k}-2\pi \ga_{n+k+1} D_{n+k+1}^{j-k-1}-\log (1/\ga_{n+k+1})\big)\big| 
\leq \gb_{n+k}(\ga_{n+1}) 4. \]
On the other hand, since $D_{n+j}^0=-1$, we have 
\[\big |X_j - \sum_{i=1}^{j} \gb_{n+i-1}(\ga_{n+1}) \log 1/\ga_{n+i} \big | 
= |2\pi \gb_{n+j}(\ga_{n+1})D_{n+j}^0| \leq 2\pi.\]
Combining the above inequalities, and using \refE{E:rotations-rest}, we conclude that 
\begin{align*}
\Big| 2\pi D_n^j- \sum_{i=1}^{j} \gb_{n+i-1}(\ga_{n+1}) \log 1/\ga_{n+i} \Big | 
& \leq \sum_{k=0}^{j-1} \gb_{n+k}(\ga_{n+1})4 + 2\pi \\
& \leq \sum_{k=0}^{j-1} 2^{-k} 4 + 2\pi \leq 8 +2\pi.
\end{align*}

By \refE{E:I_n^j-forms-nest}, $b_n^j \geq b_n^{j-1}$, which implies that $D_{n}^{j}\geq D_{n}^{j-1}$. 
Hence, for each $n\geq -1$, $D_{n}^{j}$ forms an increasing sequence. Combining with the above inequality,  
\[\Big | 2\pi \lim_{j\to +\infty} D_n^j - \C{B}(\ga_{n+1})\Big | \leq 8 +2\pi.\]
Note that because $b_n^j \leq b_n$ and $b_n^j \to b_n$ point-wise, we must have 
$\sup_{x\in [0, 1/\ga_n]} b_n=\lim_{j\to +\infty} D_n^j$. 
\end{proof}
 
\subsection{The Herman condition in the renormalisation tower}\label{SS:Herman-tower}
In this section we establish an equivalent criterion for the arithmetic class $\E{H}$ in terms of the maps $\mathbb{Y}_n$. 
The key idea here is that the equivalent criterion in \refP{P:Herman-Yoccoz-criterion} is stable under uniform changes 
to the maps $h_{\ga_n}$. That is, if one replaces $h_{\ga_n}$ by uniformly nearby maps, say $\mathbb{Y}_n^{-1}$, 
the corresponding set of rotation numbers stays the same. 
See \refP{P:Y_r-vs-h_r^-1}. 

Note that for arbitrary $m>n\geq 0$ and $y\geq 0$, the compositions 
$h_{\ga_n}^{-1} \circ  \dots  \circ  h_{\ga_m}^{-1}(y)$ may not be defined. This happens when an intermediate 
iterate falls into $(-\infty, 0]$. 

\begin{lem}\label{L:herman-Y-iterates-close}
Assume that for some integers $m > n\geq 0$, and $y \in (1, +\infty)$, the composition
$h_{\ga_n}^{-1} \circ  \dots  \circ  h_{\ga_m}^{-1}(y)$ is defined and is positive. Then, 
\[\big| 2\pi \Im \left( \mathbb{Y}_{n} \circ \dots \circ \mathbb{Y}_m( i  y/(2\pi))\right) - h_{\ga_n}^{-1} \circ  \dots  \circ  h_{\ga_m}^{-1}(y)\big| 
\leq 10 \pi.\]
\end{lem}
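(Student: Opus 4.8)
The statement is that iterating the maps $\mathbb{Y}_j$ on the imaginary axis (up to the $2\pi$-rescaling in \refP{P:Y_r-vs-h_r^-1}) tracks iterating the maps $h_{\ga_j}^{-1}$ to within an additive error independent of $m,n,y$. The natural approach is an induction on $m-n$, propagating the bound backwards from level $m$ to level $n$, with the single-step comparison supplied by \refP{P:Y_r-vs-h_r^-1} and the contraction supplied by \refL{L:uniform-contraction-Y_r}. The key observation making the induction close is that, once the iterate is moderately large, both $h_r^{-1}$ and the $2\pi$-rescaled imaginary part of $Y_r$ are (roughly) contractions in the relevant range; this prevents the accumulated error from growing.

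First I would set up notation: for $n \le j \le m$ let $y_j = h_{\ga_j}^{-1} \circ \cdots \circ h_{\ga_m}^{-1}(y)$, which by hypothesis is defined and positive for all such $j$, and note that since each $h_r$ satisfies $h_r(t) \ge t+1$ (see \refE{E:h_r-properties}), each $y_j \ge 1$ actually $y_j > 1$ whenever $j<m$; more carefully, $y_j \ge y_{j+1} + \text{(something)}$ going the wrong way, so instead I'd observe $y_{j} = h_{\ga_j}^{-1}(y_{j+1})$ with $y_{j+1} \ge 1$, hence $y_j \le y_{j+1}$ is false — rather I should track that $y_{j+1} = h_{\ga_j}(y_j) \ge y_j + 1$, so the $y_j$ are increasing as $j$ decreases... let me restate: $y_j$ increases as $j$ goes from $n$ to $m$ is wrong; $y_m = h_{\ga_m}^{-1}(y) \le \log y$ or $\le \ga_m y$, so applying more inverse maps makes it smaller, i.e. $y_n \le y_{n+1} \le \cdots \le y_m = h_{\ga_m}^{-1}(y)$. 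Correspondingly set $z_j = 2\pi \Im(\mathbb{Y}_j \circ \cdots \circ \mathbb{Y}_m(iy/(2\pi)))$. Here one must be careful that $\mathbb{Y}_j = Y_{\ga_j}$ or $-s \circ Y_{\ga_j}$ depending on $\gep_j$, but since $s$ and $-\,\cdot\,$ fix the imaginary axis setwise and $\Im(-s(w)) = \Im(w)$, the imaginary part of $\mathbb{Y}_j$ on $i\D{R}$ equals $\Im Y_{\ga_j}$; so $z_j$ is genuinely governed by \refP{P:Y_r-vs-h_r^-1}.

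The inductive step: assuming $|z_{j+1} - y_{j+1}| \le E$ for a constant $E$ to be determined, I want $|z_j - y_j| \le E$. Write $z_j = 2\pi \Im Y_{\ga_j}(i z_{j+1}/(2\pi))$ and $y_j = h_{\ga_j}^{-1}(y_{j+1})$. By \refP{P:Y_r-vs-h_r^-1}, $|z_j - h_{\ga_j}^{-1}(z_{j+1})| \le \pi$ (valid since $z_{j+1} \ge 1$, which needs checking — follows from $z_{j+1}$ being close to $y_{j+1} \ge 1$, so I'd take $E$ small enough, say $E = 9\pi$ is too big for this; I'd need $z_{j+1} \ge y_{j+1} - E$ and instead use the base case to ensure the first iterate is large, OR note \refP{P:Y_r-vs-h_r^-1} needs $y \ge 1$, so I must arrange $z_{j+1} \ge 1$; the clean fix is to prove the bound first for $y$ not too small and handle small $y$ separately, or to observe that the statement only claims the bound $10\pi$, so one can afford slack). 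Then $|z_j - y_j| \le |h_{\ga_j}^{-1}(z_{j+1}) - h_{\ga_j}^{-1}(y_{j+1})| + \pi$. The point is that $h_r^{-1}$ has Lipschitz constant $\le \max(r, 1/y_{\min}) \le 1$ everywhere on $[1,\infty)$ and in fact $\le 1/2$ on $[2,\infty)$ (since $(h_r^{-1})'(t) = r \le 1/2$ for $t \ge 1/r$ and $= 1/t \le 1/2$ for $t \in [2,1/r]$). So if both $z_{j+1}, y_{j+1} \ge 2$ we get $|z_j - y_j| \le \tfrac12 E + \pi$, and choosing $E = 2\pi$ gives $\tfrac12(2\pi) + \pi = 2\pi = E$: the induction closes with $E = 2\pi$. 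To reach $10\pi$ with room to spare, and to absorb the finitely many levels where an iterate lies in $[1,2]$ (where the Lipschitz constant is only $1$, not $1/2$), one runs the same argument; there can be at most one such "transitional" level per unit interval and the $\log$ portion $h_r^{-1}(t) = \log t$ is still $1$-Lipschitz, contributing bounded extra error.

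\textbf{Main obstacle.} The delicate point is the base case and the smallness-of-argument issue: \refP{P:Y_r-vs-h_r^-1} requires $y \ge 1$, so I must guarantee every $z_{j+1}$ stays $\ge 1$, which is not automatic from $y_{j+1} \ge 1$ alone unless the error bound is already controlled — a mild circularity to be broken by first establishing a crude bound (e.g. via $h_r(t) \ge t+1$ giving $h_r^{-1}(t) \le t-1$, comparing termwise) and then bootstrapping to $10\pi$. I expect the cleanest route is: (a) show by backward induction, using only the $1$-Lipschitz bound on $h_r^{-1}$ over $[1,\infty)$ together with the $\pi$-per-step bound from \refP{P:Y_r-vs-h_r^-1} and the contraction estimate of \refL{L:uniform-contraction-Y_r} when arguments exceed some threshold, that $z_j \ge 1$ for all $j$ (so the compositions are legitimate) and simultaneously $|z_j - y_j| \le 2\pi$ at levels where the argument exceeds, say, $4$; (b) handle the bounded-argument tail (arguments in $[1,4]$, of which there are boundedly many consecutive levels) by brute comparison, each contributing $O(1)$; (c) sum to get the constant $10\pi$, which is generous. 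The functional equation $\gep_j = \pm 1$ distinction is a red herring for imaginary parts and should be dispatched in one sentence at the start.
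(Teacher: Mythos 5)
Your backward-induction plan stalls exactly at the point you flag, and the fix you sketch would not go through. Your inductive step anchors the comparison \refP{P:Y_r-vs-h_r^-1} at the $\mathbb{Y}$-orbit point $z_{j+1}$ and then contracts the accumulated error through $h_{\ga_j}^{-1}$, which forces you to know $z_{j+1}\geq 1$ (indeed $\geq 2$ for your $1/2$-Lipschitz step). The remedy you propose in step (a) — prove $z_j\geq 1$ for all $j$ — is not available, because the claim is false in general: the hypothesis controls only the $h^{-1}$-orbit (its intermediate values exceed $1$ because the final value is positive and $h_r(0)=1$), while the $\mathbb{Y}$-orbit is merely positive. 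Already after one step from a moderate height the value $2\pi\Im Y_r(iy'/(2\pi))$ can lie well below $1$ while $h_r^{-1}(y')>1$ (e.g.\ $y'=3$ and $r$ small gives roughly $0.25$ versus $\log 3\approx 1.1$), so at the next level your use of \refP{P:Y_r-vs-h_r^-1} at $z_{j+1}$ is illegitimate; note also that no lower bound on $z_j$ is needed for the $\mathbb{Y}$-compositions to be defined, since each $\mathbb{Y}_j$ maps $\ol{\D{H}'}$ into $\D{H}'$. Step (b), the ``brute comparison contributing $O(1)$ per level,'' is left unquantified precisely where the constant $10\pi$ must be earned: summed per-level contributions with no contraction do not obviously stay under $10\pi$; what actually works in the tail is an absolute bound on both orbits (using $z_j\leq 0.9\,z_{j+1}$ and $h_r^{-1}(t)\leq t-1$), which you do not carry out.

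The clean resolution — and it is the route the paper takes — is to swap the roles of the two orbits. Apply \refP{P:Y_r-vs-h_r^-1} only at the points $y_{j+1}>1$ of the $h^{-1}$-orbit, which the positivity hypothesis guarantees at every level, so each replacement of $h_{\ga_j}^{-1}$ by $2\pi\Im\mathbb{Y}_j(i\,\cdot/(2\pi))$ costs at most $\pi$ with no circularity; then propagate that cost not through $h^{-1}$ but through the already-applied maps $\mathbb{Y}_n,\dots,\mathbb{Y}_{j-1}$, whose composition is $0.9^{\,j-n}$-Lipschitz on all of $\ol{\D{H}'}$ by \refL{L:uniform-contraction-Y_r}, with no lower bound on the argument required. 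Organizing this as a telescoping sum (or, equivalently, as your induction but with the error recursion $E_j\leq 0.9\,E_{j+1}+\pi$) yields $\pi\sum_{k\geq 0}0.9^{k}\leq 10\pi$ immediately. Your opening remark that the sign $\gep_j$ is irrelevant for imaginary parts along the imaginary axis is correct and is how the paper dispatches that point as well.
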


\begin{proof}
For integers $j$ with $n \leq j \leq m-1$, we may introduce 
\begin{gather*}
G_{m,j-1}(y)=h_{\ga_j}^{-1} \circ \dots \circ  h_{\ga_{m}}^{-1}(y), G_{m,m-1}(y) = h_{\ga_m}^{-1}(y), G_{m, m}(y)= y. 
\end{gather*}
By the assumptions in the lemma, all the above values are positive. 
Also, for integers $j$ with $n+1 \leq j \leq m$, and $t\geq -1$, we may introduce the maps 
\begin{gather*}
\gY_{j,n-1}(t)= 2\pi \Im \mathbb{Y}_{n} \circ \dots \circ \mathbb{Y}_{j}(i t/(2\pi)),
\gY_{n,n-1}(t)=2\pi \Im \mathbb{Y}_n(i t/(2\pi)), \gY_{n-1, n-1}(t)=t.
\end{gather*}
With the above notations, we may form a telescoping sum, to obtain 
\begin{align*}
\big|h_{\ga_n}^{-1} \circ \dots  \circ  h_{\ga_m}^{-1}(y) & - 2\pi \Im \mathbb{Y}_{n} \circ \dots \circ \mathbb{Y}_m(i y/(2\pi))\big| \\
&= \big|G_{m,n-1}(y)- \gY_{m,n-1}(y) \big | \\ 
&= \Big | \sum_{j=n}^{m} \gY_{j-1,n-1}(G_{m,j-1}(y)) - \gY_{j, n-1}(G_{m,j}(y)) \Big| 
\end{align*}
By \refE{E:uniform-contraction-Y}, for all $s$ and $t$ in $(0, +\infty)$, 
$|\gY_{j-1,n-1}(s) - \gY_{j-1, n-1}(t)\big| \leq 0.9^{(j-n)} \cdot |s-t|$. 
Thus, 
\begin{align*}
\big |\gY_{j-1,n-1}(G_{m,j-1}(y))-&\gY_{j, n-1}(G_{m,j}(y))\big|  \\
&= \big |\gY_{j-1,n-1}(G_{m,j-1}(y)) - \gY_{j-1, n-1}(-2\pi i \mathbb{Y}_j (i G_{m,j}(y)/(2\pi)))\big| \\
& \leq 0.9^{(j-n)} \big |G_{m,j-1}(y) + 2\pi i \mathbb{Y}_j (i G_{m,j}(y)/(2\pi)) \big| 
\end{align*}
On the other hand, by \refP{P:Y_r-vs-h_r^-1} and \refE{E:Y_n}, for all $j \geq 0$ and all $t \geq 1$, we have 
\[|h_{\ga_j}^{-1}(t) - 2 \pi \Im \mathbb{Y}_j(i t/(2\pi))| \leq  \pi.\] 
Also, note that since $G_{m,n-1}(y)>0$ and $h_{\ga_n}(0)=1$, $G_{m, j}(y)>1$, for $n\leq j \leq m$. 
Therefore, 
\begin{align*}
\big |G_{m,j-1}(y) + 2\pi i \mathbb{Y}_j (i G_{m,j}(y)/(2\pi)) \big| 
=\big |h_{\ga_j}^{-1} (G_{m,j}(y)) - 2\pi\Im \mathbb{Y}_j (i G_{m,j}(y)/(2\pi))\big| \leq \pi. 
\end{align*}
Combining the above inequalities together, we obtain
\begin{align*}
\big|h_{\ga_n}^{-1} \circ \dots  \circ  h_{\ga_m}^{-1}(y) & - 2\pi \Im \mathbb{Y}_{n} \circ \dots \circ \mathbb{Y}_m(i y/(2\pi))\big| 
\leq  \sum_{j=n}^m 0.9^{(j-n)} \pi  
\leq 10\pi.\qedhere
\end{align*}
\end{proof}

\begin{propo}\label{P:Herman-Y-iterates}
An irrational number $\ga$ belongs to $\E{H}$, if and only if, for all $x>0$ there is $m \geq 1$ such that 
\[\Im \mathbb{Y}_0 \circ  \dots  \circ \mathbb{Y}_{m-1}(i \C{B}(\ga_{m})/(2\pi)) \leq x.\]
\end{propo}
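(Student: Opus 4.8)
\textbf{Proof proposal for \refP{P:Herman-Y-iterates}.}
The plan is to translate the criterion from \refP{P:Herman-Yoccoz-criterion}, which is phrased in terms of iterates of $h_{\ga_n}^{-1}$ applied to $\C{B}(\ga_m)$, into the equivalent statement about iterates of $\mathbb{Y}_n$ applied to $i\,\C{B}(\ga_m)/(2\pi)$, using the uniform comparison in \refL{L:herman-Y-iterates-close} (which itself rests on \refP{P:Y_r-vs-h_r^-1}). First I would rewrite \refP{P:Herman-Yoccoz-criterion} in the contrapositive, dualized form: $\ga\in\E{H}$ if and only if for every $n\geq 0$ there exists $m\geq n$ with
\[
h_{\ga_n}^{-1}\circ\cdots\circ h_{\ga_{m-1}}^{-1}\big(\C{B}(\ga_m)\big)\leq 0,
\]
where the composition is interpreted as leaving $\C{B}(\ga_m)$ unchanged when $m=n$. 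Indeed, the inequality $h_{\ga_{m-1}}\circ\cdots\circ h_{\ga_n}(0)\geq \C{B}(\ga_m)$ is equivalent, since each $h_{\ga_j}$ is an increasing bijection $\D{R}\to(0,+\infty)$, to $0\geq h_{\ga_n}^{-1}\circ\cdots\circ h_{\ga_{m-1}}^{-1}(\C{B}(\ga_m))$, provided the right-hand composition is defined; and $h_r(y)\geq y+1$ from \refE{E:h_r-properties} forces the intermediate iterates to stay positive as long as the final value would be positive, so definedness is automatic in the relevant range. Because $\ga$ and $\ga+1$ give the same sequences $\ga_n$, and by the stability of the criterion under shrinking $n$ already noted after \refD{D:Herman-Yoccoz-criterion}, it suffices to verify the condition for $n=0$ only; this reduces the target statement to: $\ga\in\E{H}$ iff for every $x>0$ (equivalently, for some/every suitably small threshold) there is $m\geq 1$ with the $\mathbb{Y}$-composition at most $x$.

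Next I would compare the two compositions quantitatively. By \refL{L:herman-Y-iterates-close} with $n=0$ and $y=\C{B}(\ga_m)$, whenever $\C{B}(\ga_m)>1$ and the composition $h_{\ga_0}^{-1}\circ\cdots\circ h_{\ga_{m-1}}^{-1}(\C{B}(\ga_m))$ is defined and positive, we have
\[
\big|\,2\pi\,\Im\big(\mathbb{Y}_0\circ\cdots\circ\mathbb{Y}_{m-1}(i\,\C{B}(\ga_m)/(2\pi))\big)-h_{\ga_0}^{-1}\circ\cdots\circ h_{\ga_{m-1}}^{-1}(\C{B}(\ga_m))\,\big|\leq 10\pi.
\]
From this, the two conditions ``$h_{\ga_0}^{-1}\circ\cdots\circ h_{\ga_{m-1}}^{-1}(\C{B}(\ga_m))\leq 0$ for some $m$'' and ``$\Im\mathbb{Y}_0\circ\cdots\circ\mathbb{Y}_{m-1}(i\,\C{B}(\ga_m)/(2\pi))\leq x$ for some $m$'' differ only by an additive constant of size $O(1)$ in the rescaled variable, and one promotes a single such $m$ to arbitrarily large ``depth'' using the $+\infty$ divergence in \refL{L:h-vs-cocycle}-(iii) (together with $h_r(y+1)\geq h_r(y)+1$ from \refE{E:h_r-properties}): once one inequality holds for one pair $(m,n)$, it holds for a whole tail of $m$, and the gap grows without bound, which absorbs the constant $10\pi$ as well as the threshold $x$. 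Conversely, if $\ga\notin\E{H}$ then by \refP{P:Herman-Yoccoz-criterion} there is $n$ (which we may take to be $0$ after using invariance and the shift-stability of the criterion) such that $h_{\ga_{m-1}}\circ\cdots\circ h_{\ga_0}(0)<\C{B}(\ga_m)$ for all $m$, i.e. the $h$-composition on $\C{B}(\ga_m)$ stays strictly positive for every $m$; I would then need to show it stays bounded below by a fixed positive quantity along a subsequence, or more precisely that it fails to go below $x$, which translates via \refL{L:herman-Y-iterates-close} into the $\mathbb{Y}$-composition failing to drop below $x - 5$ (say), for all $m$, contradicting the right-hand side of the proposition.

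The main obstacle I anticipate is the bookkeeping around the case $\C{B}(\ga_m)\leq 1$ (or more generally $\leq e^2$, the threshold appearing in \refL{L:herman-chains}), where \refL{L:herman-Y-iterates-close} and \refP{P:Y_r-vs-h_r^-1} are not directly applicable with the clean constant, and where the composition may be trivially small for reasons unrelated to the Herman condition — this must be handled separately, presumably by the same device used in the proof of \refP{P:Herman-Yoccoz-criterion}: when $\C{B}(\ga_m)$ is small, one observes that $m$ itself is large (since $\C{B}(\ga_m)\geq\log(1/\ga_m)$ and the $a_j$ cannot all be enormous if the total is small), so that $\Im\mathbb{Y}_0\circ\cdots\circ\mathbb{Y}_{m-1}$ applied to a bounded imaginary part is automatically below any prescribed $x$ by the uniform contraction \refE{E:uniform-contraction-Y}. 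A secondary technical point is ensuring the ``for all $x>0$'' quantifier is genuinely equivalent to ``for one sufficiently small $x$'': because $\Im\mathbb{Y}_n\geq 0$ on the relevant set cannot be used (the values can be slightly negative, down to $-1/(2\pi)$, by \refL{L:uniform-contraction-Y_r}), but since we can always compose one more time and $\mathbb{Y}_n$ contracts, a single successful $m$ for threshold $x_0$ yields successful (larger) $m$ for every smaller $x$, so the two formulations coincide. I would organize the write-up as: (1) reduce to $n=0$; (2) dualize \refP{P:Herman-Yoccoz-criterion}; (3) apply \refL{L:herman-Y-iterates-close} in the regime $\C{B}(\ga_m)>e^2$; (4) dispatch the regime $\C{B}(\ga_m)\leq e^2$ by the contraction estimate; (5) use \refL{L:h-vs-cocycle}-(iii) and the uniform bound to pass between a single $m$ and all large $m$, closing both implications.
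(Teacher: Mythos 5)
There is a genuine gap, and it is the reduction to $n=0$. The remark following \refD{D:Herman-Yoccoz-criterion} says that a witness $m'$ for a level $n'$ also serves every level $n\leq n'$: validity propagates \emph{downward} in $n$ and failure propagates \emph{upward}, so verifying the criterion of \refP{P:Herman-Yoccoz-criterion} only at $n=0$ is strictly weaker than the actual condition ``for all $n$ there is $m\geq n$''. (A number whose first few entries make $h_{\ga_{m-1}}\circ\cdots\circ h_{\ga_0}(0)\geq \C{B}(\ga_m)$ for some small $m$, but whose tail is Brjuno non-Herman, satisfies your reduced condition while lying outside $\E{H}$, since membership in $\E{H}$ depends only on the tail.) The quantifier ``for all $x>0$'' in the proposition is exactly the translation of ``for all $n$'': in the paper's forward argument the smallness of the $\mathbb{Y}$-composition is not produced by the comparison with $h^{-1}$ alone, which only yields a value of size about $10\pi+1$ at the first level $k$ where the inverse $h$-orbit of $\C{B}(\ga_m)$ is defined, positive and (by minimality of $k$) at most $1$; it is produced by the uniform contraction \refE{E:uniform-contraction-Y} of the head $\mathbb{Y}_0,\dots,\mathbb{Y}_{k-1}$, i.e.\ by the factor $0.9^{k}\leq 0.9^{n}$ with $n$ chosen from $x$. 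Your attempt to recover this depth from \refL{L:h-vs-cocycle}-(iii) cannot work: that lemma keeps the base level $n$ fixed and lets $m$ grow with an additive gap at the \emph{top} level $m$, but such a gap does not make the value at level $0$ small; if the inverse orbit $h^{-1}_{\ga_j}\circ\cdots\circ h^{-1}_{\ga_{m-1}}(\C{B}(\ga_m))$ always dies at a bounded level $j$, then the $\mathbb{Y}$-composition from level $0$ stays bounded below by a positive constant and ``for all $x>0$'' fails even though your $n=0$ condition holds.

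The same confusion undermines your converse: if $\ga\notin\E{H}$ the failure occurs from some level $n_0$ onward and cannot be moved to $n=0$. The paper's backward direction instead fixes an arbitrary $n$, produces $k>l\geq n$ with $h^{-1}_{\ga_l}\circ\cdots\circ h^{-1}_{\ga_k}(12\pi)\leq 0$, and chooses $x$ smaller than $\Im\mathbb{Y}_0\circ\cdots\circ\mathbb{Y}_k(i)$ and than the finitely many values $\Im\mathbb{Y}_0\circ\cdots\circ\mathbb{Y}_j(i\C{B}(\ga_{j+1})/(2\pi))$ for $j<k$, so that any successful $m$ must exceed $k$; it then truncates the composition at level $k+1$, where the value is forced to be at most $1$, applies \refL{L:herman-Y-iterates-close} there, and pushes the resulting bound $12\pi$ down through $h^{-1}_{\ga_{l'}}\circ\cdots\circ h^{-1}_{\ga_k}$ to obtain a domination $\C{B}(\ga_m)<h_{\ga_{m-1}}\circ\cdots\circ h_{\ga_n}(0)$ at a level at least $n$. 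Two further points in your sketch are unjustified for the same underlying reason: a successful $m$ for one threshold $x_0$ does not automatically give successful $m$ for smaller thresholds, because the quantity at the next $m$ starts from the new point $i\C{B}(\ga_{m+1})/(2\pi)$ rather than continuing the previous orbit; and the small-$\C{B}(\ga_m)$ case is handled in the paper simply by $m\geq n$ together with contraction, not by arguing that small $\C{B}(\ga_m)$ forces $m$ to be large. As organized, both implications of your proof fail until the ``for all $x$''/``for all $n$'' correspondence is built in.
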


\begin{proof}
Fix an arbitrary $\ga \in \E{H}$ and $x >0$. 
Choose $n\geq 0$ such that $0.9^n(10 \pi + 1)/(2\pi) \leq x$. 
By the criterion in \refP{P:Herman-Yoccoz-criterion}, there is $m\geq n$ such that  
\begin{equation}\label{E:P-Herman-Y-iterates}
h_{\ga_{m-1}}\circ \cdots \circ h_{\ga_n} (0) \geq \C{B}(\ga_m).
\end{equation}
Now we consider two cases. First assume that $\C{B}(\ga_m)\leq 1$. 
Recall that $\C{B}(\ga_m)>0$. By \eqref{E:invariant-imaginary-line} and \eqref{E:uniform-contraction-Y}, 
\[\Im \mathbb{Y}_0 \circ  \dots  \circ \mathbb{Y}_{m-1}(i \C{B}(\ga_{m})/(2\pi)) \leq 0.9^{m} \cdot 1/ (2\pi) \leq 0.9^{n}/(2\pi) \leq x.\] 
Thus, we have the desired inequality in the proposition in this case. 

Now assume that $\C{B}(\ga_m) > 1$. The composition of the maps in \refE{E:P-Herman-Y-iterates} is understood 
as the identity map when $m=n$, and as the map $h_{\ga_n}$ when $m=n+1$. Also recall that $h_{\ga_n}(0)=1$. 
Then, in this case, we must have $n< m-1$. 
Let $k \in [n, m-1]$ be the smallest integer such that 
\[h_{\ga_k}^{-1} \circ  \dots  \circ  h_{\ga_{m-1}}^{-1}(\C{B}(\ga_{m}))\]
is defined and is positive. By \refE{E:P-Herman-Y-iterates}, $k$ exists and $k>n$. Moreover, by the minimality of 
$k$, we must have $h_{\ga_k}^{-1} \circ  \dots  \circ  h_{\ga_{m-1}}^{-1}(\C{B}(\ga_{m})) \leq +1$.
Now, we may use \refL{L:herman-Y-iterates-close} with $y=\C{B}(\ga_m)$ and 
$h_{\ga_k}^{-1} \circ  \dots  \circ  h_{\ga_{m-1}}^{-1}(\C{B}(\ga_{m}))$ to conclude that 
\begin{align*}
|2\pi \Im & \, \mathbb{Y}_k \circ  \dots  \circ  \mathbb{Y}_{m-1}(i \C{B}(\ga_{m})/(2\pi))|  \\
& \qquad \leq |2\pi \Im \mathbb{Y}_k \circ  \dots  \circ  \mathbb{Y}_{m-1}(i \C{B}(\ga_{m})/(2\pi)) 
- h_{\ga_k}^{-1} \circ  \dots  \circ  h_{\ga_{m-1}}^{-1}(\C{B}(\ga_{m}))| \\
& \qquad \qquad + |h_{\ga_k}^{-1} \circ  \dots  \circ  h_{\ga_{m-1}}^{-1}(\C{B}(\ga_{m}))| \\
& \qquad \leq 10\pi+1.
\end{align*}
Then, by \eqref{E:invariant-imaginary-line} and \eqref{E:uniform-contraction-Y}, we obtain 
\begin{align*}
\Im \mathbb{Y}_0 \circ  \dots  \circ \mathbb{Y}_{k-1} \circ \mathbb{Y}_k \circ \cdots \circ \mathbb{Y}_{m-1}(i \C{B}(\ga_{m})/(2\pi))
& \leq \mathbb{Y}_0 \circ  \dots  \circ \mathbb{Y}_{k-1} ((10\pi+1)/(2\pi)) \\
&\leq 0.9^k  \frac{10 \pi+1}{2\pi} \leq 0.9^n \frac{10 \pi +1}{2\pi} \leq x.
\end{align*}
Thus, the desired inequality in the proposition also holds in this case. 

To prove the other direction of the proposition, fix an arbitrary $n\geq 0$. 
We shall prove that there is $m\geq n$ satisfying the inequality in \refP{P:Herman-Yoccoz-criterion}. 

First note that for all $j\geq 0$ and $y\geq 0$, $h_{\ga_j}^{-1}(y)\leq y-1$. This implies that there are $k > l \geq n$ 
such that 
\begin{equation}\label{E:herm-equiv-k}
h_{\ga_l}^{-1} \circ \dots \circ h_{\ga_k}^{-1}(12 \pi)\leq 0.
\end{equation}
In particular, the composition in the above equation is defined. Note that in general one cannot choose $l=n$. 
Now, choose $x>0$ such that  
\begin{equation}\label{E:herm-equiv-1}
x < \Im \mathbb{Y}_0 \circ \dots \circ \mathbb{Y}_k(i),
\end{equation} 
and 
\begin{equation}\label{E:herm-equiv-2}
x < \min \big\{\Im \mathbb{Y}_0 \circ \dots \circ \mathbb{Y}_j(i \C{B}(\ga_{j+1})/(2\pi))\mid \forall j\in [0, k-1] \cap \mathbb{Z} \big\},
\end{equation} 
By the hypothesis in the proposition, there is $m\geq 1$ such that 
\begin{equation}\label{E:P:herm-equiv-second-side}
\Im \mathbb{Y}_0 \circ  \dots  \circ \mathbb{Y}_{m-1}(i \C{B}(\ga_{m})/(2\pi)) \leq x.
\end{equation}
Note that by \refE{E:herm-equiv-2} we must have $m\geq k+1$. In particular, $m\geq n$.  

By \refE{E:herm-equiv-1}, we must have 
\begin{equation}\label{E:P:herman-equiv-second-side-truncate}
\Im \mathbb{Y}_{k+1} \circ  \dots  \circ \mathbb{Y}_{m-1}(i \C{B}(\ga_{m})/(2\pi)) \leq 1.
\end{equation}
Otherwise, by the injectivity of the maps $\mathbb{Y}_j$ and \refE{E:invariant-imaginary-line}, 
\begin{align*}
\Im \mathbb{Y}_0 \circ  \dots  \circ \mathbb{Y}_{m-1}(i \C{B}(\ga_{m})/(2\pi)) 
&=\Im \mathbb{Y}_0 \circ  \dots  \circ \mathbb{Y}_k (\mathbb{Y}_{k+1} \circ \dots \circ \mathbb{Y}_{m-1}(i \C{B}(\ga_{m})/(2\pi)) \\
&> \Im \mathbb{Y}_0 \circ  \dots  \circ \mathbb{Y}_k (i) > x,
\end{align*}
which contradicts \refE{E:P:herm-equiv-second-side}.

Now we consider two cases. 
First assume that $\C{B}(\ga_m)> 1$ and $h_{\ga_{k+1}}^{-1} \circ\dots\circ h_{\ga_{m-1}}^{-1}(\C{B}(\ga_{m}))$
is defined. Here, we may apply \refL{L:herman-Y-iterates-close} to this composition with $y=\C{B}(\ga_m)$, and use 
\refE{E:P:herman-equiv-second-side-truncate}, to get 
\begin{align*}
h_{\ga_{k+1}}^{-1} \circ  \dots &  \circ h_{\ga_{m-1}}^{-1}(\C{B}(\ga_{m}))  \\
& \leq |h_{\ga_{k+1}}^{-1} \circ  \dots  \circ h_{\ga_{m-1}}^{-1}(\C{B}(\ga_{m})) 
- 2\pi \Im \mathbb{Y}_{k+1} \circ  \dots  \circ \mathbb{Y}_{m-1}(i \C{B}(\ga_{m})/(2\pi)) |  \\
& \qquad \qquad + |2\pi \Im \mathbb{Y}_{k+1} \circ  \dots  \circ \mathbb{Y}_{m-1}(i \C{B}(\ga_{m})/(2\pi))| \\
&\leq 10 \pi + 2\pi.
\end{align*}
Combining this with \refE{E:herm-equiv-k}, and using the monotonicity of the maps $h_{\ga_j}$, we conclude that 
there is $l' \in [l, k]$ such that
\begin{align*}
h_{\ga_{l'}}^{-1} \circ  \dots  \circ h_{\ga_{m-1}}^{-1}(\C{B}(\ga_{m}))
& = h_{\ga_{l'}}^{-1} \circ  \dots \circ h_{\ga_k}^{-1} 
(h_{\ga_{k+1}}^{-1}\circ\cdots \circ h_{\ga_{m-1}}^{-1}(\C{B}(\ga_{m}))) \\
&\leq h_{\ga_{l'}}^{-1} \circ  \dots \circ h_{\ga_k}^{-1} (12 \pi) < 0.
\end{align*}
Here we need to choose $l'\geq l$ so that the compositions in the above equation are defined. 
The above inequality implies that $\C{B}(\ga_m)< h_{\ga_{m-1}} \circ \dots \circ h_{\ga_{l'}}(0)$. 
Note that $l'\geq l \geq n$. 
On the other hand, since $h_{\ga_j}(y)\geq y+1$ for all $j$ and $y > 0$, we must have 
$h_{\ga_{l'-1}} \circ \cdots \circ h_{\ga_n}(0)>0$. Therefore, 
\[\C{B}(\ga_m)< h_{\ga_{m-1}} \circ \dots \circ h_{\ga_{l'}}(0)
< h_{\ga_{m-1}} \circ \dots \circ h_{\ga_n}(0).\] 
This completes the argument in this case.  

Now assume that either $\C{B}(\ga_m)\leq 1$, or 
$h_{\ga_{k+1}}^{-1} \circ\dots\circ h_{\ga_{m-1}}^{-1}(\C{B}(\ga_{m}))$ is not defined. 
These imply that there is $j$ in $[k+2, m-1]$ such that 
$h_{\ga_{j}}^{-1} \circ\dots\circ h_{\ga_{m-1}}^{-1}(\C{B}(\ga_{m}))\leq 0$, and hence  
$\C{B}(\ga_m) \leq h_{\ga_{m-1}} \circ \dots \circ h_{\ga_{j}}(0)$. 
Note that $j \geq k+2 \geq n+2$. 
As in the previous case, $h_{\ga_{j-1}} \circ \cdots \circ h_{\ga_n}(0)>0$. Therefore, 
\[\C{B}(\ga_m)< h_{\ga_{m-1}} \circ \dots \circ h_{\ga_j}(0)
< h_{\ga_{m-1}} \circ \dots \circ h_{\ga_n}(0).\] 
This completes the argument in this case.  
\end{proof}
 
\begin{propo}\label{P:optimal-herman-p_n-b_n}
Assume that $\ga \in \E{B}$. Then, $\ga\in \E{H}$ if and only if $p_n(0)=b_n(0)=0$ for all $n\geq -1$. 
\end{propo}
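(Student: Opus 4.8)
\textbf{Plan of proof for \refP{P:optimal-herman-p_n-b_n}.}
The strategy is to relate the height functions $b_n$ and $p_n$ to the iterated compositions $\mathbb{Y}_0 \circ \dots \circ \mathbb{Y}_{m-1}$ applied to the $y$-axis, and then invoke the characterisation of $\E{H}$ in terms of those compositions from \refP{P:Herman-Y-iterates}. The key observation is that $p_n(0)$ and $b_n(0)$ both record the limiting height over the vertical line $\Re w = 0$ in $I_n$, but $p_n$ is built starting from the constant initial data $(\C{B}(\ga_{n+1})+5\pi)/(2\pi)$ while $b_n$ is built from the true sets $I_n^j$. Since $p_n \geq b_n$ on $[0,1/\ga_n]$ (shown in the excerpt, just below \refE{E:p_n>=b_n}) and both are $\geq -1$ and take the value $0$ at $0$ exactly when the corresponding set pinches down to the imaginary axis at real part $0$, it suffices to control $p_n(0)$.

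First I would record, using \refE{E:p_n-peiodic} and the construction of $p_n^j$, that
$p_{-1}(0) = \lim_{m\to\infty} \Im\big(\mathbb{Y}_0\circ\dots\circ\mathbb{Y}_{m-1}(i\,(\C{B}(\ga_m)+5\pi)/(2\pi))\big)$,
with the appropriate integer translations tracked through the definition of $p_n^{j+1}$ from $p_{n+1}^j$; the additive constant $5\pi$ only shifts the argument by a bounded amount and, after one application of a contracting $\mathbb{Y}_j$, is absorbed up to a factor $0.9$, so it does not affect whether the limit is $0$. The same identity holds at each level $n$ in place of $-1$, by the self-similar definition of the $p_n^j$. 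Then, by \refP{P:Y_r-vs-h_r^-1} and \refL{L:herman-Y-iterates-close}, these iterated imaginary parts differ from $h_{\ga_0}^{-1}\circ\dots\circ h_{\ga_{m-1}}^{-1}(\C{B}(\ga_m))$ by a universally bounded amount whenever the $h^{-1}$-composition stays positive; this is essentially the content already distilled in the proof of \refP{P:Herman-Y-iterates}.

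Next I would argue the two implications. If $\ga\in\E{H}$, then by \refP{P:Herman-Y-iterates}, for every $x>0$ there is $m$ with $\Im\mathbb{Y}_0\circ\dots\circ\mathbb{Y}_{m-1}(i\C{B}(\ga_m)/(2\pi))\leq x$; since $p^j_n$ is decreasing in $j$ with limit $p_n$, and $p_n(0)\geq 0$ while the compositions realise values arbitrarily close to $0$, we get $p_{-1}(0)=0$, hence $b_{-1}(0)=0$. Applying \refP{P:Herman-Y-iterates} with the shifted sequence $(\ga_{n+1}, \ga_{n+2}, \dots)$ — legitimate because $\E{H}$ is $\mathrm{PGL}(2,\D{Z})$-invariant, so $\ga_{n+1}\in\E{H}$ for all $n$ — gives $p_n(0)=b_n(0)=0$ for all $n\geq -1$. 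Conversely, if $p_n(0)=b_n(0)=0$ for all $n$, then in particular $p_{-1}(0)=0$, which via the identity above means the iterated imaginary parts $\Im\mathbb{Y}_0\circ\dots\circ\mathbb{Y}_{m-1}(i\C{B}(\ga_m)/(2\pi))$ have infimum $0$ (the $+5\pi$ shift, after one contraction, changes nothing in the limit), so by the ``if'' direction of \refP{P:Herman-Y-iterates} we conclude $\ga\in\E{H}$.

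The main obstacle I anticipate is the bookkeeping that makes ``$p_n(0)=0$ for all $n$'' equivalent to ``for all $x>0$ there is $m$ with the iterated imaginary part $\leq x$'', rather than merely ``$\liminf = 0$''. One must be careful that $p_n^j\to p_n$ decreasingly and that $p_n^j(0)$ equals precisely the height obtained by applying $X^{n+j-1}_{\dots}$-type compositions to the constant initial data on the $0$-line (tracking the $l_i$ integer translations so that $\Re = 0$ is preserved along the whole chain), and that the $5\pi$ and the $2\pi$ normalisations in $p_n^0 = (\C{B}(\ga_{n+1})+5\pi)/(2\pi)$ versus $i\C{B}(\ga_m)/(2\pi)$ in \refP{P:Herman-Y-iterates} are genuinely irrelevant in the limit — this uses \refE{E:uniform-contraction-Y} to say a bounded perturbation of the argument produces a perturbation of size $\leq 0.9^m \cdot \mathrm{const}$ in the final height. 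Once this reconciliation is cleanly stated, the rest follows by quoting \refP{P:Herman-Y-iterates} and the invariance of $\E{H}$.
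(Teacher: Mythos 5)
Your proposal is correct and follows essentially the same route as the paper's proof: both reduce the statement to the single value $p_{-1}(0)$ (you treat the other levels via the $\mathrm{PGL}(2,\D{Z})$-invariance of $\E{H}$, the paper by noting that $p_{n+1}(0)=b_{n+1}(0)$ iff $p_n(0)=b_n(0)$), identify $p_{-1}^m(0)$ with $\Im \mathbb{Y}_0\circ\dots\circ\mathbb{Y}_{m-1}\big(i\C{B}(\ga_m)/(2\pi)+5i/2\big)$, absorb the $5/2$-offset by the uniform contraction \refE{E:uniform-contraction-Y}, and quote \refP{P:Herman-Y-iterates} in both directions, using the monotonicity of $t\mapsto \Im\mathbb{Y}_l(it)$ for the converse. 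The one bookkeeping point you flag but should make explicit is that the level $m$ furnished by \refP{P:Herman-Y-iterates} can be forced to be arbitrarily large (for each fixed $m$ the composition is a fixed positive number, so small values require large $m$), which is what makes the error term $0.9^m\cdot 5/2$ small; the paper achieves this by shrinking the auxiliary $\gep'$.
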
 
 
\begin{proof}
Recall that $p_n(0)=p_n(1/\ga_n)$ and $b_n(0)=b_n(1/\ga_n)$, for all $n\geq -1$. Also, $\mathbb{Y}_{n+1}$ maps the graphs of 
$p_{n+1}$ and $b_{n+1}$ to the graphs of $p_n$ and $b_n$, respectively. These imply that $p_{n+1}(0)=b_{n+1}(0)$
if and only if $p_n(0)=b_n(0)$, for all $n\geq -1$. 
Therefore, to prove the proposition, it is enough to show that $\ga\in \E{H}$ if and only if $p_{-1}(0)=b_{-1}(0)=0$. 

Assume that $\ga\in \E{H}$. Fix an arbitrary $\gep>0$. Choose $m_0 \geq 1$ satisfying $5 \cdot 0.9^{m_0}\leq \gep$. 
Let $\gep' \leq \gep/2$ be a positive constant. We may apply \refP{P:Herman-Y-iterates} with $x=\gep'$ and obtain 
$m\geq 1$ satisfying the inequality in that proposition. 
By making $\gep'$ small enough, we may make $m \geq m_0$. 
Now, using the uniform contraction of the maps $\mathbb{Y}_n$ in \refE{E:uniform-contraction-Y}, 
\begin{align*}
p_{-1}(0) &  \leq p_{-1}^m(0) \\
& = \Im \mathbb{Y}_0 \circ \dots \circ \mathbb{Y}_{m-1}(ip_{m-1}^0(0)) \\
&= \Im \mathbb{Y}_0 \circ \dots \circ \mathbb{Y}_{m-1}(i\C{B}(\ga_m)/(2\pi)+5/2)
- \Im \mathbb{Y}_0 \circ \dots \circ \mathbb{Y}_{m-1}(i\C{B}(\ga_m)/(2\pi)) \\
& \qquad \qquad + \Im \mathbb{Y}_0 \circ \dots \circ \mathbb{Y}_{m-1}(i\C{B}(\ga_m)/(2\pi)) \\
&\leq 0.9^m 5/2 + \gep' \leq \gep.   
\end{align*}
That is, $b_{-1}(0)=0\leq p_{-1}(0) \leq \gep$, for all $\gep > 0$. This implies that $b_{-1}(0)=p_{-1}(0)$. 

Now assume that $b_{-1}(0)=p_{-1}(0)=0$. Fix $x >0$. Since $p_{-1}^m(0) \to p_{-1}(0)$, as  $m\to +\infty$, there 
is $m\geq 1$ such that $p_{-1}^m(0)< x$. 
Then, by the monotonicity of the maps $t \mapsto \Im \mathbb{Y}_l(it)$,  
\[\Im \mathbb{Y}_0 \circ \dots \circ \mathbb{Y}_{m-1}(i \C{B}(\ga_m)/(2\pi)) 
\leq \Im \mathbb{Y}_0 \circ \dots \circ \mathbb{Y}_{m-1}(i\C{B}(\ga_m)/(2\pi)+5/2)
= p_{-1}^m(0) \leq x.    
\]
By \refP{P:Herman-Y-iterates}, this implies that $\ga \in \E{H}$. 
\end{proof} 
 
\subsection{Hairs, or no hairs} 
 
\begin{propo}\label{P:b_n-p_n-dense-touches}
For every $n\geq -1$, the following properties hold.
\begin{itemize}
\item[(i)] If $\ga \in \E{B}\setminus \E{H}$, $b_n=p_n$ on a dense subset of $[0, 1/\ga_n]$ and
$b_n< p_n$ on a dense subset of $[0, 1/\ga_n]$;
\item[(ii)] If $\ga \notin \E{B}$, $b_n=+\infty$ on a dense subset of $[0, 1/\ga_n]$ 
and $b_n < +\infty$ on a dense subset of $[0,1/\ga_n]$.
\end{itemize}
\end{propo}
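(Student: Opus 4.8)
The plan is to prove (i) and (ii) together by a single ``self‑similarity + seed point'' argument, so I will first describe the general mechanism and then the four seeds it needs.

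\emph{Self‑similarity.} Fix $n$ and $x_n\in[0,1/\ga_n]$ with trajectory $(x_m;l_m)_{m\ge n}$ as in \refSS{SS:T-defn}. Letting $j\to\infty$ in the identity $b_n^{j+1}(x_n)=\Im\mathbb{Y}_{n+1}(x_{n+1}+ib_{n+1}^j(x_{n+1}))$ (valid since a generic vertical line meets a single piece of the decomposition \eqref{E:I_n^j--1}--\eqref{E:I_n^j-+1}) and in the defining recursion for $p_n^{j+1}$ gives
\[x_n+ib_n(x_n)=\mathbb{Y}_{n+1}(x_{n+1}+ib_{n+1}(x_{n+1}))+l_n,\qquad x_n+ip_n(x_n)=\mathbb{Y}_{n+1}(x_{n+1}+ip_{n+1}(x_{n+1}))+l_n,\]
hence, iterating, $x_n+ib_n(x_n)=\Psi_k(x_{n+k}+ib_{n+k}(x_{n+k}))$ and the same with $p$, where $\Psi_k=(\mathbb{Y}_{n+1}+l_n)\circ\dots\circ(\mathbb{Y}_{n+k}+l_{n+k-1})$. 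Each $\mathbb{Y}_m$ preserves vertical lines, is injective, has $\Im\mathbb{Y}_m=\Im Y_{\ga_m}$ strictly increasing along vertical lines (immediate from the formula), and $\Im\mathbb{Y}_m(x+it)\to+\infty$ as $t\to+\infty$; so the same holds for $\Psi_k$. Fixing the prefix $(l_n,\dots,l_{n+k-1})$ cuts out an interval (``level‑$k$ piece'') $P\subseteq[0,1/\ga_n]$ on which $b_n$, resp.\ $p_n$, is the $\Psi_k$‑image of $b_{n+k}$, resp.\ $p_{n+k}$, over the corresponding subrange $R\subseteq[0,1/\ga_{n+k}]$ — all of it, or a subrange $[0,1/\ga_{n+k}-1]$ or $[1/\ga_{n+k}-1,1/\ga_{n+k}]$ coming from $K_{n+k}$ or $J_{n+k}$. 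In particular, for $x_n\in P$ above $x_{n+k}\in R$, $b_n(x_n)<p_n(x_n)$ iff $b_{n+k}(x_{n+k})<p_{n+k}(x_{n+k})$, likewise with ``$=$'', and $b_n(x_n)=+\infty$ iff $b_{n+k}(x_{n+k})=+\infty$. Since the level‑$k$ pieces cover $[0,1/\ga_n]$ with widths $\le\ga_{n+1}\cdots\ga_{n+k-1}\le 2^{-(k-1)}$, it suffices, for all sufficiently large levels $m$ and every subrange $R$ of $[0,1/\ga_m]$, to produce in $R$ one point with each of the two relevant behaviours. Finally note $b_m(0)=b_m(1/\ga_m)=0$ for all $m$: $0\in I_{-1}$ gives $b_{-1}(0)\le 0$, $\mathbb{M}_\ga\cap(1,\infty)=\emptyset$ (\refP{P:M-ga-relations-1}) gives $b_{-1}(0)\ge 0$, and the relation $b_m(0)=\Im\mathbb{Y}_{m+1}(ib_{m+1}(0))$ together with the strict monotonicity of $t\mapsto\Im\mathbb{Y}_{m+1}(it)$ propagates it to all $m$.

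\emph{The seeds.} (Low.) The point $x_m=0$ has $b_m(0)=0<+\infty$, which handles the ``$b_n<+\infty$'' part of (ii); and for (i), $\ga\in\E{B}\setminus\E{H}$ forces $p_m(0)>0=b_m(0)$ for every $m$ by \refP{P:optimal-herman-p_n-b_n} and the equivalence $p_{m+1}(0)=b_{m+1}(0)\Leftrightarrow p_m(0)=b_m(0)$ from its proof, which handles ``$b_n<p_n$''. (High.) Call a trajectory \emph{middle‑staying from level $m$} if $x_l\in[\frac{1}{2\ga_l}-1,\frac{1}{2\ga_l}]$ for all $l\ge m$. Telescoping \refL{L:Y_n-on-horizontals}(ii) — applicable because the $x_l$ lie in the middle — exactly as in the proof of \refP{P:b_n-sup-B}, starting from $b_{m+k}^0\equiv-1$ for $b$ and from $p_{m+k}^0\equiv(\C{B}(\ga_{m+k+1})+5\pi)/(2\pi)$ for $p$, and using that the tail $\ga_{m+1}\cdots\ga_{m+k}\,\C{B}(\ga_{m+k+1})$ equals the tail of the (convergent, when $\ga\in\E{B}$) series $\C{B}(\ga_{m+1})$, yields
\[2\pi b_m(x_m)=\C{B}(\ga_{m+1})+O(1),\qquad 2\pi p_m(x_m)=\C{B}(\ga_{m+1})+O(1),\]
with an \emph{absolute} $O(1)$. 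If $\ga\notin\E{B}$ this gives $b_m(x_m)=+\infty$. If $\ga\in\E{B}$ it gives $p_m(x_m)-b_m(x_m)=O(1)$ uniformly in $m$; feeding this into $0\le p_m(x_m)-b_m(x_m)\le 0.9^k\big(p_{m+k}(x_{m+k})-b_{m+k}(x_{m+k})\big)$ (from \eqref{E:uniform-contraction-Y}, valid along any trajectory) and letting $k\to\infty$ along the middle‑staying trajectory gives $b_m(x_m)=p_m(x_m)$.

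\emph{Density.} Fix a nonempty subinterval $(\alpha,\beta)\subseteq[0,1/\ga_n]$. Choose $k_0$ with level‑$k_0$ pieces of width $<\tfrac12(\beta-\alpha)$, so some piece $P$ has $\overline{P}\subseteq(\alpha,\beta)$, with subrange $R\subseteq[0,1/\ga_{n+k_0}]$. As $R$ has length $\ge1$ it contains one of the ``bottom'' points $\{0,\,1/\ga_{n+k_0}-1,\,1/\ga_{n+k_0}\}$; the point of $P$ above such a point transfers the low seed into $(\alpha,\beta)$. For the high seed we need a middle‑staying trajectory whose entry at level $n+k_0$ lies in $R$: if $R$ is of $I$‑ or $K$‑type it contains the middle interval, while if $R$ is of $J$‑type then subdividing $P$ once more produces a $K$‑type subrange inside it (an elementary check on \eqref{E:I_n^j--1}--\eqref{E:I_n^j-+1}). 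That a middle‑staying trajectory with entry in the middle interval exists follows from a routine compactness argument: the reduction map carries the length‑one middle interval at level $l$ \emph{onto} all of $[0,1/\ga_{l+1}]$, hence onto the middle interval at level $l+1$, so the nested intersections are nonempty. Transferring this seed places in $(\alpha,\beta)$ a point with $b_n=p_n$ (if $\ga\in\E{B}$), resp.\ $b_n=+\infty$ (if $\ga\notin\E{B}$). Combining with the low seeds proves (i) and (ii).

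The substantive step is the high‑seed estimate: the telescoping of \refL{L:Y_n-on-horizontals}(ii) together with the Brjuno functional equation \eqref{E:Brjuno-functional-equations}, organised so that the per‑level errors are damped by the factors $\ga_l\le\tfrac12$ and therefore absorbed into a single absolute constant (this is where the Brjuno cocycle identity \eqref{E:Brjuno-functional-equations} and the convergence of $\C{B}(\ga_{m+1})$ are used, and where $\ga\notin\E{B}$ forces $b_m(x_m)=+\infty$). The remaining work — identifying the subrange type of each level‑$k_0$ piece and checking that, after at most one further subdivision, every such subrange contains both a bottom point and the middle interval — is elementary but requires care with the $I$/$K$/$J$ bookkeeping of \refS{SS:tilings-nest}.
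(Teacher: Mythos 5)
Your proposal is correct and follows essentially the same route as the paper's proof: the equality/infinity seed comes from a middle-staying trajectory $x_l\in[1/(2\ga_l)-1,1/(2\ga_l)]$ with the telescoped estimate from \refL{L:Y_n-on-horizontals}-(ii) as in \refP{P:b_n-sup-B} (giving $2\pi b_m(x_m)=\C{B}(\ga_{m+1})+O(1)$, hence $+\infty$ off $\E{B}$, and equality with $p_m$ via the uniform contraction \eqref{E:uniform-contraction-Y}), the strict/finite seed is the point $0$ via \refP{P:optimal-herman-p_n-b_n} and $b_m(0)=0$, and density is obtained by transferring these seeds down the levels with $\mathbb{Y}_{n+1}$ plus integer translations and $+1$-periodicity. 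Your write-up is simply more explicit than the paper about the cylinder ($I$/$K$/$J$) bookkeeping in the density step, and telescopes for $p$ as well instead of comparing $p_n^j$ directly to $b_n$; both variants are fine.
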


\begin{proof}
Each of the strict inequalities in items (i) and (ii) hold at least at one point. That is because when 
$\ga \in \E{B} \setminus \E{H}$, by \refP{P:optimal-herman-p_n-b_n}, we have $p_n(0) \neq b_n(0)$, and
when $\ga \notin \E{B}$, $b_n(0) = 0 < +\infty$. 
The main task is to show that each of the equalities in (i) and (ii) hold at least at one point. 
To that end we show that there is $x_n \in [0,1/\ga_n]$ such that if $\ga \in \E{B}\setminus \E{H}$ we have 
$p_n(x_n)=b_n(x_n)$ and if $\ga \notin \E{B}$ we have $b_n(x_n)=+\infty$. 
There is an algorithm to identify the points $x_n$, as we explain below. 

There are $x_n\in [0, 1/\ga_n]$ and $l_n \in \D{Z}$, for $n\geq -1$, such that 
\[x_{n+1} \in [1/(2\ga_{n+1})-1, 1/(2\ga_{n+1})],  \quad -\gep_{n+1}\ga_{n+1} x_{n+1} \in x_n - l_n.\]

By \refL{L:Y_n-on-horizontals}-(ii), for all $n\geq -1$ and all $j\geq 0$ we have 
\[| 2\pi b_{n-1}^j(x_{n-1}) - 2 \pi \ga_n b_n^{j-1}(x_n) - \log (1/\ga_n)| \leq 4.\]
One may literally repeat the latter part of the proof of \refP{P:b_n-sup-B} with $D_n^j=b_n^j(x_n)$ to conclude that 
for all $n\geq -1$, 
\begin{equation}\label{E:P:b_n-p_n-dense-touches-1}
\big |2\pi b_{n}(x_n) - \C{B}(\ga_{n+1})\big | \leq 8+2\pi.
\end{equation}

Now, if $\ga \notin \E{B}$ we must have $b_n(x_n)=+\infty$. 
Assume that $\ga \in \E{B}$. We have
\begin{align*}
p_n^{j}(x_n) &= \Im (\mathbb{Y}_n+l_n) \circ \dots \circ (\mathbb{Y}_{n+j}(x_{n+j}+ ip_{n+j}^0(x_{n+j}))+l_{n+j-1}) \\
&= \Im (\mathbb{Y}_n+l_n) \circ \dots \circ (\mathbb{Y}_{n+j}(x_{n+j}+ i\C{B}(\ga_{n+j+1})/(2\pi)+5/2)+l_{n+j-1}), 
\end{align*} 
and 
\begin{align*}
b_n(x_n) = \Im (\mathbb{Y}_n+l_n) \circ \dots \circ (\mathbb{Y}_{n+j}(x_{n+j}+ i b_{n+j}(x_{n+j}))+l_{n+j-1}). 
\end{align*} 
Therefore, by \eqref{E:uniform-contraction-Y} and \eqref{E:P:b_n-p_n-dense-touches-1}, 
\begin{align*}
p_n^{j}(& x_n) - b_n(x_n) \leq 0.9^{j} \Big(\frac{5}{2}+ \frac{8+2\pi}{2\pi}\Big). 
\end{align*} 
Since $p_n \geq b_n$ on $[0,1/\ga_n]$, we conclude that $p_n(x_n)=\lim_{j\to +\infty} p_n^j(x_n)=b_n(x_n)$. 

To discuss items (i) and (ii) in the proposition at once, let us define $p_n \equiv +\infty$, for all $n\geq -1$, 
when $\ga \notin \E{B}$. 

Recall that the graphs of $p_n$ and $b_n$ are obtained from the graphs of $p_{n+1}$ and $b_{n+1}$, 
respectively, using the map $\mathbb{Y}_{n+1}$ and its integer translations. 
Thus, if $p_n=b_n$ at some $y \in [0, 1/\ga_n]$, then $p_{n-1}=b_{n-1}$ at $-\gep_n\ga_n y + (1+ \gep_n)/2$. 
Similarly, if $p_n\neq b_n$ at some $y\in [0,1/\ga_n]$, then $p_{n-1}\neq b_{n-1}$ at $-\gep_n\ga_n y+(1+\gep_n)/2$. 
On the other hand, $p_n$ and $b_n$ are periodic of period $+1$, for all $n\geq -1$. 
One infers from these properties, and the first part of the proof, that $p_n=b_n$ on a dense subset of $[0, 1/\ga_n]$, 
and $p_n\neq b_n$ on a dense subset of $[0, 1/\ga_n]$, for all $n\geq -1$.
\end{proof} 

\begin{propo}  \label{P:p_n-cont} \label{P:b_n-p_n-identical-or-not}
Assume that $\ga \in \E{B}$. For all $n\geq -1$, $p_n: [0, 1/\ga_n] \to [1, +\infty)$ is continuous. 
Moreover, if $\ga\in \E{H}$ then $p_n=b_n$ on $[0, 1/\ga_n]$, for all $n\geq -1$. 
\end{propo}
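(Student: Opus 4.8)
The plan is to prove the two assertions in turn, both by reducing to the level $n=-1$ via the conjugating maps $\mathbb{Y}_{n+1}$, and then using the approximation estimates already established. For continuity of $p_n$, recall that $p_n = \lim_{j\to\infty} p_n^j$ with each $p_n^j$ continuous and the convergence monotone decreasing. A monotone limit of continuous functions is upper semi-continuous automatically, so the only issue is lower semi-continuity, i.e. ruling out downward jumps. Here I would exploit the explicit quantitative control: combining \refP{P:b_n-sup-B}, the relation $p_n \geq b_n$, and the computation at the end of \refS{SS:height-functions} showing $p_n^1 \le p_n^0$, one gets that $p_n^j$ and $p_n$ stay within a bounded distance of the \emph{constant} function $(\C{B}(\ga_{n+1})+5\pi)/(2\pi)$; more importantly, the key input is the estimate from the proof of \refP{P:b_n-p_n-dense-touches}, namely that for a suitable $x_n\in[1/(2\ga_n)-1,1/(2\ga_n)]$ one has $p_n^j(x_n)-b_n(x_n)\le 0.9^j(\tfrac52+\tfrac{8+2\pi}{2\pi})$, so that $p_n^j \to p_n$ \emph{uniformly near such points}. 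To promote this to uniform convergence everywhere I would use the functional relation $p_n^{j+1}(x_n) = \Im\,\mathbb{Y}_{n+1}(x_{n+1}+i\,p_{n+1}^j(x_{n+1}))$ together with the uniform contraction \refE{E:uniform-contraction-Y}: the $j$-th iterate is obtained by applying $\mathbb{Y}_{n+1}\circ\cdots\circ\mathbb{Y}_{n+k}$, a $0.9^k$-contraction, to something, so $\|p_n^{j} - p_n^{j+1}\|_\infty \le 0.9^{\,?}\cdot(\text{bounded})$ once one observes that $p_{n+k}^0$ differs from $p_{n+k}^1$ by a bounded amount uniformly in $x$. Hence $p_n^j \to p_n$ uniformly on $[0,1/\ga_n]$, and a uniform limit of continuous functions is continuous.

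For the second assertion, suppose $\ga\in\E{H}$. By \refP{P:optimal-herman-p_n-b_n} we know $p_n(0)=b_n(0)=0$ for all $n\ge -1$ — wait, that proposition gives $p_n(0)=b_n(0)=0$, which already pins down the two functions at the periodic base point. The strategy is then to propagate equality from $x=0$ to all of $[0,1/\ga_n]$. First I would show it suffices to prove $p_{-1}=b_{-1}$ on $[0,1]$: indeed $\mathbb{Y}_{n+1}$ (with integer translations) carries the graph of $p_{n+1}$ onto that of $p_n$ and the graph of $b_{n+1}$ onto that of $b_n$, so $p_{n+1}=b_{n+1}$ pointwise iff $p_n=b_n$ pointwise, exactly as in the reduction used in \refP{P:optimal-herman-p_n-b_n}; but the chain here runs the other way (from level $0$ downward), so one should instead argue directly that $p_m(x_m)=b_m(x_m)$ for \emph{every} choice of the algorithmic sequence $(x_m)$ and that these points are dense. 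In fact \refP{P:b_n-p_n-dense-touches} already shows $\{x_n\}$ can be taken so that $p_n(x_n)=b_n(x_n)$; the point now is that when $\ga\in\E{H}$ the set of such points where equality holds is \emph{all} of $[0,1/\ga_n]$, not merely a dense subset.

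The cleanest route: fix $x\in[0,1/\ga_n]$ and run the algorithm of \refS{SS:T-defn}/\refP{P:b_n-p_n-dense-touches} starting from $x$ to produce $x=x_n, x_{n+1},x_{n+2},\dots$ with $-\gep_{m+1}\ga_{m+1}x_{m+1}\equiv x_m \pmod{\mathbb{Z}}$; then $b_n(x) - $ (the value obtained by applying $\mathbb{Y}_{n+1}\circ\cdots\circ\mathbb{Y}_{n+j}$ to $x_{n+j}+i\,b_{n+j}(x_{n+j})$, with integer shifts) and the analogous expression with $p$ differ by at most $0.9^{j}\bigl(p_{n+j}(x_{n+j})-b_{n+j}(x_{n+j})\bigr)$ in the imaginary part, by \refE{E:uniform-contraction-Y}. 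Now $0\le p_{n+j}(x_{n+j})-b_{n+j}(x_{n+j}) \le p_{n+j}^0 - (-1) \le (\C{B}(\ga_{n+j+1})+5\pi)/(2\pi)+1$, which is \emph{not} a priori bounded in $j$ when $\ga\in\E{B}$ only — so the naive bound fails. This is the main obstacle. The fix, available precisely because $\ga\in\E{H}$, is \refP{P:Herman-Y-iterates}: for any $\eps>0$ there is $m$ with $\Im\,\mathbb{Y}_0\circ\cdots\circ\mathbb{Y}_{m-1}(i\C{B}(\ga_m)/(2\pi))\le\eps$, hence $p_{-1}^m(0)\le\eps$; combining this with \refP{P:optimal-herman-p_n-b_n} and the contraction, one controls $\Im\,\mathbb{Y}_n\circ\cdots\circ\mathbb{Y}_{m-1}$ applied to the ``tall'' points. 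More precisely: the difference $p_n(x)-b_n(x)$ equals $\Im$ of [a $0.9^{\,j}$-contraction] applied to $\bigl(x_{n+j}+ip_{n+j}(x_{n+j})\bigr)$ minus the same applied to $\bigl(x_{n+j}+ib_{n+j}(x_{n+j})\bigr)$, and by \refL{L:Y_n-on-horizontals}-(iv)-type reasoning together with \refP{P:Herman-Y-iterates} one can choose $j$ so that the contraction of $\mathbb{Y}$'s sends the whole vertical segment between these two points (of length $\le$ the unbounded quantity above) to a segment of length $\le\eps$. Since $\eps$ is arbitrary and $p_n\ge b_n$, this forces $p_n(x)=b_n(x)$. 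I expect the delicate point to be making the last step rigorous: the segment between $x_{n+j}+ib_{n+j}(x_{n+j})$ and $x_{n+j}+ip_{n+j}(x_{n+j})$ must be shown to lie inside the domain where the chain $\mathbb{Y}_{n+1}\circ\cdots\circ\mathbb{Y}_{n+j}$ is a $0.9^j$-contraction \emph{and} one needs the Herman criterion to guarantee that after the contraction the image has small diameter uniformly — essentially re-running the proof of \refP{P:Herman-Y-iterates} with $b_{n+j}(x_{n+j})$ in place of $0$ as the base height, which is legitimate since $b_{n+j}(x_{n+j})\ge -1$ and $h_r(y+1)\ge h_r(y)+1$.
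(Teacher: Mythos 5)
Your continuity argument has a genuine gap: it rests on the claim that $p_m^0$ and $p_m^1$ differ by a uniformly bounded amount, and this is false. At a point $x$ whose lift is $x_{m+1}=1/\ga_{m+1}-1/2$ one has $e^{-\pi \ga_{m+1} i}e^{-2\pi \ga_{m+1} i x_{m+1}}=1$, so $2\pi p_m^1(x)=\log\frac{e^{2\pi\ga_{m+1}p_{m+1}^0}-e^{-3\pi\ga_{m+1}}}{|e^{-3\pi\ga_{m+1}}-e^{\pi\ga_{m+1} i}|}$, which is $O(1)$ whenever $\C{B}(\ga_{m+2})$ stays bounded and $\ga_{m+1}$ is small (numerator and denominator are then both comparable to $\ga_{m+1}$), whereas $2\pi p_m^0=\C{B}(\ga_{m+1})+5\pi\geq \log(1/\ga_{m+1})+5\pi$. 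Hence $\sup_x\,(p_m^0-p_m^1)$ is of order $\log(1/\ga_{m+1})/(2\pi)$, which is unbounded in $m$ for Brjuno numbers with unbounded partial quotients; worse, $0.9^{\,j}\log(1/\ga_{n+j+1})$ need not even tend to $0$ (take $\log(1/\ga_j)\sim 2^j$, which is compatible with $\ga\in\E{B}$), so the geometric series $\sum_j 0.9^{\,j}\,\|p_{n+j}^0-p_{n+j}^1\|_\infty$ that your Cauchy estimate requires can diverge. The estimate at the special points $x_n$ of \refP{P:b_n-p_n-dense-touches} cannot repair this, since it says nothing away from those points. The missing idea, which the paper proves by induction on $j$ using \refL{L:Y_r-distances}, is a comparison with the value at $x=0$: $p_n^j(x)\geq p_n^j(0)-5/\pi$ and $p_n^j(x)-p_n(x)\leq p_n^j(0)-p_n(0)+55/\pi$ for all $x$; then pointwise convergence at the single point $0$ at a deep level $m$, pushed down to level $n$ by the $0.9^{\,m-n}$-contraction, yields uniform convergence, hence continuity.

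For the Herman half you correctly invoke $p_n(0)=b_n(0)$ from \refP{P:optimal-herman-p_n-b_n}, but the propagation to all $x$ is left as a sketch (``re-running \refP{P:Herman-Y-iterates} with $b_{n+j}(x_{n+j})$ as base height''), and that re-run would again need horizontal comparison estimates to move off the imaginary axis, so the step you yourself flag as delicate is precisely where a new ingredient is required. In the paper no second pass through the Herman criterion is made: the same induction gives $p_n^j(x)-b_n^j(x)\leq p_n^j(0)-b_n^j(0)+55/\pi$, so letting $j\to\infty$ and using $p_n(0)=b_n(0)$ yields $\sup_x\,(p_n-b_n)\leq 55/\pi$ at every level $n$; since $\mathbb{Y}_{n+1}\circ\cdots\circ\mathbb{Y}_m$ carries the graphs of $p_m$ and $b_m$ onto those of $p_n$ and $b_n$ and is a $0.9^{\,m-n}$-contraction, this level-independent bound forces $p_n\equiv b_n$. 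So both halves of the proposition hinge on the comparison-with-$x{=}0$ estimates, which your proposal replaces by a false uniform bound in the first half and by an uncompleted sketch in the second.
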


\begin{proof}
We aim to show that $p_n^j$ uniformly converges to $p_n$ on $[0, 1/\ga_n]$. By the continuity of the maps $p_n^j$, 
this implies the first part of the proposition. At the same time, we show that $p_n^j$ uniformly converges to $b_n$, when 
$\ga \in \E{H}$, which implies the latter part of the proposition. We present the details in several steps. 

\smallskip

{\em Step 1.} For all $n\geq -1$, all $j\geq 0$, and all $x\in [0,1/\ga_n]$, 
\[p_n^j(x)\geq p_n^j(0)-5/\pi, \quad b_n^j(x) \geq b_n^j(0)-5/\pi.\] 

\smallskip 

We prove this by induction on $j$. 
When $j=0$, $p_n^0\equiv (\C{B}(\ga_{n+1})+5\pi)/(2\pi)$ and $b_n^0 \equiv -1$. 
Therefore, $p_n^0 (x) \geq p_n^0(0)-5/\pi$ and $b_n^0(x)\geq b_n^0(0)-5/\pi$. 
Now assume that both inequalities hold for some $j-1 \geq 0$ and all $n\geq -1$. 
Below we prove them for $j$ and all $n\geq -1$.

Fix an arbitrary $n\geq 0$, and let $x_{n-1} \in [0, 1/\ga_{n-1}]$ be arbitrary. Choose $x_{n}\in [0, 1/\ga_{n}]$ 
and $l_{n-1}\in \D{Z}$ with $-\gep_{n}\ga_{n} x_{n}= x_{n-1}+l_{n-1}$. 

If $p_n^{j-1}(x_n) \geq p_n^{j-1}(0)$, by the monotonicity of $y\mapsto \Im \mathbb{Y}_n(x_n+iy)$, we have 
\[\Im \mathbb{Y}_{n}(x_{n}+ip_{n}^{j-1}(x_{n})) - \Im \mathbb{Y}_{n}(x_{n}+ip_{n}^{j-1}(0)) \geq 0\geq -9/(2\pi).\]
If $p_n^{j-1}(x_n) \leq p_n^{j-1}(0)$, by the uniform contraction of $\mathbb{Y}_{n}$ in \refE{E:uniform-contraction-Y} 
and the induction hypothesis, we have
\begin{align*}
\Im \mathbb{Y}_{n}(x_{n}+ip_{n}^{j-1}(x_{n})) - \Im \mathbb{Y}_{n}(x_{n}+ip_{n}^{j-1}(0)) 
& \geq 0.9 (p_{n}^{j-1}(x_{n}) - p_{n}^{j-1}(0)) \\
& \geq 0.9(-5/\pi)= -9/(2\pi).
\end{align*}
On the other hand, by \refL{L:Y_r-distances}-(i), 
\[\Im \mathbb{Y}_{n}(x_{n}+ip_{n}^{j-1}(0)) \geq  \Im \mathbb{Y}_{n}(ip_{n}^{j-1}(0)) - 1/(2\pi) =p_{n-1}^{j}(0) -1/(2\pi).\]
Therefore, by the above inequalities and the definition of $p_{n-1}^{j}$, we have 
\begin{align*}
p_{n-1}^{j}(x_{n-1})& =\Im \mathbb{Y}_{n}(x_{n}+ ip_{n}^{j-1}(x_{n})) \\
&=\big(\Im \mathbb{Y}_{n}(x_{n}+ip_{n}^{j-1}(x_{n}))-\Im \mathbb{Y}_{n}(x_{n}+ip_{n}^{j-1}(0))\big)
+\Im \mathbb{Y}_{n}(x_{n}+ip_{n}^{j-1}(0))\\
& \geq  -9/(2\pi) + p_{n-1}^{j}(0) -1/(2\pi) = p_{n-1}^{j}(0) -5/\pi.  
\end{align*}
The same argument applies to the map $b_{n-1}^j$.  

As $j\to + \infty$, $p_n^j\to p_n$ and $b_n^j \to b_n$ point-wise on $[0, 1/\ga_n]$. 
These lead to 
\begin{equation}\label{E:P:p_n-cont-1}
p_n(x)\geq p_n(0)-5/\pi, \quad b_n(x) \geq b_n(0)-5/\pi. 
\end{equation} 

\smallskip

{\em Step 2.} For all $n\geq -1$ and $j\geq 0$ we have 
\[p_n^j(x) -p_n(x) \leq p_n^j(0)-p_n(0) +55/\pi, \q \tand \q p_n^j(x)-b_n^j(x) \leq p_n^j(0)-b_n^j(0) + 55/\pi.\]

\smallskip 

We shall prove these by induction on $j$. If $j=0$, by \refE{E:P:p_n-cont-1},
\[p_n^0(x)- p_n(x)= p_n^0(0)- p_n(x) \leq p_n^0(0) - p_n(0)+5/\pi, \]
and 
\[p_n^0(x)- b_n^0(x)= p_n^0(0) - b_n^0(0) \leq p_n^0(0) - b_n^0(0)+5/\pi.\]
Now assume that both inequalities in Step 2 hold for some $j\geq 0$ and all $n\geq -1$. 
We aim to prove it for $j+1$ and all $n\geq -1$. 
Fix an arbitrary $x_{n-1} \in [0, 1/\ga_{n-1}]$. Choose $x_{n}\in [0, 1/\ga_{n}]$ and $l_{n-1} \in \D{Z}$ such that 
$- \gep_{n} \ga_{n} x_{n}= x_{n-1} +l_{n-1}$. 
Then, using \refL{L:Y_r-distances}-(ii),  
\begin{align*}
p_{n-1}^{j+1}(x_{n-1}) - p_{n-1}(x_{n-1}) &= \Im \mathbb{Y}_{n}(x_{n} + i p_{n}^j(x_n)) - \Im \mathbb{Y}_{n}(x_{n} + i p_{n}(x_n)) \\
& \leq  \Im \mathbb{Y}_{n}(i p_{n}^j(x_n)) - \Im \mathbb{Y}_{n}(i p_{n}(x_n)) + 1/(2\pi).
\end{align*}
We consider two cases. 

(1) Assume that $p_n(x_n) \geq p_n(0)$. Using \refL{L:Y_r-distances}-(iii), the induction hypothesis, and 
\refE{E:uniform-contraction-Y}, respectively, we obtain  
\begin{align*}
\Im \mathbb{Y}_{n}(i p_{n}^j(x_n))&  - \Im \mathbb{Y}_{n}(i p_{n}(x_n)) \\
& \leq  \Im \mathbb{Y}_{n}(i (p_{n}^j(x_n) - p_n(x_n) +p_n(0)) - \Im \mathbb{Y}_{n}(i p_{n}(0)) + 1/(4\pi) \\
& \leq  \Im \mathbb{Y}_{n}(i(p_{n}^j(0) + 55/\pi) - p_{n-1}(0) + 1/(4\pi) \\
& \leq \Im \mathbb{Y}_{n}(ip_{n}^j(0)) + (9/10) \cdot (55/\pi)-  p_{n-1}(0) + 1/(4\pi) \\
& =p_{n-1}^{j+1}(0) - p_{n-1}(0) + 199/(4\pi).
\end{align*} 

(2) Assume that $p_n(x_n) < p_n(0)$. By \refE{E:P:p_n-cont-1}, we must have $p_n(0)-p_n(x_n)\in [0, 5/\pi]$. 
Then we may apply \refL{L:Y_r-distances}-(iv), and the induction hypothesis, to obtain 
\begin{align*}
\Im \mathbb{Y}_{n}(i p_{n}^j(x_n)) & - \Im \mathbb{Y}_{n}(i p_{n}(x_n)) \\
& \leq  \Im \mathbb{Y}_{n}(i(p_{n}^j(x_n) - p_n(x_n)+ p_n(0)) - \Im \mathbb{Y}_{n}(ip_{n}(0))+ 5/\pi \\
& \leq  \Im \mathbb{Y}_{n}(ip_{n}^j(0) + 55/\pi) - p_{n-1}(0) + 5/\pi \\
& \leq \Im \mathbb{Y}_{n}(i p_{n}^j(0)) + 0.9 \cdot 55/\pi - p_{n-1}(0) + 5/\pi \\
& =p_{n-1}^{j+1}(0) - p_{n-1}(0) + 109/(2\pi).
\end{align*}
Combining the above inequalities, we obtain the first inequality in Step 2 for $j+1$. 
The same argument applies to the difference $p_{n-1}^{j+1}(x_{n-1}) - b_{n-1}^{j+1}(x_{n-1})$. 

\smallskip

{\em Step 3.} For every $n\geq -1$, $p_n^j$ uniformly converges to $p_n$ on $[0, 1/\ga_n]$, as $j\to +\infty$. 

\smallskip

Fix $n\geq -1$, and let $\gep>0$ be arbitrary. Choose $m \geq n$ such that $0.9 ^{m-n} (1+55/\pi) < \gep$. 
Since $p_m^j(0) \to p_m(0)$, as $j\to + \infty$, there is $j_0 >0$ such that for all $j\geq j_0$ we have 
$|p_m^j(0)-p_m(0)| < 1$. By Step 2, this implies that for all $x' \in [0, 1/\ga_m]$ we have 
$|p_m^j(x')-p_m(x')| \leq 1+55/\pi$.
By the uniform contraction of $\mathbb{Y}_l$ in \refE{E:uniform-contraction-Y}, and since $\mathbb{Y}_l$ maps the graph of $p_l$ to 
$p_{l-1}$ and the graph of $p_l^k$ to the graph of $p_{l-1}^{k+1}$, we conclude that 
$|p_n^{j+m-n}(x)- p_n(x)| \leq 0.9^{m-n} (1+55/\pi) < \gep$.  

\smallskip

{\em Step 4.} If $\ga\in \E{H}$, for every $n\geq -1$, $p_n^j \to b_n$ on $[0, 1/\ga_n]$. 

\smallskip 

By \refP{P:optimal-herman-p_n-b_n}, if $\ga \in \E{H}$, we have $b_n(0)=p_n(0)$ for all $n\geq -1$. 
Taking limits as $j\to +\infty$ in the second inequality in Step 2, we conclude that for all $n\geq -1$ we have 
$|p_n(x)-b_n(x)| \leq 55/\pi$. 
By the uniform contraction of $\mathbb{Y}_l$, and equivariant property of the graphs of these functions, 
we conclude that $p_n \equiv b_n$, for all $n\geq -1$. 
\end{proof}

 
\subsection{Proof of Theorem \ref{T:model-trichotomy-thm}}\label{SS:proof-thm-thrichotomy}
In this section we combine the results from the previous sections to characterise the topology of the model 
$\mathbb{A}_\ga$. 

Note that $\mathbb{A}_\ga = \partial \mathbb{M}_\ga$ satisfies the relations 
\begin{equation}\label{E:A_ga}
\begin{gathered}
\mathbb{A}_\ga = \left\{s(e^{2\pi i w}) \mid w \in I_{-1}, \Im w \leq p_{-1}(\Re w) \right \},\quad  \tif \ga \in \E{B}, \\
\mathbb{A}_\ga= \left \{s(e^{2\pi i w}) \mid w \in I_{-1} \right\} \cup \left\{0 \right\}, \quad   \tif \ga \notin \E{B}.
\end{gathered}
\end{equation}

\begin{proof}[Proof of Theorem~\ref{T:model-trichotomy-thm}]
Recall that $I_{-1}$ is the set above the graph of $b_{-1}$, and $\mathbb{A}_\ga$ is obtained from $I_{-1}$ 
through a projection, see \eqref{E:I_n-b_n} and \eqref{E:A_ga}. 
By \refL{L:model-almost-periodic} and \refE{E:b_n^j-cont-periodic} we may extend $I_{-1}$ and $b_{-1}$ 
periodically with period $+1$. That is, $b_{-1}$ is defined on $\D{R}$ and is +1-periodic, and $I_{-1}$ becomes 
$I_{-1}+\D{Z}$.
When $\ga$ is a Brjuno number, we also have the function $p_{-1}$, which is $+1$-periodic by \refE{E:p_n-peiodic}. 
We may extend this function $+1$-periodically onto $\D{R}$ as well. 

{\em Part (i)}
By \refP{P:b_n-p_n-identical-or-not}, $b_{-1}\equiv p_{-1}$ on $[0,1]$, and hence $b_{-1}$ is a continuous function. 
It follows that $\partial I_{-1}$ is equal to the graph of $b_{-1}$. 
The map $e^{2\pi ix} \mapsto e^{-2\pi ix} e^{-2\pi b_{-1}(x)}$ from the unit circle to $\mathbb{A}_\ga$ is continuous 
and injective. 

{\em Part (ii)}
By \refP{P:b_n-sup-B}, $b_{-1}(x)$ is finite for every $x\in \D{R}$, and by \refE{E:p_n>=b_n}, $b_{-1} \leq p_{-1}$. 
Consider the function $R_\ga:\D{R}/\D{Z} \to [1, +\infty)$, $R_\ga(x)=e^{2\pi (p_{-1}(x)-b_{-1}(x))}$, and then the set 
\[A'_\ga=\{r e^{2\pi i x}  \mid 1\leq r \leq R_\ga(x)\}.\]
From \refP{P:b_n-p_n-dense-touches} we infer that each of $R_\ga(x)=1$ and $R_\ga(x) \neq 1$ hold on 
a dense subset of $\D{R}/\D{Z}$.  
Using the continuity of $p_{-1}$ in \refP{P:p_n-cont}, and \refP{P:b_n-liminfs}, we note that 
for every $x\in \D{R}/\D{Z}$, we have 
\[\limsup_{s\to x^+} R_\ga(s)= \exp \big(2\pi p_{-1} (x) - 2\pi \liminf_{s \to x^+} b_{-1}(s)\big) = R_\ga(x).\]
Similarly, $\limsup_{s\to x^-} R_\ga(s)= R_\ga(x)$.
Therefore, by the definition in the introduction, $A'_\ga$ is a one-sided hairy Jordan curve. 

Since $I_{-1}$ is a closed set, $\partial I_{-1} \subset I_{-1}$.  
On the other hand, $p_{-1}$ is continuous, and $p_{-1}=b_{-1}$ on a dense subset of $\D{R}$. 
It follows that 
\[\partial I_{-1}= \{x+iy \mid x\in \D{R}, b_{-1}(x) \leq y\leq p_{-1}(x)\}.\]
Hence, 
\[\mathbb{A}_\ga=\{ e^{2\pi ix} e^{-2\pi y} \mid x\in \D{R}/\D{Z}, b_{-1}(x) \leq y\leq p_{-1}(x) \}.\]
The map $r e^{2\pi ix}  \mapsto r e^{-2\pi i x} e^{-2\pi p_{-1}(x)}$ from $\mathbb{A}_\ga'$ to $\mathbb{A}_\ga$ is a homeomorphism.

{\em Part (iii)}
Consider the set 
\[A'_\ga = \{r e^{2\pi ix}  \mid 0 \leq r \leq e^{-2\pi b_{-1}(x)}\}.\]
By virtue of propositions \ref{P:b_n-liminfs} and \ref{P:b_n-p_n-dense-touches}, $\mathbb{A}_\ga'$ is a Cantor bouquet. 
Here, $I_{-1}$ is the set above the graph of $b_{-1}$, and $b_{-1}$ takes $+\infty$ on a dense subset of $\D{R}$, 
see \refP{P:b_n-p_n-dense-touches}. 
It follows that 
\[I_{-1}= \{x+iy \mid x \in \D{R}, b_{-1}(x) \leq y\},\] 
and hence, 
\[\mathbb{A}_\ga=\{r e^{2\pi ix} \mid 0 \leq r \leq e^{-2\pi b_{-1}(x)} \}.\]
Here, $r e^{2\pi ix}  \mapsto r e^{-2\pi i x}$ provides a homeomorphism from $\mathbb{A}_\ga'$ to $\mathbb{A}_\ga$. 
Thus, $\mathbb{A}_\ga$ is a Cantor bouquet. 
\end{proof}

\begin{cor}
For every $\ga \in \E{B}$, we have the following properties: 
\begin{itemize}
\item $\mathbb{M}_\ga$ contains the ball of radius $e^{-\mathcal{B}(\ga)-5\pi}$ about $0$, 
\item $\mathbb{M}_\ga$ does not contain any ball of radius more than $e^{-\mathcal{B}(\ga)+5\pi}$ about $0$. 
\end{itemize}
\end{cor}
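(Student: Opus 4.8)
The plan is to read off both containments from the description of $\mathbb{M}_\ga$ in terms of the height function $b_{-1}$, together with the estimate on $\sup b_{-1}$ from \refP{P:b_n-sup-B}. Recall that $\mathbb{M}_\ga$ is star-like about $0$ (\refP{P:M-ga-relations-1}), so to understand which balls about $0$ are contained in it we only need to know, for each direction $e^{2\pi i x}$, the modulus of the point of $\mathbb{M}_\ga$ furthest from $0$ in that direction and the modulus of the point closest to $0$ on the ``outer boundary''. By \refE{E:M_ga} and \refE{E:I_n-b_n}, a ray in $\mathbb{M}_\ga$ in the direction $s(e^{2\pi i x}) = e^{-2\pi i x}$ has the form $\{t\, e^{-2\pi i x}\, e^{-2\pi b_{-1}(x)} \mid 0 \leq t \leq 1\}$ when $\ga \in \E{B}$; in particular the outermost point has modulus $e^{-2\pi b_{-1}(x)}$, and $0$ is the innermost point. (Here I use that $|s(e^{2\pi i(x + i y)})| = |e^{-2\pi i x} e^{-2\pi y}| = e^{-2\pi y}$.)

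First I would establish the inner containment. Since $\mathbb{M}_\ga$ is star-like about $0$, the ball $B(0, \rho)$ is contained in $\mathbb{M}_\ga$ as soon as $\rho \leq \inf_{x} e^{-2\pi b_{-1}(x)} = e^{-2\pi \sup_x b_{-1}(x)}$. By \refP{P:b_n-sup-B} applied with $n = -1$ (so $\ga_{n+1} = \ga_0$, and $\C{B}(\ga_0) = \C{B}(\ga)$ by \refE{E:Brjuno-functional-equations} since $\ga_0 = d(\ga,\D{Z})$ and $\C{B}$ is invariant under $\ga \mapsto \ga+1$ and $\ga \mapsto -\ga$), we have $2\pi \sup_x b_{-1}(x) \leq \C{B}(\ga) + 5\pi$, hence $e^{-2\pi \sup_x b_{-1}(x)} \geq e^{-\C{B}(\ga) - 5\pi}$. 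This gives the first bullet.

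Next the outer non-containment. Again by \refP{P:b_n-sup-B}, $2\pi \sup_x b_{-1}(x) \geq \C{B}(\ga) - 5\pi$, so there is a direction $x_0$ with $b_{-1}(x_0)$ close to this supremum; more carefully, $2\pi\, b_{-1}(x_0) \geq \C{B}(\ga) - 5\pi$ can be arranged (using that the sup is attained or approached, and $b_{-1}$ is finite and continuous on the Brjuno locus by \refP{P:b_n-sup-B} and \refP{P:p_n-cont}), and the outermost point of $\mathbb{M}_\ga$ in the direction $e^{-2\pi i x_0}$ then has modulus $e^{-2\pi b_{-1}(x_0)} \leq e^{-\C{B}(\ga) + 5\pi}$. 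Since every ray of $\mathbb{M}_\ga$ in that direction stops at that modulus, the point of modulus slightly larger than $e^{-\C{B}(\ga)+5\pi}$ in that direction is not in $\mathbb{M}_\ga$; hence no ball about $0$ of radius exceeding $e^{-\C{B}(\ga)+5\pi}$ is contained in $\mathbb{M}_\ga$.

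The only genuine subtlety — and the step I would be most careful about — is the precise handling of whether $\sup_x b_{-1}(x)$ is attained and the matching of the index $n=-1$ in \refP{P:b_n-sup-B} with $\C{B}(\ga)$ rather than $\C{B}(\ga_0)$; the latter is dispatched by the functional equations \refE{E:Brjuno-functional-equations}, and the former by noting that $b_{-1}$ is continuous on the compact set $[0,1]$ when $\ga \in \E{B}$ (\refP{P:p_n-cont} together with $b_{-1} = p_{-1}$ in the Herman case, and \refP{P:b_n-sup-B} giving finiteness in general, so that in fact one can just work with the supremum directly without needing it attained). Everything else is a direct substitution into the formula for $\mathbb{M}_\ga$, so this corollary is essentially a restatement of \refP{P:b_n-sup-B}.
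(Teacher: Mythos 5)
Your proposal is correct and essentially the paper's own argument: both bullets come down to the radial description of $\mathbb{M}_\ga$ through the height function $b_{-1}$ together with the $5\pi$-accurate comparison of $2\pi\sup b_{-1}$ with $\C{B}(\ga)$ (the paper obtains the inner ball from $b_{-1}\le p_{-1}\le p_{-1}^0\equiv(\C{B}(\ga)+5\pi)/(2\pi)$, which is the same estimate you extract from the upper half of \refP{P:b_n-sup-B}, and the outer bound exactly as you do). One caveat to drop: $b_{-1}$ is \emph{not} continuous when $\ga\in\E{B}\setminus\E{H}$ (only $p_{-1}$ is, by \refP{P:p_n-cont}), but this is harmless since, as you yourself note, the second bullet only needs the supremum of $b_{-1}$ and not its attainment.
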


\begin{proof}
Recall that any point $w$ with $\Re w \in [0,1]$ and $\Im w \geq p_{-1}(\Re w)$ belongs to 
$I_{-1}$. 
Recall that $P_{-1}^j$ is a decreasing sequence of functions converging to $p_{-1}$, 
and $p_{-1}^0 \equiv (\mathcal{B}(\ga)+5\pi)/(2\pi)$. 
Through projection $w \mapsto s(e^{2\pi i w})$, and adding $0$, we obtain the desired ball. 
On the other hand, by \refP{P:b_n-sup-B}, $ \sup_{x\in [0,1]} b_{-1}(x)$ is at least $(\mathcal{B}(\ga)-5\pi)/(2\pi)$. 
With the project, we obtain a point with modulus at most $e^{-\mathcal{B}(\ga)+ 5\pi}$ outside $\mathbb{M}_\ga$. 
\end{proof}  
\section{Dynamics of  \texorpdfstring{$\mathbb{T}_\ga$}{T-ga} on \texorpdfstring{$\mathbb{A}_\ga$}{A-ga}}\label{SS:dynamics on the model}
Here, we study the dynamics of $\mathbb{T}_\ga$ on $\mathbb{A}_\ga$ and 
classify the closed invariant subset of this model. 
In particular, we prove all parts of \refT{T:dynamics-on-model} in this section. 

\subsection{Topological recurrence}\label{SS:recurrence}
Recall that a map $f: X \to X$, of a topological space $X$, is called \textbf{topologically recurrent}, 
if for every $x\in X$ there is a strictly increasing sequence of positive integers $(m_i)_{i\geq 0}$ such 
that $f\co{m_i}(x) \to x$ as $i \to +\infty$. 

\begin{propo}\label{P:model-recurrent}
For every $\ga \in \D{R} \setminus \D{Q}$, $\mathbb{T}_\ga : \mathbb{A}_\ga \to \mathbb{A}_\ga$ is 
topologically recurrent. 
\end{propo}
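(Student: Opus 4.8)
The plan is to exploit the fact that the dynamics of $\mathbb{T}_\ga$ on $\mathbb{A}_\ga$ is semi-conjugate, via the renormalisation operator $\mathcal{R}_m$, to the dynamics of $\mathbb{T}_{-1/\ga}$ on $\mathbb{A}_{-1/\ga}$, together with the description of $\mathbb{A}_\ga$ as a projection of the graphs $b_{-1}$ and $p_{-1}$ from \refS{S:topology-A}. Since $\mathbb{T}_\ga$ acts as rotation by $2\pi\ga$ in the tangential direction, the first observation is that a point $z \in \mathbb{A}_\ga$ is recurrent for $\mathbb{T}_\ga$ as soon as the forward orbit $\{\mathbb{T}_\ga\co{m}(z)\}_{m\geq 0}$ returns arbitrarily close to $z$ in the radial direction along a subsequence of times $m$ for which $m\ga$ returns close to $0$ modulo $1$. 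Because the rotation is irrational, there are infinitely many such times (the denominators $q_n$ of the convergents), so the whole problem reduces to controlling the radial (i.e.\ $\Im w$) behaviour of the lift $\tilde T_\ga$ along these return times.

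The key mechanism is that, by \refP{P:renormalisation-relations}, the return map of $\mathbb{T}_\ga$ to the sector $S_\ga$ is conjugate (via $\psi_\ga$, which preserves rays) to $\mathbb{T}_{-1/\ga}$ on $\mathbb{M}_{-1/\ga}$; iterating, the return map of $\mathbb{T}_\ga$ to a sector of angle of order $\gb_n$ is conjugate to $\mathbb{T}_{\ga_{n+1}}$ (up to the conjugation by $s$ tracking $\gep_{n+1}$). So I would argue by a compactness/recursion scheme: fix $z\in\mathbb{A}_\ga$ and $\gep>0$; choose $n$ large so that the uniform contraction $0.9$ of the maps $\mathbb{Y}_j$ (see \refE{E:uniform-contraction-Y}) forces the conjugating change of coordinate $X_0^{n}$ to contract by less than $\gep$; then it suffices to find a single return of the $n$-th renormalised dynamics $\mathbb{T}_{\pm\ga_{n+1}}$ near the corresponding lifted point. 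For the renormalised point one uses that $\mathbb{T}_{\ga_{n+1}}$ again acts as an irrational rotation in the tangential direction and iterates the argument, or else one appeals directly to the ``nested sectors'' structure: every point of $\mathbb{A}_\ga$ lies, for each $n$, in the closure of some translated sector $W^{j}$ with $j\geq n$, or in $V^\infty$, and in either case the trajectory $(w_i;l_i)$ of its lift eventually visits all levels. Concretely, given $w_{-1}\in I_{-1}$ projecting to $z$, I would track its trajectory and use the periodicity $b_n(x+1)=b_n(x)$, $b_n(0)=b_n(1/\ga_n)$ from \refE{E:b_n^j-cont-periodic} to produce, at some deep level $m$, a point $z_m'$ in $I_m$ at bounded distance from $z_m$ (exactly as in Step 4 of the proof of \refP{P:b_n-liminfs}), whose image under $X_0^{m}$ is within $\gep$ of $z$ and corresponds to a forward iterate $\mathbb{T}_\ga\co{k}(z)$ with $k$ a suitable partial sum of the $l_i$'s.

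In fact I expect the cleanest route is to combine the tangential-rotation property with \refP{P:b_n-liminfs} directly: \refP{P:b_n-liminfs} already says $\liminf_{s\to x^\pm} b_n(s)=b_n(x)$, i.e.\ along the base curve every point is approached radially from both angular sides by points of $\mathbb{A}_\ga$ with the same or smaller height; translating this back through the projection $w\mapsto s(e^{2\pi i w})$ and through the conjugacies relating return maps to renormalised dynamics shows that the $\go$-limit set of $z$ contains $z$ itself. The one delicate point is matching the angular approximation with an actual orbit: one must verify that the points $z_m'$ produced by the periodicity argument are genuinely \emph{images} of $z$ under forward iterates of $\mathbb{T}_\ga$, not merely nearby points of $\mathbb{A}_\ga$; this is where the explicit formula for $\tilde T_\ga$ in \refS{SS:T-defn} (iterating the translations by $l_i$ and the maps $\mathbb{Y}_i$) and the continuity established in \refL{L:model-map-lift} must be invoked, choosing the return times $k$ to be the cumulative number of tangential rotation steps accumulated descending and re-ascending the tower. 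The main obstacle, therefore, is bookkeeping: pinning down the precise integer $k$ (a sum of $l_i$'s over the levels traversed) so that $\mathbb{T}_\ga\co{k}(z)$ equals the constructed near-point, and ensuring $k\to\infty$ as $\gep\to 0$, rather than any analytic difficulty — the analytic input (uniform contraction, periodicity of $b_n$, continuity of $\tilde T_\ga$) is already in hand.
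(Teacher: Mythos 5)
Your overall strategy points in the same direction as the paper's argument: go deep into the tower, use the $+1$-periodicity of the deep levels, push back down through the uniformly contracting maps $\mathbb{Y}_j$ so that the error is of order $0.9^m$, and recognise the resulting point as a forward iterate of the original point. That is exactly the paper's first case. But the step you set aside as ``bookkeeping'' is in fact the entire content, and as stated your construction does not deliver it. The points produced by the periodicity argument of Step 4 of \refP{P:b_n-liminfs} are in general \emph{not} on the forward orbit: there the translation at level $m$ may be by $-1$, and the integers used to descend are any admissible ones, not the trajectory integers $l_i$; so ``the image under $X_0^m$ corresponds to a forward iterate $\mathbb{T}_\ga\co{k}(z)$'' is precisely what must be proved, and it is only true under structural constraints on the trajectory. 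Concretely, the finite composition $(\mathbb{Y}_0+l_{-1})\circ\dots\circ(\mathbb{Y}_m+l_{m-1})(w_m+1)$ is an iterate of $\tilde T_\ga$ only when $w_m\in K_m$ (so that $w_m+1\in I_m$ and the finite-composition branch of the definition of $\tilde T_\ga$ applies along the way), and even then, when $w_m+1\notin K_m$ one has to replace $l_{m-1}$ by a different integer $l'$ and settle for a point within $\sqrt2$ of $w_{m-1}$ before contracting. The renormalisation operator and \refP{P:renormalisation-relations}, which you lean on in the first paragraph, are not needed and do not by themselves resolve this identification.

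More seriously, your sketch misses the case in which the trajectory eventually satisfies $w_i\in I_i\setminus K_i$ for all large $i$ (the $V^\infty$-type points). For such points no translation by $+1$ at a deep level produces a forward iterate at all, because $\tilde T_\ga$ is there defined by the limit formula, not by a finite composition. The paper treats this case separately: it passes to a forward iterate $w'_{-1}=\tilde T_\ga\co{n_1}(w_{-1})$ lying in $V^\infty$, uses the limit maps $E_m$ from the proof of \refL{L:P:model-map-lift-2} together with the uniform bound $|E_m(w_m)+l-w_m|\le 31$ from \refE{E:L:model-map-lift-2-3}, and concludes that the \emph{fixed} iterate $\tilde T_\ga\co{n_1+1}(w_{-1})$ is recurrent, which suffices because $n_1$ does not depend on $\gep$ and $\mathbb{T}_\ga$ is a homeomorphism of the compact set $\mathbb{A}_\ga$. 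Without this second mechanism (or some substitute for it), your argument covers only the points whose trajectories return to $K_m$ at arbitrarily deep levels, so as written the proof is incomplete.
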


\begin{proof}
Evidently, it is enough to show that $\tilde{T}_\ga: I_{-1}/\D{Z} \to I_{-1}/\D{Z}$ is topologically recurrent. 
To that end, fix an arbitrary $w_{-1} \in I_{-1}$, and assume that $(w_i ; l_i)_{i\geq -1}$ denotes the trajectory 
of $w_{-1}$. 
We consider two cases: 
\begin{itemize}
\item[(i)] there are arbitrarily large integers $m$ with $w_m \in K_m$, 
\item[(ii)] there is an integer $m \geq -1$ such that for all $i \geq m$ we have $w_i \in I_i \setminus K_i$. 
\end{itemize}
 
Let us first assume that (i) holds. Fix an arbitrary $\gep>0$. There is $m \geq 4$ such that $w_m \in K_m$ and 
$(0.9)^m \sqrt{2} \leq \gep$. 
If $w_m+1 \in K_m$, then we note that 
\[(\mathbb{Y}_0+(\gep_0+1)/2) \circ (\mathbb{Y}_1 + l_0)  \circ \dots \circ (\mathbb{Y}_m + l_{m-1})(w_m+1)\]
is defined and belongs to $I_{-1}$. 
It follows from an induction argument that there is an integer $n\geq 0$ such that the above point is equal to 
$\tilde{T}_\ga\co{n}(w_{-1})$. 
Then, by \refE{E:trajectory-condition-1} and the uniform contraction of $\mathbb{Y}_j$ in 
\refL{L:uniform-contraction-Y_r}, 
we have $|\tilde{T}_\ga\co{n}(w_{-1}) -w_{-1}| \leq (0.9)^{m+1} \cdot 1 \leq \gep$. 

If $w_m+1 \notin K_m$, we may not directly apply the above argument, since $\mathbb{Y}_m+l_{m-1}(w_m+1)$ 
may not belong to $I_{m-1}$. 
However, there is an integer $l' \geq (\gep_m+1)/2$ such that 
\[|\Re (\mathbb{Y}_m+l')(w_m+1) - \Re w_{m-1}| \leq 1,\] 
and either both $(\mathbb{Y}_m+l')(w_m+1)$ and $w_{m-1}$ belong to $K_{m-1}$, or both 
$(\mathbb{Y}_m+l')(w_m+1)$ and $w_{m-1}$ belong to $I_{m-1} \setminus K_{m-1}$. 
Note that, by \refL{L:uniform-contraction-Y_r}, we have 
\begin{align*}
| \Im (\mathbb{Y}_m + l')(w_m+1) - \Im w_{m-1}| 
& = |\Im (\mathbb{Y}_m + l')(w_m+1) - \Im (\mathbb{Y}_m + l_{m-1})(w_m) | \\ 
& \leq | \mathbb{Y}_m (w_m+1) - \mathbb{Y}_m(w_m) | \\
& \leq 0.9 \cdot 1 \leq 1. 
\end{align*}
Combining with the upper bound on the difference of the real parts, we obtain 
\[|(\mathbb{Y}_m + l')(w_m+1) - w_{m-1}| \leq \sqrt{2}.\] 
Now we consider the point 
\[(\mathbb{Y}_0+(\gep_0+1)/2) \circ (\mathbb{Y}_1 + l_0)  \circ \dots 
\circ (\mathbb{Y}_{m-1}+ l_{m-2}) \circ (\mathbb{Y}_m + l')(w_m+1),\]
which is defined and belongs to $I_{-1}$. 
In the same fashion, there is an integer $n\geq 0$ such that the above point is equal to $\tilde{T}_\ga \co{n}(w_{-1})$. 
Using the uniform contraction of $\mathbb{Y}_j$ again, and the above bound on 
$|(\mathbb{Y}_m + l')(w_m+1) - w_{m-1}|$, we conclude that 
$|\tilde{T}_\ga\co{n}(w_{-1}) - w_{-1}| \leq (0.9)^{m} \cdot \sqrt{2} \leq \gep$. 

Now assume that case (ii) holds. 
Given $\gep>0$, we may choose $m \geq 4$ such that for all $i \geq m$ we have $w_i \in I_i \setminus K_i$, and 
$0.9 ^{m+1} \cdot 31 \leq \gep$. 

There is $w'_{-1}$ in $I_{-1}$ whose trajectory $(w'_i; l_i')_{i\geq -1}$ satisfies $w_i \in I_i \setminus K_i$, 
for $0 \leq i \leq m$, and $w'_m = w_m$. It follows that there is an integer $n_1 \geq 0$ such that 
$\tilde{T}_\ga \co{n_1}(w_{-1})= w'_{-1}$. 

The point $w'_{-1}$ belongs to $V^\infty$, and $\tilde{T}_\ga$ at $w'_{-1}$ is defined using the second case 
in the definition of $\tilde{T}_\ga$. Recall the map $E_n$ defined in \refE{E:E_n-defn}. 
Let us choose the integer $l \geq (\gep_{m+1}+1)/2$ such that $E_m(w_m)+l \in I_m\setminus K_m$. 
It follows that $|\Re E_m(w_m) + l - \Re w_m | \leq 1$. 
By \refE{E:L:model-map-lift-2-3}, we have $|\Im E_m(w_m) + l - \Im w_m | \leq 30$.
Thus, $|E_m(w_m) +l - w_m | \leq 31$. 

Consider the point 
\[(\mathbb{Y}_0+(\gep_0+1)/2) \circ (\mathbb{Y}_1 + l_0)  \circ \dots 
\circ (\mathbb{Y}_{m-1}+ l_{m-2}) \circ (\mathbb{Y}_m + l)(E_m(w_m)),\]
which is defined and belongs to $I_{-1}$. 
The above point is equal to $\tilde{T}_\ga\co {n_2} (\tilde{T}_\ga(w'_{-1}))$. 
By the uniform contraction of $\mathbb{Y}_j$, we conclude that 
$|\tilde{T}_\ga\co {n_2} (\tilde{T}_\ga(w'_{-1})) - \tilde{T}_\ga (w_{-1}')| \leq 0.9^{m+1} \cdot 31$. 
By the above paragraph, this implies that 
$|\tilde{T}_\ga\co {n_2+1+n_1}(w_{-1}) - \tilde{T}_\ga^{n_1+1} (w_{-1}) | \leq 0.9^{m+1} \cdot 31 \leq \gep$. 
As $n_1$ is independent of $\gep$, this completes the proof in case (ii).  
\end{proof}

Recall that a set $K \subset \mathbb{A}_\ga$ is called \textbf{forward invariant} under $\mathbb{T}_\ga$, 
if $\mathbb{T}_\ga (K) \subseteq K$.
The set $K$ is called \textbf{invariant}, or \textbf{fully invariant}, under $\mathbb{T}_\ga$, 
if $\mathbb{T}_\ga ^{-1}(K)=K$.

\begin{propo}\label{P:invariant-fully-invariant}
Let $\ga \in \D{R} \setminus \D{Q}$. If $K \subset \mathbb{A}_\ga$ is closed and forward invariant under 
$\mathbb{T}_\ga$, then $K$ is fully invariant under $\mathbb{T}_\ga$. 
\end{propo}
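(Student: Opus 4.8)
The plan is to exploit the fact that $\mathbb{T}_\ga$ is a homeomorphism of the \emph{compact} set $\mathbb{A}_\ga$ (this follows from \refP{P:T-ga-tangential}), together with the topological recurrence established in \refP{P:model-recurrent}. Suppose $K \subseteq \mathbb{A}_\ga$ is closed and forward invariant, so $\mathbb{T}_\ga(K) \subseteq K$. Since $K$ is a closed subset of a compact set, $K$ is itself compact, and since $\mathbb{T}_\ga$ is continuous and injective, $\mathbb{T}_\ga(K)$ is again compact, hence closed. We always have $K \subseteq \mathbb{T}_\ga^{-1}(K)$ once $\mathbb{T}_\ga(K) \subseteq K$, because applying $\mathbb{T}_\ga^{-1}$ to $\mathbb{T}_\ga(K) \subseteq K$ gives $K \subseteq \mathbb{T}_\ga^{-1}(K)$ (using that $\mathbb{T}_\ga$ is a bijection of $\mathbb{A}_\ga$). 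So the whole content is to prove the reverse inclusion $\mathbb{T}_\ga^{-1}(K) \subseteq K$, equivalently $\mathbb{T}_\ga(K) \supseteq K$, i.e.\ that the forward-invariant closed set $K$ is in fact mapped \emph{onto} itself.

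First I would show $\mathbb{T}_\ga(K) = K$. Consider the nested sequence of compact sets $K \supseteq \mathbb{T}_\ga(K) \supseteq \mathbb{T}_\ga^{\circ 2}(K) \supseteq \cdots$, and let $K_\infty = \bigcap_{n \geq 0} \mathbb{T}_\ga^{\circ n}(K)$. Then $K_\infty$ is non-empty, compact, and satisfies $\mathbb{T}_\ga(K_\infty) = K_\infty$ exactly (a standard fact: for a homeomorphism of a compact space and a decreasing sequence of the above form, the intersection is invariant). The key step is then to show $K_\infty = K$. Fix $x \in K$; I want $x \in \mathbb{T}_\ga^{\circ n}(K)$ for every $n$, i.e.\ $\mathbb{T}_\ga^{\circ(-n)}(x) \in K$ for every $n \geq 0$. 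By \refP{P:model-recurrent} applied to the point $x$, there is a strictly increasing sequence $(m_i)$ of positive integers with $\mathbb{T}_\ga^{\circ m_i}(x) \to x$. Given $n$, for $i$ large enough $m_i > n$, so $\mathbb{T}_\ga^{\circ(m_i - n)}(x) = \mathbb{T}_\ga^{\circ(-n)}\big(\mathbb{T}_\ga^{\circ m_i}(x)\big)$; since $m_i - n \geq 0$ and $x \in K$ with $K$ forward invariant, $\mathbb{T}_\ga^{\circ(m_i-n)}(x) \in K$. Now apply the continuous map $\mathbb{T}_\ga^{\circ(-n)}$: wait — that is the wrong direction. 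Instead I pass to the limit directly: the points $\mathbb{T}_\ga^{\circ m_i}(x)$ lie in $\mathbb{T}_\ga^{\circ n}(K)$ for all $i$ with $m_i \geq n$ (since $\mathbb{T}_\ga^{\circ m_i}(x) = \mathbb{T}_\ga^{\circ n}(\mathbb{T}_\ga^{\circ(m_i-n)}(x))$ and $\mathbb{T}_\ga^{\circ(m_i-n)}(x) \in K$), and $\mathbb{T}_\ga^{\circ n}(K)$ is closed, so the limit $x$ belongs to $\mathbb{T}_\ga^{\circ n}(K)$. As $n$ was arbitrary, $x \in K_\infty$. Hence $K \subseteq K_\infty$, and combined with $K_\infty \subseteq K$ we get $K_\infty = K$, so $\mathbb{T}_\ga(K) = \mathbb{T}_\ga(K_\infty) = K_\infty = K$.

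Finally, from $\mathbb{T}_\ga(K) = K$ and the injectivity of $\mathbb{T}_\ga$ on $\mathbb{A}_\ga$, we conclude $\mathbb{T}_\ga^{-1}(K) = K$: indeed $\mathbb{T}_\ga^{-1}(K) = \mathbb{T}_\ga^{-1}(\mathbb{T}_\ga(K)) = K$ since $\mathbb{T}_\ga$ is a bijection. This is precisely the statement that $K$ is fully invariant.

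I expect the only genuinely delicate point to be the clean bookkeeping in the middle paragraph — making sure the exponents $m_i - n$ stay non-negative so that forward invariance of $K$ can be invoked, and that the closedness used is closedness of $\mathbb{T}_\ga^{\circ n}(K)$ (which needs continuity of $\mathbb{T}_\ga$ and compactness of $K$, both already available). Everything else is formal: compactness of $\mathbb{A}_\ga$ and the homeomorphism property come from \refP{P:T-ga-tangential}, and recurrence from \refP{P:model-recurrent}. No new estimates on the maps $\mathbb{Y}_n$ are needed.
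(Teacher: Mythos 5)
Your proof is correct and follows essentially the same route as the paper: recurrence (\refP{P:model-recurrent}) together with compactness/closedness forces every point of $K$ to lie in $\mathbb{T}_\ga(K)$, and injectivity of $\mathbb{T}_\ga$ then upgrades $\mathbb{T}_\ga(K)=K$ to full invariance. The detour through $K_\infty=\bigcap_{n\geq 0}\mathbb{T}_\ga\co{n}(K)$ is harmless but unnecessary — the paper's argument is just your $n=1$ case, phrased by extracting a convergent subsequence of $\mathbb{T}_\ga\co{m_i-1}(z)$ inside the closed set $K$.
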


\begin{proof}
Fix an arbitrary $z\in K$. 
By \refP{P:model-recurrent}, there is an increasing sequence of positive integers $m_i$ 
such that $\mathbb{T}_\ga\co {m_i}(z) \to z$ as $i \to \infty$. 
Since, $K$ is closed, the sequence $\mathbb{T}_\ga \co {m_i-1}(z)$ has a convergence 
subsequence, which converges to some $z'$ in $K$. Evidently, $\mathbb{T}_\ga(z')=z$, and hence 
$z'= \mathbb{T}_\ga ^{-1}(z) \in K$. 
\end{proof}

\subsection{Closed invariant subsets}\label{SS:invariants-model}
In this section we build a family of closed invariant sets for $\mathbb{T}_\ga: \mathbb{A}_\ga \to \mathbb{A}_\ga$, 
parametrised on a closed interval in $\D{R}$. 
The process is in analogy with how the set $\mathbb{A}_\ga$ is built in \refS{SS:tilings-nest}. 

Fix an arbitrary $y \geq 0$, and inductively define $y_n \geq 0$, for $n\geq -1$, according to 
\begin{equation}\label{E:invariants-imaginary-traces}
y_{-1}= y, \qquad y_{n+1}= \Im \mathbb{Y}_{n+1}^{-1}(i y_n).
\end{equation}
For $n\geq 0$, let 
\begin{equation}\label{E:I_n-J_n-K_n-general}
\begin{gathered}
\prescript{y}{}{I}_n^0 = \{w \in \D{C}  \mid \Re w\in [0, 1/\ga_n], \Im w \geq y_n -1\}, \\
\prescript{y}{}{J}_n^0 =  \{w \in \prescript{y}{}{I}_n^0 \mid \Re w \in [1/\ga_n-1, 1/\ga_n]\}, \q
\prescript{y}{}{K}_n^0 =  \{w \in \prescript{y}{}{I}_n^0 \mid \Re w \in [0, 1/\ga_n-1] \}.
\end{gathered}
\end{equation}
As in \refS{SS:tilings-nest}, we inductively defined the sets $\prescript{y}{}{I}_n^j$, $\prescript{y}{}{J}_n^j$, and 
$\prescript{y}{}{K}_n^j$, for $j \geq 1$ and $n\geq 0$. 
Assume that $\prescript{y}{}{I}_n^j$, $\prescript{y}{}{J}_n^j$, and $\prescript{y}{}{K}_n^j$ are defined for some 
$j$ and all $n \geq 0$. 
When $\gep_{n+1}=-1$, we let 
\begin{equation}\label{E:I_n^j--1-general}
\prescript{y}{}{I}_n^{j+1} 
= \bigcup_{l=0}^{a_n-2} \big( \mathbb{Y}_{n+1} (\prescript{y}{}{I}_{n+1}^j)+ l \big)  
\bigcup \big( \mathbb{Y}_{n+1}(\prescript{y}{}{K}_{n+1}^j)+ a_n-1\big).
\end{equation}
When $\gep_{n+1}=+1$, we let 
\begin{equation}\label{E:I_n^j-+1-general}
\prescript{y}{}{I}_n^{j+1} 
= \bigcup_{l=1}^{a_n} \big( \mathbb{Y}_{n+1} (\prescript{y}{}{I}_{n+1}^j)+ l \big) 
 \bigcup \big(\mathbb{Y}_{n+1}(\prescript{y}{}{J}_{n+1}^j)+ a_n+1\big ).
\end{equation}
Then, define 
\[\prescript{y}{}{J}_n^{j+1} =  \{w \in \prescript{y}{}{I}_n^{j+1} \mid \Re w \in [1/\ga_n-1, 1/\ga_n]\}, \; 
\prescript{y}{}{K}_n^{j+1} =  \{w\in \prescript{y}{}{I}_n^{j+1} \mid \Re w \in [0, 1/\ga_n-1] \}.\]
Let 
\[\prescript{y}{}{I}_{-1}^0=\{w \in \D{C}  \mid \Re w \in [0, 1/\ga_{-1}], \Im w \geq y_{-1}-1\},\] 
and for $j\geq 1$, consider the sets 
\[\prescript{y}{}{I}_{-1}^j= \mathbb{Y}_0(\prescript{y}{}{I}_0^{j-1}) + (\gep_0+1)/2.\]
By the latter part of \refL{L:Y-domain}, $\prescript{y}{}{I}_n^1 \subset \prescript{y}{}{I}_n^0$, for $n\geq -1$. 
This implies that for all $n\geq -1$ and all $j\geq 0$, 
\begin{equation}\label{E:I_n^j-forms-nest-general}
\prescript{y}{}{I}_n^{j+1} \subset \prescript{y}{}{I}_n^j.
\end{equation}
For $n\geq -1$, we define the closed sets 
\[\prescript{y}{}{I}_{n}= \bigcap_{j\geq 0} \prescript{y}{}{I}_{n}^j.\] 
Evidently, when $y=0$, we have $y_n=0$, for all $n\geq 0$, and hence, $\prescript{0}{}I_n=I_n$, for all $n\geq -1$. 

Note that, $iy \in \prescript{y}{}I_{-1}$, for any $y\geq 0$, and  
\begin{equation}\label{E:invariant-sets-lift-nest}
\prescript{x}{}I_{-1} \subsetneq \prescript{y}{}I_{-1}, \qquad \tif x> y \geq 0. 
\end{equation}
Moreover, by the uniform contraction of $\mathbb{Y}_n$ in \refL{L:uniform-contraction-Y_r}, 
\begin{equation}\label{E:invariant-set-end}
iy' \notin \prescript{y}{}{I}_{-1}, \quad \tif  y' < y.  
\end{equation}

Recall that for all $\ga \in \D{R} \setminus \D{Q}$, $\max (\mathbb{A}_\ga \cap \D{R})=+1$.  
We define $r_\ga \geq 0$ according to 
\[[r_\ga, 1]= \mathbb{A}_\ga \cap [0, +\infty).\]
If $\ga \notin \E{B}$, $r_\ga=0$, and if $\ga \in \E{B}$, $r_\ga=e^{-2\pi p_{-1}(0)}$. 
When $\ga \notin \E{B}$ and $t \in (0,1]$, choose $y \geq 0$ so that $t=e^{-2\pi y}$ and define 
\[\prescript{t}{}{\mathbb{A}}_\ga=\left \{s(e^{2\pi i w}) \mid w\in\prescript{y}{}{I}_{-1} \right\} \cup \left\{0 \right\}.\]
We extend this notation by setting $\prescript{0}{}{\mathbb{A}_\ga}= \{0\}$. 
When $\ga \in \E{B}$ and $t \in [r_\ga,1]$, choose $y \geq 0$ so that $t=e^{-2\pi y}$, and define 
\[\prescript{t}{}{\mathbb{A}}_\ga= \left\{s(e^{2\pi i w}) \mid w\in\prescript{y}{}{I}_{-1}, 
\Im w\leq p_{-1}(\Re w) \right \}.\]
For all $\ga \in \D{R} \setminus \D{Q}$, $\prescript{1}{}{\mathbb{A}}_\ga= \mathbb{A}_\ga$. 
By \eqref{E:invariant-sets-lift-nest} and \eqref{E:invariant-set-end}, for every $r_\ga \leq s < t \leq 1$, 
we have 
\begin{equation}\label{E:invariants-model-nested}
\prescript{s}{}{\mathbb{A}_\ga} \subsetneq \prescript{t}{}{\mathbb{A}_\ga}, \quad \tand \quad t 
\notin \prescript{s}{}{\mathbb{A}_\ga}. 
\end{equation}

\begin{propo}\label{P:^tA_ga-invariant}
For any $\ga \in \D{R} \setminus \D{Q}$ and any $t \in [r_\ga, 1]$, $\prescript{t}{}{\mathbb{A}}_\ga$ is fully 
invariant under $\mathbb{T}_\ga: \mathbb{A}_\ga \to \mathbb{A}_\ga$. 
\end{propo}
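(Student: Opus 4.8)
The plan is to show that $\prescript{t}{}{\mathbb{A}}_\ga$ is forward invariant under $\mathbb{T}_\ga$ and then invoke \refP{P:invariant-fully-invariant} to conclude full invariance. Since $\mathbb{T}_\ga$ is induced from $\tilde{T}_\ga: I_{-1}/\D{Z} \to I_{-1}/\D{Z}$ via the projection $w \mapsto s(e^{2\pi i w})$, it suffices to work at the level of the lift and show that $\tilde{T}_\ga(\prescript{y}{}{I}_{-1}) \subseteq \prescript{y}{}{I}_{-1}$ (and, when $\ga \in \E{B}$, that the extra constraint $\Im w \leq p_{-1}(\Re w)$ is preserved, which is automatic since $\prescript{t}{}{\mathbb{A}}_\ga \subseteq \mathbb{A}_\ga$ and $\mathbb{A}_\ga$ itself is invariant). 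The case $t = r_\ga$ or $t = 0$ is handled separately: $\prescript{0}{}{\mathbb{A}}_\ga = \{0\}$ is trivially invariant, and $\prescript{r_\ga}{}{\mathbb{A}}_\ga$ should turn out to be the single ray/point $[r_\ga,1]$ (the ``spine''), on which invariance follows from \refP{P:T-ga-tangential} together with the explicit description near $+1$.

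First I would set up the key structural observation: the sets $\prescript{y}{}{I}_n^j$ are built by exactly the same inductive recipe (Equations \eqref{E:I_n^j--1-general}--\eqref{E:I_n^j-+1-general}) as the $I_n^j$, with the only change being that the base floor is raised from $-1$ to $y_n - 1$, where $y_n$ is the $\mathbb{Y}_{n+1}$-pullback of $y_{n-1}$ along the imaginary axis. Consequently, for a point $w_{-1} \in \prescript{y}{}{I}_{-1}$ with trajectory $(w_i; l_i)_{i\geq -1}$ (defined exactly as in \refS{SS:T-defn}, since the combinatorial data $\ga_n, a_n, \gep_n$ are unchanged), one checks by induction on $i$ that $w_i \in \prescript{y}{}{I}_i$ for all $i$, and moreover that $\Im w_i \geq y_i$; this uses that $\mathbb{Y}_{n+1}$ preserves vertical lines, that $\Im \mathbb{Y}_{n+1}(iy_{n+1}) = y_n$ by \eqref{E:invariants-imaginary-traces}, and the monotonicity statement in \refL{L:Y-domain} (the ``$H_r$'' displacement inequality) which guarantees that raising the floor on one level raises it consistently on the next. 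So the trajectory of a point in $\prescript{y}{}{I}_{-1}$ stays in the $y$-shifted tower.

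Next I would run through the definition of $\tilde{T}_\ga(w_{-1})$ in the two cases (i) and (ii) of \refS{SS:T-defn} and verify that the output again lies in $\prescript{y}{}{I}_{-1}$. In case (i), $\tilde{T}_\ga(w_{-1}) = X_0^n(w_n + 1)$ where $w_n \in \prescript{y}{}{K}_n$; since $\prescript{y}{}{K}_n$ is $+1$-periodic in the real direction by the analogue of \refL{L:model-almost-periodic} (which holds verbatim because the proof only used \eqref{E:Y_n-comm-1}--\eqref{E:Y_n-comm-2} and the combinatorics), $w_n + 1 \in \prescript{y}{}{I}_n$, and applying the maps $\mathbb{Y}_j + (\gep_j+1)/2$ — which map $\prescript{y}{}{I}_{j+1}$ into $\prescript{y}{}{I}_j$ by construction — lands back in $\prescript{y}{}{I}_{-1}$. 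In case (ii), $w_i \in \prescript{y}{}{I}_i \setminus \prescript{y}{}{K}_i$ for all $i$, and the map $E_n$ of \refE{E:E_n-defn} is defined by the same uniformly-Cauchy limit; I would check that each $E_n^m$ maps $\prescript{y}{}{I}_n$ into $\prescript{y}{}{I}_n$ (again using $+1$-periodicity and that $\Re(w_m + 1 - 1/\ga_m) \in [0,1]$ keeps the real part in range while the imaginary part only goes up), so the limit $E_n$ does too, and then $\tilde{T}_\ga(w_{-1}) = (\mathbb{Y}_0 + (\gep_0+1)/2)(E_0(w_0)) \in \prescript{y}{}{I}_{-1}$. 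This establishes $\tilde{T}_\ga(\prescript{y}{}{I}_{-1}) \subseteq \prescript{y}{}{I}_{-1}$, hence $\mathbb{T}_\ga(\prescript{t}{}{\mathbb{A}}_\ga) \subseteq \prescript{t}{}{\mathbb{A}}_\ga$; since $\prescript{t}{}{\mathbb{A}}_\ga$ is closed (intersection of closed sets, union with $\{0\}$), \refP{P:invariant-fully-invariant} upgrades this to full invariance.

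The main obstacle I anticipate is the bookkeeping in case (ii): making precise that the floor-shifts $y_n$ are exactly compatible with the inductive tiling, i.e. that $\mathbb{Y}_{n+1}$ sends $\prescript{y}{}{I}_{n+1}$ (floor $y_{n+1}-1$) into the region with floor $y_n - 1$ over the appropriate real interval, and that this is stable under the $a_n$-fold union with integer translates and the passage to the $E_n$ limit. This is where the displacement inequality in \refL{L:Y-domain} does the real work — it says pullbacks of horizontal half-planes under $\mathbb{Y}_n$ are again (contained in) horizontal half-planes pinned at the image of the imaginary axis — and I would want to state a clean lemma to that effect before the main induction, perhaps just citing the latter part of \refL{L:Y-domain} applied repeatedly. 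Everything else is a routine transcription of the arguments already given for $I_n^j$ and $\tilde{T}_\ga$ in Sections~\ref{S:M-ga}--\ref{S:T-on-M}.
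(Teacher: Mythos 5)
Your skeleton is the same as the paper's: reduce to forward invariance via \refP{P:invariant-fully-invariant}, lift to $\tilde{T}_\ga$ acting on $\prescript{y}{}{I}_{-1}$, and treat points whose trajectory enters some $K_n$ (the pieces $W^n$) exactly as you do, using the $+1$-periodicity of $\prescript{y}{}{I}_n$. The genuine gap is in your direct treatment of the remaining case $w_{-1}\in V^\infty$. There you claim that each $E_n^m$ of \refE{E:E_n-defn} maps $\prescript{y}{}{I}_n$ into $\prescript{y}{}{I}_n$, ``using $+1$-periodicity and that $\Re(w_m+1-1/\ga_m)\in[0,1]$''. This is not justified: $E_n^m$ feeds the point $w_m+1-1/\ga_m$ into the chain $X_{n+1}^m$, and $1-1/\ga_m$ is not an integer, so $+1$-periodicity gives no information about whether $w_m+1-1/\ga_m\in\prescript{y}{}{I}_m$. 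The $1/\ga_m$-translation property (\refL{L:model-almost-periodic}(ii) and its $y$-shifted analogue) is available only between the vertical lines $\Re w=0$ and $\Re w=1/\ga_m$, whereas here $\Re w_m$ lies strictly inside $(1/\ga_m-1,1/\ga_m)$, where the floor function $\prescript{y}{}b_m$ is merely $1$-periodic; shifting the real part by $1-1/\ga_m$ (i.e.\ by $-\gep_{m+1}\ga_{m+1}$ modulo $1$) can raise the floor, so $w_m+1-1/\ga_m$ may fall below it. As written, the step fails, and with it your conclusion that $E_n(w_n)\in\prescript{y}{}{I}_n$.

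Two repairs exist. The paper avoids the computation entirely: having shown $\tilde{T}_\ga\big(\bigcup_{n\geq 0}W^n\cap\prescript{y}{}{I}_{-1}\big)\subseteq\prescript{y}{}{I}_{-1}$, it invokes the continuity of $\tilde{T}_\ga$ on $I_{-1}/\D{Z}$ (\refL{L:model-map-lift}), the closedness of $\prescript{y}{}{I}_{-1}/\D{Z}$, and the fact that $\bigcup_{n\geq 0}W^n\cap\prescript{y}{}{I}_{-1}$ is dense in $\prescript{y}{}{I}_{-1}$ modulo $\D{Z}$, so invariance passes to the closure; no analysis of $V^\infty$ is needed. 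Alternatively your direct route can be saved: replace $w_m+1-1/\ga_m$ by an integer translate $w_m+k_m\in\prescript{y}{}{I}_m$ at distance at most $1$ from it; since each factor $\mathbb{Y}_j+(\gep_j+1)/2$ maps $\prescript{y}{}{I}_j$ into $\prescript{y}{}{I}_{j-1}$ (the constants $(\gep_j+1)/2$ are admissible in \eqref{E:I_n^j--1-general}--\eqref{E:I_n^j-+1-general}) and is a $0.9$-contraction by \refL{L:uniform-contraction-Y_r}, the point $E_n^m(w_n)$ lies within $0.9^{\,m-n}$ of $\prescript{y}{}{I}_n$, and closedness gives $E_n(w_n)\in\prescript{y}{}{I}_n$ in the limit. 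Finally, your side remark on $t=r_\ga$ is off: for $\ga\in\E{B}$ the set $\prescript{r_\ga}{}{\mathbb{A}}_\ga$ is not the segment $[r_\ga,1]$ but the full invariant set at height $y=p_{-1}(0)$; no separate treatment is needed, since the argument applies uniformly to every $y\geq 0$, the only special case being $\prescript{0}{}{\mathbb{A}}_\ga=\{0\}$ when $\ga\notin\E{B}$.
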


\begin{proof}
By \refP{P:invariant-fully-invariant}, it is enough to show that $\prescript{t}{}{\mathbb{A}}_\ga$ is forward invariant. 
By the definition of $\prescript{t}{}{\mathbb{A}}_\ga$, it is enough to show that for all $y\geq 0$, 
$\prescript{y}{}{I}_{-1}$ is forward invariant under $\tilde{T}_\ga$. Let us fix $y \geq 0$. 

Recall the decomposition $I_{-1}=\cup_{n\geq 0} W^n \cup V^\infty$ in \refE{E:I_-1-decomposed}. 
Let $w \in \prescript{y}{}I_{-1} \cap W^n$, for some $n\geq 0$.  
Let $(w_j, l_j)_{j\geq -1}$ denote the trajectory of $w$. 
By the definition of $W^n$, $w_{n} \in K_n$, and by the definition of 
$\prescript{y}{}I_{-1}$, $w_n \in \prescript{y}{}I_{n}$. 
Since, $\prescript{y}{}I_{n}$ is translation invariant, $w_n +1 \in \prescript{y}{}I_{n}$. 
Then, it follows from the definitions of $\tilde{T}_\ga$ and $\prescript{y}{}I_{-1}$ that 
$\tilde{T}_{\ga}(w) \in \prescript{y}{}I_{-1}$. 

By the above paragraph, $\tilde{T}_\ga$ maps $\cup_{n\geq 0} W^n \cap \prescript{y}{}I_{-1}$ into 
$\prescript{y}{}I_{-1}$. 
Since $\tilde{T}_\ga: I_{-1}/\D{Z} \to I_{-1}/\D{Z}$ is continuous and $\prescript{y}{}I_{-1}/\D{Z}$ is closed, 
$\tilde{T}_{\ga}$ maps the closure of $(\cup_{n\geq 0} W^n \cap \prescript{y}{}I_{-1})/\D{Z}$ into 
$\prescript{y}{}I_{-1}/\D{Z}$.
The closure of $(\cup_{n\geq 0} W^n \cap \prescript{y}{}I_{-1})/\D{Z}$ is equal to $\prescript{y}{}I_{-1}/\D{Z}$. 
\end{proof}

\subsection{Closures of orbits}

\begin{lem}\label{L:dense-orbits-model}
For every $\ga \in \D{R} \setminus \D{Q}$, the following hold: 
\begin{itemize}
\item[(i)] if $\ga \notin \E{B}$, then for all $y \geq 0$, the orbit of $iy$ under $\tilde{T}_\ga$ 
is dense in $\prescript{y}{}{I}_{-1}$; 
\item[(ii)] if $\ga \in \E{B}$, then for all $y$ with $0 \leq y \leq p_{-1}(0)$, the orbit of $iy$ 
under $\tilde{T}_\ga$ is dense in 
\[\{ w\in \prescript{y}{}{I}_{-1}\mid \Im w\leq p_{-1}(\Re w)\}.\] 
\end{itemize}
\end{lem}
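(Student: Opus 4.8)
The plan is to prove both parts of \refL{L:dense-orbits-model} at once, by reducing density of the orbit of $iy$ to an approximation statement at deep levels of the renormalisation tower, exactly as in the proof of \refP{P:model-recurrent}. First I would fix $\ga$, fix an admissible $y\geq 0$, and let $(y_n)_{n\geq -1}$ be the sequence from \refE{E:invariants-imaginary-traces}, so that $iy_n \in \prescript{y}{}{I}_n$ for all $n$. The trajectory of $iy$ is $(w_i;l_i)_{i\geq -1}$ with $w_{-1}=iy$; since $\Re (iy)=0$ one checks from \eqref{E:I_n^j--1}--\eqref{E:I_n^j-+1} and \eqref{E:invariants-imaginary-traces} that $w_n$ lies on the imaginary axis translate with $\Re w_n \in \{0,1/\ga_n-1\}$ at every level, so the trajectory of $iy$ is very explicit. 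The target set, call it $Z$ (it is $\prescript{y}{}{I}_{-1}$ when $\ga\notin\E{B}$ and $\{w\in\prescript{y}{}{I}_{-1}\mid \Im w\leq p_{-1}(\Re w)\}$ when $\ga\in\E{B}$), is closed, $+1$-periodic, $\tilde T_\ga$-forward-invariant by \refP{P:^tA_ga-invariant}, and — crucially — it is the closure of $(\cup_{n\geq 0}W^n\cap Z)/\D{Z}$ in $I_{-1}/\D{Z}$, which is what the proof of \refP{P:^tA_ga-invariant} establishes.

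The core step is then the following: given any $w\in Z$ and any $\gep>0$, produce an iterate $\tilde T_\ga\co{m}(iy)$ within $\gep$ of $w$ (modulo $\D{Z}$). By the density remark above, it suffices to take $w\in W^n\cap Z$ for some large $n$; let $(w_j;l_j)_{j\geq -1}$ be its trajectory, so $w_n\in \prescript{y}{}K_n$. On the other hand the trajectory of $iy$ hits the level-$n$ set at a point $v_n$ on a vertical end-line of $\prescript{y}{}I_n$ with $\Im v_n=y_n$ (or $\Im v_n$ close to it), and since $w_n\in \prescript{y}{}I_n$ we have $\Im w_n\geq y_n-1$. Using the $+1$-periodicity of $\prescript{y}{}I_n$ (to adjust real parts) and the fact that the trajectory of $iy$ can be continued by applying $\mathbb Y_{n+1}$, $\mathbb Y_{n+2},\dots$ — that is, pulling $v_n$ further up the tower — I would run the trajectory of $iy$ forward until it arrives, at some later level, at a point differing from $w$'s corresponding tower-point by a bounded amount (bounded independently of $n$, using that $\Im w_n - \Im v_n$ is controlled by $\sup b_n\leq \tfrac1{2\pi}(\C{B}(\ga_{n+1})+5\pi)$ in the Brjuno case via \refP{P:b_n-sup-B}, and by the analogous bound in the non-Brjuno case — here one needs that $w_n$ lies below the graph of $p_n$, so $\Im w_n - y_n$ is at most $\sup p_n - \inf b_n$, which is $O(1)$ after absorbing the $\C{B}(\ga_{n+1})$ terms because the graph of $p_n$ sits at height $\approx \C{B}(\ga_{n+1})/(2\pi)$, the same height as $\sup b_n$). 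Then applying the chain $(\mathbb Y_0+\tfrac{\gep_0+1}2)\circ\cdots\circ(\mathbb Y_n+l_{n-1})$ and invoking the uniform contraction \refE{E:uniform-contraction-Y}, the bounded discrepancy at level $n$ contracts to $\leq C\cdot 0.9^{n}<\gep$ for $n$ large. One must of course check, as in \refP{P:model-recurrent}, that the point so constructed genuinely equals $\tilde T_\ga\co{m}(iy)$ for some $m\geq 0$ — this is the usual bookkeeping with the integers $l_i$ and the cases $w_m+1\in K_m$ versus $w_m+1\notin K_m$, handled by replacing $l_m$ with a nearby integer $l'$ and noting the resulting point stays within $\sqrt2$ of the intended one.

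I expect the main obstacle to be the uniform boundedness of the level-$n$ discrepancy in the non-Brjuno case. When $\ga\notin\E{B}$, $b_n$ takes the value $+\infty$ densely and $\sup b_n=+\infty$, so one cannot literally bound $\Im w_n$; instead one must exploit that the chosen $w\in W^n$ has \emph{finite} height, reach it from $iy$ by first climbing to a level $n'>n$ where the relevant vertical segment of $\prescript{y}{}I_{n'}$ is short, and only then descend — the same ``eligibility''-type argument used in \refP{P:b_n-liminfs}, but now it suffices to use that the trajectory of $iy$ visits, at arbitrarily deep levels, points whose distance to the bottom of $\prescript{y}{}I_n$ is bounded, because $iy$ itself starts at bounded height $y$ and each $\mathbb Y_n^{-1}$ is an expansion by at most $1/0.9$ in the sense relevant here. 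I would organise this as: (Step 1) record the explicit trajectory of $iy$ and the periodicity/forward-invariance of $Z$; (Step 2) reduce to approximating points of $W^n\cap Z$; (Step 3) the climb-and-descend construction producing an actual iterate within $\gep$; (Step 4) conclude density. The Brjuno bookkeeping for the $p_{-1}$ constraint in part (ii) — namely that the constructed iterate really lands below the graph of $p_{-1}$ — follows because $Z$ is forward invariant and the constructed point is an honest iterate of $iy\in Z$, so no separate argument is needed there.
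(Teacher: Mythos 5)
There is a genuine gap, and it sits exactly at the step you call the core one. The whole difficulty of the lemma is that the orbit of $iy$ has its level-$n$ tower heights pinned to the bottom of $\prescript{y}{}{I}_n$ (its trajectory is $w_n=iy_n$, with $y_n\equiv 0$ when $y=0$), while the point $w$ to be approximated can sit arbitrarily far above that bottom: in the Brjuno non-Herman case $\Im w_n$ can be as large as $p_n(\Re w_n)\approx \C{B}(\ga_{n+1})/(2\pi)$, and in case (i) there is no bound at all (nor is there any function $p_n$ there, so the constraint ``$w_n$ lies below the graph of $p_n$'' you invoke does not exist). Your claimed ``discrepancy at level $n$ bounded independently of $n$'' is therefore false: $\sup_x b_n(x)$ is itself of order $\C{B}(\ga_{n+1})/(2\pi)$ by \refP{P:b_n-sup-B} (infinite when $\ga\notin\E{B}$), so it is not a uniform bound, and ``$\sup p_n-\inf b_n=O(1)$'' is wrong because $\inf b_n$ stays near $0$ while $\sup p_n$ grows with $\C{B}(\ga_{n+1})$; the cancellation you describe would apply to $\sup p_n-\sup b_n$, not to $\Im w_n-y_n$. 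The auxiliary claim that each $\mathbb{Y}_n^{-1}$ ``expands by at most $1/0.9$'' is also incorrect: contraction of $\mathbb{Y}_n$ gives a lower bound on the expansion of the inverse, and vertically $\mathbb{Y}_n^{-1}$ behaves like $h_{\ga_n}$ (cf.\ \refP{P:Y_r-vs-h_r^-1}), i.e.\ it is hugely expanding. With the constant at level $n$ growing in $n$, the final contraction estimate $C\cdot 0.9^{\,n}<\gep$ does not close.

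More fundamentally, the instruction ``run the trajectory of $iy$ forward until it arrives, at some later level, at a point within bounded distance of $w$'s tower point'' is essentially the statement being proved, transferred to level $n$: no mechanism is given for why the forward orbit visits a bounded neighbourhood of a prescribed high point of $\prescript{y}{}{I}_n$ (your reduction to $w\in W^n\cap Z$ constrains real parts only and does nothing about heights, and the bookkeeping of \refP{P:model-recurrent} only produces returns near the starting point, not visits to a prescribed target). The paper's proof uses a different, non-dynamical mechanism with two ingredients your sketch is missing. First, an arbitrary $z$ in the target set is approximated by a point of the finite-stage boundary $\partial\,\prescript{y}{}{I}_{-1}^{\,j+1}$ for large $j$; such a point is the image, under the chain $(\mathbb{Y}_0+l_{-1})\circ\cdots\circ(\mathbb{Y}_j+l_{j-1})$, of a level-$j$ point of \emph{bounded} height, and this is what absorbs the unbounded height of $z$ above the bottom graph (the continuous graph of $\prescript{y}{}{b}_{-1}^{\,j+1}$ has already risen to height $\approx\Im z$ at nearby angles). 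Second, after replacing the level-$j$ point by a nearby integer, an induction on the relations $\gep_j\ga_j=1/\ga_{j-1}-a_{j-1}$ identifies the real part of the resulting point as $-m\gep_0\ga_0+(1+\gep_0)/2$ modulo $\D{Z}$, i.e.\ as an angle actually visited by the orbit of $iy$, whose orbit points sit on the bottom graph over those angles; uniform contraction then gives the $\delta$-approximation. Your climb-and-descend idea gestures at the first ingredient, but it is needed in the Brjuno non-Herman case just as much as in the non-Brjuno case, and neither it nor the arithmetic identification of the orbit angles is carried out, so the proposal does not yield the density claim. (Only the easy subcase $\ga\in\E{H}$, where the target is the graph of the continuous function $b_{-1}=p_{-1}$, can be handled by continuity plus equidistribution of the angles, as the paper does.)
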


\begin{proof}
To simplify the notations, let us first consider the orbit of $y=0$. 

When $\ga \in \E{H}$, the set in item (ii) becomes the graph of the function $b_{-1}= p_{-1}$. 
Then, the statement follows from the continuity of $b_{-1}$ in \refP{P:p_n-cont}. 
The non-trivial case is to prove the statement when $\ga \notin \E{H}$.  

Let $\langle x \rangle$ denote the fractional part of $x \in \D{R}$, that is, $\langle x \rangle \in [0,1)$ and 
$ x \in \langle x \rangle + \D{Z}$.  
By \refE{E:A_ga}, in order to prove the proposition, it is enough to show that the set
\[\Big \{\langle - m \gep_0 \ga_0  + (1+\gep_0)/2 \rangle +  i  b_{-1}(\langle -m \gep_0 \ga_0 + (1+\gep_0)/2 \rangle) 
\Bigm \vert m \in \D{N}\Big\}\] 
is dense in $\partial I_{-1}$. This is because $s(e^{2 \pi i (-m \gep_0 \ga_0)})= e^{2\pi i m\ga}$. 

It is possible to prove both items in the proposition at once. 
Assume that $z$ is an arbitrary point, such that either $\ga \notin \E{B}$ and $z \in I_{-1}$, 
or $\ga \in \E{B}$ and $z \in I_{-1}$ with $\Im z \leq p_{-1}(\Re z)$. 
Also, fix an arbitrary $\gd>0$. 
We aim to identify an element of the orbit of $y$ in the $\delta$ neighbourhood of $z$. 

Recall that $I_{-1}= \cap_{j\geq 1} I_{-1}^j$. 
Choose $j_0 \geq 1$ such that $(9/10)^{j_0} \sqrt{2} \leq \gd/2$. 
There are $j \geq j_0$ and $z' \in \partial I_{-1}^{j+1}$ such that $|z'-z|\leq \gd/2$ and $\Re z'\in (0,1)$.  
By the definition of $I_{-1}^{j+1}$, there must be $w_{j} \in \partial I_{j}^0 $ and integers $l_k \in \D{Z}$, 
for $-1\leq k\leq j$, such that 
\[z'=(\mathbb{Y}_0+l_{-1})\circ (\mathbb{Y}_1+l_0) \circ \dots \circ (\mathbb{Y}_{j}+l_{j-1})(w_j).\] 
There is an integer $l_j$ with $0 \leq l_j \leq 1/\ga_j$ such that $|l_j - w_j| \leq \sqrt{2}$. 
Let 
\[z''=(\mathbb{Y}_0+l_{-1}) \circ (\mathbb{Y}_1+l_1) \circ \dots \circ (\mathbb{Y}_j+l_{j-1})(l_j).\] 
By the uniform contraction of the maps $\mathbb{Y}_k$ in \refE{E:uniform-contraction-Y}, 
$|z'-z''| \leq (9/10)^{j+1}\sqrt{2}\leq \gd/2$. 
In particular, $|z''-z|\leq \gd$. 

On the other hand, since  $\Re \mathbb{Y}_k(x) = - \gep_k \ga_k \Re x$, one may verify that 
\[\Re z''= \sum_{k=0}^{j} \Big(l_k \prod_{n=0}^{k} (-\gep_n \ga_n)\Big)+ l_{-1}.\]
Any value of the above form is equal to $- m\gep_0\ga_0 + (1+\gep_0)/2$, for some $m\in \D{N}$, modulo $\D{Z}$. 
This may be proved by induction on $j$. 
From the definition of $I_{-1}$ we note that $l_{-1}= (1+\gep_0)/2$. This implies the statement for $j=0$. 
Now assume that the statement is true for all integers less than $j$. To prove it for $j$, one uses the relation 
$\gep_j \ga_j= 1/\ga_{j-1}-a_{j-1}$ to reduce the statement to $j-1$. 

The proof for non-zero values of $y$ is identical to the above one; one only needs to employ the translation 
invariance of the sets $\prescript{y}{}{I}_n^j$.
\end{proof}

\begin{propo}\label{P:invariants-ordered-model}
For any $\ga \in \D{R} \setminus \D{Q}$, and any $t \in [r_\ga,1]$, $\gw(t)= \prescript{t}{}{\mathbb{A}}_\ga$. 
In particular, 
\begin{itemize}
\item if $s > t$, $s \notin \gw(t)$, 
\item the obit of $+1$ is dense in $\mathbb{A}_\ga$. 
\end{itemize}
\end{propo}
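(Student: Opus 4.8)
\textbf{Proof proposal for Proposition~\ref{P:invariants-ordered-model}.}
The plan is to identify $\gw(t)$ with $\prescript{t}{}{\mathbb{A}}_\ga$ by proving two inclusions, using the family of sets $\prescript{y}{}I_{-1}$ built above together with the density statement in \refL{L:dense-orbits-model}. First I would fix $t \in [r_\ga, 1]$ and choose $y \geq 0$ with $t = e^{-2\pi y}$, and I would work throughout with the lift: the point $s(e^{2\pi i w})$ with $w = iy$ lies on the real axis at height $t$, and its orbit under $\mathbb{T}_\ga$ corresponds to the orbit of $iy$ under $\tilde{T}_\ga$ on $I_{-1}/\D{Z}$. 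Since the projection $w \mapsto s(e^{2\pi i w})$ is continuous and proper on the relevant strip, $\gw$ of the projected point is the projection of the set of accumulation points of $\{\tilde{T}_\ga\co{m}(iy)\}_{m \geq 0}$ in $I_{-1}/\D{Z}$.

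For the inclusion $\prescript{t}{}{\mathbb{A}}_\ga \subseteq \gw(t)$: by \refL{L:dense-orbits-model}, the orbit of $iy$ under $\tilde{T}_\ga$ is dense in $\prescript{y}{}I_{-1}$ (when $\ga \notin \E{B}$) or in $\{w \in \prescript{y}{}I_{-1} \mid \Im w \leq p_{-1}(\Re w)\}$ (when $\ga \in \E{B}$); the hypothesis $0 \leq y \leq p_{-1}(0)$ needed in part (ii) of that lemma is exactly the requirement $t \geq r_\ga = e^{-2\pi p_{-1}(0)}$. A dense orbit has the whole set as its $\gw$-limit, provided the orbit is not eventually periodic; since $\ga$ is irrational and $\tilde{T}_\ga$ acts as translation by $-\ga$ in the real coordinate modulo $\D{Z}$ (\refP{P:T-ga-tangential}), no orbit is periodic, so every point of the closure is an accumulation point. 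Projecting back gives $\prescript{t}{}{\mathbb{A}}_\ga \subseteq \gw(t)$.

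For the reverse inclusion $\gw(t) \subseteq \prescript{t}{}{\mathbb{A}}_\ga$: by \refP{P:^tA_ga-invariant}, $\prescript{t}{}{\mathbb{A}}_\ga$ is fully invariant under $\mathbb{T}_\ga$, and it is closed and contains the point $t$; hence it contains the entire orbit of $t$, and being closed it contains the closure of that orbit, which contains $\gw(t)$. Combining the two inclusions yields $\gw(t) = \prescript{t}{}{\mathbb{A}}_\ga$. The two bulleted consequences are then immediate: the first, $s > t \Rightarrow s \notin \gw(t)$, is precisely the second half of \refE{E:invariants-model-nested}; the second follows by taking $t = 1$, so that $\gw(1) = \prescript{1}{}{\mathbb{A}}_\ga = \mathbb{A}_\ga$, and noting that the point $+1$ is the point of $\mathbb{A}_\ga$ on the positive real axis with $y = 0$ (indeed $+1 = s(e^{2\pi i \cdot 0})$ and $0 \in I_{-1}$), whose orbit is therefore dense in $\mathbb{A}_\ga$.

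The main obstacle I anticipate is the bookkeeping around the projection $w \mapsto s(e^{2\pi i w})$ and the passage between "dense orbit in $\prescript{y}I_{-1}$" and "$\gw$-limit equals $\prescript{t}{}{\mathbb{A}}_\ga$": one must check that accumulation in $I_{-1}/\D{Z}$ survives the projection (which is fine, as the map is a homeomorphism onto its image away from $\Im w = +\infty$, and the point $0$ is handled by the fact that $\Im \tilde{T}_\ga(z) \to +\infty$ as $\Im z \to +\infty$, see the proof of \refP{P:T-ga-tangential}), and that the exclusion of the trivial case $y = 0$ versus $y > 0$ in \refL{L:dense-orbits-model} is correctly matched to the ranges $t \in (0,1]$ and $t \in [r_\ga, 1]$. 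Everything else is a direct assembly of \refL{L:dense-orbits-model}, \refP{P:^tA_ga-invariant}, \refP{P:model-recurrent}, and \refE{E:invariants-model-nested}.
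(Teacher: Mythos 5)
Your assembly is essentially the paper's own proof, which is a one-line corollary of \refL{L:dense-orbits-model} together with the exclusion property \refE{E:invariant-set-end} (equivalently \refE{E:invariants-model-nested}); your use of \refP{P:^tA_ga-invariant} for the inclusion $\gw(t)\subseteq \prescript{t}{}{\mathbb{A}}_\ga$ is the intended easy direction. One step, however, is justified by a claim that is false in general: a dense orbit that is not eventually periodic need not have the whole closure as its $\gw$-limit set, since orbit points that are isolated in the closure are never returned to (e.g.\ an orbit marching monotonically through a convergent sequence). So ``not periodic'' alone does not give $\gw(t)=\overline{\{\mathbb{T}_\ga\co{m}(t)\}}$. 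In the present setting the conclusion is saved in either of two ways you already have at hand: (1) topological recurrence, \refP{P:model-recurrent}, gives $t\in\gw(t)$, and since $\gw(t)$ is closed and forward invariant it then contains the closure of the orbit, hence equals it; or (2) the sets $\prescript{t}{}{\mathbb{A}}_\ga$ have no isolated points (each vertical fibre is a nondegenerate segment or is accumulated by nearby fibres, by the $\liminf$ property of the height functions as in \refP{P:b_n-liminfs}), so every point of the closure of an infinite dense orbit is an accumulation point. Either remark should replace the non-periodicity argument. Finally, the endpoint $t=r_\ga=0$ in the non-Brjuno case is not covered by the parametrisation $t=e^{-2\pi y}$, but there $\prescript{0}{}{\mathbb{A}}_\ga=\{0\}$ and $\mathbb{T}_\ga(0)=0$, so $\gw(0)=\{0\}$ trivially; with these two repairs your proof is complete and coincides with the paper's.
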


\begin{proof}
This is an immediate corollary of \refL{L:dense-orbits-model} and the property in \refE{E:invariant-set-end}.
\end{proof}

\begin{propo}\label{P:invariants-classified-model}
Let $\ga \in \D{R} \setminus \D{Q}$. For every non-empty closed invariant set $X$ of 
$\mathbb{T}_\ga: \mathbb{A}_\ga \to \mathbb{A}_\ga$, there is $t \in [r_\ga,1]$ such that 
$X=\prescript{t}{}{\mathbb{A}}_\ga$.
\end{propo}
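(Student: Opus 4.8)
The plan is to show that any non-empty closed invariant set $X$ must be exactly one of the sets $\prescript{t}{}{\mathbb{A}}_\ga$ constructed in \refSS{SS:invariants-model}. The starting observation is that since $X$ is closed and invariant (hence forward invariant), \refP{P:model-recurrent} and the argument of \refP{P:invariant-fully-invariant} tell us $X$ is automatically fully invariant; moreover by topological recurrence every point of $X$ is in the closure of its own orbit. The key quantity to extract from $X$ is a ``radial depth'': define $t = \max\{|z| \mid z \in X, \ z \in [0,+\infty)\}$, or more robustly, work in the lift and set $y = \inf\{\Im w \mid w \in \widetilde{X}\}$ where $\widetilde{X} \subset I_{-1}$ is the preimage of $X$ under $w \mapsto s(e^{2\pi i w})$, and put $t = e^{-2\pi y}$. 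One first checks $t \in [r_\ga, 1]$: the upper bound $t\le 1$ is because $X \subseteq \mathbb{A}_\ga$ and $\mathbb{A}_\ga \cap [0,\infty) = [r_\ga,1]$ forces $y \ge 0$; the lower bound $t \ge r_\ga$, equivalently $y \le p_{-1}(0)$ when $\ga \in \E{B}$, follows because $X$ is non-empty and sits inside $\mathbb{A}_\ga$, whose lift lies below the graph of $p_{-1}$, combined with $+1$-periodicity of the relevant functions.

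First I would establish the inclusion $X \subseteq \prescript{t}{}{\mathbb{A}}_\ga$. Pick $z \in X$ and lift it to $w \in \widetilde{X}$ with $\Im w \ge y$. I want to show $w \in \prescript{y}{}{I}_{-1}$ (and, when $\ga\in\E{B}$, also $\Im w \le p_{-1}(\Re w)$, which is automatic from $X\subseteq \mathbb{A}_\ga$). The natural route is to run the trajectory $(w_i; l_i)_{i\ge -1}$ of $w$ and compare it level-by-level with the defining recursion of the sets $\prescript{y}{}{I}_n^j$ from \eqref{E:I_n^j--1-general}--\eqref{E:I_n^j-+1-general}. Using the functional relations \eqref{E:Y_n-comm-1}--\eqref{E:Y_n-comm-2} and the definition $y_{n+1} = \Im \mathbb{Y}_{n+1}^{-1}(iy_n)$, one shows inductively that each $w_i$ has imaginary part $\ge y_i - (\text{small bounded error})$, and then that in fact $w_i \in \prescript{y}{}{I}_i$; the point is that the orbit of $w$ under $\tilde T_\ga$ stays in $X$, hence every iterate has imaginary part $\ge y$ in the lift, which propagates down the tower via the $\mathbb{Y}_n$ to give $w_i \in \prescript{y}{}{I}_i^j$ for all $j$. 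I expect this to require care because of the distinction between the $W^n$ and $V^\infty$ parts of the decomposition \eqref{E:I_-1-decomposed}, and because of the $\gep_n = \pm 1$ case splits — but it is the same bookkeeping already used in \refP{P:^tA_ga-invariant} and \refP{P:b_n-liminfs}.

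Next I would prove the reverse inclusion $\prescript{t}{}{\mathbb{A}}_\ga \subseteq X$. Here the key input is \refL{L:dense-orbits-model}: the orbit of $iy$ under $\tilde T_\ga$ is dense in $\prescript{y}{}{I}_{-1}$ (or in the portion below the graph of $p_{-1}$ when $\ga\in\E{B}$), so by \refP{P:invariants-ordered-model} we have $\gw(iy) = \prescript{t}{}{\mathbb{A}}_\ga$, the whole set. Thus it suffices to produce a single point of $X$ at radial depth exactly $y$, i.e.\ a point $w^* \in \widetilde{X}$ with $\Im w^* = y$, and then note $\gw(w^*) \subseteq X$ (as $X$ is closed and invariant) while $\gw(w^*) = \prescript{t}{}{\mathbb{A}}_\ga$ by density. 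To get such a $w^*$: by definition of $y$ as an infimum there is a sequence $w^{(k)} \in \widetilde{X}$ with $\Im w^{(k)} \to y$; passing to a subsequence, $w^{(k)}$ converges in $I_{-1}/\D{Z}$ (the real parts live in $[0,1]$) to some $w^*$ with $\Im w^* = y$, and $w^* \in \widetilde{X}$ by closedness. One subtlety: if $y > 0$ one must check that the infimum is attained at a point whose orbit is genuinely dense in all of $\prescript{y}{}{I}_{-1}$ and not merely in some proper invariant sub-piece — but \refL{L:dense-orbits-model}(i)--(ii) is stated for \emph{every} admissible $y$, and the translation-invariance of $\prescript{y}{}{I}_n^j$ shows any point at depth exactly $y$ has the same $\gw$-limit as $iy$, so this is fine.

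The main obstacle I anticipate is the first inclusion, specifically verifying rigorously that ``every iterate of $w$ stays at lift-depth $\ge y$'' forces $w \in \prescript{y}{}{I}_{-1}$. The naive statement — that the lowest point of $\mathbb{A}_\ga$ on the orbit closure controls membership in $\prescript{y}{}{I}_{-1}$ — needs the precise interplay between $\tilde T_\ga$ and the tower of maps $\mathbb{Y}_n$, and one must rule out that $w$ could have a large imaginary part now but a trajectory that is ``supported'' on a thinner set. I would handle this by arguing contrapositively at the level of the functions $b_n$ and their translates: if $w \notin \prescript{y}{}{I}_{-1}$ then, tracing through the recursion, some iterate of the radial projection of $w$ escapes below the relevant height, contradicting that the orbit stays in $X \subseteq \{\,\text{lift-depth} \ge y\,\}$. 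This is the step where the construction in \refSS{SS:invariants-model} was deliberately set up to mirror \refSS{SS:tilings-nest}, so the verification should parallel \refP{P:^tA_ga-invariant} closely, just read in the opposite direction. Once both inclusions are in hand, $X = \prescript{t}{}{\mathbb{A}}_\ga$ with $t \in [r_\ga,1]$, completing the proof.
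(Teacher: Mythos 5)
Your overall plan (extract a radial depth from $X$, produce a point of $X$ realising it, then use \refL{L:dense-orbits-model} for one inclusion and a tower argument for the other) is the right shape, but the way you extract the parameter is wrong, and the step that was supposed to repair it fails. Setting $y=\inf\{\Im w \mid w\in\widetilde X\}$ does not recover the correct $t$: the boundary function of $\prescript{y_0}{}{I}_{-1}$ is not constant in $\Re w$, and its minimum over a period is in general strictly below its value at $\Re w=0$. Indeed, by \refL{L:Y_r-commutation}(ii) one has $\Im Y_r(1/r-1+it)=\Im Y_r(it)$, and the sign analysis of $\Im\,\partial Y_r/\partial s$ in the proof of \refL{L:Y_r-distances}(i) shows this function then strictly decreases on $[1/r-1,\,1/r-1/2]$; the paper never claims the minimum sits at $x=0$, only the bound $\prescript{y}{}{b}_{-1}(x)\geq \prescript{y}{}{b}_{-1}(0)-5/\pi$. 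Consequently, already for $X=\prescript{t_0}{}{\mathbb{A}}_\ga$ your recipe returns $y<y_0$, hence $t>t_0$, and by \refE{E:invariants-model-nested} the asserted equality $X=\prescript{t}{}{\mathbb{A}}_\ga$ is false. Your alternative definition $t=\max\bigl(X\cap[0,+\infty)\bigr)$ is the correct invariant, but it presupposes that $X$ meets the non-negative real axis (equivalently, that $\widetilde X$ contains a point of the imaginary axis), which is exactly the non-trivial content and is nowhere proved in your proposal.

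The same issue sinks your reverse inclusion. You take $w^*\in\widetilde X$ with $\Im w^*=y$ and claim its $\omega$-limit equals that of $iy$ ``by translation invariance of $\prescript{y}{}{I}_n^j$''. That transfer is false: \refL{L:dense-orbits-model} is proved only for the points $iy$ with $\Re=0$, and a point at height $y$ with $\Re w^*\notin\D{Z}$ lies on the graph of $\prescript{y'}{}{b}_{-1}$ for some $y'\neq y$, so (by the very classification being proved) its orbit closure is $\prescript{t'}{}{\mathbb{A}}_\ga$ with $t'=e^{-2\pi y'}\neq e^{-2\pi y}$. Producing a point of $X$ \emph{on the imaginary axis} is the crux, and the paper's proof is organised entirely around it: it transports $X$ down the tower via $X_{n+1}=\{w\in I_{n+1}\mid \mathbb{Y}_{n+1}(w)+(\gep_{n+1}+1)/2\in X_n\}$, records the level-wise minima, lifts the constant functions $\min\Im X_n-5$ back up to $h_n^j\to h_n$, identifies $h_n\equiv\prescript{y}{}{b}_n$ with $y=h_{-1}(0)$ by a bounded-difference-plus-contraction comparison, and shows $iy\in X_{-1}$ by a net argument using the $+1$-periodicity of $X_n$ and the uniform contraction of the $\mathbb{Y}_n$; only then does \refL{L:dense-orbits-model} give $\prescript{t}{}{\mathbb{A}}_\ga\subseteq X$, with a separate case (when $\min\Im X_n\leq 4$ for arbitrarily large $n$) yielding $t=1$, and $t=0$ reserved for $X=\{0\}$ when $\ga\notin\E{B}$. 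Your forward inclusion, read with the correct $y$, is essentially this argument, but as written (with $y$ the infimum, and with the unjustified density transfer) the proof does not close.
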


\begin{proof}
If $\ga \notin \E{B}$ and $X=\{0\}$, we let $t=0$. 
Otherwise, let $X_{-1} \subset I_{-1}$ be the lift of $X \setminus \{0\}$, which is a closed set in $\mathbb{C}$. 
We inductively define the closed sets $X_n$, for $n\geq 0$, according to 
\[X_{n+1}=\{w \in I_{n+1} \mid \mathbb{Y}_{n+1}(w)+(\gep_{n+1}+1)/2 \in X_n\}.\]
We consider two cases below. 

{\em Case (I)} There is $N\geq -1$ such that for all $n\geq N$, $\min \Im X_n \geq 4$.

We define the functions $h_n^j:[0, 1/\ga_n] \to [-1, +\infty)$, for $n\geq -1$ and $j\geq N+1$, as follows. 
For $n\geq N$, let 
\[h_n^0(x) = \min \Im X_n -5.\]
Then, define $h_n^{j+1}$ as the lift of $h_{n+1}^j$ by $\mathbb{Y}_{n+1}$.
That is, for $x\in [0,1]$, let 
\[h_n^{j+1}(x)= \Im \mathbb{Y}_{n+1} (- \gep_{n+1}x / \ga_{n+1}+ i h_{n+1}^j(- \gep_{n+1}x / \ga_{n+1})),\] 
and then extend $h_n^{j+1}$ over $[0, 1/\ga_n]$ using $h_n^{j+1}(x)= h_n^{j+1}(x+1)$.  

We claim that for all $n \geq -1$ and all $j\geq N+1$, $h_n^{j+1} \geq h_n^j$. 
We show this by induction on $j$. 
Let $n \geq N$. 
Since $X_{-1}$ is invariant under $\tilde{T}_\ga$, $X_{n+1}$ is $+1$-periodic. 
This implies that every point on the graph of $h_{n+1}^0$ is at distance at most $(5+ 1/2)$ from some element of 
$X_{n+1}$. 
Then, by the uniform contraction of $\mathbb{Y}_{n+1}$ in \refL{L:uniform-contraction-Y_r}, any point on the 
graph of $h_n^1$ is at distance at most $(1/2+5) \cdot 0.9 \leq 5$ from some element of $X_n$. 
This implies that $h_n^1 \geq \min \Im X_n -5 = h_n^0$, for all $n \geq N$. 
Now repeatedly applying the maps $\mathbb{Y}_l$, we obtain the desired property. 

By the above paragraph, $h_n^j$ converges to some function $h_n: [0, 1/\ga_n] \to [-1, +\infty]$, as $j \to +\infty$. 
Moreover, by the relations in \eqref{E:Y_n-comm-1} and \eqref{E:Y_n-comm-2}, for any $n\geq -1$ we have 
\[h_n(0)=h_n(1/\ga_n), \qquad h_n(x)=h_n(x+1), \quad \tfor n\geq -1, x\in [0, 1/\ga_n-1].\]
One may repeat the argument in Step 1 of the proof of \refP{P:p_n-cont} (replace $p_n^j$ with $h_n^j$) to 
conclude that for all $n \geq N$ and all $x\in [0, 1/\ga_n]$, $h_n(x) \geq h_n(0) -5/\pi$. 
Since $X_N$ is not empty, and lies above the graph of $h_N$, there must be $x \in [0,1/\ga_N]$ such that 
$h_N(x) < +\infty$. 
In particular, $h_N(0)\neq +\infty$, and hence $h_{-1}(0) \neq +\infty$. 
Let us introduce $y=h_{-1}(0)$. 

We claim that $iy \in X_{-1}$. 
To see this, fix an arbitrary $n\geq N$. 
There is a net of points on $X_{n}-5 i$, at distance one from one another, which lie on the graph of $h_n^0$. 
Lifting these points using $\mathbb{Y}_j+l_j$, we obtain a net of points on the graph of $h_{-1}^n$, 
which by the uniform contraction of $\mathbb{Y}_j$ in \refL{L:uniform-contraction-Y_r} must be at distances 
at most $1 \cdot 0.9^{n+1}$ from one another. 
Moreover, those points on the graph of $h_{-1}^n$ lie at distances at most $5 \cdot 0.9^{n+1}$ from some elements 
of $X_{-1}$. 
As $h_{-1}^n \to h_{-1}(0)$, one may conclude that there is a sequence of points in $X_{-1}$ which converges 
to $i h_{-1}(0)= iy$. 
Since $X_{-1}$ is closed, $iy \in X_{-1}$. 

Let us consider the set $\prescript{y}{}I_{-1}$ and the associated function $\prescript{y}{}b_n$. 
We claim that $h_n \equiv \prescript{y}{}{b}_n$, on $[0, 1/\ga_n]$, for all $n\geq -1$. 
Recall the values $y_n \geq 0$ from \refE{E:invariants-imaginary-traces}, and note that $h_n(0)=y_n$, 
for all $n\geq -1$. 
Also, recall the functions $\prescript{y}{}b_n^j: [0, 1/\ga_n] \to [-1, +\infty]$. 
Since $X_n$ lies above the graph of $h_n$, for $n\geq N$ we have 
\[h_n^0(x)=\min \Im X_n -5 \geq \min_{x\in [0, 1/\ga_n]} h_n(x) -5 \geq h_n(0)-5/\pi-5.\] 
On the other hand, $h_n(0) \geq h_n^0(0)= h_n^0(x)$, for all $x \in [0, 1/\ga_n]$. 
Recall that $\prescript{y}{}b^0_n(x) \equiv y_n-1=h_n(0)-1$, for $x\in [0, 1/\ga_n]$. 
Therefore, for all $n \geq N$ and all $x\in [0, 1/\ga_n]$, $|h_n^0(x)-\prescript{y}{}{b}_n^0(x)| \leq 6$. 
By the uniform contraction of $\mathbb{Y}_l$ in \refL{L:uniform-contraction-Y_r}, for all $n \geq -1$, 
$x \in [0, 1/\ga_n]$ and $j \geq N+1$, we must have $|h_n^j(x)- \prescript{y}{}b_n^j(x)| \leq 6 \cdot 0.9^j$. 
Taking limits as $j \to +\infty$, we obtain $h_n(x) \equiv \prescript{y}{}{b}_n(x)$. 

Since $X_{-1}$ lies above the graph of $h_{-1}$, and $h_{-1}= \prescript{y}{}b_{-1}$, we must have 
\[X_{-1} \subseteq \prescript{y}{}{I}_{-1}, \; \tif \ga \notin \E{B}, \q \tand \q 
X_{-1} \subseteq \{ w\in \prescript{y}{}{I}_{-1} \mid \Im w \leq p_{-1}(\Re w) \}, \; \tif \ga \in \E{B}.\]

On the other hand, as $iy\in X_{-1}$, $X_{-1}$ is closed, and $X_{-1}$ is invariant under $\tilde{T}_\ga$, 
the closure of the orbit of $iy$ under $\tilde{T}_\ga$ must be contained in $X_{-1}$. 
By virtue of \refL{L:dense-orbits-model}, we conclude that the inclusions in the above equation are equalities. 

{\em Case (II)} There are arbitrarily large $n$ with $\min \Im X_n \leq 4$.  

Here, we may introduce $h_n^0(x)\equiv -1$, for all $n \geq -1$, and define the functions $h_n^j$ as in case (I). 
We note that $h_n^j$ is an increasing sequence of functions, which converges to some 
$h_n: [0, 1/\ga_n] \to [-1, +\infty]$. 
Moreover, these functions also enjoy the functional relations $h_n(x+1)=h_n(x)$, for $x\in [0, 1/\ga_n]$, and 
$h_n(0)=h_n(1/\ga_n)$. Then, as in the above paragraphs, we must have $h_n(0) \leq h_n(x) +5/\pi$, for all $n\geq 0$. 
Since $X_n$ lies above the graph of $h_n$, combining with the hypothesis in this case, there must be arbitrarily 
large $n$ with $h_n(0) \leq 4+ 5/\pi$. 
By the uniform contraction of $\mathbb{Y}_l$, we conclude that $h_{-1}(0)=0$. 
Thus, $0 \in X_{-1}$. 
Since $X_{-1}$ is closed, and invariant under $\tilde{T}_\ga$, the closure of the orbit of $0$ must be in $X_{-1}$. 
Using \refL{L:dense-orbits-model}, we complete the proof in this case. 
\end{proof}

\subsection{Topology of the closed invariant subsets}\label{SS:top-invariant-sets}
In order to explain the topological properties of the sets $\prescript{t}{}{\mathbb{A}}_\ga$, we use height functions 
as in \refS{SS:height-functions} to study the structure of the sets $\prescript{y}{}I_{-1}$. 
Since each $\mathbb{Y}_n$ preserves vertical lines, each of $I_n^j$ and $I_n$, for $n\geq -1$ and $j\geq 0$, 
consists of closed half-infinite vertical lines. 
For $n\geq -1$ and $j\geq 0$, define $\prescript{y}{}b_n^j:[0, 1/\ga_n] \to [-1, +\infty)$ according to 
\begin{equation}\label{E:I_n^j-b_n^j-general}
\prescript{y}{}I_n^j= \{w\in \D{C} \mid 0 \leq \Re w \leq 1/\ga_n, \Im w \geq \prescript{y}{}b_n^j(\Re w)\}.
\end{equation}
By the relations in \eqref{E:Y_n-comm-1}--\eqref{E:Y_n-comm-2}, for all $n\geq -1$ and $j\geq 0$, 
$\prescript{y}{}b_n^j:[0, 1/\ga_n] \to [-1, +\infty)$ is continuous. 
Moreover, by \eqref{E:I_n^j-forms-nest-general} and \eqref{E:I_n^j-b_n^j-general}, 
$\prescript{y}{}b_n^{j+1} \geq \prescript{y}{}b_n^j$ on $[0, 1/\ga_n]$. 
Thus, for $n\geq -1$, we may define $\prescript{y}{}b_n:[0, 1/\ga_n] \to [-1,+\infty]$ as
\[\prescript{y}{}b_n(x)= \lim_{j\to + \infty} \prescript{y}{}b_n^j(x)= \sup_{j\geq 0} \prescript{y}{}b_n^j(x).\]
The function $\prescript{y}{}b_n$ describes the set $I_n$, that is, 
\begin{equation}\label{E:I_n-b_n-general}
\prescript{y}{}I_n= \{w\in \D{C} \mid 0 \leq \Re w \leq 1/\ga_n, \Im w \geq \prescript{y}{}b_n(\Re w)\}.
\end{equation}
By the definition of the sets $\prescript{y}{}I_n^j$, $\prescript{y}{}b_n^j(0)=\prescript{y}{}b_n^j(1/\ga_n)$, and 
$\prescript{y}{}b_n^j(x+1)= \prescript{y}{}b_n^j(x)$ for all $x\in [0, 1/\ga_n-1]$. 
Taking limits as $j\to +\infty$, we note that for all $n\geq -1$,
\begin{equation}\label{E:b_n^j-cont-periodic-general}
\prescript{y}{}b_n(0)=\prescript{y}{}b_n(1/\ga_n), \quad 
\prescript{y}{}b_n(x+1)=\prescript{y}{}b_n(x), \quad \forall x \in [0, 1/\ga_n-1].
\end{equation}
These are the key functional relations required to explain the topology of $\prescript{y}{}I_{-1}$. 

From \refS{SS:height-functions}, recall the height functions $p_n : [0, 1/\ga_n] \to [-1, +\infty)$, for $n\geq -1$. 

\begin{propo}\label{P:invariants-topology-model}
For every $\ga \in \D{R} \setminus \D{Q}$ the following hold: 
\begin{itemize}
\item[(i)] if $\ga \notin \E{B}$, for every $t \in (r_\ga, 1]$, $\prescript{t}{}{\mathbb{A}}_\ga$ is a Cantor bouquet;
\item[(ii)] if $\ga \in \E{B} \setminus \E{H}$, for every $t \in (r_\ga, 1]$, $\prescript{t}{}{\mathbb{A}}_\ga$ 
is a one-sided hairy Jordan curve.
\end{itemize}
\end{propo}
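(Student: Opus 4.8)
The key observation is that the sets $\prescript{t}{}{\mathbb{A}}_\ga$ are governed by the pair of height functions $\prescript{y}{}b_{-1}$ and $p_{-1}$ (with $t = e^{-2\pi y}$) in exactly the same way that $\mathbb{A}_\ga$ is governed by $b_{-1}$ and $p_{-1}$ in the proof of \refT{T:model-trichotomy-thm}. So the plan is to reproduce that proof, replacing $b_n$ with $\prescript{y}{}b_n$ throughout, and to verify that every ingredient used there has an analogue for $\prescript{y}{}b_n$. First I would record that, by \eqref{E:I_n-b_n-general} and the definition of $\prescript{t}{}{\mathbb{A}}_\ga$, we have the descriptions
\[
\prescript{t}{}{\mathbb{A}}_\ga = \{ r e^{2\pi i x} \mid 0 \leq r \leq e^{-2\pi \,\prescript{y}{}b_{-1}(x)} \}, \quad \tif \ga \notin \E{B},
\]
and, when $\ga \in \E{B}$, $\prescript{t}{}{\mathbb{A}}_\ga$ is the image under $w \mapsto s(e^{2\pi i w})$ of the region between the graphs of $\prescript{y}{}b_{-1}$ and $p_{-1}$. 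As in \refSS{SS:proof-thm-thrichotomy}, it then suffices to produce homeomorphic models $\{r e^{2\pi i x} \mid 0 \le r \le e^{-2\pi \prescript{y}{}b_{-1}(x)}\}$ (case (i)) and $\{re^{2\pi i x} \mid 1 \le r \le e^{2\pi(p_{-1}(x) - \prescript{y}{}b_{-1}(x))}\}$ (case (ii)) and check they satisfy axioms (a) and (b) in the definitions of Cantor bouquet and one-sided hairy Jordan curve in \refSS{SS:CB-HJC}.

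The real content is that the three properties of $b_{-1}$ needed in \refSS{SS:proof-thm-thrichotomy} persist for $\prescript{y}{}b_{-1}$. These are: (1) a \emph{liminf} statement $\liminf_{s\to x^\pm} \prescript{y}{}b_{-1}(s) = \prescript{y}{}b_{-1}(x)$ — the analogue of \refP{P:b_n-liminfs}; (2) \emph{density of touches}, i.e. $\prescript{y}{}b_{-1} = p_{-1}$ on a dense set and $\prescript{y}{}b_{-1} < p_{-1}$ (resp. $< +\infty$) on a dense set — the analogue of \refP{P:b_n-p_n-dense-touches}; and (3) finiteness of $\prescript{y}{}b_{-1}$ when $\ga \in \E{B}$, which here follows because $\prescript{y}{}b_{-1} \le \prescript{y}{}b_{-1}^0 + (\text{bounded}) $... more precisely from the analogue of \refP{P:b_n-sup-B}: one can run the telescoping argument of that proposition verbatim with $D_n^j = \max \prescript{y}{}b_n^j$, the only change being that $\prescript{y}{}b_n^0 \equiv y_n - 1$ instead of $-1$, and $y_n$ grows at most like $0.9^{-n}$ by \refL{L:uniform-contraction-Y_r} applied to $\mathbb{Y}_{n+1}^{-1}$, contributing a harmless bounded term. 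For (1), the proof of \refP{P:b_n-liminfs} uses only: the sets $\prescript{y}{}I_n^j$ are closed (so $\prescript{y}{}I_n$ is closed, giving one inequality); the translation-periodicity \eqref{E:b_n^j-cont-periodic-general}; the decomposition of $1/\ga_n$ into subintervals via $a_n, \gep_{n+1}$; and the uniform contraction of the $\mathbb{Y}_j$. All of these hold for the shifted sets, so the \emph{eligibility} machinery (Steps 1--4 there) transfers with only cosmetic changes — the role of $b_n$ as ``lowest point of $\prescript{y}{}I_n$ above a given real part'' is the same. For (2), the proof of \refP{P:b_n-p_n-dense-touches} identifies a point $x_n$ via the $\prescript{y}{}b_n(x_n) \approx \C{B}(\ga_{n+1})/(2\pi)$ estimate and then uses $\mathbb{Y}_{n+1}$-equivariance plus periodicity to spread touches densely; the same argument applies, since $p_n$ is unchanged and the shifted $\prescript{y}{}b_n$ still satisfies $\prescript{y}{}b_n \le p_n$ (the shift only raises the base $\prescript{y}{}b_n^0$, but $p_n^0$ dominates it for $y$ in the allowed range, and the induction $\prescript{y}{}b_n^j \le p_n^j$ propagates — I would check this monotonicity explicitly as the one genuinely new small lemma). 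Continuity of $p_{-1}$ when $\ga \in \E{B}$ is \refP{P:p_n-cont} and needs nothing new.

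\textbf{The main obstacle} is the finiteness/boundedness of $\prescript{y}{}b_{-1}$ together with the inequality $\prescript{y}{}b_n \le p_n$: one must control how much the upward shift of the base level propagates through the infinite tower of maps $\mathbb{Y}_n$. The point is that the shift at level $n$, namely $y_n$, can grow without bound as $n \to \infty$ (since $\mathbb{Y}_n^{-1}$ expands), so one cannot simply say ``the shift is bounded''. What saves the argument is that in forming $\prescript{y}{}b_{n-1}^{j}$ from $\prescript{y}{}b_{n}^{j-1}$ via $\mathbb{Y}_n$ one contracts by $0.9$ (\refL{L:uniform-contraction-Y_r}), and in the telescoping sum of \refP{P:b_n-sup-B} each level-$k$ contribution is weighted by $\gb_{n+k} \le 2^{-k}$; the extra term coming from the nonzero base $y_{n+j}-1$ appears as $\gb_{n+j} \cdot (\text{something} \lesssim y_{n+j})$, and since $y_{n+j} \lesssim 0.9^{-(n+j)}$ while $\gb_{n+j} \lesssim 2^{-(n+j)}$, the product still tends to $0$. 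So the telescoping estimate closes with the same constant up to an additive $O(1)$, yielding $|2\pi \sup_x \prescript{y}{}b_{-1}(x) - \C{B}(\ga)| = O(1)$ whenever $\ga \in \E{B}$, hence finiteness, and the analogue of \refP{P:b_n-p_n-identical-or-not} (that $\ga \in \E{H} \Rightarrow \prescript{y}{}b_n \equiv p_n$) follows by the same Step 1--4 argument as in \refP{P:p_n-cont}. Once these transferred propositions are in hand, the proof concludes exactly as Parts (i)--(iii) of \refT{T:model-trichotomy-thm}: in case (i) one gets a Cantor bouquet, in case (ii) a one-sided hairy Jordan curve (here using additionally that the base curve — the graph of $p_{-1}$, which is continuous — is a Jordan curve after projection, and that $\prescript{y}{}b_{-1} < p_{-1}$ on a dense set so the ``hairs'' of varying length $e^{2\pi(p_{-1}(x) - \prescript{y}{}b_{-1}(x))} - 1$ accumulate correctly by the liminf property). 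I would present the argument by stating the four transferred propositions, noting in each case that the proof is ``word for word that of \refP{...} with $b_n$ replaced by $\prescript{y}{}b_n$ and the elementary modification described above'', and then giving the short concluding paragraph that mirrors \refSS{SS:proof-thm-thrichotomy}.
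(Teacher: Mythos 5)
Your overall route is exactly the paper's: transfer the liminf property of \refP{P:b_n-liminfs} and the dense-touch dichotomy of \refP{P:b_n-p_n-dense-touches} to the shifted height functions $\prescript{y}{}{b}_n$, using the relations \eqref{E:b_n^j-cont-periodic-general} in place of \eqref{E:b_n^j-cont-periodic}, and then rerun the proof of \refT{T:model-trichotomy-thm} with $\prescript{y}{}{b}_{-1}$ in place of $b_{-1}$ and $p_{-1}$ unchanged. So the architecture is correct and matches the paper.

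There is, however, a genuine flaw in the way you handle what you call the main obstacle. From \refL{L:uniform-contraction-Y_r} you may only conclude that $\mathbb{Y}_{n+1}^{-1}$ \emph{expands} distances by at least $1/0.9$; it gives no upper bound on the expansion, so the claim $y_n\lesssim 0.9^{-n}$ is false. In fact, on the imaginary axis $\mathbb{Y}_{n+1}^{-1}$ behaves like $h_{\ga_{n+1}}$ (this is the content of \refP{P:Y_r-vs-h_r^-1}), so $y_{n+1}\approx h_{\ga_{n+1}}(2\pi y_n)/(2\pi)$ can be exponentially larger than $y_n$, and the sequence $(y_n)$ may grow like an iterated exponential. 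Consequently your telescoping argument, as justified, does not close: the term $\gb_{n+j}\,y_{n+j}$ cannot be controlled by comparing $0.9^{-(n+j)}$ with $2^{-(n+j)}$. The correct control, and the cheaper route, comes from the sandwich you partially flag: for $t\in(r_\ga,1]$ one has $y<p_{-1}(0)$; since $\mathbb{Y}_{m+1}$ maps the graph of $p_{m+1}$ to the graph of $p_m$ and $t\mapsto\Im\mathbb{Y}_{m+1}^{-1}(it)$ is increasing, induction gives $y_m<p_m(0)\leq p_m^0=(\C{B}(\ga_{m+1})+5\pi)/(2\pi)$, hence $\prescript{y}{}{b}_n^j\leq p_n^j$ and $b_n\leq\prescript{y}{}{b}_n\leq p_n$ for all $n,j$ (the lower bound because $\prescript{y}{}{I}_n\subset I_n$). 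This sandwich already yields everything you wanted from a shifted \refP{P:b_n-sup-B}: finiteness of $\prescript{y}{}{b}_{-1}$ for $\ga\in\E{B}$ is immediate, and at the special points $x_n$ of \refP{P:b_n-p_n-dense-touches} one has $b_n(x_n)=p_n(x_n)$ (resp.\ $b_n(x_n)=+\infty$ when $\ga\notin\E{B}$), hence $\prescript{y}{}{b}_n(x_n)=p_n(x_n)$ (resp.\ $=+\infty$), so the dense-touch properties transfer with no new telescoping at all; the strict inequality on a dense set comes from $\prescript{y}{}{b}_n(0)=y_n<p_n(0)$ (resp.\ $y_n<+\infty$) spread by periodicity and equivariance. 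If you do want the sup estimate itself, note it also follows from the sandwich together with \refP{P:b_n-sup-B} (or from $\gb_{n+j}\,\C{B}(\ga_{n+j+1})\to 0$, which uses convergence of the Brjuno series, not a geometric bound on $y_n$). With that repair, the rest of your argument goes through as in the paper.
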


\begin{proof}
One may repeat the proof of \refP{P:b_n-liminfs}, replacing $b_n$ with $\prescript{y}{}b_n$, and using the 
relations in \refE{E:b_n^j-cont-periodic-general} instead of the corresponding ones in \refE{E:b_n^j-cont-periodic}.
(Indeed, one only needs to verify the first and the last paragraphs in the proof of \refP{P:b_n-liminfs}.) 
That leads to the properties 
\begin{itemize}
\item[(a)] for all $x\in [0, 1/\ga_n)$, $\liminf_{s\to x^+} \prescript{y}{}{b}_n(s)= \prescript{y}{}{b}_n(x)$;
\item[(b)] for all $x\in (0, 1/\ga_n]$, $\liminf_{s\to x^-} \prescript{y}{}{b}_n(s)= \prescript{y}{}{b}_n(x)$; 
\end{itemize}
for each $n\geq -1$ and $y\geq 0$. 
In the same way, as in the proof of \refP{P:b_n-p_n-dense-touches}, we also note that for every $n\geq -1$, 
\begin{itemize}
\item[(c)] if $\ga \notin \E{B}$, each of $\prescript{y}{}b_n=+\infty$ and $\prescript{y}{}b_n < +\infty$ 
hold on a dense subset of 
$[0,1/\ga_n]$;
\item[(d)] if $\ga \in \E{B}\setminus \E{H}$ and $y < p_{-1}(0)$, 
each of $\prescript{y}{}b_n=p_n$ and $\prescript{y}{}b_n < p_n$ hold on a dense subset of $[0, 1/\ga_n]$.
\end{itemize}
Then, one may repeat the proof of \refT{T:model-trichotomy-thm}, using the above properties (a)--(d) 
to establish the desired result. 
\end{proof}

\subsection{Dependence on the parameter}

\begin{propo}\label{P:invariants-cont-depen}
For every $\ga \in \D{R} \setminus \D{Q}$, the map $t \mapsto \prescript{t}{}{\mathbb{A}}_\ga$, $t \in [r_\ga, 1]$, 
is continuous with respect to the Hausdorff metric on compact subsets of $\D{C}$.
\end{propo}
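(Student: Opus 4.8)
\textbf{Proof proposal for Proposition \ref{P:invariants-cont-depen}.}

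The plan is to reduce the continuity of $t \mapsto \prescript{t}{}{\mathbb{A}}_\ga$ to the continuity of the family of sets $\prescript{y}{}{I}_{-1}$ in the parameter $y$, and then to the continuity of the associated height functions $\prescript{y}{}{b}_{-1}$ on $[0,1]$. First I would fix $\ga$ and reparametrise: write $t = e^{-2\pi y}$, so that $t \in [r_\ga,1]$ corresponds to $y \in [0, y_{\max}]$ where $y_{\max}$ is $p_{-1}(0)$ if $\ga \in \E{B}$ and $+\infty$ otherwise. The projection $w \mapsto s(e^{2\pi i w})$ is continuous and, restricted to the strip $\{0 \le \Re w \le 1\}$, proper in the sense that it sends a Hausdorff-convergent sequence of sets whose imaginary parts stay bounded below to a Hausdorff-convergent sequence of sets (with $0$ adjoined as a limit point when the imaginary parts go to $+\infty$). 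So it suffices to show that $y \mapsto \prescript{y}{}{I}_{-1}$, intersected with $\{\Im w \le p_{-1}(\Re w)\}$ when $\ga \in \E{B}$, is continuous in the Hausdorff topology on closed subsets of the strip (compactified at $\Im w = +\infty$).

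The key mechanism is the uniform contraction of the maps $\mathbb{Y}_n$, \refE{E:uniform-contraction-Y}, exactly as in the construction of the sets $I_n^j$. I would run the argument level by level. At level $0$ in the tower, the base sets $\prescript{y}{}{I}_n^0 = \{\Re w \in [0,1/\ga_n],\ \Im w \ge y_n - 1\}$ depend continuously on $y$, because $y_n = \Im \mathbb{Y}_n^{-1}(i y_{n-1})$ and each $\mathbb{Y}_n^{-1}$ is continuous (indeed real-analytic, \refL{L:Y-domain}); so $y \mapsto y_n$ is continuous for each fixed $n$. Then, since $\prescript{y}{}{I}_n^{j+1}$ is built from $\prescript{y}{}{I}_{n+1}^{j}$ by applying $\mathbb{Y}_{n+1}$, integer translations, and taking finite unions — all operations continuous in the Hausdorff metric — an induction on $j$ gives that $y \mapsto \prescript{y}{}{I}_{-1}^{j}$ is continuous for each fixed $j$. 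To pass to the limit $j \to \infty$ I would invoke the uniform contraction: by the same estimate used to prove \refE{E:I_n^j-forms-nest-general} and the nesting, $\prescript{y}{}{I}_{-1}^{j}$ converges to $\prescript{y}{}{I}_{-1}$ at a rate $O(0.9^{j})$ that is uniform in $y$ on compact parameter ranges (the relevant height functions differ from their limits by at most a constant times $0.9^{j}$, cf. Step 1–2 of the proof of \refP{P:p_n-cont} and the estimates in \refP{P:invariants-classified-model}). A uniform limit of continuous maps into a metric space is continuous, so $y \mapsto \prescript{y}{}{I}_{-1}$ is continuous.

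When $\ga \in \E{B}$ there is the extra ceiling $\{\Im w \le p_{-1}(\Re w)\}$; since $p_{-1}$ is a fixed continuous function (\refP{P:p_n-cont}), intersecting a Hausdorff-continuous family of closed sets with a fixed closed set bounded in this direction is again Hausdorff-continuous, using that $\prescript{y}{}{b}_{-1} \le p_{-1}$ and the liminf property (a)–(b) from \refP{P:invariants-topology-model} to control the lower boundary. Finally I would transport everything back through the projection and restore $t = e^{-2\pi y}$; continuity of $y \mapsto e^{-2\pi y}$ and of its inverse on $[0,\infty]$ (with $y = +\infty \leftrightarrow t = 0$) completes the argument, the case $t = r_\ga$ being handled by the same estimates since $\prescript{y}{}{I}_{-1}$ shrinks to its limit as $y \nearrow y_{\max}$. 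I expect the main obstacle to be making precise the claim that the $O(0.9^{j})$ convergence of $\prescript{y}{}{I}_{-1}^{j}$ to $\prescript{y}{}{I}_{-1}$ is genuinely uniform in $y$ — one must track that the constants in the telescoping estimates (the "$+55/\pi$" type bounds) do not blow up as $y$ varies over a compact set, and separately handle the non-Brjuno case where $y$ ranges over all of $[0,\infty]$ and the sets can have hairs reaching to $\Im w = +\infty$; there the compactification of the strip at infinity and the fact that $b_{-1} = +\infty$ on a dense set must be reconciled with Hausdorff continuity, which works precisely because adding the point $0$ (the image of $\Im w = +\infty$) is built into the definition of $\prescript{t}{}{\mathbb{A}}_\ga$.
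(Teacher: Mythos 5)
There is a genuine gap at the centre of your argument, namely the passage $j \to \infty$. You justify it by claiming that ``the relevant height functions differ from their limits by at most a constant times $0.9^{j}$''. This cannot be true in the sup-norm sense in exactly the cases that matter: each $\prescript{y}{}{b}_{-1}^{j}$ is continuous and finite, so a uniform (or uniformly geometric) convergence would force the limit $\prescript{y}{}{b}_{-1}$ to be continuous and finite; but for $\ga \notin \E{B}$ the limit is $+\infty$ on a dense set, and for $\ga \in \E{B}\setminus\E{H}$ it equals $p_{-1}$ on a dense set and is strictly smaller on a dense set (\refP{P:b_n-p_n-dense-touches} and its analogue used in \refP{P:invariants-topology-model}), hence is discontinuous. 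So the estimate you lean on is false as stated, and the ``uniform limit of continuous maps'' conclusion is unsupported. A set-level version of your claim, $d_H\bigl(\prescript{y}{}{I}_{-1}^{j}, \prescript{y}{}{I}_{-1}\bigr) \leq C\,(0.9)^{j}$ uniformly in $y$, is plausibly true, but it is a genuinely different statement (Hausdorff distance of epigraph-type regions allows horizontal displacement) and needs its own proof: one has to produce, within every stage-$j$ cell, points of the \emph{limit} set at distance $O(0.9^{j})$ from the cell's bottom boundary, e.g.\ using that $\prescript{y}{}{I}_{j}$ contains the vertical rays over the integer translates at height $y_j$ (within distance $O(1)$ of the floor $y_j-1$ of $\prescript{y}{}{I}_j^0$), that $\mathbb{Y}_n$ maps vertical lines to vertical lines, and the contraction \refE{E:uniform-contraction-Y}. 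Nothing of this sort appears in your proposal; you flag the uniformity as ``the main obstacle'' but offer only the incorrect function-level bound in its place.

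The paper avoids this issue entirely by never comparing a finite stage with the $j\to\infty$ limit. Instead it compares the \emph{same} finite stage at two nearby parameters: if $|x-y|$ is small then $|x_n-y_n|\le 1$ up to a prescribed level, and the contraction gives $|\prescript{x}{}{b}_{-1}^{\,n+1}-\prescript{y}{}{b}_{-1}^{\,n+1}|\le (0.9)^{n+1}$, which is legitimate because both objects at a fixed stage are continuous. From this it deduces the set identities $\prescript{y}{}{I}_{-1}=\overline{\cup_{x>y}\prescript{x}{}{I}_{-1}}$ and, for $y>0$, $\prescript{y}{}{I}_{-1}=\cap_{x<y}\prescript{x}{}{I}_{-1}$, and then converts these, using the monotonicity \refE{E:invariant-sets-lift-nest} together with a finite-subcover argument (the $\gep$-neighbourhoods $B_\gep(\prescript{x}{}{I}_{-1})$ cover everything above a fixed height, so compactness applies), into the two Hausdorff inclusions. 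So your route is structurally different (stagewise continuity in $y$ plus uniform-in-$y$ convergence in $j$, versus the paper's monotone limits in the parameter plus covering), and it could probably be completed, but only after replacing the false uniform estimate on height functions by a genuine Hausdorff-distance estimate of the kind sketched above; as written, the key step does not hold.
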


\begin{proof}
By the definition of $\prescript{t}{}{\mathbb{A}}_\ga$, it is enough to show that the map 
$y \mapsto \prescript{y}{}I_{-1}$, for $y\geq 0$, is continuous with respect to the Hausdorff distance. 
To that end, we need to show that for any $y \geq 0$ and any $\gep >0$, there is $\delta>0$ such that 
if $|x-y| \leq \delta$ and $x\geq 0$, then $\prescript{x}{}I_{-1} \subset B_\gep (\prescript{y}{}I_{-1})$
and $\prescript{y}{}I_{-1} \subset B_\gep (\prescript{x}{}I_{-1})$. 
Here, $B_\gep(\prescript{z}{}I_{-1})$ denotes the $\gep$-neighbourhood of $\prescript{z}{}I_{-1}$. 
Fix an arbitrary $y \geq 0$ and $\gep >0$.

We claim that 
\begin{itemize}
\item[(i)] $\prescript{y}{}I_{-1}= \overline{\cup_{x>y} \prescript{x}{}I_{-1}}$, 
\item[(ii)] if $y> 0$, $\prescript{y}{}I_{-1} = \cap_{x<y} \prescript{x}{}I_{-1}$. 
\end{itemize}
To prove these, it is enough to show that for all $t \in [0, 1]$, $\prescript{x}{}b_{-1}(t) \to \prescript{y}{}b_{-1}(t)$, 
as $x \to y$. 
Let $(x_n)_{n \geq -1}$ and $(y_n)_{n\geq -1}$ denote the sequences defined according to 
\refE{E:invariants-imaginary-traces} for the values $x$ and $y$ respectively. 
If $|x-y|$ is small enough, $|x_n - y_n| \leq 1$. 
The inequality $|\prescript{x}{}b_n^0 - \prescript{y}{}b_n^0| \leq 1$ and the uniform contraction in 
\refL{L:uniform-contraction-Y_r} imply that 
$|\prescript{x}{}b_{-1}^{n+1} - \prescript{y}{}b_{-1}^{n+1}| \leq  0.9 ^{n+1}$. 
Since $\prescript{x}{}b_{-1}^n(t) \to \prescript{x}{}b_{-1}(t)$ and 
$\prescript{y}{}b_{-1}^n(t) \to \prescript{y}{}b_{-1}(t)$, as $n\to \infty$, one infers that 
$\prescript{x}{}b_{-1}(t) \to \prescript{y}{}b_{-1}(t)$, as $x \to y$.

By property (i) above, $\cup_{x>y} B_\gep (\prescript{x}{}I_{-1})$ provides an open cover of $\prescript{y}{}I_{-1}$. 
Note that for any $x>y$, $B_\gep (\prescript{x}{}I_{-1})$ covers all points in $\prescript{0}{}I_{-1}$ 
above some imaginary value. 
It follows (choose a finite cover) that there is $x_0 > y$ such that if $y < x \leq x_0$, 
$\prescript{y}{}I_{-1} \subset B_\gep (\prescript{x}{}I_{-1})$. 
On the other hand, if $y>0$, property (ii) implies that there is $x_1 < y$ such that for all $x_1 < x < y$, 
we have $\prescript{x}{}I_{-1} \subset B_\gep (\prescript{y}{}I_{-1})$. 

Let $\delta = \min \{|x_0-y|, |x_1-y|\}$. 
Below assume that $|x-y| \leq \delta$, for some $x \geq 0$. 

If $x< y$, by \refE{E:invariant-sets-lift-nest}, $\prescript{y}{}I_{-1} \subset \prescript{x}{}I_{-1}$ and hence 
$\prescript{y}{}I_{-1} \subset B_\gep(\prescript{x}{}I_{-1})$. 
If $x>y$, by the above paragraph, we have $\prescript{y}{}I_{-1} \subset B_\gep(\prescript{x}{}I_{-1})$. 

Similarly, if $x > y$, by \refE{E:invariant-sets-lift-nest}, $\prescript{x}{}I_{-1} \subset \prescript{y}{}I_{-1}$ and hence 
$\prescript{x}{}I_{-1} \subset B_\gep(\prescript{y}{}I_{-1})$. 
If $x< y$, by the above paragraphs, $\prescript{x}{}I_{-1} \subset B_\gep (\prescript{y}{}I_{-1})$. 
\end{proof}

It would be interesting to know the modulus of continuity of the map 
$t \mapsto \prescript{t}{}{\mathbb{A}}_\ga$, $t \in [r_\ga, 1]$.

This completes the proof of \refT{T:dynamics-on-model}. 
More precisely, part (i) is proved in \refP{P:model-recurrent}. 
Part (ii) follows from Propositions \ref{P:invariants-ordered-model} and \ref{P:invariants-cont-depen}. 
Part (iii) follows from \refP{P:invariants-classified-model} and \refE{E:invariants-model-nested}. 
Parts (iv)  and (v) follow from Propositions \ref{P:invariants-classified-model} and \ref{P:invariants-topology-model}. 
Here, when $\ga \in \E{H}$, we have $r_\ga=1$ and there is nothing to prove. 

\subsection*{Acknowledgement}
I would like to thank Xavier Buff and Arnaud Cheritat for sharing with me the manuscript \cite{BC09},
and also for many useful discussions around the topic during the author's visit to Univeriste Paul Sabatier 
in 2012. 
I would also like to thank the anonymous referees for helpful comments to improve the 
presentation of the paper.
I am grateful to EPSRC (UK) for the fellowship EP/M01746X/1, which allowed carrying 
out this project.  
\bibliographystyle{amsalpha}
\bibliography{/Users/dcheragh/Work/Inscriptions/Data}
\end{document}